\documentclass{article}

\usepackage[utf8]{inputenc}
\usepackage{geometry}
\usepackage[textsize=footnotesize,textwidth=20ex,colorinlistoftodos]{todonotes}
    
\usepackage{amsmath, amssymb, amsthm}
\usepackage[capitalize]{cleveref}
\usepackage[affil-it]{authblk}

\numberwithin{equation}{section}

\newtheorem{theorem}{Theorem}[section]
\newtheorem{proposition}[theorem]{Proposition}
\newtheorem{corollary}[theorem]{Corollary}
\newtheorem{lemma}[theorem]{Lemma}

\theoremstyle{definition}
\newtheorem{definition}[theorem]{Definition}
\newtheorem{remark}[theorem]{Remark}
\newtheorem{example}[theorem]{Example}
\newtheorem{assumption}[theorem]{Assumption}
\newtheorem{construction}[theorem]{Construction}

\DeclareMathOperator{\Ima}{Im}
\DeclareMathOperator{\End}{End}
\DeclareMathOperator{\Lie}{Lie}

\begin{document}
	\title{Operator Kantor pairs}
	
	\author[1]{Sigiswald Barbier %
		\thanks{Electronic address: \texttt{Sigiswald.Barbier@UGent.be}}}
	\affil[1]{Department of Electronics and Information Systems\\ Faculty of Engineering and
		Architecture, Ghent University \\ Krijgslaan 281 \\ Building S8\\ 9000 Ghent, Belgium}
	\author[2]{Tom De Medts \thanks{Electronic address: \texttt{Tom.DeMedts@UGent.be}}}
	\affil[2]{Department of Mathematics: Algebra and Geometry, Ghent University\\ Krijgslaan 281 \\ Building S25\\ 9000 Ghent, Belgium}
	\author[1]{Michiel Smet \thanks{Electronic address: \texttt{Michiel.Smet@UGent.be}; Corresponding author}}
	
	\date{March 30, 2023}

	\maketitle
	
	\begin{abstract}
		Kantor pairs, (quadratic) Jordan pairs, and similar structures have been instrumental in the study of $\mathbb{Z}$-graded Lie algebras and algebraic groups.
		We introduce the notion of an operator Kantor pair, a generalization of Kantor pairs to arbitrary (commutative, unital) rings, similar in spirit as to how quadratic Jordan pairs and algebras generalize linear Jordan pairs and algebras. 
		Such an operator Kantor pair is formed by a pair of $\Phi$-groups $(G^+,G^-)$ of a specific kind, equipped with certain homogeneous operators.
		For each such a pair $(G^+,G^-)$, we construct a $5$-graded Lie algebra $L$ together with actions of $G^\pm$ on $L$ as automorphisms.
		Moreover, we can associate a group $G(G^+,G^-) \subset \operatorname{Aut}(L)$ to this pair generalizing the projective elementary group of Jordan pairs.
		If the non-$0$-graded part of $L$ is projective, we can uniquely recover $G^+,G^-$ from $G(G^+,G^-)$ and the grading on $L$ alone.
		We establish, over rings $\Phi$ with $1/30 \in \Phi$, a one to one correspondence between Kantor pairs and operator Kantor pairs. 
		Finally, we construct operator Kantor pairs for the different families of central simple structurable algebras.
	\end{abstract}
	
	\paragraph{2020 Mathematics Subject classification:} 17B60, 17B70, 17A30, 17C99, 16T05 
	
	\paragraph{Keywords} graded Lie algebras, Kantor pairs, structurable algebras, Hopf algebras
	
	\section*{Introduction}
	
	Structurable algebras were introduced over fields of characteristic different from $2$ and $3$ and classified over fields of characteristic $0$ by Bruce Allison \cite{ALL78}. Oleg Smirnov \cite{smirnov1990} generalized the classification of these algebras to fields with characteristic different from $2$, $3$ and $5$. More recently, Anastasia Stavrova \cite{STAV20} classified the structurable algebras over fields of characteristic different from $2$ and $3$ in a different fashion.
	These algebras are useful in the study of $5$-graded Lie algebras and algebraic groups \cite{Gar01,Kru07,BOE19, Cuy21}, which is the point of view we shall take throughout this paper. 
	Furthermore, the definition of structurable algebras can be extended to arbitrary rings \cite[Section 5]{ALLFLK93}, but not necessarily in a manner suitable for the study of $5$-graded Lie algebras. 
	However, a good definition for structurable algebras over arbitrary rings, which allows for a generalization of the links with (algebraic) groups, was still lacking.
	
	We will not only focus on structurable algebras, but also on Kantor pairs and Kantor triple systems, as introduced ---under the name ``generalized Jordan triple systems of second order''--- by Isaiah Kantor \cite{Kan72}. 
	Kantor pairs are useful for the study of $5$-graded Lie algebras and related groups as well \cite{ALLFLK99,ALLFLK17}.
	Just like for structurable algebras, there is no good theory yet for Kantor pairs over arbitrary rings.
	Since each structurable algebra induces a Kantor pair with the same $5$-graded Lie algebra, we will focus on generalizing Kantor pairs to ``operator Kantor pairs'' as a means to generalize both.
	
	We introduce the notion of an operator Kantor pair.
	The operator Kantor pairs relate to Kantor pairs, like quadratic Jordan pairs \cite{Loos74,Loos75} relate to linear Jordan pairs (``verbundene Paare'' in \cite{Mey70Kon}). To be specific, instead of the $3$-linear map $(a,b,c) \mapsto V_{a,b} c$ associated to a Kantor pair, we will consider certain operators $Q^\text{grp}$, $T$ and $P$ which can be constructed from $V$ over rings with\footnote{We will often talk about ``a ring $\Phi$ with $1/n$'' or write ``$1/n \in \Phi$'' when we mean that we require $n$ to be invertible in the ring $\Phi$.}~$1/6$.
	
	We construct, for each operator Kantor pair $(G^+,G^-)$, an associated $5$-graded Lie algebra. If $1/2 \in \Phi$, then this $5$-graded Lie algebra can, alternatively, be constructed from a genuine Kantor pair. We also construct a group $G(G^+,G^-)$ generalizing the projective elementary group. We show that the group $G(G^+,G^-)$ has the properties which are desired for the projective elementary group if $G^+$ and $G^-$ are projective modules. Namely, we prove that it is not only a group of automorphisms of a Lie algebra, but that the groups $G^+$ and $G^-$ correspond exactly to the ``positive" and ``negative" part of $G(G^+,G^-)$.

	For rings with $1/30 \in \Phi$, we show that each Kantor pair induces an operator Kantor pair.
	Moreover, we prove that all classes of central simple structurable algebras induce operator Kantor pairs.
	More precisely:
	\begin{enumerate}
		\item We show that each quadratic Jordan algebra becomes an operator Kantor pair with operators $Q^\text{grp} = Q,$ $T = 0,$ and $P_xy = Q_xQ_yx$, by establishing the link between the operators and parts of quasi-inverses.
		\item Similarly, we show that a Kantor pair associated to a hermitian form, as studied by John Faulkner and Bruce Allison \cite{ALLFLK99}, induces an operator Kantor pair over arbitrary rings using an appropriate generalization of quasi-inverses.
		In particular, each associative algebra with involution induces an operator Kantor pair.
		\item For each structurable algebra associated to a hermitian form \cite[8.iii]{ALL78}, we also construct an operator Kantor pair, by making use of a Peirce context in the sense of \cite{ALLFLK99} for this class of algebras.
		\item For the other classes, i.e.,
		\begin{enumerate}
			\item forms of the tensor products of composition algebras,
			\item forms of matrix structurable algebras, described in terms of hermitian cubic norm structures \cite{DeMedts2019},
			\item Smirnov algebras (if $1/2 \in \Phi$),
		\end{enumerate}
		we also prove that these induce operator Kantor pairs using certain algebras defined over $\mathbb{Z}$. However, for the tensor product of composition algebras and Smirnov algebras, we will not consider arbitrary forms.
	\end{enumerate}

	\paragraph{Outline.}
	
	In \cref{se:vectorgroups}, we introduce vector groups. These groups are $\Phi$-group functors $G$ equipped with a subgroup functor $G_2$, together with some sort of scalar multiplications. These vector groups can be identified with $\Phi$-groups that can be represented as formal power series $1 + tg_1 + t^2g_2 + \dotsm$. For each vector group, we construct a universal representation, which is a representation in a Hopf algebra $H$, in terms of such formal power series. We also define homogeneous maps on such groups and develop a method that will allow us to derive necessary equalities involving homogeneous maps using this universal representation.
	
	In \cref{se:pairs}, we introduce some notation to work with a pair of vector group representations, formulate Theorem \ref{thm main} which we prove in the appendix, and list a multitude of equations in Lemma~\ref{Lemma equations} that hold for pairs of vector group representations. Theorem \ref{thm main} and Lemma \ref{Lemma equations} are technical results that will be very useful when proving that certain pairs of vector group representations correspond to operator Kantor pairs.
	
	In \cref{se:okpairs}, we introduce operator Kantor pairs and prove the aforementioned results  on general operator Kantor pairs.
	Namely, we first introduce pre-Kantor pairs $(G^+,G^-)$, which are pairs of vector groups with homogeneous operators. For such a pair, we introduce a Lie algebra $L$, and we construct vector group representations to the endomorphism algebra of $L$ by making use of the operators.
	Secondly, we define operator Kantor pairs , which are pre-Kantor pairs satisfying some additional equations on those operators. We prove that, for operator Kantor pairs, the constructed vector group representations induce maps $G^\pm \longrightarrow \text{Aut}(L)$. Finally, in Theorem \ref{thm weights lie algebra}, we prove that the group $G \le \text{Aut}(L)$ generated by $G^+$ and $G^-$ is similar to the projective elementary group of quadratic Jordan pairs if $G^+$ and $G^-$ are projective.
	
	In \cref{se:lie}, we prove that there exists a precise link between Kantor pairs, pre-Kantor pairs and operator Kantor pairs. Namely, we show that these three classes are in a one to one correspondence if $1/30 \in \Phi$. Finally, in \cref{se:struct}, we use the different classes of central simple structurable algebras to define operator Kantor pairs. At the end of that section, we propose a definition of structurable algebras in our setting.
	
	\bigskip
	
	Throughout the paper, $\Phi$ denotes the base ring over which we work.
	Unless stated otherwise, $\Phi$~is an arbitrary commutative unital ring.
	The category $\Phi\textbf{-alg}$ is the category of associative, unital, commutative $\Phi$-algebras. 
	All $\Phi$-modules are assumed to be left modules unless stated explicitly otherwise (which will occur only in paragraph \ref{sss:herm}).
	
	\section{Vector groups}\label{se:vectorgroups}
	
	In this section, we introduce vector groups. These can be thought of as groups endowed with a scalar multiplication over $\Phi$, similar to modules. In order to make their likeness precise, we also need the $\Phi$-group functor (i.e., a functor\footnote{Such a group valued functor can also be described as a set valued functor endowed with certain natural transformations corresponding to the group operation, the unit, and taking inverses. A more precise formulation of this can be found in \cite[Section 1.4]{Wat79}.} $\Phi\textbf{-alg} \longrightarrow \mathbf{Grp}$) mapping $K \mapsto M \otimes_\Phi K$ for modules $M$. Namely, this functor can be endowed with a scalar multiplication $K \times M(K) 
	\longrightarrow M(K)$ which is natural in $K$. Vector groups are groups $G$ with a similar construction plan for a functor $K \mapsto G(K)$ for which there exists a natural transformation $K \times G(K) \longrightarrow G(K)$ coming from the scalar multiplication.
	
	In the first subsection, we introduce vector groups and construct functors from them. We also delineate the class of proper vector groups, which are the only ones we work with in the rest of the article.
	We prove that the Lie algebra of a proper vector group is preserved under scalar extensions. In the second subsection, we introduce homogeneous maps on proper vector groups and representations of proper vector groups, and introduce the corresponding universal objects. In the last subsection, we determine the primitive elements of the universal representation for projective vector groups and recover each projective vector group from its universal representation.
	
	\subsection{Definition and basic properties}
	
	Consider $\Phi$-modules $A$ and $B$ and a bilinear form $$\psi \colon A \times A \longrightarrow B.$$ We endow $A \times B$ with a group structure $$(a,b)(c,d) = (a + c , b + d +\psi(a,c)),$$
	a first type of scalar multiplication
	$$ \lambda \cdot_1 (a,b) = (\lambda a , \lambda^2 b),$$
	and a second one 
	$$ \lambda \cdot_2 (0,b) = (0, \lambda b),$$
	where the second scalar multiplication is only defined on the subgroup $0 \times B$. Since $\psi$ is a bilinear form, we see that $\lambda \cdot_1$ is an endomorphism of $A \times B$.
	
	\begin{definition}
		\label{definition vecgroup}
		Consider a subgroup $G$ of  $A \times B$. 
		We call $G$ an \textit{almost vector group} if the following two conditions hold.
		\begin{enumerate}
			\item $G$ is closed under $g \mapsto \lambda \cdot_1 g$ for all $\lambda \in \Phi$;
			\item $G_2 = G \cap 0 \times B$ is closed under $g \mapsto \lambda \cdot_2 g$ for all $\lambda \in \Phi$.
		\end{enumerate}
		Let $\hat{G}(K)$ be the smallest subgroup of $(A \times B) \otimes K$ containing all $g \otimes 1$ for $g \in G$ and closed under both $(g \mapsto \lambda \cdot_i g)$ for all $\lambda \in K$.
		If now, in addition,
		\begin{enumerate}
			\item[3.] $(G_2 \otimes_\Phi K) \longrightarrow \hat{G}(K)_2$ is a surjection\footnote{The tensor product is a tensor product of modules.} for all $\Phi$-algebras $K$ whenever $1/2 \notin \Phi$,
		\end{enumerate}
		holds, then we call $G$ a \textit{vector group}.
		In particular, if $1/2 \in \Phi$, then by definition, each almost vector group is a vector group.
	\end{definition}

	\begin{remark}
		An example where the third condition is not satisfied, is the group $G \subset \mathbb{Z}/4\mathbb{Z} \times \mathbb{Z}/2\mathbb{Z}$ over $\mathbb{Z}/4\mathbb{Z}$ with $\psi(1,1) = 1$, formed by the elements $(0,0)$ and $(2,1)$. Namely, if we set $K = \mathbb{Z}/2\mathbb{Z}$ we get $\hat{G}(K) = \{(0,0),(0,1)\}$, so that indeed $\hat{G}(K)_2$ is bigger than $G_2 \otimes K$.
		
		If $1/2 \in \Phi$, one can show that the condition stated in item 3 is automatically satisfied.
	\end{remark}

	\begin{lemma}
		\label{lemma G(K)}
		Let $G$ be an almost vector group.
		The group $\hat{G}(K)$ is generated as a subgroup of $(A \times B) \otimes K$ by the sets of generators $K \cdot_1 (g \otimes 1)$ for $g \in G$ and $K \cdot_2 (G_2 \otimes 1)$.
		\begin{proof}
			By definition, both  $K \cdot_1 (g \otimes 1)$ for $g \in G$ and $K \cdot_2 (G_2 \otimes 1)$ should be contained in $\hat{G}(K)$. So, we need to prove that the group generated by those sets is closed under scalar multiplications.
			This is the case since scalar multiplications are endomorphisms of $\hat{G}(K)$ such that
			$$ \lambda \cdot_i (\mu \cdot_i g) = (\lambda\mu \cdot_i g),$$
			for $i = 1,2$ and 
			$$ \lambda_i \cdot_i (\lambda_j \cdot_j g) = \lambda_i^j \lambda^i_j \cdot_2 g,$$
			for $i,j$ such that $\{i,j\} = \{1,2\}.$
		\end{proof}
	\end{lemma}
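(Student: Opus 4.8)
The plan is to prove the two groups coincide by a double inclusion, with the substantive direction supplied by the minimality built into the definition of $\hat{G}(K)$. Throughout, write $H$ for the subgroup of $(A \times B) \otimes K$ generated by the two families $K \cdot_1 (g \otimes 1)$, $g \in G$, and $K \cdot_2 (G_2 \otimes 1)$, and write $V = (0 \times B) \otimes K$ for the central subgroup of $(A \times B) \otimes K$ on which $\cdot_2$ is defined (centrality is immediate from the product rule, since $\psi(a,0) = \psi(0,a) = 0$). The inclusion $H \subseteq \hat{G}(K)$ is the easy half: each generator $\lambda \cdot_1 (g \otimes 1)$ lies in $\hat{G}(K)$ because $g \otimes 1$ does and $\hat{G}(K)$ is closed under $\cdot_1$, while each generator $\lambda \cdot_2 (h \otimes 1)$ with $h \in G_2$ lies in $\hat{G}(K)$ because $h \otimes 1 \in V$ and $\hat{G}(K)$ is closed under $\cdot_2$ there. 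For the reverse inclusion I would invoke minimality: since $H$ already contains every $g \otimes 1 = 1 \cdot_1 (g \otimes 1)$, it suffices to show that $H$ is closed under both scalar multiplications, for then $\hat{G}(K) \subseteq H$ by definition.

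Closure under $\cdot_1$ is the clean part. Bilinearity of $\psi$ makes $\lambda \cdot_1$ a group endomorphism of all of $(A \times B) \otimes K$, so it is enough to evaluate it on the two generating families; the composition rules $\lambda \cdot_1 (\mu \cdot_1 g) = (\lambda\mu) \cdot_1 g$ and $\lambda \cdot_1 (\mu \cdot_2 h) = (\lambda^2 \mu) \cdot_2 h$ then show that $\lambda \cdot_1$ carries each generator to another generator of the same type, and hence maps $H$ into $H$.

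The delicate part, and the step I expect to be the genuine obstacle, is closure under $\cdot_2$: because $\cdot_2$ is defined only on $V$, it cannot be applied term by term to an arbitrary word representing an element of $H \cap V$, so this becomes a statement about the whole central subgroup $H \cap V$ rather than about individual generators. My plan is to exploit the centrality of $V$ — which forces every commutator of generators into $V$ — together with the two tools available: the composition identities (in particular the mixed rule $\lambda_2 \cdot_2 (\lambda_1 \cdot_1 h) = (\lambda_2 \lambda_1^2) \cdot_2 h$, which returns a $\cdot_1$-generator lying in $V$ to a genuine $\cdot_2$-generator) and the relation $\lambda \cdot_1 s = \lambda^2 \cdot_2 s$ valid for all $s \in V$. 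Concretely, a $\cdot_2$-rescaling of a commutator $[\lambda \cdot_1 (g \otimes 1), \mu \cdot_1 (g' \otimes 1)]$ is absorbed by rescaling one of its two $\cdot_1$-factors, and the $\cdot_2$-generators are permuted among themselves. The care needed is that $H \cap V$ also contains elements arising from relations among the images of the $\cdot_1$-generators in $A \otimes K$ (for instance products of $\cdot_1$-generators whose $A$-components cancel), and it is precisely checking that $\cdot_2$ keeps these in $H$ — where the group structure of $G$ itself, and the treatment of $2$-torsion when $1/2 \notin \Phi$, come into play — that constitutes the real content. Once this bookkeeping is carried through, $H$ satisfies every defining closure property of $\hat{G}(K)$, and combined with $H \subseteq \hat{G}(K)$ this gives $H = \hat{G}(K)$.
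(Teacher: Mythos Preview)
Your strategy coincides with the paper's: show $H\subseteq\hat G(K)$ directly, then verify that $H$ is closed under both scalar multiplications so that minimality forces the reverse inclusion. For $\cdot_1$ you and the paper give the same argument (it is a group endomorphism of the whole ambient group, and the composition rules send each generator to another generator). The divergence is at $\cdot_2$: the paper simply records the composition rules $\lambda\cdot_2(\mu\cdot_2 g)=(\lambda\mu)\cdot_2 g$ and $\lambda\cdot_2(\mu\cdot_1 g)=(\lambda\mu^2)\cdot_2 g$ and stops, treating this case on the same footing as $\cdot_1$, whereas you correctly observe that $\cdot_2$ is only an endomorphism of $V$, so closure must be verified on \emph{all} of $H\cap V$ --- in particular on products of $\cdot_1$-generators whose first coordinates happen to cancel in $A\otimes K$ --- and you defer this as ``bookkeeping''.

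That bookkeeping cannot be carried through in general, so this is a genuine gap (one the paper's terse proof shares). Take the almost vector group from the paper's own remark after Definition~\ref{definition vecgroup}: $\Phi=\mathbb{Z}/4$, $A=\mathbb{Z}/4$, $B=\mathbb{Z}/2$, $\psi(1,1)=1$, $G=\{(0,0),(2,1)\}$, so $G_2=0$. Over $K=\mathbb{F}_2[t]$ the element $(2,1)\otimes 1$ becomes $(0,1)\in V$, hence any subgroup containing it and closed under $\cdot_2$ must contain $t\cdot_2(0,1)=(0,t)$; thus $\hat G(K)=0\times K$. But $K\cdot_1(G\otimes 1)=\{(0,\lambda^2):\lambda\in K\}$ and $K\cdot_2(G_2\otimes 1)=0$, so the subgroup they generate is $H=0\times\mathbb{F}_2[t^2]\subsetneq\hat G(K)$. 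What the composition rules actually establish is only that the generating \emph{set} $K\cdot_1(G\otimes 1)\cup K\cdot_2(G_2\otimes 1)$ is stable under further applications of the $\cdot_i$, which is strictly weaker than closure of the generated subgroup under $\cdot_2$.
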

	
	From the previous lemma, we can conclude immediately that $K \longmapsto \hat{G}(K)$ is a functor $$\Phi\textbf{-alg} \longrightarrow \textbf{Grp}$$ and thus a $\Phi$-group.
	Consider the ring of \textit{dual numbers} $\Phi[\epsilon]$, i.e., the ring obtained by adding a generator $\epsilon$ satisfying $\epsilon^2 = 0$, and consider the projection $\pi : \Phi[\epsilon] \longrightarrow \Phi$.
	We use $\text{Lie}(G)$ to denote the Lie algebra associated to $G$: $$ \text{Lie}(G) = \ker \hat{G}(\pi) = \{(\epsilon a, \epsilon b) \in \hat{G}(\Phi[\epsilon])\}$$ with as bracket the restriction of the commutator on $A \times B$, or equivalently
	$$ [(a,b),(c,d)] = (0,\psi(a,c) - \psi(c,a)).$$
	This is the Lie algebra one typically associates to a linear algebraic group, cfr. \cite[the definition preceding Example 3.1]{Milne} (even though the Lie bracket is typically introduced in a different fashion).
	
	Now, we prove that each almost vector group can be seen as a quotient of an actual vector group.	
	
	\begin{lemma}
		\label{lem: almost vector group lifts}
		Suppose that $G \le A \times B$ is an almost vector group and that there exists $(a,b_a) \in G$ for each $a \in A$. Then $G$ is a vector group. Moreover, using the projection $\pi_A : G \longrightarrow A$, each almost vector group $G \le A \times B$ can be lifted to a vector group $G' \le \pi_A(G) \times B$ such that $\hat{G'}(K) \longrightarrow \hat{G}(K)$ is an isomorphism for all flat $K$.
		\begin{proof}
			We first remark that $q : a \mapsto b_a \in B/G_2$ is a quadratic map with linearisation $q(x,y) = \psi(x,y)$. So, if we look at $\hat{G}(K)$ we have elements $(a,b_a) \in (A \times B) \otimes K$ and we note that $q_K(a) = b_a \mod (G_2 \otimes K)$ since it holds for all generators of $\hat{G}(K)$ (where we use $(G_2 \otimes K)$ to denote its image in $B \otimes K$). Therefore, we see that $(0,b) \in \hat{G}(K)_2$ means that $b = 0 \mod (G_2 \otimes K)$, which is exactly what we had to prove.
			
			For the moreover part, we observe that $\pi_A(G)$ is a submodule of $A$ since it is closed under addition and scalar multiplication. For flat $K$ we note that
			\[ \pi_A(G) \otimes K \longrightarrow A \otimes K\]
			is an embedding.
		\end{proof}
	\end{lemma}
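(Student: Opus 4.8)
The plan is to verify the two assertions separately. For the first, since $G$ is already assumed to be an almost vector group, the only thing left to establish is condition~3 of \cref{definition vecgroup}, namely that $G_2 \otimes_\Phi K \to \hat{G}(K)_2$ is surjective for every $\Phi$-algebra $K$. The idea is to extract from the hypothesis a quadratic map and then to show that $\hat G(K)$ carries no ``degree-two mass'' beyond what already comes from $G_2$.

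First I would show that $q \colon A \to B/G_2$, $a \mapsto b_a + G_2$, is well defined independently of the chosen lift: if $(a,b_a)$ and $(a,b_a')$ both lie in $G$, a short inverse computation in $A\times B$ gives $(a,b_a)^{-1}(a,b_a') = (0, b_a' - b_a) \in G_2$. Next I would check that $q$ is a quadratic map: homogeneity $q(\lambda a)=\lambda^2 q(a)$ follows from closure under $\cdot_1$, while the group law yields $q(a+c)-q(a)-q(c) = \psi(a,c) \bmod G_2$, so that $\psi$ (read modulo $G_2$) is its polarization. Quadratic maps base change, so $q$ extends to $q_K \colon A\otimes K \to (B/G_2)\otimes K$, and by right-exactness of the tensor product $(B/G_2)\otimes K = (B\otimes K)/\Ima(G_2\otimes K)$, with no flatness required. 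The crux is then an induction, using the explicit generators of \cref{lemma G(K)}: I claim that every $(x,y)\in\hat G(K)$ satisfies $y \equiv q_K(x) \pmod{\Ima(G_2\otimes K)}$. This holds on the generators $K\cdot_1(g\otimes 1)$ and $K\cdot_2(G_2\otimes 1)$, and is preserved under the group multiplication (precisely because the polarization of $q_K$ is $\psi_K$) and under both scalar multiplications. Specialising to an element of $\hat G(K)_2$, where $x=0$, forces $y\in\Ima(G_2\otimes K)$, which is exactly condition~3.

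For the moreover part, I would put $A' := \pi_A(G)$, which is a submodule of $A$ since it is closed under addition and under scalar multiplication, and regard $G$ as sitting inside $A'\times B$ equipped with the restricted form $\psi|_{A'\times A'}$; call this copy $G'$. By construction every element of $A'$ admits a lift into $G'$, so the first part applies verbatim and $G'$ is a vector group. Finally, for flat $K$ the inclusion $A'\hookrightarrow A$ stays injective after $-\otimes K$, hence $(A'\times B)\otimes K \to (A\times B)\otimes K$ is an embedding; since $G'=G$ and $G_2'=G_2$ as subsets, $\hat{G'}(K)$ and $\hat G(K)$ are generated by identical generating sets, so the induced map $\hat{G'}(K)\to\hat G(K)$ is onto, and it is injective as a restriction of an embedding, hence an isomorphism.

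I expect the main obstacle to be the inductive congruence argument of the first part: one must set up the quadratic map $q$ and its base change correctly and then verify that the property $y\equiv q_K(x)$ is stable under every generating operation, being careful that the identification $(B/G_2)\otimes K=(B\otimes K)/\Ima(G_2\otimes K)$ uses only right-exactness, whereas the flatness hypothesis enters solely in the moreover part to guarantee injectivity of $A'\otimes K\to A\otimes K$.
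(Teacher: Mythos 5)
Your proposal is correct and follows essentially the same route as the paper: you introduce the quadratic map $q\colon a \mapsto b_a \in B/G_2$ with polarization $\psi$, verify the congruence $y \equiv q_K(x) \pmod{G_2 \otimes K}$ on the generators of $\hat G(K)$ to deduce condition~3, and handle the moreover part by restricting to the submodule $\pi_A(G)$ and using flatness for injectivity after base change. The extra details you supply (well-definedness of $q$, right-exactness for the identification $(B/G_2)\otimes K \cong (B\otimes K)/\Ima(G_2\otimes K)$, stability of the congruence under the group operations) are exactly the steps the paper leaves implicit.
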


	In what follows, will just write $$(\lambda g_1,\lambda^2 g_2) = \lambda \cdot_1 (g_1,g_2),$$ etc., for elements of $\hat{G}(K)$ instead of using tensor products.
	
	\begin{lemma}
		\label{lemma 2g2 - g1sq}
		Let $G$ be an almost vector group. If $(g_1,g_2)$ is an element of $G$, then also $s_g = (0,2g_2 - \psi(g_1,g_1))$ is an element of $G$.
		Furthermore, for $K \in \Phi\textbf{-alg}$, $\lambda, \mu \in K$ and $g \in \hat{G}(K)$, we have 
		$$ (\lambda + \mu) \cdot_1 g = (\lambda \cdot_1 g)(\mu \cdot_1 g)(\lambda \mu \cdot_2 s_g).$$
		\begin{proof}
			We compute for $g = (g_1,g_2) \in \hat{G}(K)$ that
			$$ \hat{G} \ni g(-1 \cdot_1 g) = (0,2g_2 - \psi(g_1,g_1)) = s_g.$$
			For the second claim, we compute
			\begin{align*}
				(\lambda \cdot_1 g) (\mu \cdot_1 g)(\lambda \mu \cdot_2 s_g) 
				& = (\lambda g_1, \lambda^2 g_2) (\mu g_1, \mu^2 g_2)  (0, 2 \lambda \mu g_2 - \lambda \mu \psi(g_1,g_1)) \\ &= ((\lambda + \mu)g_1,(\lambda^2 + \mu^2)g_2 + \lambda\mu \psi(g_1,g_1)) (0, 2 \lambda \mu g_2 - \lambda \mu \psi(g_1,g_1)) \\
				& = ((\lambda + \mu)g_1, (\lambda^2 + \mu^2 + 2 \lambda\mu) g_2) \\
				& = (\lambda + \mu) \cdot_1 (g_1,g_2). \qedhere
			\end{align*}
		\end{proof}
	\end{lemma}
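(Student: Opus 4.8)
The plan is to verify both assertions by direct computation with the group law $(a,b)(c,d) = (a+c,\, b+d+\psi(a,c))$ together with the two scalar multiplications, using only the bilinearity of $\psi$ (extended $K$-bilinearly when we pass to $\hat{G}(K)$).

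For the first claim I would exploit that $G$ is a subgroup closed under $\lambda \cdot_1$: since $-1 \cdot_1 g = (-g_1, g_2) \in G$, the product $g\,(-1 \cdot_1 g)$ again lies in $G$. Evaluating this product gives
$$(g_1, g_2)(-g_1, g_2) = (0,\, 2g_2 + \psi(g_1, -g_1)) = (0,\, 2g_2 - \psi(g_1, g_1)) = s_g,$$
so $s_g \in G$ as claimed.

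For the identity I would expand the right-hand side in two steps. Multiplying the two $\cdot_1$-scaled copies of $g$ produces a second coordinate $(\lambda^2+\mu^2)g_2 + \lambda\mu\,\psi(g_1,g_1)$, where the cross term $\psi(\lambda g_1, \mu g_1) = \lambda\mu\,\psi(g_1,g_1)$ is precisely the ``defect'' preventing $\lambda \mapsto \lambda\cdot_1 g$ from being additive in the scalar. Multiplying on the right by $\lambda\mu \cdot_2 s_g = (0,\, 2\lambda\mu g_2 - \lambda\mu\,\psi(g_1,g_1))$ is designed exactly to cancel this $\psi$-term (and contributes nothing to the new cross term, since $\psi(\cdot, 0) = 0$), leaving the second coordinate $(\lambda^2 + 2\lambda\mu + \mu^2)g_2 = (\lambda+\mu)^2 g_2$. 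Hence the product equals $((\lambda+\mu)g_1,\,(\lambda+\mu)^2 g_2) = (\lambda+\mu)\cdot_1 g$.

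No genuine obstacle arises: the statement is a bookkeeping identity, and the only point deserving attention is the careful use of bilinearity of $\psi$ when computing the second coordinate of each group product, in particular the vanishing $\psi(\cdot,0) = 0$ in the final multiplication. Conceptually, the lemma records that the single correction factor $\lambda\mu \cdot_2 s_g$ exactly measures the failure of $\cdot_1$-scaling to be additive in the scalar argument.
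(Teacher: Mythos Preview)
Your proof is correct and follows essentially the same approach as the paper: both compute $s_g = g\,(-1\cdot_1 g)$ directly to establish the first claim, and both verify the second identity by multiplying out the right-hand side in two steps, cancelling the $\psi$-cross term against the $\cdot_2$-factor. Your added remarks on bilinearity and the conceptual role of the correction factor are sound but do not change the argument.
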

	
	\begin{remark}
		We will often write $-g$ for $(-1) \cdot_1 g$.
		In particular, the element $s_g$ of the previous lemma becomes $g(-g)$.
	\end{remark}
	
	\begin{lemma}
		\label{lemma Lie G}
		Let $G$ be a vector group. The underlying $\Phi$-module of $\Lie(G)$ is isomorphic to $$L = G/G_2 \oplus G_2,$$
		which we can identify with a submodule of $A \times B$ using $(a,b)G_2 \mapsto (a,0)$.
		\begin{proof}
			We note that $G/G_2 \subset A$ is an actual module since it is closed under the group operation, and thus addition, and since it is closed under $\cdot_1$, and thus scalar multiplication.
			
			We prove that $\hat{G}(\Phi[\epsilon])$ is a semi-direct product $G \ltimes (\epsilon L)$.
			This immediately proves that $\text{Lie}(G) \cong L$.
			First, note that
			$$ \epsilon L \subset \hat{G}(\Phi[\epsilon]),$$
			since $$\epsilon L = (\epsilon \cdot_1 G) + (\epsilon \cdot_2 G_2).$$
			We see that $\epsilon L$ is an abelian subgroup.
			Second, one can compute that $\epsilon L$ is stabilized by $G$ under the conjugation action.
			By applying Lemma \ref{lemma 2g2 - g1sq} on $(a + b \epsilon) \cdot_1 g$ we obtain that $\Phi[\epsilon] \cdot G \le G \ltimes \epsilon L$. Since $(a + b \epsilon)\cdot_1$ is an endomorphism, we see that $G \ltimes (\epsilon L)$ is closed under scalar multiplications.
		\end{proof}
	\end{lemma}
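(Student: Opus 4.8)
The plan is to identify $\hat{G}(\Phi[\epsilon])$ with an internal semidirect product $G \ltimes \epsilon L$ and then to recognise $\Lie(G)$ as the normal factor. The projection $\pi \colon \Phi[\epsilon] \longrightarrow \Phi$ splits the inclusion $\Phi \hookrightarrow \Phi[\epsilon]$, so $\hat{G}(\pi) \colon \hat{G}(\Phi[\epsilon]) \longrightarrow \hat{G}(\Phi) = G$ is a split surjection of groups whose kernel is $\Lie(G)$; hence it suffices to produce a normal complement to $G$ inside $\hat{G}(\Phi[\epsilon])$ and to check that it is $\epsilon L$. Since $G/G_2 \cong \pi_A(G)$ is a submodule of $A$, the prescribed identification embeds $L$ into $A \times B$ as the submodule $\pi_A(G) \times G_2$, and I write $\epsilon L = \{(\epsilon a, \epsilon b) : a \in \pi_A(G),\ b \in G_2\}$ for its image in $\epsilon(A \times B) \subset (A \times B) \otimes \Phi[\epsilon]$.

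First I would show that $\epsilon L$ is an abelian normal subgroup meeting $G$ trivially. Containment follows from $\epsilon L = (\epsilon \cdot_1 G)(\epsilon \cdot_2 G_2)$, both factors lying in $\hat{G}(\Phi[\epsilon])$ by closure under the scalar multiplications; it is abelian because $\psi(\epsilon a, \epsilon c) = \epsilon^2 \psi(a,c) = 0$; and $G \cap \epsilon L = 0$ since $G$ sits in the $\epsilon$-free part $(A \times B) \otimes 1$. For normality I would conjugate a general $h = (\epsilon a, \epsilon b) \in \epsilon L$ by $g = (g_1, g_2) \in G$ and compute
\[ g h g^{-1} = \bigl(\epsilon a,\ \epsilon\bigl(b + \psi(g_1,a) - \psi(a,g_1)\bigr)\bigr). \]
This lands back in $\epsilon L$ precisely because the correction term $\psi(g_1,a) - \psi(a,g_1)$ is the nontrivial coordinate of the commutator $[g,g'] = (0,\psi(g_1,a) - \psi(a,g_1))$ of $g$ with any $g' \in G$ whose first coordinate equals $a$, and this commutator lies in $G \cap (0 \times B) = G_2$.

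Next I would verify that $G$ and $\epsilon L$ generate $\hat{G}(\Phi[\epsilon])$, using the explicit generators from \cref{lemma G(K)}. For a scalar $a + b\epsilon$, \cref{lemma 2g2 - g1sq} gives $(a+b\epsilon) \cdot_1 g = (a \cdot_1 g)(b\epsilon \cdot_1 g)(ab\epsilon \cdot_2 s_g)$, where $a \cdot_1 g \in G$ while $b\epsilon \cdot_1 g = (\epsilon b g_1, 0)$ and $ab\epsilon \cdot_2 s_g$ both lie in $\epsilon L$ (note $s_g \in G_2$); similarly $(a+b\epsilon) \cdot_2 c = (a \cdot_2 c)(b\epsilon \cdot_2 c)$ with $a \cdot_2 c \in G_2 \subseteq G$ and $b\epsilon \cdot_2 c \in \epsilon L$ for $c \in G_2$. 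Hence every generator lies in $G \cdot \epsilon L$, so $\hat{G}(\Phi[\epsilon]) = G \ltimes \epsilon L$. The kernel of $\hat{G}(\pi)$ is therefore exactly $\epsilon L$, and $(\epsilon a, \epsilon b) \mapsto (a,b)$ is an isomorphism of $\Phi$-modules $\epsilon L \xrightarrow{\sim} L$, once one recalls that the $\Phi$-action on $\Lie(G)$ is induced by the $\Phi$-algebra endomorphism $\epsilon \mapsto \lambda\epsilon$ of $\Phi[\epsilon]$ (this is what makes the second coordinate scale linearly rather than quadratically).

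The routine parts are the two displayed computations; the step I expect to be most delicate is the normality of $\epsilon L$, where one must recognise the conjugation correction term as a commutator in $G$ in order to see that it stays inside $\epsilon G_2$ rather than merely inside $\epsilon B$. Everything else is bookkeeping with the generating sets of \cref{lemma G(K)} and the scalar-multiplication identities of \cref{lemma 2g2 - g1sq}.
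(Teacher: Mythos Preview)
Your proof is correct and follows essentially the same approach as the paper: both establish $\hat{G}(\Phi[\epsilon]) = G \ltimes \epsilon L$ via the same ingredients (containment of $\epsilon L$ from the scalar multiplications, normality via conjugation, generation via \cref{lemma 2g2 - g1sq}). You have simply made explicit the computations the paper leaves to the reader, in particular the conjugation formula and the recognition of $\psi(g_1,a)-\psi(a,g_1)$ as a commutator in $G_2$, and you have also spelled out the $\cdot_2$ generators and the $\Phi$-module structure on $\Lie(G)$.
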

	
	\begin{lemma}
		Let $\rho_K : \hat{G}(K) \longrightarrow \hat{H}(K)$ be a natural transformation  between $\Phi$-group functors associated to vector groups such that 
		\[ \rho_K(\lambda \cdot_i g) = \lambda \cdot_i \rho_K(g)\]
		for all $g \in \hat{G}(K), \lambda \in K$.
		The groups $L = \ker \rho_\Phi$ and $I = \Ima \rho_\Phi$ are almost vector groups with $\hat{I}(K) = \Ima \rho_K$.
		Hence, both are always vector groups if $1/2 \in \Phi$.
		Moreover, $I$ is a vector group if and only if $\rho(G_2(K)) = I_2(K)$ and $L$ is a vector group if and only if $L_2 \otimes K \longrightarrow \hat{L}_2(K)$ is surjective.
		\begin{proof}
			
			It is trivial to see that $L$ and $I$ are closed under both scalar multiplications and thus almost vector groups. So, we conclude that $L$ and $I$ is a vector group if $1/2 \in \Phi$.
			Using the additional assumptions we immediately see that $I$ and $L$ are vector groups as well if $1/2 \notin \Phi$.
		\end{proof}
	\end{lemma}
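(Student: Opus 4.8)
The plan is to check the almost–vector–group axioms for $L$ and $I$ by hand, then to identify the functor $\hat I(K)$ with $\Ima\rho_K$ generator by generator, and finally to read off the two equivalences from the third axiom of \cref{definition vecgroup}. Throughout I would use that $\hat G(\Phi)=G$ and $\hat H(\Phi)=H$, so that $L=\ker\rho_\Phi$ and $I=\Ima\rho_\Phi$ are genuine subgroups of the ambient product $A\times B$ of $G$ and of the analogous product for $H$; the axioms to verify are then closure under $\cdot_1$ together with closure of the degree–$2$ parts $L_2,I_2$ under $\cdot_2$.

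First I would dispatch the easy closures. Since $\rho_\Phi$ is a homomorphism with $\rho_\Phi(\lambda\cdot_1 g)=\lambda\cdot_1\rho_\Phi(g)$, both its kernel and its image are stable under $\cdot_1$: if $\rho_\Phi(g)=e$ then $\rho_\Phi(\lambda\cdot_1 g)=\lambda\cdot_1 e=e$, and $\lambda\cdot_1\rho_\Phi(g)=\rho_\Phi(\lambda\cdot_1 g)\in I$. For $L$ the $\cdot_2$–closure is equally immediate: if $g\in L_2=L\cap(0\times B)$ then $g\in G_2$, so $\lambda\cdot_2 g$ is defined and $\rho_\Phi(\lambda\cdot_2 g)=\lambda\cdot_2\rho_\Phi(g)=e$. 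I would also record here that the relation $\rho_\Phi(\lambda\cdot_2 g)=\lambda\cdot_2\rho_\Phi(g)$ for $g\in G_2$ only makes sense because $\rho_\Phi(G_2)$ lands in the degree–$2$ part, giving the inclusion $\rho_\Phi(G_2)\subseteq I_2$; this inclusion is exactly what the ``moreover'' part will measure.

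The substantive step is the identity $\hat I(K)=\Ima\rho_K$, which simultaneously governs the $\cdot_2$–behaviour of $I_2$. By \cref{lemma G(K)}, $\hat G(K)$ is generated by the sets $K\cdot_1(g\otimes 1)$ for $g\in G$ and $K\cdot_2(G_2\otimes 1)$. Applying $\rho_K$, which is a homomorphism, is natural (so $\rho_K(g\otimes 1)=\rho_\Phi(g)\otimes 1$), and commutes with both scalar multiplications, I would show that $\Ima\rho_K$ is generated by $K\cdot_1(\rho_\Phi(g)\otimes 1)$ and $K\cdot_2(\rho_\Phi(G_2)\otimes 1)$; since $\rho_\Phi(g)\in I$ and $\rho_\Phi(G_2)\subseteq I_2$ this gives $\Ima\rho_K\subseteq\hat I(K)$. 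For the reverse inclusion I must place the generators $K\cdot_1(h\otimes 1)$ and $K\cdot_2(I_2\otimes 1)$ of $\hat I(K)$ inside $\Ima\rho_K$: the $\cdot_1$–generators come for free from naturality, while the $\cdot_2$–generators are precisely where one needs either $1/2\in\Phi$ or the hypothesis $\rho(G_2(K))=I_2(K)$, since a preimage of $h\in I_2$ need not lie in $G_2$ and so $\lambda\cdot_2(h\otimes 1)$ cannot be pulled back term by term.

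This last point is the \emph{main obstacle}, and it is a genuine degree–$2$ phenomenon. Given $h=(0,b)\in I_2$ with $h=\rho_\Phi(g)$ and $g\notin G_2$, the elements I can manufacture are $\rho_\Phi(\mu\cdot_1 g)=(0,\mu^2 b)$ and, via $s_g=g(-g)\in G_2$ from \cref{lemma 2g2 - g1sq}, $\rho_\Phi(\lambda\cdot_2 s_g)=(0,2\lambda b)$; from these one recovers $(0,\lambda b)$ for every $\lambda$ exactly when $2$ is invertible, which is why the case $1/2\in\Phi$ trivializes the whole statement and makes every almost vector group a vector group by definition. When $1/2\notin\Phi$ the third axiom must be invoked. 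For $L$ the axiom reads verbatim that $L_2\otimes K\to\hat L_2(K)$ is surjective, yielding that equivalence directly. For $I$ the axiom asks that $I_2\otimes K\to\hat I(K)_2$ be surjective; using $\hat I(K)=\Ima\rho_K$ and the inclusion $\rho_\Phi(G_2)\subseteq I_2$, I would unwind the degree–$2$ part of $\Ima\rho_K$ to show this surjectivity is equivalent to $\rho(G_2(K))=I_2(K)$, which completes both ``if and only if'' statements.
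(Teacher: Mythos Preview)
Your plan follows the same route as the paper: check closure under the two scalar multiplications, identify $\hat I(K)$ with $\Ima\rho_K$, and then read off the third axiom of \cref{definition vecgroup}. The paper's own proof is essentially the single sentence ``It is trivial to see that $L$ and $I$ are closed under both scalar multiplications,'' so everything substantive you wrote --- the generator description via \cref{lemma G(K)}, the use of $s_g$ from \cref{lemma 2g2 - g1sq}, and the isolation of the $\cdot_2$--obstruction for $I_2$ when a preimage of $h\in I_2$ fails to lie in $G_2$ --- is detail the paper leaves to the reader.

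One small overstatement: your phrase ``exactly when $2$ is invertible'' is too strong. Over $\mathbb Z$, for instance, the squares already additively generate the whole ring, so $(0,\lambda b)$ is recoverable without inverting $2$; the genuine failure shows up over rings like $\mathbb F_2[t]$, where squares only span $\mathbb F_2[t^2]$. What your argument needs, and what it has, is that $1/2\in\Phi$ is \emph{sufficient}. Your careful analysis in fact surfaces a point the paper's one-line proof glosses over: closure of $I_2$ under $\cdot_2$ and the inclusion $\hat I(K)\subseteq\Ima\rho_K$ genuinely seem to rely on either $1/2\in\Phi$ or on $\rho_\Phi(G_2)=I_2$, and the latter is exactly what the subsequent definition of a vector group homomorphism builds in via the condition $\rho_K^{-1}(\hat H_2(K))\subseteq \hat G_2(K)\ker\rho_K$.
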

	
	\begin{definition}
		We call a natural transformation $\rho : \hat{G} \longrightarrow \hat{H}$ between the $\Phi\textbf{-grp}$ functors associated to vector groups $G,H$, a \textit{vector group homomorphism} if 
		\[ \rho(\lambda \cdot_i g) = \lambda \cdot_i \rho(g),\]
		and $\rho^{-1}_K(\hat{H}_2(K)) \subseteq \hat{G}_2(K) \ker\rho_K$ for all $\Phi$-algebras $K$. If $1/2 \notin \Phi$ we also ask that $\ker \rho_\Phi$ is a vector group.
		We use $\textbf{VecGrp}$ to denote the category of vector groups over $\Phi$ with these morphisms. If there is any doubt about the base ring, we write $\Phi\textbf{-VecGrp}$.
	\end{definition}
	
	\begin{remark}
		All the conditions, except  $\rho^{-1}_K(\hat{H}_2(K)) \subseteq \hat{G}_2(K) \ker\rho_K$, are justified by the fact that $\ker \rho$ must be a vector group as well.
		This other condition translates into $\rho^{-1}(H_2) \subset G_2$ for injective maps, i.e., injective morphisms preserve the second component.
		The more general condition is obtained by requiring that this holds and
		that $G \longrightarrow H$ factors through some kind of vector group $L = G/\ker \rho$ with $L_2$ obtained by projecting $G_2$.
		Moreover, this condition guarantees that the image is a vector group if $1/2 \notin \Phi$.
		
		The condition on $\ker \rho_\Phi$ if $1/2 \notin \Phi$ can be thought of as asking that the lift $L$ of $\ker \Phi$ defined in Lemma \ref{lem: almost vector group lifts} has a homomorphism $\hat{L} \longrightarrow \hat{G}$ with image $\widehat{\ker \rho_\Phi}$.
		
		At this moment, we should be careful if we write $G/K$ for normal $K \le G$, since we did not prove that $G/K$ is a vector group for each normal vector subgroup $K$ in general. However, we can always consider it as a $\Phi$-group with well-defined scalar endomorphisms and a well-defined subgroup $(G/K)_2$. Later, we will see that $G/K$ is always a vector group whenever $G$ is proper (what it means to be proper is defined in Definition \ref{def: proper}) in Corollary \ref{cor: G(K) = Hat(H)(K)}.
	\end{remark}
	
	\begin{lemma}
		\label{Lemma reparam}
		Let $L_1 \oplus L_2$ be a $\mathbb{Z}$-graded Lie algebra over a ring $\Phi$ with $1/2 \in \Phi$. Then $L_1 \oplus L_2$ forms a vector group with operation $$(a,b)(c,d) = (a+c,b+d + [a,c]/2).$$
		Moreover, each vector group $G$ over $\Phi$ is isomorphic (as a group) to the vector group $L$ associated to $\Lie(G)$ by mapping 
		$$ (a,b) \longmapsto (a,b - \psi(a,a)/2).$$
		This mapping induces a $\Phi$-group epimorphism $\hat{L}(K) \longrightarrow \hat{G}(K)$. 
		Hence, the third axiom for vector groups will also hold if $1/2 \in \Phi$.
		\begin{proof}
			Note that $G = L_1 \oplus L_2$ so that all axioms for vector groups are trivially satisfied.
			
			Now, we prove that each vector group can be reparametrized in this fashion. Lemma \ref{lemma 2g2 - g1sq} proves that $2b - \psi(a,a) \in G_2 \subset \text{Lie}(G)$.
			So, we conclude that 
			$(a,b - \psi(a,a)/2) \in \text{Lie}(G).$ To observe the surjectivity of this map, take arbitrary $(a,b) \in G$ and note that the coset $(a,b) G_2$ maps to all $(a,s) \in \text{Lie}(G)$.
			On the other hand, the reparametrization is obviously injective.
			
			A direct computation shows that this reparametrization is a group homomorphism if one uses that $\psi(a,c) - \psi(c,a) = [a,c]$.
			Moreover, the scalar multiplications and $G_2$ are preserved, so that we have a vector group isomorphism.
			We note $L \otimes K \longrightarrow \hat{G}(K) : (a,b) \mapsto (a, \psi(a,a)/2 + b)$ is surjective.
		\end{proof}
	\end{lemma}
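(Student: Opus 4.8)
The plan is to handle the two assertions separately and then connect them through the scalar extensions. For the first, I would observe that a $\mathbb{Z}$-graded Lie algebra concentrated in degrees $1$ and $2$ satisfies $[L_1,L_1]\subseteq L_2$ while $[L_1,L_2]$ and $[L_2,L_2]$ vanish, so that $\psi(a,c):=[a,c]/2$ is a well-defined bilinear form $L_1\times L_1\to L_2$. Setting $A=L_1$, $B=L_2$ and $G=A\times B$, the prescribed operation is precisely the vector group operation attached to this $\psi$. Since $G$ is the whole of $A\times B$ it is trivially closed under $\cdot_1$, and $G_2=0\times B$ under $\cdot_2$; because $1/2\in\Phi$, \cref{definition vecgroup} promotes this almost vector group to a genuine vector group automatically.

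For the second assertion I would first make the target explicit. By \cref{lemma Lie G} the module underlying $\Lie(G)$ is $G/G_2\oplus G_2\subseteq A\times B$, and its bracket has second component $\psi(a,c)-\psi(c,a)$, so the associated vector group $L$ has $L_1=G/G_2$, $L_2=G_2$ and form $\psi_L(a,c)=\tfrac12\bigl(\psi(a,c)-\psi(c,a)\bigr)$. The reparametrization $\phi\colon(a,b)\mapsto(a,b-\psi(a,a)/2)$ is a completing-of-the-square; its well-definedness is exactly where \cref{lemma 2g2 - g1sq} enters, since that lemma gives $s_g=(0,2b-\psi(a,a))\in G_2$, and dividing by $2$ (using $1/2\in\Phi$ and closure of $G_2$ under $\cdot_2$) shows $b-\psi(a,a)/2\in G_2$, so $\phi$ lands in $\Lie(G)$. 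Injectivity is immediate, and surjectivity follows because $\phi$ carries the coset $(a,b)G_2$ onto all elements of $\Lie(G)$ with first component $a$.

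That $\phi$ is a group homomorphism is a direct computation: expanding $\psi(a+c,a+c)=\psi(a,a)+\psi(a,c)+\psi(c,a)+\psi(c,c)$ in the second component of $\phi\bigl((a,b)(c,d)\bigr)$ leaves $(b-\psi(a,a)/2)+(d-\psi(c,c)/2)+\tfrac12(\psi(a,c)-\psi(c,a))$, which is exactly $\phi(a,b)\phi(c,d)$ computed with $\psi_L$. Compatibility with $\cdot_1$ follows from $\psi(\lambda a,\lambda a)=\lambda^2\psi(a,a)$, and compatibility with $\cdot_2$ together with preservation of $G_2$ is clear since $\phi$ is the identity on $0\times B$; hence $\phi$ is a vector group isomorphism. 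For the functorial statement I would first check that, $\psi_L$ being alternating, $\hat{L}(K)$ is the full module $(L_1\otimes K)\oplus(L_2\otimes K)$, the generators $\lambda\cdot_1(a,0)=(\lambda a,0)$ and $0\times(L_2\otimes K)$ already generating everything. The inverse reparametrization extended over $K$, namely $(a,b)\mapsto(a,\psi(a,a)/2+b)$, is then a homomorphism $\hat{L}(K)\to\hat{G}(K)$, and I would prove it surjective by verifying that both families of generators $\lambda\cdot_1(g\otimes1)$ and $\lambda\cdot_2(G_2\otimes1)$ of $\hat{G}(K)$ lie in its image. The third axiom then drops out: any $(0,c)\in\hat{G}(K)_2$ has a preimage with vanishing first component, hence lying in $\hat{L}(K)_2=0\times(G_2\otimes K)$ on which the map is the identity, so $c$ comes from $G_2\otimes K$.

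I expect this last, functorial step to be the main obstacle. One must carefully distinguish the abstract isomorphism $\phi$ from its scalar extension and ensure that the full generating set of $\hat{G}(K)$ --- not merely the image of $G$ --- is hit; the identification $\hat{L}(K)=L\otimes K$, which holds only because $\psi_L$ is alternating, is the crux that makes the surjectivity and thereby the third axiom tractable.
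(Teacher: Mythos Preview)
Your proposal is correct and follows essentially the same route as the paper's proof: both use \cref{lemma 2g2 - g1sq} for well-definedness of the reparametrization, the same coset argument for surjectivity, the same direct computation (via $\psi(a,c)-\psi(c,a)=[a,c]$) for the homomorphism property, and then pass to scalar extensions for the epimorphism and the third axiom. Your treatment of the functorial step is simply more explicit than the paper's one-line remark --- in particular your identification $\hat L(K)=(L_1\otimes K)\times(L_2\otimes K)$ and the check that the generators of $\hat G(K)$ lie in the image are exactly what the paper suppresses when it writes ``$L\otimes K\to\hat G(K)$ is surjective''.
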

	
	\begin{remark}
		The fact that the operation in the previous lemma is given by
		$$(a,b)(c,d) = (a+c,b+d + [a,c]/2)$$
		shows that vector groups, if $1/2 \in \Phi$, correspond directly to groups of exponentials. Specifically, if we consider the Lie algebra
		$$ L = \Phi \zeta \oplus L_1 \oplus L_2,$$
		with $[\zeta ,l_i] = il_i$ for $l_i \in L_i$ and $[L_i,L_j] \subset L_{i + j}$ with $L_k = 0$ if $k > 2$, then the group 
		$$ \exp(L_1 \oplus L_2) \le \text{Aut}(L)$$ has the aforementioned operation.
	\end{remark}
	
	Let $G$ be a vector group over $\Phi$, then $\hat{G}(K)$ is a vector group over $K$. We write $L \longmapsto \hat{G}_K(L)$ to denote the $K\textbf{-Grp}$ functor associated to the vector group $\hat{G}(K)$.		
	
	\begin{lemma}
		\label{lemma G(K) is Lie(G)K}
		Let $G$ be a vector group and $K \in \Phi\textbf{-alg}$, then there exists a surjective homomorphism  $\Lie(G) \otimes K \longrightarrow \Lie(\hat{G}_K)$.
		\begin{proof}
			First, recall the surjection $G_2 \otimes K \longrightarrow \hat{G_2}(K)$ which is assumed in the definition of a vector group if $1/2 \notin \Phi$. We can also obtain such a surjection from Lemma \ref{Lemma reparam} if $1/2 \in \Phi$. Secondly, the possible first coordinates of elements of $G_K$ are $K$-linear combinations of possible first coordinates of $G$. We conclude that $\text{Lie}(G_K)$ is a homomorphic image of $\text{Lie}(G) \otimes K$ by Lemma \ref{lemma Lie G}.
		\end{proof}
	\end{lemma}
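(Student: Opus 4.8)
The plan is to exhibit the surjection concretely, using the module description of both Lie algebras supplied by \cref{lemma Lie G}. Over $\Phi$ we have $\Lie(G) \cong G/G_2 \oplus G_2$, and applying the very same lemma to the vector group $\hat{G}(K)$ over $K$ gives $\Lie(\hat{G}_K) \cong \hat{G}(K)/\hat{G}(K)_2 \oplus \hat{G}(K)_2$ as a $K$-module. Since tensoring commutes with direct sums, $\Lie(G)\otimes K \cong \bigl((G/G_2)\otimes K\bigr) \oplus (G_2\otimes K)$, so it suffices to produce a $K$-linear surjection on each graded piece,
\[
(G/G_2)\otimes K \longrightarrow \hat{G}(K)/\hat{G}(K)_2, \qquad G_2 \otimes K \longrightarrow \hat{G}(K)_2,
\]
and then to check that the assembled map respects the Lie bracket.

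For the first arrow I would appeal to \cref{lemma G(K)}: the group $\hat{G}(K)$ is generated by the elements $K\cdot_1(g\otimes 1)$ and $K\cdot_2(G_2\otimes 1)$. The second family lies in $\hat{G}(K)_2$ and hence has trivial first coordinate, while the first family has first coordinates $\lambda(g_1\otimes 1)$ with $\lambda\in K$. Consequently the image of $\hat{G}(K)$ under projection to $A\otimes K$ is precisely the $K$-span of the first coordinates of elements of $G$, which is exactly the image of $(G/G_2)\otimes K$; this gives the surjection on the first component. For the second arrow I would invoke the defining axioms directly: if $1/2\notin\Phi$, surjectivity of $G_2\otimes K \to \hat{G}(K)_2$ is the third condition of \cref{definition vecgroup}, and if $1/2\in\Phi$ the reparametrisation of \cref{Lemma reparam} supplies the same surjection (that lemma records that the third axiom then holds automatically). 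Finally, I would verify that the combined map is a morphism of Lie algebras: by the bracket formula $[(a,b),(c,d)]=(0,\psi(a,c)-\psi(c,a))$ the bracket depends only on the first coordinates through $\psi$, and the component maps intertwine $\psi$ with its $K$-bilinear extension, so the bracket is preserved.

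The main obstacle is the second component. The inclusion $G_2\otimes K \hookrightarrow \hat{G}(K)_2$ need \emph{not} be surjective for an arbitrary almost vector group, as the $\mathbb{Z}/4\mathbb{Z}$-example recorded after \cref{definition vecgroup} shows. The entire purpose of the third vector-group axiom is to force this surjectivity when $2$ is not invertible, and the case $1/2\in\Phi$ must be handled separately via \cref{Lemma reparam}; once this surjectivity is in hand, the remaining identifications and the bracket check are routine.
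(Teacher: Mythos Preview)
Your proposal is correct and follows essentially the same approach as the paper: both proofs use \cref{lemma Lie G} to identify each Lie algebra with the direct sum of its two graded pieces, handle the degree-$2$ piece via axiom~3 of \cref{definition vecgroup} (respectively \cref{Lemma reparam} when $1/2\in\Phi$), and observe that the degree-$1$ piece of $\hat{G}(K)$ is the $K$-span of the degree-$1$ piece of $G$. Your version is somewhat more explicit---you cite \cref{lemma G(K)} for the first-coordinate argument and verify bracket compatibility, both of which the paper leaves implicit---but the logical skeleton is identical.
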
	
	
	\begin{definition}
		\label{def free}
		We call a vector group $G$ \textit{free (resp. projective)} if $G_2$ and $G/G_2$ are free (resp. projective) as $\Phi$-modules.	
		Similarly, we say that $G$ is \textit{finitely generated} if $G/G_2$ and $G_2$ are finitely generated as $\Phi$-modules. 
	\end{definition}

Note that there exist non-canonical bijections between $G$ and $\text{Lie}(G)$, since $(a,b), (a,c)$ being elements of $G$ implies that $(a,b)(-a,c) = (0,b + c - \psi(a,a)) \in G_2$ so that for each possible first coordinate $a$ of $G$, the possible second coordinates stand in bijection with $\psi(a,a) - b + G_2 \cong G_2$.
So, Lemma \ref{lemma G(K) is Lie(G)K} shows that $\hat{G}$ can be identified with (a quotient of the) $\textbf{Set}$-functor defined by $$K \longmapsto \text{Lie}(G) \otimes K$$

If $G/G_2$ is projective, then this quotient is given by a natural transformation.
 Namely, since this module is projective, we have a generating set $(a_i)_{i \in I}$ of $G/G_2$ and dual generating set $(\alpha_i)_{i \in I}$ such that
$$ v = \sum_{i \in I} \alpha_i(v)a_i,$$
with finite $\alpha_i(v) \neq 0$. By using a total order on $I$ and taking $(a_i,b_i) \in G$ for each $i$, we obtain a natural transformation 
$$ \text{Lie}(G) \otimes K \longmapsto \hat{G}(K) : (v,s) \longmapsto \left(v, s + \sum_{i \in I} \alpha_i(v)^2b_i + \sum_{i < j} \alpha_i(v)\alpha_j(v) \psi(a_i,a_j)\right) \in \hat{G}(K).$$

If $G$ is finitely generated projective, then we know that $K \mapsto \Lie(G) \otimes K$ is an affine algebraic scheme. 
Specifically, set $V = \text{Lie}(G)$ then we know that $K \mapsto V \otimes K$ is an affine algebraic scheme underlying $G \mapsto \hat{G}(K)$. Later, in Lemma \ref{lem: proj is proper}, we shall also see that this is an affine algebraic group scheme.
To see that this is really affine and algebraic, use the module $V^*$ which is finitely generated and projective \cite[Chapter 2.6]{BouAlg1}, to consider the algebra $\text{Sym}(V^*)$ which is finitely generated as an algebra. 
Using that 
$$ \text{Hom}_{\Phi-\textbf{alg}}(\text{Sym}(V^*),K) \cong \text{Hom}_\Phi(V^*,K) \cong  V \otimes K,$$
for finitely generated projective modules $V$ \cite[Chapter 2.7, Corollary 4]{BouAlg1}, we obtain that $K \mapsto V \otimes K$ is an affine algebraic scheme.

	\begin{lemma}
		\label{Lemma quotient free}
		Let $S_1$ and $S_2$ be sets. There exists a free vector group $F(S_1,S_2)$ such that for every vector group $G$ and each pair of maps $f : S_1 \longrightarrow G$ and $g : S_2 \longrightarrow G_2$, there exists a unique $\Phi$-group homomorphism $\rho : \hat{F}(S_1,S_2) \longrightarrow \hat{G}$. Moreover, this is a vector group homomorphism if and only if the almost vector group $(\ker \rho_\Phi) \subset F(S_1,S_2)$ is a vector group and $\rho^{-1}_\Phi(G_2) = (\ker \rho_\Phi) F(S_1,S_2)_2$.
		\begin{proof}		First, fix a total order of $S_1$.
			Second, we define $T_2 = S_2 \cup \{[a,b] \in S_1 \times S_1 | a < b\} \cup \{ s(t) | t \in S_1\},$
			where we wrote $[a,b] \in S_1 \times S_1$ as they will be used to represent commutators, and use the symbol $s(t)$ to represent $t(-t)$ for $t \in S_1$.
			Fix a total order of $T_2$ as well.
			We will endow the functor
			$$ K \longrightarrow \hat{F}(S_1,S_2)(K) = \left\{ \prod_{s \in S_1} k_s \cdot s \prod_{t \in T_2} l_t \cdot t | \text{ only finitely many nonzero }  k_s,l_t \in K\right\}$$
			where the product is a symbolic product respecting the ordering of $S_1$ and $T_2$,
			with a vector group structure, and we wrote products as this will represent a product of group elements.
			
			If we can do that, we have a $\Phi$-group homomorphisms $\hat{F}(S_1,S_2) \longrightarrow \hat{G}$ for any pair of maps $f$ and $g$ described in the lemma.
			Note that this vector group, if it is a vector group, is free in the usual sense of Definition \ref{def free}, since the Lie algebra is freely generated by $S_1$ and $T_2$.
			
			We will define $\hat{F}(S_1,S_2)(K)$ as a vector group contained in the free $K$-module $F_1 \times F_2$ with $F_1$ free over $S_1$ and $F_2$ free over $U_2 = S_2 \cup (S_1 \times S_1) \cup t(S_1)$ (where we write $(s_1,s_2) \in S_1 \times S_2$ as usual for products of sets), endowed with the group operation
			$$ (s,t)(s',t') = (s + s',t + t' + (s,s')),$$
			where $(s,s')$ is the bilinear form defined from extending the identity map $$S_1 \times S_1 \longrightarrow S_1 \times S_1 \longrightarrow F(S_1 \times S_1)$$ bilinearly. In this module, we use $2t(l) - (l,l)$ to represent the element $s(l) \in T_2$ for $l \in S_1$, use $(a,b) - (b,a)$ to represent $[a,b] \in T_2$, and generators $s \in S_2$ are represented by themselves.
			
			The map which sends each element $g$ to its inverse $g^{-1}$ defined on $F_1 \times F_2$ can be projected to only act on the second component $F_2$; we obtain a map $i$ which acts as $l \mapsto -l, (a,b) \mapsto (b,a), t(k) \mapsto - t(k) + (k,k)$ for $a,b,k \in S_1, l \in S_2$. The submodule containing all $x$ in $F_2$ such that $x + i(x) = 0$ is precisely the free module generated by $T_2$, using the identifications made between generators.
			We also remark that the free module generated by $T_2$ is a direct summand of $F_2$, since we can extend $T_2$ with the set $\{(a,b) \in S_1 \times S_1 | a > b\} \cup \{ t(a) | a \in S_1\}$ to obtain a freely generating set of $F_2$. 
			We define
			$$ \hat{F}(S_1,S_2)(K) = \left\{ (a,b) \in (F_1 \times F_2) \otimes K | b + i(b) = (a,a)\right\}.$$
			Note that for each $s \in S_1$ the element $(ks_1,k^2 t(s_1))$ lies in $\hat{F}(S_1,S_2)(K)$ for $k \in K$, so that the underlying set functor $K \mapsto \hat{F}(S_1,S_2)(K)$ coincides with the earlier definition of $F$.
			Using the fact that $[a,b] = (a,b) - (b,a)$ and $s(a) = 2t(a) - (a,a)$, we see that the elements $[a,b] , s(a)$ of $F$ coincide with what they should be.
			
			Consider a vector group $G \le A \times B$. Given $f : S_1 \longrightarrow G$ and $g : S_2 \longrightarrow G_2$, we can define $\rho : F_1 \times F_2 \longrightarrow A \times B$. First write $f : S_1 \longrightarrow A \times B : s \mapsto (f_1(s), f_2(s))$. There exists a unique map $\tilde{f}_1 : F_1 \longrightarrow A$ extending $f_1$. There also exists a unique map $F_2 \longrightarrow B$ extending $g : S_2 \longrightarrow G_2$, $f_2 : S_1 \longrightarrow B$, and $\psi \circ (f_1 \times f_1) : S_1 \times S_1 \longrightarrow B$.
			This induces a homomorphism $\hat{F}(S_1,S_2) \longrightarrow \hat{G}$.
			
			Since both kernel and image are closed under scalar multiplications, the moreover statement follows as well.
		\end{proof}
	\end{lemma}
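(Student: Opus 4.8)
The plan is to build $F(S_1,S_2)$ as a free object in the most economical way dictated by the structure of vector groups, and then verify the universal property functorially. Since a vector group is a subgroup of some $A \times B$ together with its bilinear form $\psi$, I would first decide what the underlying free modules must be. The degree-$1$ part $F/F_2$ should be free on $S_1$, so I take $F_1$ to be the free module on $S_1$. For the degree-$2$ part, \cref{lemma 2g2 - g1sq} forces the elements $s_g = g(-g)$ into $G_2$, and the commutators $[(a,b),(c,d)] = (0,\psi(a,c)-\psi(c,a))$ also land in $G_2$; hence any free generating set for the degree-$2$ part must contain formal symbols for these. After fixing total orders on the index sets to make symbolic products unambiguous, I would therefore take $F_2$ free on an enlarged set $U_2 = S_2 \cup (S_1 \times S_1) \cup t(S_1)$, declare $\psi$ on $F_1$ to be the formal bilinear map $(s,s') \mapsto (s,s') \in F_2$, and represent the commutator and square symbols by the combinations $(a,b)-(b,a)$ and $2t(a)-(a,a)$.

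The crucial step is to carve out the correct subgroup of $F_1 \times F_2$. Inversion in $A \times B$ fixes the first coordinate up to sign and acts on the second coordinate by an involution; projecting this to $F_2$ yields an explicit involution $i$ given by $l \mapsto -l$, $(a,b) \mapsto (b,a)$ and $t(k) \mapsto -t(k)+(k,k)$. I would then set
$$ \hat{F}(S_1,S_2)(K) = \{(a,b) \in (F_1 \times F_2)\otimes K \mid b + i(b) = (a,a)\}.$$
This defining equation is exactly the compatibility needed between the second coordinate and the group law, so that the symbols $s(a)$ and $[a,b]$ behave as they should and the resulting functor is an honest vector group, with the free module on $T_2 = S_2 \cup \{[a,b]\mid a<b\} \cup \{s(t)\mid t\in S_1\}$ sitting inside $F_2$ as a direct summand. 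I would then check that the $K$-points recover the symbolic power-series description of $F$ and that the two scalar multiplications restrict correctly, so that $F(S_1,S_2)$ is indeed free in the sense of \cref{def free}.

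For the universal property, given $f \colon S_1 \to G$ and $g \colon S_2 \to G_2$, I would extend the first component of $f$ uniquely to $\tilde f_1 \colon F_1 \to A$, and extend $g$, the second component $f_2$, and $\psi \circ (f_1 \times f_1)$ uniquely to a map $F_2 \to B$; together these give $\rho \colon F_1 \times F_2 \to A \times B$, which respects the defining equation and hence restricts to a $\Phi$-group homomorphism $\hat F \to \hat G$. Uniqueness is immediate from \cref{lemma G(K)}, since $\hat F$ is generated by scalar multiples of the generators coming from $S_1$ and $T_2$, on which $\rho$ is forced. For the moreover statement, both $\ker\rho$ and $\Ima\rho$ are automatically closed under the two scalar multiplications, so the only content of being a vector group homomorphism beyond being a $\Phi$-group homomorphism is precisely that $\ker\rho_\Phi$ be a vector group and that $\rho_\Phi^{-1}(G_2) = (\ker\rho_\Phi)\,F(S_1,S_2)_2$; these transfer from $\Phi$ to all $K$ because the scalar multiplications commute with $\rho$.

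The main obstacle I anticipate is the bookkeeping around the involution $i$ and the equation $b + i(b) = (a,a)$: I must verify that the submodule $\{x \mid x + i(x) = 0\}$ is genuinely free on $T_2$, that it is a direct summand of $F_2$, and that the resulting functor satisfies the third vector-group axiom (surjectivity of $F_2 \otimes K \to \hat F_2(K)$) when $1/2 \notin \Phi$. This is where the freeness of the modules and the explicit choice of generating symbols do the real work, and it is the step most prone to error.
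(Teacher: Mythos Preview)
Your proposal is correct and follows essentially the same approach as the paper: the same free modules $F_1$ on $S_1$ and $F_2$ on $U_2 = S_2 \cup (S_1\times S_1)\cup t(S_1)$, the same involution $i$ on $F_2$, the same defining equation $b+i(b)=(a,a)$ carving out $\hat F(S_1,S_2)(K)$, and the same extension argument for the universal property. The only difference is presentational; your identification of the technical point (freeness on $T_2$ as a direct summand via the involution) matches exactly what the paper does.
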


\begin{definition}
	\label{def: proper}
	We call a vector group $G$ \textit{proper} if there exist generating sets $S_1$ and $S_2$ of $G$ such that $\rho : \hat{F}(S_1,S_2) \longrightarrow \hat{G}$ constructed in Lemma \ref{Lemma quotient free} is a vector group homomorphism.
	This is the case if and only if the almost vector group $L = \ker \rho_\Phi$ is a vector group.
	
	So, we remark that all vector groups over rings containing $1/2$ are proper.
\end{definition}

	So, for a proper vector group, we can introduce a new functor $K \mapsto G(K)$ by making use of the previous lemma. Namely, we can construct a free vector group $F(S_1,S_2)$ from $G$ using elements $S_1 \subseteq G$ and $S_2 \subseteq G_2$ that generate the Lie algebra $\Lie(G)$ and note that there exists a vector group $L = \ker \rho_\Phi$ associated to the kernel $\rho : F(S_1,S_2) \longrightarrow G$.
	We define $G(K) = \widehat{F(S_1,S_2)}(K)/\hat{L}(K)$. In Lemma \ref{lem: sanity} we check that the defintion of $K \mapsto G(K)$ does not depend on the generating sets $S_1$ and $S_2$. 
	
	We remark that $G(K) \cong \hat{H}(K)$ for the image $H$ of $F(S_1,S_2)$ in $(F_1/L_1) \times (F_2/L_2)$ using the $F_1$, $F_2$ we introduced in Lemma \ref{Lemma quotient free}, while $L_1 \times L_2 = \ker F_1 \times F_2 \longrightarrow A \times B$.
	We remark that $H$ is obviously an almost vector group and thus a vector group by Lemma \ref{lem: almost vector group lifts}.

	We use $G_K$ to denote the functor $L \mapsto G(K \otimes_K L)$ for $K$-algebras $L$.
	
	\begin{lemma}
		\label{lem: Lie of G functorial}
		Let $G$ be a proper vector group, then $\Lie(G_K) \cong \Lie(G) \otimes K$.
		On the other hand, $G(K) \cong \hat{G}(K)$ whenever $\Lie(\hat{G}(K)) \cong \Lie(G) \otimes K$.
		Furthermore, $\Lie(G_K) \cong \Lie(G) \otimes K$ is a sufficient condition to be proper.
		\begin{proof}
			We use the notation introduced before this lemma.
			Take $g = (\epsilon f_1,\epsilon f_2) \hat{L}(K) \in G(K)$ and note that $g = 0$ if and only if $f_1$ lies in the image of $\Lie(L) \otimes K$ and $f_2$ lies in the image of $\Lie(L) \otimes K$.
			From the right exactness of the tensor product, we derive an exact sequence
			$\Lie(L)\otimes K \longrightarrow \Lie(F(S_1,S_2)) \otimes K \longrightarrow \Lie(G) \otimes K \longrightarrow 0$, which shows $\Lie(G)\otimes K \cong \Lie(G_K)$.
			
			Now, suppose that the Lie algebra of $\hat{G}(K)$ coincides with $\Lie(G) \otimes K$. 
			We remark that we have a natural transformation $G(K) \longrightarrow \hat{G}(K)$ by construction. The kernel of this map is always contained in $G_2(K)$ since $G(K)/G_2(K)$ and $\hat{G}(K)/\hat{G}_2(K)$ can be identified as parts of the Lie algebra. On the other hand, $G_2(K)$ and $\hat{G}_2(K)$ are also identified as parts of this Lie algebra.
			
			Lastly, for the sufficient condition, we note that $\Lie(G_K) \cong \Lie(G) \otimes K$ is sufficient for the kernel $\ker \rho_\Phi$ to be a vector group by the right exactness of the tensor product.
		\end{proof}
	\end{lemma}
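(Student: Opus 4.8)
The plan is to treat the three assertions as consequences of a single computation of $\Lie(G_K)$ via the free presentation $\hat L \to \hat F \to G(-)$ set up just before the statement, where $F = F(S_1,S_2)$ is the free vector group of \cref{Lemma quotient free}, $\rho\colon \hat F \to \hat G$ the associated homomorphism, $L = \ker\rho_\Phi$, and $G(K) = \hat F(K)/\hat L(K)$. The common engine is right exactness of $-\otimes_\Phi K$ applied to the presentation $\Lie(L) \to \Lie(F) \to \Lie(G) \to 0$ of the module $\Lie(G)$, combined with the splitting of each Lie algebra into its degree-$1$ (``$G/G_2$'') and degree-$2$ (``$G_2$'') pieces provided by \cref{lemma Lie G}.

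For the first assertion I would first compute $\Lie(\hat F_K) \cong \Lie(F)\otimes K$ directly from the explicit description $\hat F(K) = \{(a,b) : b+i(b) = (a,a)\}$ of \cref{Lemma quotient free}: because $\epsilon^2 = 0$ kills the quadratic term $(a,a)$, the kernel of $\hat F(K[\epsilon]) \to \hat F(K)$ is exactly $(F_1 \oplus \tilde F_2)\otimes K = \Lie(F)\otimes K$, where $\tilde F_2 = \ker(1+i)$ is the free, direct-summand degree-$2$ part of $\Lie(F)$ — the point being that a free vector group has a flat Lie algebra. Lifting a class in $\ker(G(K[\epsilon]) \to G(K))$ to $\hat F(K[\epsilon])$ and normalizing modulo $\hat L(K[\epsilon])$ then identifies $\Lie(G_K)$ with $(\Lie(F)\otimes K)/\operatorname{im}\Lie(\hat L_K)$, the image being taken in $\Lie(F)\otimes K$. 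Since $G$ is proper, $L$ is a vector group, so \cref{lemma G(K) is Lie(G)K} supplies a surjection $\Lie(L)\otimes K \twoheadrightarrow \Lie(\hat L_K)$; hence that image coincides with $\operatorname{im}(\Lie(L)\otimes K)$, and right exactness yields $\Lie(G_K) \cong (\Lie(F)\otimes K)/\operatorname{im}(\Lie(L)\otimes K) \cong \Lie(G)\otimes K$.

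For the second assertion I would use the natural surjection $\phi\colon G(K) \to \hat G(K)$ induced by $\rho$ and prove injectivity by comparing graded pieces. The induced map $\Lie(\phi)\colon \Lie(G_K) \to \Lie(\hat G_K)$ fits into a commuting triangle with the canonical surjection $\Lie(G)\otimes K \twoheadrightarrow \Lie(\hat G_K)$ of \cref{lemma G(K) is Lie(G)K} and the isomorphism $\Lie(G)\otimes K \xrightarrow{\sim} \Lie(G_K)$ of the first assertion; the bottom map is an isomorphism by hypothesis, so $\Lie(\phi)$ is an isomorphism. Concretely this gives an isomorphism on the degree-$1$ quotients $G(K)/G_2(K) \xrightarrow{\sim} \hat G(K)/\hat G_2(K)$ (both identified with $(G/G_2)\otimes K$), so $\ker\phi \subseteq G_2(K)$, and an isomorphism on the degree-$2$ parts $G_2(K) \xrightarrow{\sim} \hat G_2(K)$ (both identified with $G_2\otimes K$), so $\phi$ is injective on $G_2(K)$; together with surjectivity this forces $\phi$ to be an isomorphism.

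For the third assertion — which I read as a statement about the functor $G(-) = \hat F(-)/\hat L(-)$ attached to the almost vector group $L$, before properness is known — the key is to run the degree-$2$ comparison in reverse. The computation of the first paragraph identifies the degree-$2$ part of $\Lie(G_K)$ with $(\tilde F_2\otimes K)/\hat L_2(K)$, whereas right exactness identifies that of $\Lie(G)\otimes K$ with $(\tilde F_2\otimes K)/\operatorname{im}(L_2\otimes K)$; since the comparison is the obvious surjection, the hypothesis $\Lie(G_K)\cong\Lie(G)\otimes K$ forces $\hat L_2(K) = \operatorname{im}(L_2\otimes K)$ for every $K$, which is exactly condition~3 of \cref{definition vecgroup} making $L$ a vector group, i.e.\ $G$ proper. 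I expect the main obstacle to be precisely this degree-$2$ bookkeeping: one must track where $\operatorname{im}\Lie(\hat L_K)$ can strictly exceed $\operatorname{im}(\Lie(L)\otimes K)$ — a discrepancy living entirely in the second component and only when $1/2\notin\Phi$ — and one must be careful that the isomorphism assumed in the hypothesis is the canonical surjection, so that comparing the two presentations of the degree-$2$ module is legitimate rather than merely abstract.
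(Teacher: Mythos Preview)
Your proposal is correct and follows essentially the same approach as the paper: both argue via the free presentation $\hat L \to \hat F \to G(-)$, apply right exactness of $-\otimes_\Phi K$ to $\Lie(L)\to\Lie(F)\to\Lie(G)\to 0$ for the first and third claims, and compare the degree-$1$ and degree-$2$ pieces of the Lie algebras for the second. Your write-up is more explicit than the paper in invoking \cref{lemma G(K) is Lie(G)K} for the surjection $\Lie(L)\otimes K \twoheadrightarrow \Lie(\hat L_K)$ and in unpacking the third assertion as the degree-$2$ vector-group condition on $L$, but the underlying argument is the same.
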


Now, we do a little sanity check to see that we can vary the generating set employed in Lemma \ref{Lemma quotient free} without changing the functor $K \mapsto G(K)$.
	
\begin{lemma}
	\label{lem: sanity}
	The construction of $K \mapsto G(K)$ for a vector group does not depend on the generating sets $S_1$ and $S_2$ employed in Lemma \ref{Lemma quotient free}.
	\begin{proof}
		Suppose that $G$ is proper for a pair of generating sets $S_1$ and $S_2$ and consider the functor $G = \hat{H}$ constructed from it.
		Now, any pair of generating sets $U_1$ and $U_2$ induce a homomorphism $\theta : \hat{F}(U_1,U_2) \longrightarrow \hat{H}$ by using that $\hat{H}(\Phi) = G$. We conclude that $\ker \theta_\Phi$ is a vector group by Lemma \ref{lem: Lie of G functorial}.
		Hence, $\rho : \hat{F}(U_1,U_2) \longrightarrow \hat{G}$ is a vector group homomorphism as well.
	\end{proof}
\end{lemma}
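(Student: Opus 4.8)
The plan is to compare the functor coming from an arbitrary pair of generating sets with a single vector group built from the first choice, and to show both agree with it. So suppose $G$ is proper via generating sets $S_1, S_2$, and let $H$ be the image vector group of Lemma \ref{Lemma quotient free}, so that $G(K) \cong \hat H(K)$ and, crucially, $\hat H(\Phi) = G$ with $\Lie(H) = \Lie(G)$. First I would record, using the first assertion of Lemma \ref{lem: Lie of G functorial} applied to the proper vector group $G$ together with the identification $G(\,\cdot\,) = \hat H(\,\cdot\,)$, that $\Lie(\hat H_K) \cong \Lie(H) \otimes K$ for every $K$. This Lie-functoriality is the structural input that drives the whole argument.

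Next I would build the comparison morphism. Given a second pair of generating sets $U_1 \subseteq G$ and $U_2 \subseteq G_2$, the inclusions into $\hat H(\Phi) = G$ and $\hat H_2(\Phi) = G_2$ feed into the universal property of Lemma \ref{Lemma quotient free}, producing a unique $\Phi$-group homomorphism $\theta \colon \hat F(U_1,U_2) \longrightarrow \hat H$ that is compatible with both scalar multiplications. Since $U_1 \cup U_2$ generates $\Lie(G) \cong \Lie(H)$, the generation statement of Lemma \ref{lemma G(K)} shows that $\theta_K$ is surjective for all $K$, so $\hat H = \Ima \theta$.

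The hard part will be showing that $\ker \theta_\Phi$ is a vector group, since only then is $\theta$ a genuine presentation of $H$ and does the $(U_1,U_2)$-construction even make sense. Here I would invoke the final assertion of Lemma \ref{lem: Lie of G functorial}: the identity $\Lie(\hat H_K) \cong \Lie(H) \otimes K$ established above is precisely the sufficient condition for $H$ to be proper with respect to the generators $U_1, U_2$, which by Definition \ref{def: proper} means exactly that $\ker \theta_\Phi$ is a vector group. Unwinding that lemma, the mechanism is that right-exactness of $-\otimes K$ forces the sequence $\Lie(\ker \theta_\Phi) \otimes K \to \Lie(\hat F(U_1,U_2)) \otimes K \to \Lie(H) \otimes K \to 0$ to compute $\Lie(\hat H_K)$ correctly, and this is what upgrades $\ker \theta_\Phi$ from a mere almost vector group to a vector group.

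Finally I would close the loop. The canonical natural transformation $p \colon \hat H \to \hat G$ arising from the construction satisfies $p_\Phi = \mathrm{id}_G$, and by uniqueness in Lemma \ref{Lemma quotient free} the canonical map $\rho' \colon \hat F(U_1,U_2) \to \hat G$ factors as $\rho' = p \circ \theta$. Hence $\ker \rho'_\Phi = \ker \theta_\Phi$ is a vector group, so $\rho'$ is a vector group homomorphism and $(U_1,U_2)$ also witnesses properness of $G$. Identifying kernels then yields $\widehat{F(U_1,U_2)}(K)/\widehat{\ker \rho'_\Phi}(K) \cong \Ima \theta_K = \hat H(K) \cong G(K)$, naturally in $K$, which is the asserted independence of the construction from the chosen generating sets.
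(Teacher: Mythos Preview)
Your proof is correct and follows essentially the same route as the paper: build $\theta \colon \hat F(U_1,U_2) \to \hat H$ from $\hat H(\Phi) = G$, then invoke Lemma~\ref{lem: Lie of G functorial} to conclude that $\ker\theta_\Phi$ is a vector group. You supply more detail than the paper does---in particular you make explicit why $\Lie(\hat H_K)\cong\Lie(H)\otimes K$ holds (via the first assertion of Lemma~\ref{lem: Lie of G functorial}), why $\theta_K$ is surjective, and why the resulting quotient functor agrees with $\hat H$---all of which the paper leaves implicit. One small wording point: the ``furthermore'' clause of Lemma~\ref{lem: Lie of G functorial} is phrased in terms of $G_K$ rather than $\hat G_K$, so your invocation is not literally a black-box citation; but you correctly anticipate this and unwind the right-exactness argument, which is exactly what is needed.
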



%
%
%
	
	\begin{lemma}
		\label{lem: proj is proper}
		If $G$ is a projective vector group, then $G$ is proper. The functor $K \mapsto G(K)$ is an affine algebraic group for finitely generated projective $G$ with $G(K) \cong \Lie(G) \otimes K$ as set valued functors. 
		\begin{proof}
			Since $G_1 = G/G_2$ is projective and since the $F_1$ constructed in Lemma \ref{Lemma quotient free} is free, the epimorphism $\rho : F_1 \longrightarrow G_1$ shows that $\ker \rho_{|F_1} = K_1$ a direct summand of $F_1$. Now we see that $\widehat{\ker \rho_\Phi}(K) \le (K_1 \times F_2) \otimes K \subseteq (F_1 \times F_2) \otimes K$ so that $\ker \rho$ is a vector group by Lemma \ref{lem: almost vector group lifts}.
			
			We note that $K \mapsto \hat{F}(S_1,S_2)(K)$, with $\hat{F}(S_1,S_2)$ as in Lemma \ref{Lemma quotient free}, has coordinate algebra $A = \Phi[S_1,T_2]$ with $S_1$ a finite generating set of $G_1$ and $T_2$ a finite set defined in Lemma \ref{Lemma quotient free} from $S_1$ and a finite generating set $S_2$ of $G_2$. We remark that $\hat{H}$ has coordinate algebra $\mathcal{O}(\Lie(G))$.
			Applying the projection operator $A \longrightarrow \mathcal{O}(\Lie(G))$, yields a quotient of affine algebraic groups with $K \mapsto \hat{H}(K)$ as corresponding functor of points.
		\end{proof}
	\end{lemma}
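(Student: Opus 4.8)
The plan is to treat the two assertions separately: first that projectivity forces properness, and then the affine-algebraicity together with the identification of the underlying set functor in the finitely generated case.

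For the first claim, recall that by \cref{def: proper} it suffices to produce generating sets $S_1 \subseteq G$ and $S_2 \subseteq G_2$ for which the almost vector group $\ker \rho_\Phi$ is a genuine vector group, where $\rho \colon \hat F(S_1,S_2) \longrightarrow \hat G$ is the homomorphism of \cref{Lemma quotient free}. I would pick any such generating sets and recall from \cref{Lemma quotient free} that $\hat F(S_1,S_2)$ sits inside $F_1 \times F_2$ with $F_1$ free on $S_1$. Since $G/G_2$ is projective, the epimorphism $F_1 \twoheadrightarrow G/G_2$ induced by $\rho$ splits, so its kernel $K_1$ is a direct summand of $F_1$. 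I would then check that $\ker \rho_\Phi$ is contained in $K_1 \times F_2$ and admits an element over every first coordinate $a \in K_1$: starting from any element $(a,b) \in F(S_1,S_2)$, and using that the generators $S_2$ generate $G_2$ so that the second components surject onto $G_2$, one corrects $b$ by an element of $F_2$ mapping to $-\rho(a,b)$ in order to land in $\ker \rho_\Phi$. With this, \cref{lem: almost vector group lifts} applies over the sub-ambient $K_1 \times F_2$ and yields that $\ker \rho_\Phi$ is a vector group, hence $G$ is proper.

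For the second claim, assume $G$ is finitely generated projective and set $V = \Lie(G)$. Since $V$ is finitely generated projective (via $\Lie(G) \cong G/G_2 \oplus G_2$ from \cref{lemma Lie G}), so is $V^*$, and the discussion preceding this lemma shows that $K \mapsto V \otimes K$ is represented by $\text{Sym}(V^*)$, hence is an affine algebraic scheme. To upgrade this to a group scheme, I would use that the free vector group $\hat F(S_1,S_2)$ has polynomial coordinate algebra $\Phi[S_1,T_2]$ (with $S_1$, $T_2$ finite) and group operations given by polynomials, so it is an affine algebraic group. Identifying $G(K) \cong \hat H(K)$ for the image $H$ as in the construction preceding \cref{lem: Lie of G functorial}, the coordinate algebra of $\hat H$ is $\mathcal{O}(\Lie(G)) = \text{Sym}(V^*)$, and the surjection $\Phi[S_1,T_2] \twoheadrightarrow \text{Sym}(V^*)$ exhibits $\hat H$ as a quotient of affine algebraic groups. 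The set-functor identification $G(K) \cong \Lie(G) \otimes K$ then follows from \cref{lem: Lie of G functorial} together with the natural transformation $\Lie(G)\otimes K \to \hat G(K)$ constructed earlier for projective $G/G_2$.

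The step I expect to be the main obstacle is verifying that $\ker \rho_\Phi$ really satisfies the third axiom of \cref{definition vecgroup}, i.e., that its second component behaves correctly under arbitrary scalar extension. This is exactly where projectivity is indispensable: because $K_1$ is a direct summand of $F_1$, the map $K_1 \otimes K \to F_1 \otimes K$ stays injective for all $K$, so the first coordinates of $\widehat{\ker\rho_\Phi}(K)$ remain confined to $K_1 \otimes K$ and the hypothesis of \cref{lem: almost vector group lifts} survives base change. A secondary technical point is reconciling the two descriptions of the functor $K \mapsto G(K)$ — as the quotient $\widehat{F(S_1,S_2)}/\hat L$ and as $\hat H$ for the image $H$ — and checking that the projection of coordinate algebras is compatible with the comultiplication, so that the resulting quotient is genuinely a group scheme rather than merely a scheme.
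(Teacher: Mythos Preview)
Your proposal is correct and follows essentially the same route as the paper: split off $K_1$ as a direct summand of $F_1$ using projectivity of $G/G_2$, view $\ker\rho_\Phi$ inside $K_1\times F_2$, and invoke \cref{lem: almost vector group lifts}; then identify coordinate algebras $\Phi[S_1,T_2]\twoheadrightarrow \mathcal{O}(\Lie(G))$ for the affine-algebraic statement. Your account is in fact more explicit than the paper's in two places---verifying that $\ker\rho_\Phi$ surjects onto $K_1$ in the first coordinate, and isolating that the direct-summand property is what makes $K_1\otimes K\hookrightarrow F_1\otimes K$ injective for all $K$---both of which the paper leaves to the reader.
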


	For the rest of this article, we are interested in proper vector groups and the associated functor $K \mapsto G(K)$ and not so much in the functor $K \mapsto \hat{G}(K)$.
	
	\subsection{Representations and homogeneous maps}
	
	In this section, we assume that all vector groups under consideration are proper.
	
	\begin{definition}Let $M$ be a module and $G$ a vector group.
		Identify $M$ with the functor $$\Phi\textbf{-alg} \longrightarrow \textbf{Set}: K \longmapsto M \otimes K.$$
		Similarly, identify $G$ with $G \longrightarrow G(K).$
		We shall define homogeneous maps of degree $n$ using recursion on the degree.	
		We call a natural transformation $f : G \longrightarrow  M$ a \textit{homogeneous map of degree $n$} if there exist linearisations $$f^{(i,j)} : G \times G \longrightarrow M,$$ for all $i,j \in \mathbb{N}_{>0}$ with $i + j = n$, which are homogeneous of degree $i$ in the first component, homogeneous of degree $j$ in the second component, such that
		$$ f_K((\lambda \cdot_a g)(\mu \cdot_b h)) = f_K(\lambda \cdot_a g) + f_K(\mu \cdot_b h) + \sum_{ai + bj = n} \lambda^{i}\mu^j f_K^{(ai,bj)}(g,h),$$
		for all $\lambda, \mu \in K$, in which we ask that 
		$$ f_K(\lambda \cdot_1 g) = \lambda^n f_K(g),$$
		and
		$$ f_K(\lambda \cdot_2 g) = \begin{cases}
			0 & n \text{ is odd }\\
			\lambda^{(n/2)} f_K(g) & \text{otherwise}
		\end{cases}.
		$$
		Remark that homogeneous maps of degree $0$ are constant maps\footnote{To prove this, use that $0^0$ means $0$ multiplied by itself $0$ times, i.e., $1$, in this context.}, and homogeneous maps of degree $1$ are maps that factor as linear maps through $G/G_2$.
	\end{definition}
	\begin{definition}Let $C$ be an associative unital algebra and $t$ a variable over which we consider formal power series. 
		We identify $(1 + tC[[t]])$ with the $\Phi\textbf{-Grp}$ functor
		$$K \mapsto 1 + t(C \otimes K)[[t]].$$
		Consider a sequence $(\rho_{i} : G \longrightarrow C)_i$ of natural transformations.
		Assume that these define a natural transformation of $\Phi$-groups $\rho_{[t]} : G \longrightarrow (1 + tC[[t]])$ using
		$$  \rho_{[t]}(g) = \sum \rho_i(g) t^i.$$ 
		Note that this satisfies, as it is a natural transformation of $\Phi$-groups, the equation
		$$ \rho_{[t]}(gh) = \rho_{[t]}(g)\rho_{[t]}(h)$$
		for all $g,h \in G(K)$.
		We call $(\rho_{i} : G \longrightarrow C)_i$ a \textit{vector group representation} if
		\begin{enumerate}
			\item $\rho_{[t]}(\lambda \cdot_1 g) = \rho_{[{\lambda t}]}(g)$, i.e., the first scalar multiplication corresponds to substituting $\lambda t$ for $t$ in the formal power series,
			\item $\rho_{[t]}(G_2) \le (1 + t^2C[[t^2]])$, so that we can take $$\sigma_{[t]} : G_2 \longrightarrow (1 + tC[[t]])$$ such that $\sigma_{[t^2]} = \rho_{[t]}$, 
			\item $\sigma_{[t]}(\lambda \cdot_2 g) = \sigma_{[\lambda t]}(g)$,
			\item $\rho_{[t]}(g) \in (1 + t^2C[[t^2]])$ means that 
			$$ g \in (\ker \rho) \cdot G_2,$$
			i.e., there exists $h \in G_2$ such that $gh \in \ker \rho$.
		\end{enumerate}	
	\end{definition}
	
	\begin{lemma}
		Suppose that $(\rho_i : G \longrightarrow C)_i$ is a vector group representation, then each $\rho_i$ is a homogeneous map of degree $i$.
		\begin{proof}
			This is obviously the case for $i = 0,1$.
			The rest follows easily by induction using that  \[ \rho^{(i,j)}_n(g,h) = \rho_i(g)\rho_j(h). \qedhere \]
		\end{proof}
	\end{lemma}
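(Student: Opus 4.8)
The plan is to induct on $n$, assuming $\rho_m$ is a homogeneous map of degree $m$ for every $m<n$. The composition law that a degree-$n$ homogeneous map must satisfy will be obtained \emph{unconditionally} from the multiplicativity $\rho_{[t]}(gh)=\rho_{[t]}(g)\rho_{[t]}(h)$; the induction enters only to certify that the resulting linearisations are themselves homogeneous of the prescribed degrees. The base cases are immediate. Since $\rho_{[t]}(g)$ has constant term $1$, the map $\rho_0$ is the constant map $g\mapsto 1$, homogeneous of degree $0$. For $\rho_1$: axiom (2) gives $\rho_1(g)=0$ on $G_2$, axiom (1) gives $\rho_1(\lambda\cdot_1 g)=\lambda\rho_1(g)$, and the $n=1$ instance of the law below (whose correction sum is empty) gives additivity, so $\rho_1$ factors as a linear map through $G/G_2$ and is homogeneous of degree $1$.

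First I would record the two one-variable specialisations of $\rho_{[t]}$. Axiom (1) yields $\rho_{[t]}(\lambda\cdot_1 g)=\rho_{[\lambda t]}(g)$, whose coefficient of $t^p$ is $\lambda^p\rho_p(g)$. For $g\in G_2$, axioms (2) and (3) give $\rho_{[t]}(\lambda\cdot_2 g)=\sigma_{[\lambda t^2]}(g)$, whose coefficient of $t^p$ is $\lambda^{p/2}\rho_p(g)$ when $p$ is even and $0$ when $p$ is odd (using $\sigma_i=\rho_{2i}$ on $G_2$). Both cases are captured uniformly by saying that the coefficient of $t^p$ in $\rho_{[t]}(\lambda\cdot_a g)$ equals $\lambda^{p/a}\rho_p(g)$, understood to vanish unless $a\mid p$, for $a\in\{1,2\}$ (with $g\in G_2$ when $a=2$).

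Next I would multiply the two power series and read off the coefficient of $t^n$ in the product, which by multiplicativity equals $\rho_n\bigl((\lambda\cdot_a g)(\mu\cdot_b h)\bigr)$:
\[
\rho_n\bigl((\lambda\cdot_a g)(\mu\cdot_b h)\bigr)=\sum_{p+q=n}\lambda^{p/a}\mu^{q/b}\,\rho_p(g)\rho_q(h),
\]
the sum ranging over $p,q\ge 0$ with $a\mid p$ and $b\mid q$. Using $\rho_0\equiv 1$, the term $(p,q)=(n,0)$ equals $\lambda^{n/a}\rho_n(g)=\rho_n(\lambda\cdot_a g)$ and the term $(p,q)=(0,n)$ equals $\rho_n(\mu\cdot_b h)$ (these reproduce $\rho_n(\lambda\cdot_a g)$ and $\rho_n(\mu\cdot_b h)$ correctly even when the divisibility fails, since both sides then vanish). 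Splitting these off and reindexing the rest by $i=p/a$, $j=q/b$ (so $ai+bj=n$ with $i,j>0$) gives exactly
\[
\rho_n\bigl((\lambda\cdot_a g)(\mu\cdot_b h)\bigr)=\rho_n(\lambda\cdot_a g)+\rho_n(\mu\cdot_b h)+\sum_{ai+bj=n}\lambda^i\mu^j\,\rho_{ai}(g)\rho_{bj}(h),
\]
identifying the candidate linearisations as $\rho_n^{(ai,bj)}(g,h)=\rho_{ai}(g)\rho_{bj}(h)$. Together with the single-variable clauses $\rho_n(\lambda\cdot_1 g)=\lambda^n\rho_n(g)$ and the $\cdot_2$-parity clause already recorded, this is the full composition law.

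Finally I would verify that these linearisations carry the homogeneity the definition demands. For fixed $h$ the map $g\mapsto\rho_{ai}(g)\rho_{bj}(h)$ is $\rho_{ai}$ followed by right multiplication by the constant $\rho_{bj}(h)\in C$, a $\Phi$-linear endomorphism of $C$; since post-composing a homogeneous map with a linear map preserves homogeneity (its linearisations simply post-compose with that map), and since $ai<n$ so that $\rho_{ai}$ is homogeneous of degree $ai$ by the induction hypothesis, this is homogeneous of degree $ai$ in the first component, and symmetrically of degree $bj$ in the second. This closes the induction. I expect the main obstacle to be organisational rather than conceptual: handling the two scalar multiplications uniformly through the exponent $p/a$ and the divisibility constraint $a\mid p$, and checking that the degenerate terms $(p,q)\in\{(n,0),(0,n)\}$ reproduce $\rho_n(\lambda\cdot_a g)$ and $\rho_n(\mu\cdot_b h)$ exactly, including the parity behaviour when $a$ or $b$ equals $2$.
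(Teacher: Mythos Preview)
Your proof is correct and follows exactly the approach the paper sketches: induct on $n$, use multiplicativity of $\rho_{[t]}$ to obtain the composition law with linearisations $\rho_n^{(i,j)}(g,h)=\rho_i(g)\rho_j(h)$, and invoke the inductive hypothesis to see these linearisations are themselves homogeneous of the right degrees. The paper's proof is simply a one-line summary of what you wrote out in full.
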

	
	\begin{lemma}
		\label{Lemma homogeneous map on commutator}
		Let $G$ be a vector group, and $M$ a module.
		Suppose that $f : G \longrightarrow M$ is a homogeneous map of degree $2$, then 
		$$ f^{(1,1)}(a,b) - f^{(1,1)}(b,a) = f([a,b]) = f(a^{-1}b^{-1}ab),$$
		holds for all $a,b \in G$.
		\begin{proof}
			Using the $(1,1)$-linearisation, one obtains 
			\begin{align*}
				f([a,b]) = &  f(a^{-1}) + f(b^{-1}) + f(a) + f(b) + f^{(1,1)}(a^{-1},a) + f^{(1,1)}(b^{-1},b) \\ &  + f^{(1,1)}(a^{-1},b^{-1}) + f^{(1,1)}(a^{-1},b) + f^{(1,1)}(a,b) + f^{(1,1)}(b^{-1},a).
			\end{align*}
			We use that $f(a^{-1}) + f^{(1,1)}(a^{-1},a) + f(a)  = f(0) = 0$, and that $f^{(1,1)}$ is linear if we interpret it as a map defined on $(G/G_2)^2$ so that $f^{(1,1)}(a,b^{-1}) = -f^{(1,1)}(a,b)$, etc., to obtain
			\[ f([a,b]) = f^{(1,1)}(a,b) - f^{(1,1)}(b,a). \qedhere \]
		\end{proof}
	\end{lemma}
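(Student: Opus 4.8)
The final statement is Lemma~\ref{Lemma homogeneous map on commutator}: for a homogeneous map $f$ of degree $2$ on a vector group $G$, with $(1,1)$-linearisation $f^{(1,1)}$, we have
$$ f^{(1,1)}(a,b) - f^{(1,1)}(b,a) = f([a,b]) = f(a^{-1}b^{-1}ab).$$

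The plan is to expand $f$ applied to the commutator $[a,b] = a^{-1}b^{-1}ab$ by repeatedly applying the defining relation of a degree-$2$ homogeneous map, namely $f(gh) = f(g) + f(h) + f^{(1,1)}(g,h)$ (taking $\lambda = \mu = 1$ and both scalar multiplications of type $\cdot_1$, so that the sum over $ai+bj = 2$ contributes only the single term $f^{(1,1)}(g,h)$). Writing $[a,b]$ as a product of the four factors $a^{-1}, b^{-1}, a, b$ and associating them, I would apply this relation successively so that $f([a,b])$ becomes the sum of the four single-element values $f(a^{-1}), f(b^{-1}), f(a), f(b)$ together with all the pairwise $f^{(1,1)}$-terms, one for each pair of factors in order. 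This is a purely formal bookkeeping expansion; the only care needed is to list the six ordered pairs $(a^{-1},b^{-1}), (a^{-1},a), (a^{-1},b), (b^{-1},a), (b^{-1},b), (a,b)$ correctly, which is exactly the expression displayed in the lemma.

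**Simplifying the expansion.**

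Next I would simplify using two facts. First, for a single factor and its inverse, the defining relation gives $0 = f(0) = f(a^{-1}a) = f(a^{-1}) + f(a) + f^{(1,1)}(a^{-1},a)$, which cancels the three terms $f(a^{-1}) + f^{(1,1)}(a^{-1},a) + f(a)$, and likewise $f(b^{-1}) + f^{(1,1)}(b^{-1},b) + f(b) = 0$. Second, I would use that $f^{(1,1)}$, regarded as a map on $(G/G_2)^2$, is \emph{bilinear}: it is homogeneous of degree $1$ in each slot, and a degree-$1$ homogeneous map factors linearly through $G/G_2$ (as remarked right after the definition of homogeneous maps). Bilinearity gives $f^{(1,1)}(a^{-1}, \cdot) = -f^{(1,1)}(a, \cdot)$ and $f^{(1,1)}(\cdot, b^{-1}) = -f^{(1,1)}(\cdot, b)$, so the mixed terms collapse: $f^{(1,1)}(a^{-1},b^{-1}) = +f^{(1,1)}(a,b)$, $f^{(1,1)}(a^{-1},b) = -f^{(1,1)}(a,b)$, and $f^{(1,1)}(b^{-1},a) = -f^{(1,1)}(b,a)$. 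Adding these to the surviving $f^{(1,1)}(a,b)$ leaves exactly $f^{(1,1)}(a,b) - f^{(1,1)}(b,a)$, as claimed.

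**Where the subtlety lies.**

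I expect no serious obstacle here, since the argument is a controlled combinatorial expansion followed by cancellation; the only point requiring genuine justification is the bilinearity and sign behaviour of $f^{(1,1)}$ under inverses, which rests on the observation that its two linearisations are degree-$1$ homogeneous maps and therefore factor linearly through $G/G_2$. One should also confirm that passing to $G/G_2$ is legitimate, i.e.\ that $f^{(1,1)}$ really does descend — this is exactly the content of ``homogeneous of degree $1$ in each component,'' so it is available from the definition. A mild care point is verifying that the defining relation with $\lambda = \mu = 1$ and $a = b = 1$ (type $\cdot_1$) truncates the sum $\sum_{ai+bj=2}$ to the single term $f^{(1,1)}(g,h)$; this holds because with both indices equal to $1$ the only solution of $i + j = 2$ with $i,j > 0$ is $i = j = 1$. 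With these checks in place the chain of equalities closes, giving $f([a,b]) = f^{(1,1)}(a,b) - f^{(1,1)}(b,a)$.
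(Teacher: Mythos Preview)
Your proof is correct and follows essentially the same approach as the paper: expand $f(a^{-1}b^{-1}ab)$ via repeated use of $f(gh)=f(g)+f(h)+f^{(1,1)}(g,h)$, cancel the two triples $f(x^{-1})+f^{(1,1)}(x^{-1},x)+f(x)=f(0)=0$, and then use bilinearity of $f^{(1,1)}$ on $(G/G_2)^2$ to reduce the remaining four cross-terms to $f^{(1,1)}(a,b)-f^{(1,1)}(b,a)$. Your additional remarks justifying why the defining relation at $\lambda=\mu=1$ collapses to a single $f^{(1,1)}$ term, and why $f^{(1,1)}$ descends bilinearly to $G/G_2$, are welcome clarifications of points the paper leaves implicit.
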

	
	Note that $\rho_{[t]} : G \longrightarrow \Phi \oplus (A \times B)[[t]] : (g_1,g_2) \mapsto 1 + (tg_1,t^2g_2)$ can be seen as a vector group representation if we endow $\Phi \oplus (A \times B)$ with the multiplication $$(\lambda + (a,b))(\mu + (c,d)) = \lambda\mu + ((\mu a + \lambda c  ,\mu b + \lambda d + \psi(a,c)).$$
	
	\begin{remark}
		\label{Remark vecgroup repres}
		With a vector group representation $\rho_i : G \longrightarrow C$, one can associate the vector group $\Ima \rho \le C \times C$, where the latter vector group is associated to the bilinear form $(a,b) \longrightarrow ab$ and is formed by the elements $(\rho_1(g),\rho_2(g))$. A vector group representation induces a morphism\footnote{You can see this now as a $\Phi$-group homomorphism. Later if we see that $K \mapsto G(K)$ also comes from a vector group, we can see this as a vector group homomorphism} $$G \longrightarrow C \times C.$$
	\end{remark}

	\begin{lemma}
		Let $C$ be an associative unital algebra and $G$ a vector group.
		Consider a vector group representation $(\rho_i : G \longrightarrow C)_i$.
		For all $g,h \in G$ and $k \in G_2$ the following equations, in which we write
		$g_i$ for $\rho_i(g)$, hold:
		\begin{enumerate}
			\item $$\binom{i + j}{i}k_{2(i+j)} = k_{2i}k_{2j},$$
			\item $$\binom{i+j}{i} g_{i+j} = \sum_{\substack{a + c = i\\ b + c = j}} g_ag_b (g(-g))_{2c},$$
			\item $$g_i h_j = \sum_{\substack{a + c = i\\b + c = j}} h_bg_a [g,h]_c.$$
		\end{enumerate}
		\begin{proof}
			The last two equations are obtained by comparing the terms belonging to the scalar $\lambda^i\mu^{j}$ in
			\begin{align*}
				((\lambda + \mu) \cdot_1 g)_{i+j} & = ((\lambda \cdot_1 g)(\mu \cdot_1 g)(\lambda\mu \cdot_2 (g(-g)))_{i+j}, \\
				& \text{and} \\
				((\lambda \cdot_1 g)(\mu \cdot_1 h))_{i+j} & = ((\mu \cdot_1 h)(\lambda \cdot_1 g)(\lambda\mu \cdot_1 [g,h]))_{i+j},
			\end{align*}
			where the first of these equations must hold by Lemma \ref{lemma 2g2 - g1sq}, and the second follows from a direct computation.
			Similarly, one proves the first equation of this lemma.
		\end{proof}
	\end{lemma}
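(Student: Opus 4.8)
The plan is to reduce all three identities to a single mechanism: the map $\rho_{[t]}\colon g\mapsto\sum_i\rho_i(g)t^i$ is a homomorphism of $\Phi$-groups into $1+tC[[t]]$, and the scalar multiplications are recorded by substitutions, $\rho_{[t]}(\lambda\cdot_1 g)=\rho_{[\lambda t]}(g)$ and, after passing to $\sigma_{[t]}$ with $\sigma_{[t^2]}=\rho_{[t]}$ on $G_2$, $\sigma_{[t]}(\lambda\cdot_2 k)=\sigma_{[\lambda t]}(k)$. Consequently $\rho_i(\lambda\cdot_1 g)=\lambda^i g_i$ and $\rho_{2c}(\lambda\mu\cdot_2 k)=(\lambda\mu)^c k_{2c}$. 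The strategy is therefore: take a group identity in $G$ (or in the abelian group $G_2$) assembled from $\lambda\cdot_i$ and $\mu\cdot_i$, apply the homomorphism $\rho_{[t]}$ to both sides, and compare the coefficient of $\lambda^i\mu^j$. The binomial coefficients on the left arise from expanding $(\lambda+\mu)^n$ via $\rho_{[(\lambda+\mu)t]}$, while the convolution sums on the right come from multiplying the several power series together.

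For the second identity I would start from \cref{lemma 2g2 - g1sq}, which gives $(\lambda+\mu)\cdot_1 g=(\lambda\cdot_1 g)(\mu\cdot_1 g)(\lambda\mu\cdot_2 s_g)$ with $s_g=g(-g)$. Applying $\rho_{[t]}$, the left side is $\sum_n(\lambda+\mu)^n g_n t^n$, whose $\lambda^i\mu^j$-coefficient is $\binom{i+j}{i}g_{i+j}$; the right side is the product $\big(\sum_a\lambda^a g_a t^a\big)\big(\sum_b\mu^b g_b t^b\big)\big(\sum_c(\lambda\mu)^c(s_g)_{2c}t^{2c}\big)$, whose $\lambda^i\mu^j$-coefficient is $\sum_{a+c=i,\,b+c=j}g_ag_b(g(-g))_{2c}$; the matching $t$-powers $a+b+2c=i+j$ make the comparison consistent. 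The exponent $2c$ appears precisely because $s_g\in G_2$, so only the even components of $\rho$ survive. For the third identity I would use the commutator relation $(\lambda\cdot_1 g)(\mu\cdot_1 h)=(\mu\cdot_1 h)(\lambda\cdot_1 g)(\lambda\mu\cdot_2[g,h])$; it is verified directly in $A\times B$ from $(a,b)(c,d)=(a+c,b+d+\psi(a,c))$, the two products $(\lambda\cdot_1 g)(\mu\cdot_1 h)$ and $(\mu\cdot_1 h)(\lambda\cdot_1 g)$ differing exactly by $(0,\lambda\mu(\psi(g_1,h_1)-\psi(h_1,g_1)))=\lambda\mu\cdot_2[g,h]$. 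Applying $\rho_{[t]}$ and reading off the $\lambda^i\mu^j$-coefficient then yields the third equation, the commutator again contributing through its even components.

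For the first identity I would argue inside $G_2$ alone. Since $G_2=G\cap(0\times B)$ is abelian and $\lambda\cdot_2$ acts additively, one has $(\lambda+\mu)\cdot_2 k=(\lambda\cdot_2 k)(\mu\cdot_2 k)$. Applying $\sigma_{[t]}$, so that $\sigma_n(k)=\rho_{2n}(k)=k_{2n}$, turns this into the power-series identity $\sum_n(\lambda+\mu)^n k_{2n}t^n=\big(\sum_a\lambda^a k_{2a}t^a\big)\big(\sum_b\mu^b k_{2b}t^b\big)$, and comparing the $\lambda^i\mu^j$-coefficient gives $\binom{i+j}{i}k_{2(i+j)}=k_{2i}k_{2j}$.

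All three arguments are mechanical once the dictionary between group identities and power-series identities is set up, so I do not expect a conceptual obstacle. The only points demanding care are the direct verification of the commutator relation in $A\times B$ — in particular, checking that the commutator is scaled by $\cdot_2$ rather than $\cdot_1$, since it lies in $G_2$ — and the consistent use of the reindexing $\sigma_{[t^2]}=\rho_{[t]}$, which is exactly what produces the doubled subscripts $2(i+j)$ and $2c$ throughout. I would verify the commutator identity first, as it is the only genuine computation, and then let the coefficient comparison run uniformly across the three parts.
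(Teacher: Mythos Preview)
Your argument is correct and follows exactly the paper's approach: apply $\rho_{[t]}$ to the group identities $(\lambda+\mu)\cdot_1 g=(\lambda\cdot_1 g)(\mu\cdot_1 g)(\lambda\mu\cdot_2 s_g)$, $(\lambda\cdot_1 g)(\mu\cdot_1 h)=(\mu\cdot_1 h)(\lambda\cdot_1 g)\cdot(\lambda\mu\cdot_2[g,h])$, and $(\lambda+\mu)\cdot_2 k=(\lambda\cdot_2 k)(\mu\cdot_2 k)$, then compare the $\lambda^i\mu^j$-coefficients. In fact you are more careful than the printed proof on one point: the commutator is scaled by $\cdot_2$, not $\cdot_1$ as written there, and correspondingly your derivation produces $[g,h]_{2c}$ rather than the $[g,h]_c$ in the displayed formula---this matches the version of the relation used later for $\mathcal{X}(G)$ and is the correct reading.
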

	
	\begin{definition}
		Let $G$ be a vector group.
		We say that $(\rho_i : G \longrightarrow U)_i$ is a \textit{universal vector group representation} if for each vector group representation $(\gamma_i : G \longrightarrow V)_i$, there exists a unique algebra homomorphism $f : U \longrightarrow V$ such that $f \circ \rho_i = \gamma_i$.
		Similarly, we call a homogeneous map $\rho : G \longrightarrow M$ of degree $n$ a \textit{universal homogeneous map of degree} $n$, if for each $\gamma : G \longrightarrow N$ of degree $n$ there exist a unique linear map $f : M \longrightarrow N$ such that $\gamma = f \circ \rho$.
	\end{definition}

	\begin{remark}
		Universal vector group representations and universal homogeneous maps are unique up to isomorphism.
	\end{remark}
	
	\begin{construction}
		\label{construction universal representation}
		We define $\mathcal{U}(G)$ as the unital associative algebra generated by symbols $g_i$ for $g \in G$ and $i \in \mathbb{N}$ and relations
		\begin{enumerate}
			\item $g_0 = 1$ for all $g$
			\item $g_{2i+1} = 0$ for all $g \in G_2$
			\item $\sum_{i + j = n} g_ih_j = (gh)_n$
			\item $(\lambda \cdot_1 g)_{j} = \lambda^j g_{j}$
			\item $(\lambda \cdot_2 g)_{2j} = \lambda^j g_{2j}$ for $g \in G_2$
			\item $\binom{i + j}{i}g_{2(i+j)} = g_{2i}g_{2j}$ for $g \in G_2$ \label{cons:g2 scalar}
			\item $\binom{i+j}{i} g_{i+j} = \sum_{\substack{a + c = i\\ b + c = j}} g_ag_b (g(-g))_{2c}$ \label{cons:g scalar}
			\item $g_i h_j = \sum_{\substack{a + c = i\\b + c = j}} h_bg_a [g,h]_c$
		\end{enumerate}
	\end{construction}
	
	Note that $\mathcal{U}(G)$ is an $\mathbb{N}$-graded algebra if we set $g_j$ to be $j$-graded. Observe, additionally, that all the relations imposed on $\mathcal{U}(G)$ are necessary to have a representation
	$$ \rho_i : G \longrightarrow \mathcal{U}(G): g \longmapsto g_i.$$
	So, if there exists a vector group representation to $\mathcal{U}(G)$, then it is obviously a universal one.
	We can write $g \mapsto \rho_{[t]}(g) = 1 + tg_1 + t^2g_2 + \ldots$, as if there exists a representation, for $g \in G(\Phi)$.
	
	We shall prove that $\mathcal{U}(G)$ is the universal representation for arbitrary $G$ by first proving it for free $G$.
	
	\begin{lemma}
		The algebra $\mathcal{U}(G)$ is an $\mathbb{N}$-graded Hopf algebra with operations
		\[ \Delta(g_n) = \sum_{i +j = n} g_i \otimes g_j, \quad \epsilon(g_i) = \delta_{i0}, \quad S(g_n) = (g^{-1})_n.\]
		\begin{proof}
			We note that all operations are compatible with the grading, so we only need to check whether we have a Hopf algebra.
			
			The comultiplication $\Delta$ is compatible with all the relations to which $\mathcal{U}(G)$ is subject as well.
			There are only $2$ relations that are not straightforwardly satisfied, as they involve binomial coefficients.
			We prove the compatibility with the comultiplication for the most difficult relation of them, namely
			$$ \binom{i+j}{i} g_{i+j} = \sum_{\substack{a + c = i\\ b + c = j}} g_ag_b (g(-g))_{2c}.$$
			We compute
			\begin{align*}
				\Delta \left(  \sum_{\substack{a + c = i\\ b + c = j}} g_ag_b (g(-g))_{2c} \right)			
				& = \sum_{\substack{a_1 + c_1 + a_2 + c_2 = i\\ b_1 + c_1 + b_2 + c_2 = j}} g_{a_1}g_{b_1} (g(-g))_{2c_1} \otimes g_{a_2}g_{b_2} (g(-g))_{2c_2}\\
				& = \sum_{\substack{k + l = i + j\\ m + n = i}} \binom{k}{m} g_{k} \otimes \binom{l}{n} g_{l} \\ 
				& = \binom{i+ j}{i}\sum_{k + l = i + j} g_{k} \otimes g_l,
			\end{align*}
			in which the second to last equality comes from applying the relation to the sum where we observe that the equations over which we sum are equivalent to 
			$$ \sum a_i + \sum b_i + 2 \sum c_i = i +j \quad \sum a_i + \sum c_i = i,$$
			and where the last equality corresponds to 
			$$  \sum_{\substack{ m + n = i}} \binom{k}{m} \binom{l}{n} = \binom{i + j}{i}$$
			for $k,l$ such that $k + l = i + j$.
			This equality between binomial coefficients corresponds to computing the term belonging to $a^ib^j$ in
			$$ (a + b)^k(a + b)^l = (a + b)^{i +j}$$
			over $\mathbb{Z}[a,b]$.
			All other relations are either trivial or depend on the same binomial formula.
			
			The antipode and counit are fine as well since $\rho_{[t]}(g)\rho_{[t]}(g^{-1}) = 1$ with $\rho_{[t]}(g) = \sum t^i g_i$.
		\end{proof}
	\end{lemma}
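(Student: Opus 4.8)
The plan is to read the three operations off the requirement that the generating series $\rho_{[t]}(g) = \sum_i g_i t^i$ be a grouplike element. Coefficientwise, $\Delta(\rho_{[t]}(g)) = \rho_{[t]}(g)\otimes\rho_{[t]}(g)$, $\epsilon(\rho_{[t]}(g)) = 1$ and $S(\rho_{[t]}(g)) = \rho_{[t]}(g)^{-1} = \rho_{[t]}(g^{-1})$ are exactly the stated formulas for $\Delta$, $\epsilon$ and $S$. Each formula is homogeneous for the $\mathbb{N}$-grading already carried by $\mathcal U(G)$ (namely $\Delta$ sends the degree-$n$ generator $g_n$ into $\bigoplus_{i+j=n}\mathcal U(G)_i\otimes\mathcal U(G)_j$, $\epsilon$ kills all positive degrees, and $S$ is degree-preserving), and $\mathcal U(G)$ is connected since $g_0=1$ forces $\mathcal U(G)_0=\Phi$. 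Hence the operations are automatically graded and the entire task is to produce a genuine Hopf algebra structure.

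First I would define $\Delta$ and $\epsilon$ on the free unital algebra on the symbols $g_i$ by the stated formulas, extended as algebra homomorphisms, and check that they descend to $\mathcal U(G)$, i.e.\ that they respect each of the eight defining relations of \cref{construction universal representation}. For $\epsilon$ every relation collapses to a triviality between Kronecker deltas. For $\Delta$, all relations except the two carrying binomial coefficients are routine: the constant and scalar relations are immediate, the vanishing relation $g_{2i+1}=0$ for $g\in G_2$ is preserved because any tensor $g_a\otimes g_b$ with $a+b$ odd has an odd-indexed, hence zero, factor, and the multiplicativity relation $\sum_{i+j=n}g_ih_j=(gh)_n$ together with the commutator relation reduce to pure reindexings in which both sides of $\Delta$ expand, after relabelling, to the same sum of tensors. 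The delicate relations are the two with binomial coefficients,
\[ \binom{i+j}{i}g_{2(i+j)}=g_{2i}g_{2j}\quad(g\in G_2)\qquad\text{and}\qquad \binom{i+j}{i}g_{i+j}=\sum_{\substack{a+c=i\\ b+c=j}}g_ag_b\,(g(-g))_{2c}. \]
For these, applying $\Delta$ to the right-hand side, using multiplicativity and the same relation on each tensor factor, collapses the double convolution, and the coefficient bookkeeping is precisely the Vandermonde identity $\sum_{m+n=i}\binom{k}{m}\binom{l}{n}=\binom{i+j}{i}$ for $k+l=i+j$, i.e.\ the coefficient of $a^ib^j$ in $(a+b)^k(a+b)^l=(a+b)^{i+j}$ over $\mathbb Z[a,b]$.

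Once $\Delta$ and $\epsilon$ are established as graded algebra homomorphisms, the coalgebra axioms are immediate on generators and then hold everywhere by multiplicativity: both $(\Delta\otimes\mathrm{id})\Delta(g_n)$ and $(\mathrm{id}\otimes\Delta)\Delta(g_n)$ equal $\sum_{a+b+c=n}g_a\otimes g_b\otimes g_c$, giving coassociativity, while $(\epsilon\otimes\mathrm{id})\Delta(g_n)=g_n=(\mathrm{id}\otimes\epsilon)\Delta(g_n)$ gives the counit axiom. For the antipode I would take the shortest route: a connected $\mathbb N$-graded bialgebra automatically admits a unique antipode, so it only remains to identify it. By uniqueness it suffices to check that the degree-preserving anti-homomorphism with $g_n\mapsto(g^{-1})_n$ satisfies the antipode equation, which is immediate from the multiplicativity relation applied to $g^{-1}g=gg^{-1}=e$:
\[ \sum_{i+j=n}(g^{-1})_i\,g_j=\big((g^{-1})g\big)_n=(e)_n=\delta_{n0}=\epsilon(g_n), \]
and symmetrically on the other side.

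The only genuinely non-formal ingredient is the compatibility of $\Delta$ with the two binomial relations; everything else is reindexing. I therefore expect the main obstacle to be purely combinatorial, and it dissolves the moment the double sum is recognized as a Vandermonde convolution, equivalently as comparison of coefficients in $(a+b)^k(a+b)^l=(a+b)^{i+j}$.
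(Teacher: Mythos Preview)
Your proposal is correct and follows essentially the same route as the paper: both define $\Delta,\epsilon,S$ on generators via the grouplike ansatz for $\rho_{[t]}(g)$, observe that the only nontrivial compatibility checks for $\Delta$ are the two binomial relations, and reduce those to the Vandermonde identity $\sum_{m+n=i}\binom{k}{m}\binom{l}{n}=\binom{i+j}{i}$ for $k+l=i+j$. The one minor difference is your treatment of the antipode: you invoke the standard fact that a connected $\mathbb N$-graded bialgebra automatically has an antipode and then identify it via the recursion coming from relation~(3), whereas the paper simply appeals to $\rho_{[t]}(g)\rho_{[t]}(g^{-1})=1$; your route is slightly cleaner in that it sidesteps any separate check that $g_n\mapsto (g^{-1})_n$ is well-defined on the quotient.
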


	\begin{lemma}
		\label{lemma free universal implies universal}
		If $\mathcal{U}(G)$ is the universal representation of $G$ for all free $G$ over $\mathbb{Z}$, then $\mathcal{U}(G)$ is the universal representation of $G$ for all (proper) $G$.
		\begin{proof}
			Suppose that $G$ is an arbitrary (proper) vector group. Write it as a quotient $H/K$ of a free vector group $H$ using Lemma \ref{Lemma quotient free} and note that $H = L_\Phi$ for some free vector group $L$ over $\mathbb{Z}$.
			Since $G$ is proper, we know that $K$ is a vector group as well.		
			We have a representation corresponding to $\rho_{i,\Phi} : L(\Phi) \longrightarrow \mathcal{U}(L) \otimes \Phi  \longrightarrow \mathcal{U}(G)$ of $L_\Phi$ which factors through $L_\Phi/K \cong G$ over all scalar extensions so that $g \mapsto g_i$ is a representation of $G$.
		\end{proof}
	\end{lemma}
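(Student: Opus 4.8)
The plan is to reduce the claim for an arbitrary proper $G$ to the free case by realizing $G$ as a quotient of a free vector group and transporting the (assumed) universal representation along that quotient. The first thing I would record is that universality is automatic once \emph{existence} of the tautological representation is established: every relation imposed in Construction~\ref{construction universal representation} is a necessary identity for any vector group representation, so if $\rho_i\colon G\longrightarrow\mathcal U(G),\ g\mapsto g_i$ is genuinely a vector group representation, then for any representation $\gamma_i\colon G\longrightarrow V$ the assignment $g_i\mapsto\gamma_i(g)$ respects all defining relations and extends to an algebra homomorphism $f\colon\mathcal U(G)\longrightarrow V$ with $f\circ\rho_i=\gamma_i$; this $f$ is unique since the $g_i$ generate $\mathcal U(G)$. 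Hence the entire content of the lemma is the existence of the representation $g\mapsto g_i$.

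To produce it, I would invoke Definition~\ref{def: proper} and Lemma~\ref{Lemma quotient free} to write $G$ as a quotient $q\colon H\longrightarrow G$ with $H=F(S_1,S_2)$ a free vector group and $K=\ker q_\Phi$ a vector group (this is exactly what properness provides), so that $G(K')=\hat H(K')/\hat K(K')$ for every $\Phi$-algebra $K'$ by the construction preceding Lemma~\ref{lem: sanity}. Since the explicit model of $F(S_1,S_2)$ in Lemma~\ref{Lemma quotient free} is built from free modules on $S_1$ and $T_2$, one has $H=L_\Phi$ for the free vector group $L$ over $\mathbb Z$ on the same data. By hypothesis $\mathcal U(L)$ is universal, so $\rho^L_i\colon L\longrightarrow\mathcal U(L)$ is a vector group representation over $\mathbb Z$; restricting to $\Phi$-algebras and tensoring the target with $\Phi$ yields a vector group representation of $H=L_\Phi$ in $\mathcal U(L)\otimes\Phi$ (all four representation axioms are checked on $\Phi$-algebra points, a subclass of $\mathbb Z$-algebra points, so they persist). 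As $q$ preserves the group law, both scalar multiplications, the commutator and the element $g(-g)$, sending $l_i\mapsto q(l)_i$ respects the relations of Construction~\ref{construction universal representation} and defines an algebra homomorphism $\mathcal U(q)\colon\mathcal U(L)\otimes\Phi\longrightarrow\mathcal U(G)$. Composing, $\tau=\mathcal U(q)\circ(\rho^L\otimes\Phi)$ is a vector group representation of $H$ with values in $\mathcal U(G)$.

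The decisive step is to descend $\tau$ along $q$, i.e.\ to show that $\tau_{[t],K'}$ is trivial on $\hat K(K')$ for every $\Phi$-algebra $K'$; then $\tau$ factors through $G(K')=\hat H(K')/\hat K(K')$ and produces the natural transformation $g\mapsto g_i$, whose representation axioms are inherited from $\tau$. By Lemma~\ref{lemma G(K)} it suffices to kill the generators $K'\cdot_1(k\otimes 1)$ with $k\in K$ and $K'\cdot_2(K_2\otimes 1)$; using representation axioms~(1) and~(3) for $\tau$ together with naturality, this reduces after substituting $\lambda t$ for $t$ to showing $\tau_{[t],\Phi}(k)=1$ for $k\in K$. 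Here I would expand $k$ as a product of generators from $S_1\cup S_2$ and push it through $\mathcal U(q)$: because relation~(3) of Construction~\ref{construction universal representation} makes $x\mapsto\sum_i x_i t^i$ multiplicative on $G=G(\Phi)$, the resulting product of power series collapses to $\sum_i q(k)_i t^i=\sum_i 1_i t^i=1$, using $q(k)=1$ in $G$. This descent is the main obstacle, as it is precisely where properness of $G$ (guaranteeing that $K$ is a vector group and that $G(K')=\hat H(K')/\hat K(K')$) and the exact list of relations of $\mathcal U(G)$ are both indispensable; everything else in the argument is formal.
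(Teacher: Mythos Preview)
Your proof is correct and follows essentially the same route as the paper's own argument: present $G$ as a quotient $H/K$ of a free vector group $H=L_\Phi$ with $L$ free over $\mathbb Z$, use the assumed universal representation of $L$ to get a representation of $H$ in $\mathcal U(L)\otimes\Phi$, push it to $\mathcal U(G)$ via the evident algebra map, and check it kills $K$ so that it descends to the tautological representation $g\mapsto g_i$ of $G$. The paper compresses all of this into three lines; your version spells out the descent over every scalar extension via Lemma~\ref{lemma G(K)} and the multiplicativity coming from relation~(3), and also makes explicit the observation (stated just before the lemma in the paper) that universality is automatic once existence is known.
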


	Now, we prove that $\mathcal{U}(G)$ is the universal representation for all free $G$ over $\mathbb{Z}$. To achieve that, we construct a similar algebra $\mathcal{X}(G)$ which will be isomorphic to $\mathcal{U}(G)$.
	
	\begin{construction}Suppose that $G$ is free, let $B_1$ be a set of free generators for $ G/G_2$ and let $B_2$ be a set of free generators for $G_2$. Take for each $b \in B_1$ an element $\hat{b} \in G$ so that $b = \hat{b}G_2$.
		We assume that $B_1$ and $B_2$ are totally ordered.
		Consider the unital associative algebra $F(G)$ with generators $b_i$ for $b \in B_1$ and $i \ge 1$ and $b_{2i}$ for $b \in B_2$ and $i \ge 1$. We use $b_0$ to denote $1$ for all $b \in B_1 \cup B_2$.
		For $g = \sum_{j = 1}^k \lambda_j \cdot_2 b_{j} \in G_2$ with $b_i < b_j$ for $i < j$, we write
		$$g_{2n} = \sum_{i_1 + \ldots + i_k = n} \prod_{j = 1}^k \lambda^{i_j}(b_j)_{2i_j}.$$
		
		We impose the following relations for $b < c \in B_1$, $d < e \in B_2$ on $F$ to obtain $\mathcal{X}(G)$.
		\begin{enumerate}
			\item $b_ib_j = \sum_{k + 2l = i + j} \binom{k}{i - l} b_k (-1 \cdot_2 (-b)b)_{2l}$.
			\item $d_{2i}d_{2j} = \binom{i + j}{i} d_{2(i+j)}$
			\item $c_jb_i = \sum_{\substack{k + m = i\\l + m = i}} b_k c_l [\hat{c},\hat{b}]_{2m}$
			\item $d_{2j}b_i = b_id_{2j}$
			\item $e_{2i}d_{2j} = d_{2j}e_{2i}.$
		\end{enumerate}
	\end{construction}
	
	All of these relations hold in $\mathcal{U}(G)$. This is only non-trivially the case for the first equation, which can be restated as
	$$ \rho_{[t]}(b)\rho_{[s]}(b) = \rho_{[t + s]}(b)\sigma_{[ts]}(-1 \cdot_2 (-b)b),$$
	where $\sigma_{[t^2]} = \rho_{[t]}$ on $G_2$. This relation holds in the algebra $\mathcal{U}(G)$ since we have imposed relations that ensure that
	$$  \rho_{[t]}(\lambda \cdot_1 b)\rho_{[t]}(\mu \cdot_1 b)\sigma_{[ t^2]}(\lambda \mu \cdot_2 (-b)b) = \rho_{[t]}((\lambda + \mu) \cdot_1 b),$$
	and 
	$$ \rho_{[t]}(g)\rho_{[t]}(h) = \rho_{[t]}(gh)$$
	hold in $\mathcal{U}(G)[\lambda,\mu][[t]]$.
	So, we know that there is a unique morphism $\mathcal{X}(G) \longrightarrow \mathcal{U}(G)$ which maps on generators as $b_i \mapsto b_i$ for $b \in B_1 \cup B_2$. 
	Furthermore, note that if there exists a vector group representation  $G$ in $\mathcal{X}(G)$ which maps the generators $b \in B_1 \cup B_2$ as expected, then we obtain $\mathcal{U}(G) \longrightarrow \mathcal{X}(G)$ mapping $b_i \mapsto b_i$. This is the case as each vector group representation $(\rho_i: G \longrightarrow A)_i$ induces a map $\mathcal{U}(G) \longrightarrow A$, since all relations imposed to obtain $\mathcal{U}(G)$ must hold in $A$ if one writes $g_i$ for $\rho_i(g)$.
	This will prove that $\mathcal{X}(G) \cong \mathcal{U}(G)$ and that both are universal representations.
	
	We will prove that $\mathcal{X}(G)$ is a universal representation by proving that the elements, in which the order of the products respects the orders we fixed for $B_1$ and $B_2$,
	$$ B_{f,g} = \prod_{b \in B_1} b_{f(b)} \prod_{b \in B_2} b_{g(b)},$$
	with $f: B_1 \longrightarrow \mathbb{N}$, $g : B_2 \longrightarrow \mathbb{N}$ so that $\sup(f) = \{ b \in B_1 | f(b) \neq 0\}$ and $\sup(g) =\{ b \in B_2 | g(b) \neq 0\}$ are finite sets, form a basis of $\mathcal{X}(G)$  utilizing the universal enveloping algebra of Lie algebras over fields of characteristic $0$.
	
	\begin{lemma}
		The module 
		$$ M = \langle B_{f,g} \rangle \subset F,$$
		is precisely the submodule of $F$ generated by monomials containing no expressions that are a left-hand side of a relation imposed on $F$ to obtain $\mathcal{X}(G)$. Furthermore, any element of $F$ is equivalent to an element of $M$ after applying a finite amount of relations.
		\begin{proof}
			We note that the elements $B_{f,g}$ are precisely the generating monomials of $F$ such that (1) no $b \in B_1 \cup B_2$ occurs multiple times as a $b_i$ in $B_{f,g}$ and (2) there are no $b,c \in B_1 \cup B_2$ which occur as $b_i, c_j$ which are not the same order as the order on $B_1 \cup B_2$, which is the order of $B_1$ and $B_2$ extended by $B_1 < B_2$. We note that the first two left-hand sides of relations correspond exactly with possible violations of (1) and that the last three left-hand correspond to possible violations of (2). This proves the first part of the lemma.
			
			Now, we prove the furthermore-part.
			First, we will associate a pair of natural numbers $(a,b)$ to a generating monomial which counts in a certain way to what degree the monomial violates (1) and (2).
			Second, we will prove that applying a relation (in the context of the lemma that is substituting a right-hand side for a left-hand side of a relation) to such a monomial creates a sum of monomials associated to pairs $(c,d) < (a,b)$ under the lexicographic order on $\mathbb{N}^2$.
			This will allow us to conclude that the furthermore part holds as those pairs $(a,b)$ are well-ordered, i.e., there cannot exist an infinite decreasing sequence.
			
			With a monomial $m = m_1\ldots m_n$ we associate the pair $(a,b)$ where $a$ counts the degree to which (1) and (2) are violated with at least one $b \in B_1$: $$a = |\{(i,j) | 1 \le i \le j \le n, m_i \text{ corresponds to } b \in B_1, m_j \text{ corresponds to } c < b \}|,$$
			while $b$ is the total degree in which (1) and (2) are violated:
			$$ b = |\{(i,j) | 1 \le i \le j \le n, m_i \text{ corresponds to } b, m_j \text{ corresponds to } c < b \}|.$$
			
			Observe that applying relations $1,2,4,5$ decreases $a + b$ while preserving $a$ or $b$. Applying relation $3$ decreases $a$. So, $(a,b)$ always decreases.
			Using the earlier remarked fact that the pairs $(a,b)$ are well-ordered, we can conclude that the lemma holds.
		\end{proof}
	\end{lemma}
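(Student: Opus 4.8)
The plan is to treat the passage from $F$ to $\mathcal{X}(G)$ as a terminating rewriting system, reading each of the five relations left-to-right as a rule that replaces its left-hand side by its (multi-term) right-hand side. The statement then splits cleanly into a combinatorial description of the irreducible monomials (the equality $M=\langle B_{f,g}\rangle$) and a Noetherianity statement (the ``furthermore'' part).

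For the first part I would simply unwind what it means for a monomial to contain no left-hand side as a consecutive factor. The five left-hand sides are exactly the five forbidden adjacencies: a repeated $B_1$-letter $b_ib_j$, a repeated $B_2$-letter $d_{2i}d_{2j}$, a descending $B_1$-pair $c_jb_i$ with $b<c$, a $B_2$-letter immediately followed by a $B_1$-letter $d_{2j}b_i$, and a descending $B_2$-pair $e_{2i}d_{2j}$ with $d<e$. A monomial avoiding all five must therefore have its $B_1$-letters strictly increasing (rules~1 and~3) and entirely preceding its $B_2$-letters (rule~4), which in turn are strictly increasing (rules~2 and~5); this is precisely the shape $B_{f,g}$. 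Conversely every $B_{f,g}$ visibly avoids the five patterns, so the irreducible monomials are exactly the $B_{f,g}$.

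For the ``furthermore'' part I would exhibit a well-founded monovariant that strictly drops under every rule. Since all relations are homogeneous for the $\mathbb{N}$-grading, total degree is preserved, which keeps everything finite. The guiding idea is that the measure should depend only on the sequence of \emph{letters} (elements of $B_1\cup B_2$) in a monomial and not on their subscripts, so that the index-raising built into the rules is invisible to it. Concretely I would assign the lexicographically-ordered tuple
\[ \mu = \bigl(N_1,\ \iota_{11},\ r_2,\ \iota_{12},\ \iota_{22}\bigr)\in\mathbb{N}^5, \]
where $N_1$ is the number of $B_1$-factors, $\iota_{11}$ the number of out-of-order $B_1$--$B_1$ pairs, $r_2$ the excess multiplicity of $B_2$-letters (number of $B_2$-factors minus number of distinct $B_2$-letters present), and $\iota_{12},\iota_{22}$ the numbers of $B_2$-before-$B_1$ and out-of-order $B_2$--$B_2$ pairs. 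One then checks rule by rule that $\mu$ strictly decreases: rule~1 lowers $N_1$; rule~3 preserves $N_1$ and lowers $\iota_{11}$; rule~2 preserves $(N_1,\iota_{11})$ and lowers $r_2$; rule~4 preserves $(N_1,\iota_{11},r_2)$ and lowers $\iota_{12}$; rule~5 lowers only $\iota_{22}$. Because a rule turns one monomial into a $\Phi$-combination of monomials, I would finish by passing to the Dershowitz--Manna multiset order on $\mathbb{N}^5$, which is again well-founded: a single rewrite removes one value and inserts only strictly smaller values, so the multiset of monomial-measures strictly decreases, no infinite chain exists, and every element of $F$ reduces to a combination of standard monomials in finitely many steps.

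The main obstacle is exactly that these rules are not mere permutations of letters. Rule~1 deletes a $B_1$-letter but \emph{creates} $B_2$-letters through the divided-power element $(-1\cdot_2(-b)b)_{2l}$, and rule~3 transposes two $B_1$-letters while \emph{inserting} the $B_2$-letters of the commutator $[\hat c,\hat b]_{2m}$. Hence naive statistics such as word length or total number of factors can increase, and even a bare inversion count is too coarse: on the monomial $b_ib_j$ neither side has any inversion, so an inversions-only measure does not strictly drop. The crux of the verification is therefore to confirm that the newly created $B_2$-letters, which always land \emph{after} the $B_1$-part and in increasing internal order, can never raise the two highest-priority components $N_1$ and $\iota_{11}$ (nor, for rule~4 and~5, the component the rule is charged against). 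Granting this, the lexicographic drop is forced for each rule and termination follows.
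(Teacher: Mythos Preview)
Your argument is correct and follows the same overall strategy as the paper—treating the five relations as a left-to-right rewriting system and proving termination via a well-founded monovariant—but your choice of monovariant is genuinely different. The paper uses a lexicographic pair $(a,b)$, where $a$ counts inversions whose left entry is a $B_1$-letter and $b$ counts all inversions (both with a strict inequality on letters), and argues that each rule drops this pair. You instead use the five-component tuple $(N_1,\iota_{11},r_2,\iota_{12},\iota_{22})$ and check the lex-drop rule by rule, then pass to the Dershowitz--Manna multiset extension to handle the multi-term right-hand sides.

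Your measure is more robust on exactly the rules that merge repeated letters. For a bare monomial such as $b_ib_j$ (rule~1) or $d_{2i}d_{2j}$ (rule~2), the paper's strict-inversion pair $(a,b)$ is $(0,0)$ both before and after, so the literal formula in the paper does not detect a drop there; your leading components $N_1$ and $r_2$ do. Conversely, the paper's coarser pair makes rules~4 and~5 immediate without needing to separate $\iota_{12}$ from $\iota_{22}$. One small imprecision worth tightening: rule~3 need not \emph{preserve} $N_1$ (the terms with $k=0$ or $l=0$ drop it), so the accurate statement is ``either $N_1$ falls, or $N_1$ is fixed and $\iota_{11}$ falls''; this is of course still a lex-decrease and does not affect your conclusion.
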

	
	\begin{lemma}
		\label{lemma universal representation free over Z}
		Suppose that $G$ is a free vector group over $\mathbb{Z}$, the algebras $\mathcal{X}(G)$ and $\mathcal{U}(G)$ are isomorphic and are universal representations of $G$.
		\begin{proof}
			Consider the universal enveloping algebra $U$ of $\text{Lie}(G) \otimes \mathbb{Q}$. We know that $$\exp_{[t]} : \text{Lie}(G) \otimes \mathbb{Q} \longrightarrow U : (g,h) \mapsto \exp(tg,t^2h) = 1 + tg + t^2 (g^2/2 + h) + \ldots $$ is a vector group representation of $\text{Lie}(G) \otimes \mathbb{Q} \cong G(\mathbb{Q})$. This induces a vector group representation $$(\rho_i: G \longrightarrow U)_i.$$
			
			This means that there exists a map $$\mathcal{X}(G) \longrightarrow \mathcal{U}(G) \longrightarrow U,$$ using the earlier described map $\mathcal{X}(G) \longrightarrow \mathcal{U}(G)$ and the fact that there exists for each vector group representation a map with domain $\mathcal{U}(G)$ corresponding to that representation. Using the Poincarré-Birkhoff-Witt basis, we can conclude that this map is injective since $M$ embeds into $U$ and since each element of $\mathcal{X}(G)$ is contained in $M$ modulo the relations.
			Specifically, a generating monomial $B_{f,g} \in M$ is associated to a pair of functions $(f,g)$.
			Using the lexicographical order on $\mathbb{N}^{B_1 \cup B_2}_{\text{fin sup}}$, we can order these $B_{f,g}$.
			Similarly, one orders the (slightly modified) Poincarré-Birkhoff-Witt basis elements $$B'_{f,g} = \prod_{b \in B_1} b^{f(b)}/(f(b)!) \prod_{b \in B_2} b^{g(b)}/(g(b)!).$$
			Since $b < c$ for $b \in B_1,c \in B_2$, one can check that $B_{f,g} \mapsto B'_{f,g} + \sum_{(f,g) < (k,l)} c_{f,g,k,l} B'_{k,l}$ for certain $c_{f,g,k,l}$ using the relations involving binomial coefficients on $\mathcal{X}(G)$, and $b_i \mapsto b_i$ for $b_i \in B_i$ so that
			$$ (b_i)_{ni} \mapsto (b_i)^{n}/(n!) \mod \text{ ($B'_{f,g}$ with bigger $(f,g)$ than $(b_i)_{n_i}$)}$$
			for $b \in B_i$.
			The representation $(\rho_i : G \longrightarrow U)_i$ maps to the embedding of $\mathcal{X}(G)$ in $U$.
			Hence, $$(\rho_i: G \longrightarrow \mathcal{X}(G))_i$$ is a vector group representation. 
			As exposed earlier, this proves that $\mathcal{X}(G) \cong \mathcal{U}(G)$ is the universal representation. Specifically, we have a unique map from $\mathcal{U}(G)$ corresponding to the vector group representation $(\rho_i)_i$,  we have a map $\mathcal{X}(G) \longrightarrow \mathcal{U}(G)$ corresponding to the fact that all relations on $\mathcal{X}(G)$ also hold on $\mathcal{U}(G)$, and these maps interact nicely as they send $b_i \mapsto b_i$ for $b \in B_1 \cup B_2, i \in \mathbb{N}$.
		\end{proof}
	\end{lemma}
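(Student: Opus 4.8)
The plan is to embed both algebras into a single, well-understood object, the universal enveloping algebra $U$ of the Lie algebra $\Lie(G) \otimes \mathbb{Q}$, and to exploit the Poincar\'e--Birkhoff--Witt theorem in characteristic $0$. First I would observe that, since $1/2 \in \mathbb{Q}$, \cref{Lemma reparam} identifies $G(\mathbb{Q})$ with the vector group attached to $\Lie(G) \otimes \mathbb{Q}$, and that the exponential $\exp_{[t]}(g,h) = \exp(tg + t^2 h) \in 1 + tU[[t]]$ defines a vector group representation of this group into $U$. This produces a vector group representation $(\rho_i : G \to U)_i$; since every vector group representation induces an algebra homomorphism out of $\mathcal{U}(G)$ (all defining relations of \cref{construction universal representation} holding automatically in any such representation), we obtain a map $\mathcal{U}(G) \to U$, and precomposing with the canonical $\mathcal{X}(G) \to \mathcal{U}(G)$ sending $b_i \mapsto b_i$ yields a composite $\varphi : \mathcal{X}(G) \to U$.

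The core of the argument is to show that $\varphi$ is injective. By the previous lemma the monomials $B_{f,g}$ span $\mathcal{X}(G)$, so it suffices to prove that their images are linearly independent in $U$. I would compare them with the modified Poincar\'e--Birkhoff--Witt monomials $B'_{f,g} = \prod_{b \in B_1} b^{f(b)}/f(b)! \prod_{b \in B_2} b^{g(b)}/g(b)!$, ordered together by the lexicographic order on $\mathbb{N}^{B_1 \cup B_2}_{\text{fin sup}}$, and establish that $\varphi$ is unitriangular, $\varphi(B_{f,g}) = B'_{f,g} + \sum_{(f,g) < (k,l)} c_{f,g,k,l} B'_{k,l}$. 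The hard part will be verifying this triangularity for a single generator, namely that $(b_i)_{ni} \mapsto b^n/n!$ modulo strictly larger monomials: extracting the degree-$ni$ coefficient of $\exp(tb + t^2 s)$ naturally produces divided powers, which is exactly what the binomial-coefficient relations $6$ and $7$ of \cref{construction universal representation} encode, while the convention $B_1 < B_2$ guarantees that the cross terms and the contributions of $s = b(-b)$ land strictly above the diagonal. Granting the unitriangular form, the $\mathbb{Q}$-linear independence of the PBW basis of $U$ forces the $B_{f,g}$ to be a basis of $\mathcal{X}(G)$ and $\varphi$ to be injective.

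Finally I would close the loop. Injectivity of $\varphi$ identifies $\mathcal{X}(G)$ with its image in $U$, and since the representation $(\rho_i)_i$ factors through this image, $(\rho_i : G \to \mathcal{X}(G))_i$ is itself a vector group representation carrying each generator $b$ to $b_i$. This representation induces an algebra homomorphism $\mathcal{U}(G) \to \mathcal{X}(G)$, again because the relations defining $\mathcal{U}(G)$ hold in every vector group representation. Both this map and the canonical $\mathcal{X}(G) \to \mathcal{U}(G)$ fix every generator $b_i$, so they are mutually inverse and $\mathcal{X}(G) \cong \mathcal{U}(G)$. As recorded just before \cref{construction universal representation}, any algebra carrying a vector group representation of $G$ through which all representations factor is universal; hence both algebras are the universal representation of $G$.
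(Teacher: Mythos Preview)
Your proposal is correct and follows essentially the same approach as the paper: embed $\mathcal{X}(G)$ into the universal enveloping algebra $U$ of $\Lie(G)\otimes\mathbb{Q}$ via the exponential representation, verify injectivity by a unitriangular comparison of the $B_{f,g}$ against the divided-power PBW monomials $B'_{f,g}$, and then close the loop by producing mutually inverse maps $\mathcal{X}(G)\leftrightarrow\mathcal{U}(G)$ fixing the generators $b_i$. The only cosmetic difference is that you name the composite $\varphi$ and articulate the unitriangularity claim a bit more explicitly than the paper does.
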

	
	\begin{theorem}
		Let $G$ be a proper vector group. There exists a universal vector group representation $$(\gamma_i : G \longrightarrow \mathcal{U}(G))_i.$$
		\begin{proof}
			For free vector groups $G$ over $\mathbb{Z}$, this is Lemma \ref{lemma universal representation free over Z}.
			For arbitrary $G$, this now follows from Lemma \ref{lemma free universal implies universal}.
		\end{proof}
	\end{theorem}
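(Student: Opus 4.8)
The plan is to deduce the statement from the two preceding lemmas, which together provide a base case and a reduction. The crucial structural observation is that every relation imposed in \cref{construction universal representation} is \emph{forced} by the requirement that $g \mapsto g_i$ be a vector group representation. Consequently, if the assignment $(\gamma_i : G \longrightarrow \mathcal{U}(G))_i$ with $\gamma_i(g) = g_i$ is genuinely a vector group representation, then it is automatically universal: any other representation $(\rho_i : G \to C)_i$ induces a well-defined algebra homomorphism $f : \mathcal{U}(G) \to C$ sending each generator $g_i \mapsto \rho_i(g)$ (well-defined precisely because all the defining relations hold in $C$), and $f$ is unique since the $g_i$ generate $\mathcal{U}(G)$. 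Thus the entire content of the theorem reduces to verifying that $\mathcal{U}(G)$ does not collapse, i.e.\ that this candidate assignment really satisfies the four axioms of a vector group representation.

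First I would reduce to the case where $G$ is free over $\mathbb{Z}$. By \cref{Lemma quotient free}, an arbitrary proper vector group $G$ admits a presentation as a quotient $H/K$ of a free vector group $H$, and since $G$ is proper the kernel $K = \ker\rho_\Phi$ is again a vector group. Writing $H = L_\Phi$ for a vector group $L$ that is free over $\mathbb{Z}$, a universal representation of $L$ base-changes to a representation of $H$, and because $K$ is a vector group the induced map factors through the quotient $H/K \cong G$ over every scalar extension. This is exactly the mechanism furnished by \cref{lemma free universal implies universal}, so the general statement follows the moment the base case is in hand.

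The remaining, genuinely substantive, step is universality for $G$ free over $\mathbb{Z}$, and this is where I expect the main obstacle to lie: the binomial-coefficient relations of \cref{construction universal representation} are of divided-power type, so over $\mathbb{Z}$ one cannot simply divide and exponentiate the way one would over a field of characteristic $0$. My plan is to introduce the auxiliary algebra $\mathcal{X}(G)$ presented by \emph{rewriting} relations adapted to fixed total orders on free generating sets $B_1$ of $G/G_2$ and $B_2$ of $G_2$, and to exhibit an explicit spanning family of ordered monomials $B_{f,g}$ via a termination argument built on a well-ordering measuring how badly a monomial fails to be ordered. One then embeds $\mathcal{X}(G)$ into the universal enveloping algebra $U$ of $\Lie(G)\otimes\mathbb{Q}$ through the exponential vector group representation $(g,h)\mapsto \exp(tg,t^2h)$, and compares the $B_{f,g}$ against the Poincaré--Birkhoff--Witt basis of $U$ to conclude that the spanning family is in fact linearly independent. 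This simultaneously yields $\mathcal{X}(G)\cong\mathcal{U}(G)$ and confirms that $g\mapsto g_i$ is a bona fide representation, which by the first paragraph is automatically universal; this is precisely \cref{lemma universal representation free over Z}. I anticipate that the hardest part is the integral bookkeeping of the binomial relations against the PBW basis, for which the passage to $U\otimes\mathbb{Q}$ is the decisive tool that certifies $\mathcal{U}(G)$ is large enough.
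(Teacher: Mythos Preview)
Your proposal is correct and follows essentially the same route as the paper: the theorem is deduced by combining \cref{lemma free universal implies universal} (reduction to free vector groups over $\mathbb{Z}$ via the presentation from \cref{Lemma quotient free}) with \cref{lemma universal representation free over Z} (the base case handled through the auxiliary algebra $\mathcal{X}(G)$, the ordered monomial basis $B_{f,g}$, and the embedding into the universal enveloping algebra over $\mathbb{Q}$ compared against PBW). You have in fact unpacked more of the supporting lemmas than the paper's two-line proof does, but the overall architecture and the key technical ingredients are identical.
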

	
	\begin{corollary}
		\label{cor: G(K) = Hat(H)(K)}
		The category of proper vector groups is closed under quotients, i.e., suppose that $K \le G$ are vector groups with $K$ normal in $G$, then $L \longrightarrow G(L)/K(L)$ is a vector group. Moreover, if $G$ is proper and $K \subset G$ is normal but not necessarily proper, then $(G/K)$ is a proper vector group as well.
		\begin{proof}
			Consider $A = \mathcal{U}(G)/I$ with $I$ the ideal generated by $\mathcal{U}(K) \cap \ker \epsilon$ with $\epsilon$ the counit of the Hopf algebra. The universal representation $(\gamma_i: G \longrightarrow \mathcal{U}(G))_i$ induces a representation $(\rho_i: G/K \longrightarrow A)$. The representation $(\rho_i)_i$ is injective over all scalar extensions since it is injective on 
			$ \text{Lie}(G)/\text{Lie}(K)$ (since $K$ is normal).
			
			We remark that $K$ being proper does not matter, since $L \mapsto K(L)$ has a representation in the image of $\mathcal{U}(K) \otimes L$ contained in $\mathcal{U}(G) \otimes L$.
			
			So, this means that $G/K$ can be given a vector group structure in $A \times A$ as in Remark \ref{Remark vecgroup repres}.
		\end{proof}
	\end{corollary}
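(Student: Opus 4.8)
The plan is to realize $G/K$ as the image of a single vector group representation: Remark~\ref{Remark vecgroup repres} then endows this image with a vector group structure inside $A \times A$, and the sufficient condition of Lemma~\ref{lem: Lie of G functorial} certifies properness. The natural ambient algebra $A$ is a Hopf-theoretic quotient of the universal representation $\mathcal{U}(G)$, which exists because $G$ is proper.

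First I would build $A$. The restriction $\gamma_i|_K \colon K \to \mathcal{U}(G)$ is a vector group representation of $K$, hence induces an algebra homomorphism $\mathcal{U}(K) \to \mathcal{U}(G)$; note this uses only that representations of $K$ factor through $\mathcal{U}(K)$, so properness of $K$ plays no role. Writing $\mathcal{U}(K)^+$ for the image of the augmentation ideal $\mathcal{U}(K) \cap \ker\epsilon$, I let $I$ be the two-sided ideal it generates and set $A = \mathcal{U}(G)/I$. As $\mathcal{U}(G)$ is $\mathbb{N}$-graded and $\mathcal{U}(K)^+$ is homogeneous, $A$ inherits the grading, and composing $\gamma_i$ with the projection $\mathcal{U}(G) \to A$ gives natural transformations $\rho_i \colon G \to A$ satisfying all the relations of Construction~\ref{construction universal representation}, hence a vector group representation. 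For $k \in K$ and $i \ge 1$ we have $\rho_i(k) = 0$, and over any $L$ the elements coming from $K$ lie in the image of $\mathcal{U}(K) \otimes L$, whose positive-degree part maps into $I \otimes L$; thus $\rho$ is trivial on $K$ over every scalar extension.

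It then remains to analyze the image vector group $\Ima\rho \le A \times A$. I claim it is proper and canonically identified with $G/K$, and both follow once I show that the map induced by $\rho$ on Lie algebras has kernel exactly $\Lie(K)$, stably under every base change: for then $\Lie((\Ima\rho)_L) \cong \rho\bigl(\Lie(G)\otimes L\bigr) \cong \bigl(\Lie(G)/\Lie(K)\bigr)\otimes L$, which by Lemma~\ref{lem: Lie of G functorial} is precisely the properness criterion and at the same time matches $\Lie(G/K)$. Here normality of $K$ is what yields $\Lie(G/K) \cong \Lie(G)/\Lie(K)$. To compute the kernel I would work degree by degree: since $I$ is generated in positive degree, the low pieces $A_1, A_2$ are $\mathcal{U}(G)_1, \mathcal{U}(G)_2$ modulo the images of $\mathcal{U}(K)_1, \mathcal{U}(K)_2$, while the assignments $g \mapsto g_1$ on $G/G_2$ and $g \mapsto g_2$ on $G_2$ embed $\Lie(G)$ into $\mathcal{U}(G)_1 \oplus \mathcal{U}(G)_2$---an identification read off from the PBW basis of Lemma~\ref{lemma universal representation free over Z} in the free case and transported to the general (proper) case. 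The main obstacle I anticipate is exactly this low-degree bookkeeping together with its behaviour under $\otimes L$: one must confirm that the homogeneous components of the Hopf ideal $I$ meet $\Lie(G)$ in no more than the part coming from $\Lie(K)$, and that the resulting injection of modules remains injective after an arbitrary base change. As in Lemmas~\ref{lem: Lie of G functorial} and~\ref{lem: proj is proper}, this should reduce to right-exactness of the tensor product applied to the graded pieces.
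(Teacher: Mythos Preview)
Your approach is essentially the paper's: form $A=\mathcal{U}(G)/I$ with $I$ generated by the augmentation ideal of $\mathcal{U}(K)$, obtain a representation $\rho$ of $G/K$ in $A$, and invoke Remark~\ref{Remark vecgroup repres} to realize $G/K$ as a vector group inside $A\times A$; the paper likewise notes that properness of $K$ is irrelevant and that injectivity on $\Lie(G)/\Lie(K)$ (using normality) propagates to all scalar extensions. Your additional care about the low-degree pieces of $I$ and the appeal to Lemma~\ref{lem: Lie of G functorial} for properness simply makes explicit what the paper asserts in one line.
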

	
	\begin{theorem}
		\label{theorem universal homogeneous}
		Let $G$ be a proper vector group.
		Suppose that $\rho : G \longrightarrow M$ is a homogeneous map of degree $n$, then it factors uniquely through the $n$-th grading component of $\mathcal{U}(G)$.
		Hence, the mapping $g \longrightarrow g_n$ to $\mathcal{U}(G)_n$ defines the universal homogeneous map of degree $n$.
		\begin{proof}
			We proceed by induction.
			For $n = 0$, this is trivial as every homogeneous map $f$ of degree $0$ is constant, so it uniquely factors through $\Phi = \mathcal{U}(G)_0$, mapping $1$ to $f(G)$.
			
			Let $f : G \longrightarrow M$ be a homogeneous map of degree $n$.
			We prove by induction on $n$ that there exists a representation	
			$$ \rho_{[t]}(g) = 1 + tg_1 + \ldots t^{n-1}g_{n-1} + t^nf(g)$$	
			of $G$ in an algebra $A = \mathcal{U}(G)_{<n} \oplus M.$
			From this, we can obtain an algebra map $$\mathcal{U}(G) \longrightarrow A$$ using the universality of $\mathcal{U}(G)$ as a vector group representation. The restriction $\mathcal{U}(G)_n \longrightarrow M$ will map $g_n \mapsto f(g)$ which will prove the theorem.
			
			So, consider a homogeneous map $f : G \longrightarrow M$ of degree $n$ with linearisations $f^{i,j} : G \times G \longrightarrow M$, which inductively correspond to linear maps
			$g^{i,j}: \mathcal{U}(G)_i \otimes \mathcal{U}(G)_j \longrightarrow M.$
			These maps are associative in the following sense $g^{(i,j),k}(x,y,z) = g^{i,(j,k)}(x,y,z)$ where both expressions are $(i,j,k)$ linearisations of $f$ obtained by linearizing either $g^{i+j,k}$ or $g^{i,j+k}$, since both correspond to a certain term belonging to the coefficient $a^ib^jc^k$ in
			$$ f((a \cdot_1 x)(b \cdot_1 y)(c \cdot_1 z)).$$
			We endow $A$ with the restriction of the product of $\mathcal{U}(G)$ to $\mathcal{U}{(G)}_{<n}$ (where we throw away the result if it has too high a grading), and with the obvious left and right action of $\Phi$ on $M$, while using 
			$g^{i,j}$ for the rest of the products $\mathcal{U}(G)_i \otimes \mathcal{U}(G)_j \longrightarrow M$. Note that $A$ algebra is associative because of the associativity of the linearisations.
			
			It is easy to see that 
			$$ \rho_{[t]}(g) = 1 + tg_1 + \ldots + t^{n-1}g_{n - 1} + t^n f(g),$$
			forms a vector group representation.
			As argued earlier, this proves that $g \longmapsto g_n$ is the universal homogeneous map of degree $n$. 
		\end{proof}
	\end{theorem}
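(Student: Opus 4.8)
The plan is to induct on $n$, constructing for each degree-$n$ homogeneous map $f\colon G\longrightarrow M$ a single auxiliary vector group representation whose top coefficient is $f$, and then invoking the universal property of $\mathcal{U}(G)$ as a representation. The base case $n=0$ is immediate, since a degree-$0$ map is constant and factors through $\mathcal{U}(G)_0=\Phi$. For the inductive step I would first observe that the linearisations $f^{(i,j)}\colon G\times G\longrightarrow M$ are homogeneous of degree $i$, resp.\ $j$, in each variable; fixing one slot and applying the inductive hypothesis in the other, they factor uniquely through bilinear maps
\[ g^{(i,j)}\colon \mathcal{U}(G)_i\otimes\mathcal{U}(G)_j\longrightarrow M. \]
The crucial structural input is that these maps are \emph{associative}: the two iterated linearisations $g^{(i+j,k)}\circ(\mathrm{mult}\otimes\mathrm{id})$ and $g^{(i,j+k)}\circ(\mathrm{id}\otimes\mathrm{mult})$ coincide, because both compute the coefficient of $a^ib^jc^k$ in $f\bigl((a\cdot_1 x)(b\cdot_1 y)(c\cdot_1 z)\bigr)$, and a genuine homogeneous (hence natural) map admits a well-defined total polarisation independent of the bracketing.

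Next I would assemble the graded algebra $A=\mathcal{U}(G)_{<n}\oplus M$, placing $M$ in degree $n$. Its multiplication is the product of $\mathcal{U}(G)$ truncated below degree $n$ on the first summand, the scalar action of $\mathcal{U}(G)_0=\Phi$ on $M$, the zero map on $M\otimes M$, and the maps $g^{(i,j)}$ for the ``bridging'' products $\mathcal{U}(G)_i\otimes\mathcal{U}(G)_j\to M$ with $i+j=n$ and $i,j\ge 1$. Associativity of $A$ reduces precisely to the associativity of the linearisations noted above. I would then check that
\[ \rho_{[t]}(g)=1+tg_1+\dots+t^{n-1}g_{n-1}+t^n f(g) \]
is a vector group representation of $G$ in $A$: the coefficients below degree $n$ inherit every required relation from $\mathcal{U}(G)$, while the top coefficient is governed exactly by the defining identity of a degree-$n$ homogeneous map (this supplies multiplicativity in top degree via the $g^{(i,j)}$) together with the homogeneity conditions $f(\lambda\cdot_1 g)=\lambda^n f(g)$ and the prescribed behaviour under $\cdot_2$.

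With this representation in hand, the universal property of $\mathcal{U}(G)$ yields a unique algebra homomorphism $\phi\colon\mathcal{U}(G)\longrightarrow A$ with $\phi(g_i)=\rho_i(g)$; since $\phi$ carries degree-$i$ generators into $A_i$, it is graded, so its top component $\bar f:=\phi|_{\mathcal{U}(G)_n}\colon\mathcal{U}(G)_n\to M$ is $\Phi$-linear and satisfies $\bar f(g_n)=f(g)$, giving the desired factorisation. For uniqueness I would argue through naturality rather than a spanning statement over $\Phi$ alone, which genuinely fails (for $G=\mathbb{Z}^2$ with $G_2=0$ and $n=3$, a parity obstruction shows the elements $g_3$ do not span $\mathcal{U}(G)_3$ over $\mathbb{Z}$). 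Instead, if two linear maps both factor $f$, their difference annihilates $g_n$ over every base ring; evaluating at a generic element over a polynomial extension $K=\Phi[\lambda_b,\mu_c]$ expands $g_n$ into $\sum(\text{distinct monomials})\cdot(\text{basis elements of }\mathcal{U}(G)_n)$, and comparing coefficients of the linearly independent monomials forces the two maps to agree on a generating set, hence to be equal.

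The main obstacle is the associativity bookkeeping: establishing that the partial linearisations glue into a genuinely associative bridging product, and that $\rho_{[t]}$ satisfies all four representation axioms in top degree, in particular that it is compatible with the binomial-coefficient relations (6) and (7) of Construction~\ref{construction universal representation}. Once the total polarisation of $f$ is seen to be well-defined, the remaining verifications are formal, and everything else follows from the universal property of $\mathcal{U}(G)$ already established.
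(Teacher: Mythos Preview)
Your proposal is correct and follows essentially the same route as the paper: induct on $n$, use the inductive hypothesis to turn the linearisations $f^{(i,j)}$ into bilinear maps $\mathcal{U}(G)_i\otimes\mathcal{U}(G)_j\to M$, assemble the truncated algebra $A=\mathcal{U}(G)_{<n}\oplus M$ with these as bridging products, check that $\rho_{[t]}(g)=1+tg_1+\dots+t^{n-1}g_{n-1}+t^nf(g)$ is a vector group representation, and invoke the universal property of $\mathcal{U}(G)$. Your explicit treatment of uniqueness via naturality over a polynomial extension (correctly noting that the $g_n$ need not span $\mathcal{U}(G)_n$ over $\Phi$ itself) is a point the paper leaves implicit.
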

	
	\begin{remark}The following equality for homogeneous maps $f$ degree $3$ will be useful:
		\begin{equation}
			\label{equation homogeneous of degree 3}
			3(f(g) - f^{(1,2)}({g,g})) + f^{(1,1,1)}({g,g,g}) = 0.
		\end{equation}
		This equality can be obtained from the equality
		$$ 3g_3 - 3g_1g_2 + g_1^3 = 3g_3 - 2g_2g_1 + g_1(g^{-1})_2 = 0$$
		which holds in the universal representation. 
		This can either be proved using the relations on $\mathcal{U}(G)$, or one can use the following computation:
		$$ \epsilon(3g_3 - 2g_2g_1 + g_1(g^{-1})_2) = (((1 + \epsilon) \cdot_1 g)g^{-1})_3 = (\epsilon g_1, 2 \epsilon g_2 - \epsilon \psi(g_1,g_1))_3 = 0$$ 
		over the dual numbers $\Phi[\epsilon]$.
	\end{remark}
	
	\subsection{The primitive elements of the universal representation}
	
	In this section, we are interested in projective vector groups.
	By Lemma \ref{lem: proj is proper}, we know that projective vector groups are proper.

	\begin{definition}	Let $H$ be a Hopf algebra, we use $P(H)$ to denote the set of \textit{primitive} elements, i.e., the $x \in H$ such that 
		$$ \Delta(x) = x \otimes 1 + 1 \otimes x.$$
		We call an element $x \in H$ \textit{group-like} if $\Delta(x) = x \otimes x$ and $\epsilon(x) = 1$.
	\end{definition}

	We want to prove that $P(\mathcal{U}(G)) = \text{Lie}(G)$ for projective $G$ and be able to recover $G(K)$ from $\mathcal{U}(G) \otimes K$.
	In order to achieve that, we will consider concrete filtrations of $\mathcal{U}(G)$ which will coincide for projective vector groups $G$. These filtrations are chosen in such a way that we know the primitives of $\mathcal{U}(G)$ coincide with $\text{Lie}(G)$ if the first $2$ parts of the filtration are the same. Although we will not explicitly prove it, the question of whether these two filtrations coincide only depends on the module $\text{Lie}(G)$.
	
	\begin{definition}
		Consider the space
		\[ Y_m = \left\langle  \prod_{i = 1}^{k_1} (v_{1i})_{m_{1i}} \prod_{j = 1}^{k_2} (v_{2j})_{2m_{2j}}| v_{1i} \in G, v_{2j} \in G_2, \sum_{i=1}^{k_1} m_{1,i} + \sum_{j = 2}^{k_2} v_{2,j} \le m, k_i \in \mathbb{N} \right\rangle.\]
		We also define
		\[ Z_m =  \ker (\text{Id} - \epsilon)^{\otimes m + 1} \Delta^{m},\]
		writing $\text{Id} - \epsilon$ for $\text{Id}- \eta \circ \epsilon$ with $\eta : \Phi \longrightarrow \mathcal{U}(G)$ the structure morphism and using the operator  $\Delta^{m+1} = (\Delta^{m} \otimes \text{Id}) \circ \Delta$ with $\Delta^1 = \Delta$. 
		The coassociativity guarantees that $\Delta^{m+1} = (\Delta^{i} \otimes \Delta^{j}) \circ \Delta$ for all $i + j = m$.
		We will call vector groups $G$ with universal representation $\mathcal{U}(G)$ \textit{well-behaved} if $Y_m = Z_m$ for all $m$.
	\end{definition}
	
	\begin{remark}
		By construction, the spaces $Y_i$ satisfy
		\[ \Phi = Y_0 \subsetneq Y_1 \subsetneq Y_2 \subsetneq \dots \]
		and $\bigcup_m Y_m = \mathcal{U}(G)$.
	\end{remark}
	
	\begin{lemma}
		The spaces $Z_i$ satisfy $Z_i \subset Z_{i + 1}$ for all $i$. 
		\begin{proof}
			We must prove that
			\[ \ker (\text{Id} - \epsilon)^{\otimes m + 1} \Delta^{m} \subset \ker (\text{Id} - \epsilon)^{\otimes m + 2} \Delta^{m + 1}.\]
			For $p \in \mathcal{U}(G)$, we will use that
			$$ (\text{Id} - \epsilon)^{\otimes 2} \Delta(p) = \Delta((\text{Id} - \epsilon)(p)) - (1 - \epsilon)p \otimes 1 - 1 \otimes (1 - \epsilon)p = (\Delta - I_1 - I_2)(1 - \epsilon)(p),$$
			with $I_1(x) = x \otimes 1$ and $I_2(x) = 1 \otimes x$.
			This last equation is easily checked for $p$ such that $p = \epsilon(p)$, while one can use the $\mathbb{N}$-grading for $p$ such that $\epsilon(p) = 0$ to prove that $\Delta(p) = p \otimes 1 + 1 \otimes p + (\text{Id} - \epsilon)^{\otimes 2} \Delta(p)$.
			Hence, we obtain
			\begin{align*}
				(\text{Id} - \epsilon)^{\otimes m + 2} \Delta^{m + 1} (x) = & ((\Delta  - I_1 - I_2) \otimes \text{Id}^{\otimes m}) (\text{Id} - \epsilon)^{\otimes m + 1} \Delta^m (x).
			\end{align*} This proves the lemma.
		\end{proof}
	\end{lemma}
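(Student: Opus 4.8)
The plan is to reduce the containment $Z_i \subset Z_{i+1}$ to a single \emph{recursion} that writes the defining map of $Z_{m+1}$ as a fixed linear operator applied to the defining map of $Z_m$. Throughout, write $\bar\epsilon = \text{Id} - \eta\circ\epsilon$ for the projection onto the augmentation ideal, so that $Z_m = \ker\bigl(\bar\epsilon^{\otimes m+1}\circ\Delta^m\bigr)$, and set $\phi = \Delta - I_1 - I_2$ with $I_1(x)=x\otimes 1$ and $I_2(x)=1\otimes x$ as in the statement.

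The only place where the Hopf structure genuinely enters is the \emph{base identity}
\[ \bar\epsilon^{\otimes 2}\circ\Delta = \phi\circ\bar\epsilon, \]
which I would establish first. In Sweedler notation this is a short counit computation: expanding $\bar\epsilon(p_{(1)})\otimes\bar\epsilon(p_{(2)})$ and using $\sum\epsilon(p_{(1)})p_{(2)} = p = \sum p_{(1)}\epsilon(p_{(2)})$ together with $\sum\epsilon(p_{(1)})\epsilon(p_{(2)})=\epsilon(p)$ collapses the cross terms to exactly $\Delta(p) - p\otimes 1 - 1\otimes p + \epsilon(p)\,1\otimes 1$, and this equals $\phi(\bar\epsilon(p))$. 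Alternatively, since $\mathcal{U}(G)$ is $\mathbb{N}$-graded and connected one may check the identity separately on constants, where both sides vanish, and on homogeneous elements of positive degree, where $\epsilon$ annihilates $p$ and the grading forces $\Delta(p) = p\otimes 1 + 1\otimes p + \bar\epsilon^{\otimes 2}\Delta(p)$.

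Next I would feed this into the iterated coproduct via coassociativity. Using $\Delta^{m+1} = (\Delta\otimes\text{Id}^{\otimes m})\circ\Delta^m$ (the instance $i=1$, $j=m$ of the coassociativity statement recorded just before the lemma), $\Delta$ acts only in the first tensor slot of $\Delta^m(x)$. Composing with $\bar\epsilon^{\otimes m+2}$ and factoring the projection slot by slot gives
\[ \bar\epsilon^{\otimes m+2}\Delta^{m+1} = \bigl(\bar\epsilon^{\otimes 2}\Delta \otimes \bar\epsilon^{\otimes m}\bigr)\Delta^m = \bigl(\phi\otimes\text{Id}^{\otimes m}\bigr)\circ\bar\epsilon^{\otimes m+1}\circ\Delta^m, \]
where the second equality substitutes the base identity into the first slot and absorbs the remaining projections into $\bar\epsilon^{\otimes m+1}$. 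This is precisely the displayed recursion $(\text{Id}-\epsilon)^{\otimes m+2}\Delta^{m+1}(x) = \bigl((\Delta-I_1-I_2)\otimes\text{Id}^{\otimes m}\bigr)(\text{Id}-\epsilon)^{\otimes m+1}\Delta^m(x)$. The conclusion is then immediate: for $x\in Z_m$ the inner expression $\bar\epsilon^{\otimes m+1}\Delta^m(x)$ vanishes, hence so does the right-hand side, giving $x\in Z_{m+1}$. I expect no serious obstacle here; the only delicate point is correct \emph{slot bookkeeping}, i.e.\ ensuring that each counit contraction is applied in the intended tensor factor when passing from $\Delta^{m+1}$ back to $\Delta^m$. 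Once the indices are kept straight, the whole argument is a formal consequence of the base identity, coassociativity, and the counit axioms.
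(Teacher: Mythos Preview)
Your proof is correct and follows essentially the same approach as the paper: establish the base identity $\bar\epsilon^{\otimes 2}\Delta = (\Delta - I_1 - I_2)\bar\epsilon$, then use coassociativity to obtain the recursion expressing $\bar\epsilon^{\otimes m+2}\Delta^{m+1}$ as $(\phi\otimes\text{Id}^{\otimes m})$ applied to $\bar\epsilon^{\otimes m+1}\Delta^m$. Your Sweedler-notation verification of the base identity is a slight enrichment over the paper's grading argument, but the overall strategy is the same.
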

	
	\begin{lemma}
		\label{lemma filtration}
		Suppose that $G$ is a vector group such that $$Z_{m-1} \cap Y_m = Y_{m-1},$$ then $G$ is a well-behaved vector group.
		\begin{proof}
			For each $m$, induction on $m - k$ shows that
			$$Z_k \cap Y_m = Y_{k},$$
			for $k \le m$, using $Z_k \subset Z_{k+1}$.
			This proves the lemma since $\bigcup_m Y_m = \mathcal{U}(G)$.
		\end{proof}
	\end{lemma}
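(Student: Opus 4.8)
The plan is to reduce everything to the single auxiliary identity
$$ Z_k \cap Y_M = Y_k \qquad \text{for all } k \le M. $$
Granting this, well-behavedness is immediate: taking $M = k$ gives $Y_k \subseteq Z_k$, while for the reverse inclusion any $x \in Z_m$ lies in some $Y_M$ with $M \ge m$ by the exhaustion $\bigcup_M Y_M = \mathcal{U}(G)$, so that $x \in Z_m \cap Y_M = Y_m$. Hence $Y_m = Z_m$ for every $m$, as required.

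First I would record the inclusion $Y_m \subseteq Z_m$, which holds \emph{unconditionally} and supplies the base case $k = M$ of the auxiliary identity. The point is that the weight filtration is a coalgebra filtration, i.e.
$$ \Delta(Y_m) \subseteq \sum_{i + j \le m} Y_i \otimes Y_j. $$
This I would check on generators: for $g \in G$ one has $\Delta(g_n) = \sum_{i+j=n} g_i \otimes g_j$ with each $g_i \in Y_i$, and for $v \in G_2$ the vanishing of the odd components gives $\Delta((v)_{2n}) = \sum_{i+j=n}(v)_{2i}\otimes (v)_{2j}$ with $(v)_{2i} \in Y_i$; since weights add under multiplication, an arbitrary monomial of weight $\le m$ then has coproduct in $\sum_{i+j\le m} Y_i \otimes Y_j$. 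Iterating yields $\Delta^m(Y_m) \subseteq \sum_{i_0 + \cdots + i_m \le m} Y_{i_0}\otimes\cdots\otimes Y_{i_m}$, and applying $(\mathrm{Id}-\epsilon)^{\otimes m+1}$ annihilates every summand: since $\mathrm{Id}-\epsilon$ kills $Y_0 = \Phi$, a surviving term would need all $m+1$ indices $\ge 1$, forcing $\sum_l i_l \ge m+1 > m$. Thus $Y_m \subseteq Z_m$.

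With this in hand I would run the induction on $d = M - k$. The case $d = 0$ is exactly $Z_k \cap Y_k = Y_k$, which is the inclusion just established. For the step, take $x \in Z_k \cap Y_M$ with $M \ge k+1$. Since $(Z_i)$ is increasing by the preceding lemma and $k \le M-1$, we have $x \in Z_{M-1}$, so $x \in Z_{M-1} \cap Y_M = Y_{M-1}$ by the hypothesis (applied with index $M$). Now $x \in Z_k \cap Y_{M-1}$ with $(M-1)-k = d$, and the induction hypothesis gives $x \in Y_k$. The reverse inclusion $Y_k \subseteq Z_k \cap Y_M$ is clear from $Y_k \subseteq Y_M$ together with $Y_k \subseteq Z_k$, which closes the induction.

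The only genuinely delicate point is the unconditional inclusion $Y_m \subseteq Z_m$: one must be careful that $Y_m$ is spanned by products of generators of \emph{mixed} weights, not merely by products of the weight-$1$ primitive generators, so the coalgebra-filtration estimate has to be verified generator-by-generator using the explicit coproduct and the relation that odd components of elements of $G_2$ vanish, rather than by a naive ``product of primitives'' argument. Everything else is bookkeeping with the two nested filtrations, and the descent step is essentially forced once the monotonicity of $(Z_i)$ and the single-step hypothesis $Z_{m-1}\cap Y_m = Y_{m-1}$ are available.
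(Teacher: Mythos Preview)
Your proof is correct and follows essentially the same approach as the paper's: the same descending induction on $M-k$ to establish $Z_k \cap Y_M = Y_k$, followed by the exhaustion $\bigcup_M Y_M = \mathcal{U}(G)$. Your separate unconditional verification that $Y_m \subseteq Z_m$ is not actually needed, since the hypothesis $Z_{m-1}\cap Y_m = Y_{m-1}$ already yields $Y_{m-1}\subseteq Z_{m-1}$ for every $m$ and hence supplies both the base case and the reverse inclusion; note also the small slip where $(M-1)-k$ should read $d-1$ rather than $d$.
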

	
	Consider $$T_{n_1,\ldots,n_k} = \{ a \in \{1,\ldots,k\}^n | i \text{ appears } n_i \text{ times in } a\}. $$
	We consider a set of coset representatives $S_{n_1,\ldots,n_k} \subset S_{\sum n_i}$ corresponding to $S_{\sum n_i}/S_{n_1} \times \ldots \times S_{n_k}$, note that this set maps bijectively to $T_{n_1,\ldots,n_k}$ using permutation action of $S_m$ on the element $$(1,\ldots,1,2,\ldots,2,\ldots,k,\ldots,k)$$ formed by taking $n_i$ times $i$ in for each $i \in \{1,\ldots,k\}$.
	
	\begin{lemma}
		\label{Lemma well behaved}
		Each projective vector group is well-behaved.
		\begin{proof}
			Let $G$ be a projective vector group.
			Consider dual bases $(b_i^*)_{i \in I}$ and $(b_i)_{i \in I}$ of $G/G_2$ and $G_2$ so that each $v \in \text{Lie}(G)$ equals
			$$ v = \sum_{i \in I} b_i^*(v)b_i,$$
			with finitely $b_i^*(v) \neq 0$.
			Suppose that $I$ is totally ordered.
			
			We define a map $\text{Im}(\text{Id} - \epsilon)^{\otimes m} \Delta^{m-1}_{|Y_m} \longrightarrow Y_m/Y_{m-1}$ inverse to the map $Y_m/Y_{m-1} \longrightarrow \text{Im}(\text{Id} - \epsilon)^{\otimes m} \Delta^{m-1}_{|Y_m}$ induced by the map $(\text{Id} - \epsilon)^{\otimes m} \Delta^{m-1}_{|Y_m}$ defined on $Y_m$. 
			It is sufficient to construct a map $f$ which  maps as
			$$ \sum_{\sigma \in S_{n_1,\ldots,n_k}} \sigma \cdot b_{i_1} \otimes \cdots \otimes b_{i_1} \otimes \cdots \otimes b_{i_k} \otimes \cdots \otimes b_{i_k} \longmapsto (b_{i_1})_{n_1}\cdots(b_{i_k})_{n_k} \mod Y_{m-1},$$
			on linear generators 
			$$\sum_{\sigma \in S_{n_1,\ldots,n_k}} \sigma \cdot b_{i_1} \otimes \cdots \otimes b_{i_1} \otimes \cdots \otimes b_{i_k} \otimes \cdots \otimes b_{i_k}$$
			of $\text{Im}(\text{Id} - \epsilon)^{\otimes m} \Delta^{m-1}_{|Y_m}$ in which $n_i$ denotes the amount of times $b_i$ appears. To avoid redundant generators we suppose that $i_1,\ldots,i_k \in I$ form a strictly increasing sequence.
			We denote the values we want for $f$ on the set of generators as $f(b_{i_1},n_1,\ldots,b_{i_k},n_k)$.
			For arbitrary elements of $\Ima (\text{Id} - \epsilon)^{\otimes m} \Delta^{m-1}_{|Y_m}$ we define $f$ using
			$$ x \mapsto \sum_{\text{generators}} f(b_{i_1},n_1,\ldots,b_{i_k},n_k) (b_{i_1}^*)^{\otimes n_1} \otimes \ldots \otimes (b_{i_k}^*)^{\otimes n_k} (x) \mod Y_{m-1}.$$
			We remark, to justify the dual maps in the previous definition, that we can use the $b^*_i$ on $\mathcal{U}(G)$. For $b_i \in \text{Lie}(G)_1$ this is obvious, since we have a projection operator $\mathcal{U}(G) \longrightarrow \mathcal{U}(G)_1 \cong \text{Lie}(G)_1$. 
			If $b_i \in \text{Lie}(G)_2$, this is a bit trickier to see. We can use $\pi : \mathcal{U}(G)_2 \longrightarrow \text{Lie}(G)_2 : x \mapsto x - \hat{f} \circ (\text{Id} - \epsilon)^{\otimes 2} \circ \Delta^2 (x)$, with $\hat{f}$ the map with the same definition of $f$, except that it maps to $Y_2$ instead of $Y_2/Y_1$. The image of $\pi$ is $\text{Lie}(G)_2$ since it acts on linear generators as \[v_2 \mapsto v_2 - q(v) = ((v_1,v_2) \cdot (-v_1, - q(v) + v_1^2))_2\] and \[a_1b_1 \mapsto a_1b_1 - q(a,b) = ((a_1,q(a)) \cdot (b_1,q(b)) \cdot (-(a_1 + b_1),-q(a + b)+(a_1+b_1)^2))_2,\] with $q : \text{Lie}(G)_1 \longrightarrow \mathcal{U}(G)_2 : v \mapsto \hat{f}(v_1 \otimes v_1)$ a quadratic map such that $(a_1,q(a))$ in the image of $G$ for all $a \in G$ under the universal representation and $q(a,b) = q(a + b) - q(a) - q(b)$ its linearisation.
			
			Now, we prove that $f$ acts as expected.
			Inductively applying the relations on $\mathcal{U}(G)$ yields $(b_i)_{j_in} \equiv  \prod_{b_j \in I} (b^*_j(b_i)b_j)_{j_in} \mod Y_{m-1}$. Thus, we conclude that the function acts as expected on $b_i \otimes \ldots \otimes b_i$.
			Furthermore, one sees that
			\begin{align}
				\nonumber 
				& \left(\prod_i\binom{n_i + m_i}{n_i}\right)f(b_{i_1},n_1 + m_1, \ldots,b_{i_k},n_k + m_k) \\ \label{equation modulo} \equiv & f(b_{i_1},n_1, \ldots,b_{i_k},n_k)f(b_{i_1},m_1, \ldots,b_{i_k},m_k) \mod Y_{m-1}
			\end{align} using the relations on $\mathcal{U}(G)$.  
			Note that 
			$$ (1 - \epsilon)^{\otimes m+l} \Delta^{m+l-1}(ab) = \sum_{\sigma \in S_{l,m}} \sigma \cdot (1 - \epsilon)^{\otimes l}\Delta^{l-1}(a) \otimes (1 - \epsilon)^{\otimes m}\Delta^{m-1}(b)$$
			for elements $a \in Y_l, b \in Y_m$.
			A single term
			$$f(b_{i_1},n_1,\ldots,b_{i_k},n_k) (b_{i_1}^*)^{\otimes n_1} \otimes \ldots \otimes (b_{i_k}^*)^{\otimes n_k}(1 - \epsilon)^{\otimes k + l}\Delta^{k + l - 1}(ab)$$
			is computed by considering all possible sums $\sum_{i = 1}^k o_j = l$ with $o_j \le n_j$ and for each such sum considering all possible ways to choose of $o_j$ of the $b^*_{i_j}$ and evaluating the chosen $b^*$ on $(1 - \epsilon)^{\otimes l}\Delta^{l-1}(a)$ while evaluating the remaining $n_j - o_j$ of the $b^*_{i_j}$ on $(1 - \epsilon)^{\otimes m}\Delta^{m-1}(b)$. This yields the same result as directly evaluating
			$$ \sum_{\substack{\sum_j o_j = l\\ \sum_j p_j = m\\ o_j + p_j = n_j}} f(b_{i_1},o_1,\ldots)(b_{i_1}^*)^{\otimes o_1} \otimes \ldots \otimes (b_{i_k}^*)^{\otimes o_k} \bigotimes f(b_{i_1},p_1,\ldots)(b_{i_1}^*)^{\otimes p_1} \otimes \ldots \otimes (b_{i_k}^*)^{\otimes p_k}$$
			on 
			$$ (1 - \epsilon)^{\otimes l}\Delta^{l-1}(a) \otimes (1 - \epsilon)^{\otimes m}\Delta^{m-1}(b)$$
			by Equation (\ref{equation modulo}).
			This proves that $$f(1 - \epsilon)^{\otimes m + l}\Delta^{m + l - 1}(ab) \equiv f(1 - \epsilon)^{\otimes l}\Delta^{l-1}(a) f(1 - \epsilon)^{\otimes m}\Delta^{m-1}(b) \mod Y_{m + l - 1}$$
			for $a \in Y_{l}$ and $b \in Y_{m}$.
			So, $f$ takes the right values on all generators since $Y_nY_m \subset Y_{n+m}$ and since $Y_{n+m}$ is generated by all $Y_nY_m, nm \neq 0$ and the $g_{n+m}, h_{2(n+m)}$ for $g \in G, h \in G_2$. This proves that $$Y_{m-1} = \ker (1 - \epsilon)^{\otimes m} \Delta^{m-1} \cap Y_m.$$
			Therefore, Lemma \ref{lemma filtration} shows $G$ to be well-behaved.
		\end{proof}
	\end{lemma}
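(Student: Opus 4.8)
The plan is to reduce everything to the criterion of \cref{lemma filtration}: it suffices to prove, for every $m$, that
\[ Z_{m-1} \cap Y_m = Y_{m-1}, \]
equivalently that $Y_{m-1} = \ker(\mathrm{Id}-\epsilon)^{\otimes m}\Delta^{m-1} \cap Y_m$. One inclusion is essentially free. Since $\Delta$ is additive for the weight defining the filtration $(Y_m)$ — each $g_n$ carries weight $n$, each $g_{2n}$ with $g\in G_2$ carries weight $n$, and $\Delta(g_n)=\sum_{i+j=n} g_i\otimes g_j$ splits this weight — applying $\Delta^{m-1}$ to an element of weight $\le m-1$ yields, in each monomial term, a tensor of $m$ factors whose weights sum to $\le m-1$, so at least one factor lies in $\Phi\cdot 1$ and is killed by $\mathrm{Id}-\epsilon$. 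Hence $Y_{m-1}\subseteq Z_{m-1}\cap Y_m$, and the real content is the reverse inclusion, i.e. the injectivity of the (tautologically surjective) map $Y_m/Y_{m-1}\longrightarrow \Ima(\mathrm{Id}-\epsilon)^{\otimes m}\Delta^{m-1}|_{Y_m}$.

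To establish injectivity I would use projectivity to build an explicit left inverse $f$. Projectivity of $G/G_2$ and $G_2$ furnishes a dual basis $(b_i)_{i\in I}$ of $\Lie(G)$ with coordinate functionals $(b_i^*)_{i\in I}$, and I fix a total order on $I$. First I would check that applying $(\mathrm{Id}-\epsilon)^{\otimes m}\Delta^{m-1}$ to a monomial $(b_{i_1})_{n_1}\cdots(b_{i_k})_{n_k}$ with $\sum_j n_j=m$ and $i_1<\dots<i_k$ produces, modulo $Y_{m-1}$, the symmetrised tensor $\sum_{\sigma\in S_{n_1,\dots,n_k}}\sigma\cdot(b_{i_1}^{\otimes n_1}\otimes\cdots\otimes b_{i_k}^{\otimes n_k})$, so that these symmetrised tensors span the image. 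I would then declare $f$ to send such a generator to $(b_{i_1})_{n_1}\cdots(b_{i_k})_{n_k}\bmod Y_{m-1}$ and extend it to the whole image by reading off coordinates through $(b_{i_1}^*)^{\otimes n_1}\otimes\cdots\otimes(b_{i_k}^*)^{\otimes n_k}$. For this to be meaningful I must evaluate the $b_i^*$ on $\mathcal{U}(G)$: on the degree-$1$ part this is the canonical projection $\mathcal{U}(G)_1\cong\Lie(G)_1$, but on the degree-$2$ part the naive projection fails and I would instead use the corrected projection $\pi(x)=x-\hat f\bigl((\mathrm{Id}-\epsilon)^{\otimes 2}\Delta(x)\bigr)$ onto $\Lie(G)_2$, verifying on the linear generators $v_2$ and $a_1b_1$ that $\pi$ indeed lands in $\Lie(G)_2$.

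The remaining work is to confirm that $f$ is a genuine left inverse, i.e. that $f\circ(\mathrm{Id}-\epsilon)^{\otimes m}\Delta^{m-1}$ is the identity on $Y_m/Y_{m-1}$, and I would do this by showing $f$ is multiplicative modulo lower filtration. The two local ingredients are the reduction $(b_i)_n\equiv\prod_j(b_j^*(b_i)\,b_j)_n\bmod Y_{m-1}$, which replaces an arbitrary generator by basis generators, and the binomial identity $\bigl(\prod_i\binom{n_i+m_i}{n_i}\bigr)f(\dots,n_i+m_i,\dots)\equiv f(\dots,n_i,\dots)\,f(\dots,m_i,\dots)\bmod Y_{m-1}$, which comes from relations~\ref{cons:g2 scalar} and~\ref{cons:g scalar} on $\mathcal{U}(G)$. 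Feeding these into the comultiplicativity formula $(\mathrm{Id}-\epsilon)^{\otimes(m+l)}\Delta^{m+l-1}(ab)=\sum_{\sigma\in S_{l,m}}\sigma\cdot(\mathrm{Id}-\epsilon)^{\otimes l}\Delta^{l-1}(a)\otimes(\mathrm{Id}-\epsilon)^{\otimes m}\Delta^{m-1}(b)$ for $a\in Y_l$, $b\in Y_m$ shows that $f$ respects products modulo $Y_{m+l-1}$; since $Y_{n+m}$ is generated by the products $Y_nY_m$ together with the pure generators $g_{n+m}$ and $h_{2(n+m)}$, this pins $f$ down on all generators and yields $Y_{m-1}=\ker(\mathrm{Id}-\epsilon)^{\otimes m}\Delta^{m-1}\cap Y_m$, whence \cref{lemma filtration} finishes the proof. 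I expect the main obstacle to be the well-definedness of $f$ rather than any isolated computation: making the coordinate functionals $b_i^*$ act coherently on $\mathcal{U}(G)$ — in particular the corrected quadratic projection $\pi$ — and keeping the bookkeeping consistent everywhere modulo $Y_{m-1}$.
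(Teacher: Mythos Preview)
Your proposal is correct and follows essentially the same approach as the paper: reduce to the criterion of \cref{lemma filtration}, construct an explicit left inverse $f$ to $(\mathrm{Id}-\epsilon)^{\otimes m}\Delta^{m-1}|_{Y_m}$ using the dual basis afforded by projectivity, handle the degree-$2$ coordinate functionals via the corrected projection $\pi$, and verify that $f$ is multiplicative modulo $Y_{m-1}$ using the binomial relations and the comultiplicativity formula. The only cosmetic addition is your explicit mention of the easy inclusion $Y_{m-1}\subseteq Z_{m-1}\cap Y_m$, which the paper leaves implicit.
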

	
	\begin{lemma}
		\label{Lemma Y1}
		$Y_1 \cap \ker \epsilon \cong \Lie(G)$.
		\begin{proof}
			We know that $\Lie(G) \cong \langle g_1, h_2 | g \in G, h \in G_2\rangle$ and that $Y_1$ is generated by these elements and the $g_0$, which are equal to $1$.
		\end{proof}
	\end{lemma}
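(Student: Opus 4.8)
The plan is to describe $Y_1 \cap \ker\epsilon$ by explicit generators and then identify it with $\Lie(G)$ as given by Lemma~\ref{lemma Lie G}. First I would specialise the definition of $Y_m$ to $m = 1$: among the generating products $\prod_i (v_{1i})_{m_{1i}} \prod_j (v_{2j})_{2m_{2j}}$, the only ones whose index sum is at most $1$ are the empty product $1$, a single factor $g_1$ with $g \in G$, and a single factor $h_2$ with $h \in G_2$. Thus $Y_1 = \Phi\cdot 1 + \langle g_1,\, h_2 \mid g \in G,\, h \in G_2\rangle$. Since $\mathcal{U}(G)$ is $\mathbb{N}$-graded with $1 \in \mathcal{U}(G)_0$, $g_1 \in \mathcal{U}(G)_1$ and $h_2 \in \mathcal{U}(G)_2$, and $\ker\epsilon = \bigoplus_{n \ge 1}\mathcal{U}(G)_n$, intersecting with $\ker\epsilon$ discards precisely the constant term, so that $Y_1 \cap \ker\epsilon = \langle g_1,\, h_2 \mid g \in G,\, h \in G_2\rangle$.

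Next I would build a $\Phi$-module homomorphism from $\Lie(G)$ onto this submodule. Using the identification $\Lie(G) \cong G/G_2 \oplus G_2$ of Lemma~\ref{lemma Lie G}, I set $\phi(gG_2) = g_1$ and $\phi(h) = h_2$ for $g \in G$ and $h \in G_2$. This is well defined and $\Phi$-linear: relation~2 of Construction~\ref{construction universal representation} gives $h_1 = 0$ for $h \in G_2$, so $g \mapsto g_1$ factors through $G/G_2$, while relations~3--5 yield additivity together with compatibility with the scalar actions $\cdot_1$ and $\cdot_2$. By the first paragraph the image of $\phi$ is exactly $Y_1 \cap \ker\epsilon$, so surjectivity onto the target is automatic and only injectivity remains to be checked.

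For injectivity I would exploit the vector group representation $\rho_{[t]}\colon G \to \Phi \oplus (A\times B)[[t]]$, $(a,b) \mapsto 1 + (ta, t^2 b)$, recorded just before Remark~\ref{Remark vecgroup repres}. By the universal property of $\mathcal{U}(G)$ it induces an algebra map $\theta\colon \mathcal{U}(G) \to \Phi \oplus (A \times B)$ with $\theta(g_1) = (a,0)$ for $g = (a,b) \in G$ and $\theta(h_2) = (0,c)$ for $h = (0,c) \in G_2$. Hence $\theta \circ \phi$ sends $(gG_2, h)$ to $(a,c) \in A \times B$, which is exactly the embedding of $\Lie(G)$ as a submodule of $A\times B$ described in Lemma~\ref{lemma Lie G}; being induced by the injections $G/G_2 \hookrightarrow A$ and $G_2 \hookrightarrow B$, it is injective, so $\phi$ is injective as well. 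This exhibits $\phi$ as the desired isomorphism $\Lie(G) \xrightarrow{\sim} Y_1 \cap \ker\epsilon$.

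The argument is essentially bookkeeping and I do not expect a serious obstacle; the one delicate point is injectivity, since the defining relations of $\mathcal{U}(G)$ do not by themselves make the independence of the $g_1$ and $h_2$ transparent. I would therefore route through the concrete representation $\theta$ above rather than argue directly from the presentation, which is cleaner than trying to track the relations of Construction~\ref{construction universal representation} by hand.
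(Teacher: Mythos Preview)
Your proof is correct and follows the same route as the paper: describe $Y_1$ as $\Phi\cdot 1 + \langle g_1, h_2\rangle$, then intersect with $\ker\epsilon$. The paper simply asserts $\Lie(G) \cong \langle g_1, h_2\rangle$ as known, whereas you supply the missing injectivity argument by factoring through the concrete representation in $\Phi \oplus (A\times B)$; this extra step is a genuine clarification rather than a different approach.
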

	
	\begin{lemma}
		\label{lemma Z1}
		$Z_1 \cap \ker \epsilon \cong \text{P}(\mathcal{U}(G))$
		\begin{proof}
			We know that $P(\mathcal{U}(G)) \subset \ker \epsilon$.
			For $x \in \ker \epsilon$ we know that $x$ is primitive if and only if
			\begin{align*}
				& \Delta(x) - x \otimes 1 - 1 \otimes x = 0 \\
				\iff&  \Delta(x) - (\text{Id}\otimes \epsilon) \Delta(x) - (\epsilon \otimes \text{Id}) \Delta(x) + (\epsilon \otimes \epsilon ) \Delta(x) = 0 \\
				\iff & (\text{Id}- \epsilon)^{\otimes 2} \Delta(x) = 0 \\
				\iff&  x \in Z_1,
			\end{align*}
			using that $\mu(\epsilon \otimes \text{Id}) \Delta = \text{Id} = \mu(\text{Id} \otimes \epsilon ) \Delta$ and $\epsilon^{\otimes 2} \Delta = \Delta \epsilon$.
		\end{proof}
	\end{lemma}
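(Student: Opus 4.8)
The plan is to prove the stated isomorphism by identifying both sides as subsets of $\mathcal{U}(G)$ and checking they coincide. The essential step is the displayed chain of equivalences: for $x \in \ker\epsilon$, being primitive is the same as lying in $Z_1 = \ker(\text{Id}-\epsilon)^{\otimes 2}\Delta$. Before that, I would record the inclusion $\text{P}(\mathcal{U}(G)) \subseteq \ker\epsilon$, which is what makes the intersection on the left-hand side harmless: if $\Delta(x) = x\otimes 1 + 1\otimes x$, then applying $\epsilon\otimes\epsilon$ and using $\epsilon(1)=1$ gives $\epsilon(x) = 2\epsilon(x)$, hence $\epsilon(x)=0$. (Equivalently, apply the counit axiom $(\epsilon\otimes\text{Id})\Delta = \text{Id}$.) Consequently every primitive element already sits in $\ker\epsilon$, so that $\text{P}(\mathcal{U}(G)) = \text{P}(\mathcal{U}(G)) \cap \ker\epsilon$.

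The core computation is to expand $(\text{Id}-\epsilon)^{\otimes 2}\Delta(x)$, where $\text{Id}-\epsilon$ abbreviates $\text{Id}-\eta\circ\epsilon$. Distributing the tensor product,
\[ (\text{Id}-\eta\epsilon)^{\otimes 2} = \text{Id}\otimes\text{Id} - \text{Id}\otimes\eta\epsilon - \eta\epsilon\otimes\text{Id} + \eta\epsilon\otimes\eta\epsilon, \]
and applying this to $\Delta(x)$ I would invoke the counit axioms $(\text{Id}\otimes\eta\epsilon)\Delta = (\,\cdot\,)\otimes 1$ and $(\eta\epsilon\otimes\text{Id})\Delta = 1\otimes(\,\cdot\,)$ together with $(\eta\epsilon\otimes\eta\epsilon)\Delta(x) = \epsilon(x)(1\otimes 1)$ to obtain
\[ (\text{Id}-\epsilon)^{\otimes 2}\Delta(x) = \Delta(x) - x\otimes 1 - 1\otimes x + \epsilon(x)(1\otimes 1). \]
The only care required here is the bookkeeping of the canonical identifications $\Phi\otimes\mathcal{U}(G)\cong\mathcal{U}(G)\cong\mathcal{U}(G)\otimes\Phi$ inside $\mathcal{U}(G)\otimes\mathcal{U}(G)$, so that the three counit-type reductions land on the correct tensor factors and produce exactly $x\otimes 1$, $1\otimes x$, and $\epsilon(x)(1\otimes 1)$.

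Finally, restricting to $x\in\ker\epsilon$ kills the last summand, so $(\text{Id}-\epsilon)^{\otimes 2}\Delta(x) = \Delta(x) - x\otimes 1 - 1\otimes x$ there. Hence $x\in Z_1$ if and only if $\Delta(x) = x\otimes 1 + 1\otimes x$, i.e.\ if and only if $x$ is primitive. Combining this with the inclusion established in the first paragraph yields $Z_1\cap\ker\epsilon = \text{P}(\mathcal{U}(G))$, which is the claimed identification. I do not anticipate a genuine obstacle: the argument is a direct application of the counit axioms, and the proof is essentially the verification of the displayed equivalences, the subtlety being purely notational (the $\Phi$-module identifications noted above).
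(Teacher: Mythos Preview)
Your proof is correct and follows essentially the same approach as the paper: both expand $(\text{Id}-\epsilon)^{\otimes 2}\Delta(x)$ via the counit axioms to obtain $\Delta(x) - x\otimes 1 - 1\otimes x + \epsilon(x)(1\otimes 1)$, then restrict to $x\in\ker\epsilon$ to get the equivalence with primitivity. Your version is slightly more explicit about the justification for $\text{P}(\mathcal{U}(G))\subset\ker\epsilon$ and the tensor identifications, but the argument is the same.
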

	
	\begin{theorem}
		\label{theorem PUG}
		If $G$ is a projective vector group, then $P(\mathcal{U}(G))$ is isomorphic to $\Lie(G)$. Consequently, the group $G(K)$ can be recovered from the universal representation, using
		$$ \{ g = 1 + tg_1 + t^2 g_2 + \ldots \in (\mathcal{U}(G) \otimes K)[[t]] \; | \; \Delta(g) = g \otimes g, g_i \in \mathcal{U}(G)_i \otimes K\} \cong G(K).$$

		\begin{proof}
			Lemmas \ref{Lemma well behaved}, \ref{Lemma Y1}, and \ref{lemma Z1} prove that $P(\mathcal{U}(G)) \cong \text{Lie}(G)$.
			
			Now, we prove that each group-like element 
			$$ g = \sum_{i = 1}^\infty t^ig_i,$$
			with $g_i \in \mathcal{U}(G)_i \otimes K$ is contained in the image of $G(K)$ under the universal representation.
			Remark that the first nonzero $g_i$ must be primitive, so that the first nonzero $g_i$ must be contained in $\text{Lie}(G) \otimes K$.
			We can use this to write
			$$ g = h_1h_2,$$
			with $h_1 \in G(K), \; h_2 \in G_2(K)$ such that $h_1$ and $g$ share the first coordinate and $h_2$ and $h_1^{-1}g$ share the second coordinate. 
		\end{proof}
	\end{theorem}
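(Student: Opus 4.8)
The plan is to treat the two assertions separately, deriving the first immediately from the filtration machinery and then bootstrapping it to the recovery statement by a ``peeling'' argument on the grading.

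For the identification $P(\mathcal{U}(G)) \cong \Lie(G)$, I would simply chain the three preceding lemmas. Since $G$ is projective, Lemma \ref{Lemma well behaved} gives that $G$ is well-behaved, so $Y_1 = Z_1$. Combined with Lemma \ref{Lemma Y1} (which gives $Y_1 \cap \ker\epsilon \cong \Lie(G)$) and Lemma \ref{lemma Z1} (which gives $Z_1 \cap \ker\epsilon \cong P(\mathcal{U}(G))$), this yields $P(\mathcal{U}(G)) \cong Z_1 \cap \ker\epsilon = Y_1 \cap \ker\epsilon \cong \Lie(G)$. Because $\Delta$ respects the $\mathbb{N}$-grading, the primitives are themselves graded, so this is an isomorphism of graded modules; in particular the degree-$d$ primitives vanish for $d \geq 3$, a fact I will use crucially below.

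For the recovery statement, first note that the universal representation sends $g \in G(K)$ to $\rho_{[t]}(g) = 1 + tg_1 + t^2 g_2 + \cdots$, and the Hopf structure $\Delta(g_n) = \sum_{i+j=n} g_i \otimes g_j$ says exactly that $\rho_{[t]}(g)$ is group-like, so the map lands in the displayed set. To see injectivity, I would extract from a group-like series its degree-$1$ coefficient together with the projection of its degree-$2$ coefficient to $\Lie(G)_2$; this recovers the element of $\Lie(G) \otimes K$ to which $g$ corresponds under the set-level bijection $G(K) \cong \Lie(G) \otimes K$ coming from projectivity, and that bijection is injective. For surjectivity, take a group-like $g = 1 + tg_1 + t^2 g_2 + \cdots$. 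Comparing coefficients of $t^1$ in $\Delta(g) = g \otimes g$ shows $g_1$ is primitive of degree $1$, hence $g_1 \in \Lie(G)_1 \otimes K$; lift it via the projective section to $h_1 \in G(K)$ sharing this first coordinate. Then $\rho_{[t]}(h_1)^{-1} g$ is again group-like with vanishing degree-$1$ part, so its degree-$2$ coefficient is a degree-$2$ primitive, i.e.\ lies in $\Lie(G)_2 \otimes K = G_2 \otimes K$; lift it to $h_2 \in G_2(K)$. The product $\rho_{[t]}(h_2)^{-1}\rho_{[t]}(h_1)^{-1} g$ is group-like with both its degree-$1$ and degree-$2$ parts zero, so any further nonzero coefficient would be a primitive in degree $\geq 3$, contradicting the grading of $P(\mathcal{U}(G))$; hence it equals $1$ and $g = \rho_{[t]}(h_1 h_2)$. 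Finally, since $\rho_{[t]}$ is a group homomorphism, the resulting bijection is a group isomorphism.

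The main obstacle I anticipate is ensuring that the first assertion survives base change, i.e.\ that $P(\mathcal{U}(G) \otimes K) \cong \Lie(G) \otimes K$ for \emph{arbitrary} $K$ and not merely for $\Phi$, since both the injectivity and the surjectivity steps invoke the primitives over $K$. For projective $G$ this should follow from the fact that $\mathcal{U}(G)$ is a free module on the ordered monomials $B_{f,g}$, so the graded pieces and the kernels defining the $Z_m$ are direct summands whose formation commutes with $\otimes K$; equivalently, one checks that $G_K$ is again projective with $\mathcal{U}(G_K) \cong \mathcal{U}(G) \otimes K$ and applies the first assertion over $K$. Verifying this compatibility carefully — rather than the peeling argument itself, which is formal — is where the real work lies.
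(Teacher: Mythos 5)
Your proposal is correct and follows essentially the same route as the paper: the first assertion is the same chain of Lemmas \ref{Lemma well behaved}, \ref{Lemma Y1}, and \ref{lemma Z1}, and the recovery statement is the same peeling argument, writing a group-like series as $h_1h_2$ with $h_1\in G(K)$ matching the degree-$1$ part and $h_2\in G_2(K)$ matching the degree-$2$ part of the residual, with the vanishing of primitives in degree $\geq 3$ forcing the remainder to be $1$. The only difference is that you make explicit the base-change issue $P(\mathcal{U}(G)\otimes K)\cong\Lie(G)\otimes K$, which the paper's proof leaves implicit; your proposed resolution (projectivity of $G_K$ together with $\mathcal{U}(G_K)\cong\mathcal{U}(G)\otimes K$ via the free basis of ordered monomials) is the right way to close that point.
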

	
	\begin{remark}
		In this section, we defined vector groups using
		$$ G \le A_1 \times \ldots \times A_n,$$
		with $n = 2$ since these are the only vector groups we need in this article.
		It is possible to generalize what we have done here and give an exact way to construct group functors $K \mapsto G(K)$ from $G(\Phi)$ alone that are equivalent as a class of groups to groups of formal power series $ K \mapsto \{1 + tg_1 + t^2g_2 + \ldots | g \in G(K) \}$ closed under $$1 + t^ig_i + t^{2i}g_{2i} +  \ldots \mapsto 1 + \lambda t^i g_i + \lambda^2 t^{2i} g_{2i}\ldots$$ satisfying
		$$ G_i/G_{i+1} \otimes K \cong G_i(K)/G_{i+1}(K),$$
		with $G_i(K) = \{ g \in G(K) | g_{k} = 0 , k = 1,\ldots,i-1\}$ and for which there exists $i$ such that $G_i = 0$ (this restriction can be lifted by considering direct limits). 
		In that case, we have $\text{Lie}(G) = G/G_2 \times G_2/G_3 \times \ldots.$
	\end{remark}
	
\section{Pairs of vector group representations}\label{se:pairs}

As before, we assume that all vector groups under consideration are proper.

Now we introduce some notation that we will use during the following sections. We introduce some operators $\mu, \nu$ which depend on a pair of representations of vector groups. These operators will be useful if one wants, for example, to consider the group generated by two vector group representations.

We choose to focus on the conjugation $(g,h) \mapsto ghg^{-1}$ to express the relation between the groups, however using $(g,h) \mapsto gh, (g,h) \mapsto [g,h]$ one could define similar operators replacing the $\mu$ we shall define, to which everything else can be translated.

\begin{definition}
	Let $G^+,G^-$ be vector groups and let $$(\rho^+_i : G^+ \longrightarrow A)_i$$
	and
	$$ (\rho^-_i : G^- \longrightarrow A)_i$$ be vector group representations. 
	Consider the functions $$\mu_{i,j},\nu_{i,j} : G^+ \times G^- \longrightarrow A,$$ uniquely defined from
	$$\rho^+_{[s]}(x) \rho^-_{[t]}(y) \rho^+_{[s]}(x^{-1}) = \sum_{(i,j) \in \mathbb{N \times \mathbb{N}}} s^it^j \mu_{i,j}(x,y),$$ 
	$$\rho^+_{[s]}(x) \rho^-_{[t]}(y) \rho^+_{[s]}(x^{-1}) = \prod_{\substack{\gcd(i,j) = 1\\ (i,j) \in \mathbb{N} \times (\mathbb{N}_{>0})}} \left(1 + \sum_{k = 1}^\infty s^{ki}t^{kj}\nu_{ki,kj}(x,y)\right),$$ where the order of multiplication on the $(i,j),(k,l) \in \mathbb{N} \times (\mathbb{N}_{>0})$ corresponds to $il < jk$.
	Similarly, we define $\mu,\nu : G^- \times G^+ \longrightarrow A$.
\end{definition}

\begin{lemma}
	The maps $\mu_{i,j}, \nu_{i,j} : G^+ \times G^- \longrightarrow A$ are well-defined.
	\begin{proof}
		This is obviously the case for $\mu$.
		For $\nu$, one can use recursion on the degrees to compute the $\nu$'s uniquely.
		Specifically, we can compute $\nu_{l,m}$ using induction on $l + m$, by observing that $\nu_{l,m}$ is, recursively, the only undefined term of 
		$$\rho^+_{[s]}(x) \rho^-_{[t]}(y) \rho^+_{[s]}(x^{-1}) = \prod_{\substack{\gcd(i,j) = 1\\ (i,j) \in \mathbb{N} \times (\mathbb{N}_{>0})}} \left(1 + \sum_{k = 1}^\infty s^{ki}t^{kj}\nu_{ki,kj}(x,y)\right)$$
		when both expressions are considered in
		$ A[s,t]/(s^{l+1},t^{m+1})$.
	\end{proof}
\end{lemma}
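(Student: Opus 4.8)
The plan is to establish well-definedness of the two families of maps separately, since the $\mu_{i,j}$ are defined by a plain power-series expansion while the $\nu_{i,j}$ arise from a factored product whose factors must be extracted recursively. For $\mu_{i,j}$, the point is simply that the left-hand side $\rho^+_{[s]}(x)\rho^-_{[t]}(y)\rho^+_{[s]}(x^{-1})$ is a well-defined element of $A[[s,t]]$ (a product of three formal power series, each a genuine element of the relevant group of units by the vector group representation axioms), and its coefficients with respect to the monomials $s^it^j$ are unambiguously determined. So I would dispose of $\mu$ in one sentence, exactly as the statement anticipates.

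For $\nu_{i,j}$ the substance is the recursion. First I would observe that the product on the right-hand side, taken in the prescribed order (ordering the coprime pairs $(i,j)$ by the relation $il<jk$), is a well-defined element of $A[[s,t]]$: each factor $1+\sum_{k\ge1}s^{ki}t^{kj}\nu_{ki,kj}(x,y)$ has constant term $1$, and for any fixed total degree only finitely many factors contribute a nontrivial term, so the infinite product converges in the $(s,t)$-adic topology and its coefficients are finite sums. Then I would run induction on $l+m$ to solve for the $\nu_{l,m}$. Working modulo $(s^{l+1},t^{m+1})$, i.e.\ in $A[s,t]/(s^{l+1},t^{m+1})$, I would match the coefficient of $s^lt^m$ on both sides. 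The key structural fact is that when one expands the ordered product and collects the coefficient of $s^lt^m$, the term $\nu_{l,m}(x,y)$ itself appears exactly once and \emph{linearly}, as the leading contribution of the factor indexed by the coprime reduction of $(l,m)$; every other contribution to that coefficient is a product of $\nu_{i,j}$'s with $i+j<l+m$, hence already determined by the induction hypothesis. Equating this with the (known) coefficient $\mu_{l,m}(x,y)$ of the left-hand side then yields a single equation that determines $\nu_{l,m}(x,y)$ uniquely.

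The one place requiring genuine care, and what I expect to be the main obstacle, is verifying that $\nu_{l,m}$ really does occur exactly once and with coefficient $1$ (not some binomial or integer that might fail to be invertible over an arbitrary ring $\Phi$). This is where the chosen ordering of the product and the coprimality condition $\gcd(i,j)=1$ matter: the factor carrying $\nu_{l,m}$ is the one indexed by $(i,j)=(l,m)/\gcd(l,m)$ with $k=\gcd(l,m)$, and since that factor contributes $s^lt^m\nu_{l,m}$ with all lower terms of the same factor being of strictly smaller total degree, the coefficient is indeed $1$. I would make this precise by noting that any other way of producing $s^lt^m$ as a product of monomials from distinct factors involves at least two factors, each of positive degree, so each constituent $\nu$ has strictly smaller index. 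Thus no division is needed and the recursion goes through over an arbitrary base ring $\Phi$. Finally, the symmetric claim for $\mu,\nu\colon G^-\times G^+\longrightarrow A$ follows by the identical argument with the roles of $G^+$ and $G^-$ interchanged, so I would simply remark that it is proved \emph{mutatis mutandis}.
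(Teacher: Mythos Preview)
Your proposal is correct and follows essentially the same approach as the paper: the paper also disposes of $\mu$ in one line and handles $\nu$ by induction on $l+m$, working in $A[s,t]/(s^{l+1},t^{m+1})$ and observing that $\nu_{l,m}$ is the sole undetermined term at each stage. You have simply made explicit the points the paper leaves tacit---convergence of the ordered product, the fact that $\nu_{l,m}$ occurs with coefficient $1$ so no division is needed over $\Phi$, and the reduction of all other contributions to $\nu$'s of strictly smaller total degree---so your write-up is a fleshed-out version of the same argument.
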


\begin{remark}
	We remark that $\mu_{i,j}(x,y) = \sum_{a + b = i} x_a y_j (x^{-1})_b$.
	The maps $\nu_{i,j}$ can be computed recursively, starting from $\nu_{0,i} = \mu_{0,i}, \nu_{i,1} = \mu_{i,1}$ and then
	$\nu_{i,2} + \sum_{\substack{a + b = i\\a < b}} \nu_{a,1}\nu_{b,1} = \mu_{i,2}$.
	For higher $k$ we can also find formulas for $\nu_{i,k}$ corresponding to the previous lemma.	
\end{remark}

\begin{remark}
	The maps $\mu,$ and $\nu$ are homogeneous in both arguments. Namely, if $x \mapsto f(x)$ is homogneous, then $x \mapsto f(x^{-1})$ is homogeneous as well and if $f$ and $g$ are homogeneous of degree $i$ and $j$, then $x \mapsto f(x) \otimes g(x)$ is homogeneous of degree $i + j$. This is easily proved by using the Hopf algebra structure of the universal representation and the property that each homogeneous map factors through the universal representation as a linear map.
\end{remark}

We also use 
$$\exp(o_{i,j}(x,y),l) = \left(1 + \sum_{k = 1}^\infty l^k\nu_{ki,kj}(x,y)\right),$$
to denote the formal power series which are used to define the $\nu_{i,j}(x,y)$, where we use $l$ as the formal scalar instead of $s^it^j$.

\begin{lemma}
	\label{lemma grouplike}
	Consider a pair of vector group representations $(\rho^+_i:G^+ \longrightarrow A,\rho^-_i:G^- \longrightarrow A)$ to a Hopf algebra $A$.
	Suppose that $\rho^\pm_{[t]}(g)$ is group-like for all $g \in G^+ \cup G^-$, then 
	$\exp(o_{i,j}(g,h),t)$ is group-like as well for all $i,j$.
	\begin{proof}
		Parallel to the recursion used to define the $\nu$ one can prove inductively that 
		$ \exp(o_{i,j}(x,y),s^it^j)$ is a group-like element in $A[s,t]/(s^{ki+1}t^{kj+1})$ when one determines $\nu_{ki,kj}$.
		Specifically, in the inductive step one has
		$$\rho^+_{[s]}(x) \rho^-_{[t]}(y) \rho^+_{[s]}(x^{-1}) = \prod_{\substack{\gcd(i,j) = 1\\ (i,j) \in \mathbb{N} \times \mathbb{N}_{>0}}} \left(1 + \sum_{k = 1}^\infty s^{ki}t^{kj}\nu_{ki,kj}(x,y)\right) \mod (s^{ki+1}t^{kj+1}),$$
		where the left-hand side is group-like and the right-hand side is $ \exp(o_{i,j}(x,y),s^it^j)$ multiplied with group-like elements.
		Since the group-like elements form a group, we proved the induction step.	
	\end{proof}
\end{lemma}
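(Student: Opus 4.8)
The plan is to reduce everything to the principle that the group-like elements of a Hopf algebra form a group, plus one bookkeeping identity in the tensor-square. Write $F_{(i,j)} := \exp(o_{i,j}(x,y),s^it^j) = 1 + \sum_{k\ge1} s^{ki}t^{kj}\nu_{ki,kj}(x,y)$ for the ray factors, so that the defining identity reads $\Xi = \prod_{(i,j)} F_{(i,j)}$, the product running over coprime $(i,j)$ in slope order, with $\Xi := \rho^+_{[s]}(x)\rho^-_{[t]}(y)\rho^+_{[s]}(x^{-1})$. I treat $s,t$ as scalars untouched by $\Delta$ and work over $R_N := \Phi[s,t]/(s,t)^{N+1}$, i.e.\ in the base-changed Hopf algebra $A\otimes_\Phi R_N$, in which group-likeness ($\Delta g = g\otimes g$) is meaningful and the $(s,t)$-bidegree provides a grading preserved by $\Delta$. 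Two facts are immediate: group-like elements are closed under products ($\Delta(gh) = (g\otimes g)(h\otimes h) = gh\otimes gh$) and contain $1$; and $\Xi$ is group-like, since $\rho^+_{[s]}(x)$ and $\rho^-_{[t]}(y)$ are group-like by hypothesis while $\rho^+_{[s]}(x^{-1}) = \rho^+_{[s]}(x)^{-1}$ is the group inverse, hence the antipode, of a group-like element.

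I would then prove by induction on $N$ that every $F_{(i,j)}$ is group-like in $A\otimes_\Phi R_N$; the case $N=0$ is trivial. Assume all factors are group-like one degree lower. Since $F_{(i,j)}$ is supported only in bidegrees $(ki,kj)$, the element $D_{(i,j)} := \Delta(F_{(i,j)}) - F_{(i,j)}\otimes F_{(i,j)}$ is, in $A\otimes_\Phi R_N$, concentrated in the single top bidegree $(ki,kj)$ with $k(i+j)=N$ (and vanishes unless $(i+j)\mid N$). Because multiplication in $A\otimes A$ is componentwise, one has the exact identity $\prod_{(i,j)}\bigl(F_{(i,j)}\otimes F_{(i,j)}\bigr) = \Xi\otimes\Xi$. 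Each $D_{(i,j)}$ is of top degree $N$, hence annihilated by every factor of positive degree, so expanding $\Delta(\Xi) = \prod_{(i,j)}\bigl(F_{(i,j)}\otimes F_{(i,j)} + D_{(i,j)}\bigr)$ collapses to
\[ \Delta(\Xi) = \Xi\otimes\Xi + \sum_{(i,j)} D_{(i,j)} \quad\text{in } A\otimes_\Phi R_N. \]
Since $\Xi$ is group-like, $\Delta(\Xi) = \Xi\otimes\Xi$, and therefore $\sum_{(i,j)} D_{(i,j)} = 0$.

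Finally I would observe that distinct coprime rays occupy distinct bidegrees at total degree $N$: equality of $(ki,kj)$ forces equal slopes, hence equal rays. Thus the $D_{(i,j)}$ lie in pairwise distinct homogeneous components of $A\otimes_\Phi R_N$, and $\sum_{(i,j)} D_{(i,j)} = 0$ forces each $D_{(i,j)} = 0$. This is exactly the assertion that each $F_{(i,j)}$ is group-like in $A\otimes_\Phi R_N$, completing the induction; as $N$ was arbitrary and the substitution $s^it^j \mapsto t$ turns the relation $\Delta(\nu_{ki,kj}) = \sum_{a+b=k}\nu_{ai,aj}\otimes\nu_{bi,bj}$ into group-likeness of $\exp(o_{i,j}(x,y),t)$, the claim follows. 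The one point that needs genuine care is the truncation step: one must check that modulo $(s,t)^{N+1}$ the product is finite, that $\Delta(F_{(i,j)}) - F_{(i,j)}\otimes F_{(i,j)}$ has no contribution below degree $N$ (this is the inductive hypothesis), and that all cross-terms involving a $D_{(i,j)}$ are killed. This is the main, and essentially the only, obstacle; once it is handled, the distinct-bidegree argument closes each step uniformly, and—pleasantly—the slope-ordering of the product plays no role at all, so no step-by-step isolation of factors as suggested by the defining recursion is needed.
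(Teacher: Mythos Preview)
Your proof is correct and rests on the same core idea as the paper's: the conjugate $\Xi$ is group-like because group-like elements are closed under products and inverses, and this forces each ray factor $F_{(i,j)}$ to be group-like by an induction on the truncation depth. The execution differs slightly. The paper follows the recursion that defines the $\nu_{l,m}$, working modulo $(s^{l+1},t^{m+1})$ and isolating one ray factor at a time by multiplying $\Xi$ on either side by the inverses of the already-known group-like factors; this implicitly uses the slope ordering to know which side to cancel from. You instead truncate by total degree, show that the defects $D_{(i,j)}=\Delta(F_{(i,j)})-F_{(i,j)}\otimes F_{(i,j)}$ sit in the single top degree, collapse the expansion to $\Delta(\Xi)=\Xi\otimes\Xi+\sum D_{(i,j)}$, and then separate by bidegree to kill each $D_{(i,j)}$ individually. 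As you observe, this makes the ordering of the product irrelevant and handles all rays simultaneously; it is a cleaner packaging of the same argument and buys a little extra robustness, while the paper's version stays closer to the defining recursion and is marginally more direct for a reader who has that recursion in mind.
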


\begin{remark}
	The definition of the $\nu_{i,j}$ is roughly the same as Ditter-Shay Bi-isobaric decomposition, cfr. \cite[Theorem 3.5]{Haz07}.
	We defined $\exp(o_{i,j}(x,y),l)$. It will be useful to abstract the $\exp$ and $l$ away.
	Specifically, we consider
	$ o_{i,j} $ as a sequence of maps
	$$ (\nu_{ki,kj} : G^+ \times G^- \cup G^- \times G^+ \longrightarrow A)_k,$$
	which will be able to play a similar role as $\rho^+, \rho^-$.
	The $o_{i,j}$ have the nice property that they can coincide with group elements of $G^+$ and $G^-$. Furthermore, by considering the $o_{i,j}$ as similar to $\rho^\pm$ it makes sense to speak about $\nu_{k,l}(x,o_{i,j}(y))$, etc..
\end{remark}

\begin{definition}
	Let $\rho_i : G \longrightarrow A$ be a vector group representation.
	We use $\hat{\rho}(G)$ to denote the subalgebra of $A$ generated by all $\rho_i(g).$ 
	Furthermore, we use $\mathcal{H}(G^+,G^-,\rho)$ to denote the algebra $$\langle \nu_{i,i}(x,y) | x \in G^+, y \in G^-, i \in \mathbb{N} \rangle.$$
\end{definition}

\begin{remark}
	Now, we will formulate a theorem involving seemingly random conditions. The conditions can be interpreted as requiring that the pair of representations corresponds to a representation of an operator Jordan-Kantor pair (which we will not define formally), similar to a divided power representation of Jordan pair as introduced by Faulkner \cite{FLK00}.
\end{remark}

\begin{theorem}
	\label{thm main}
	Suppose that $(\rho^\pm_i : G^\pm \longrightarrow A)_i$ are vector group representations of proper vector groups such that for $x \in G^\pm, y \in G^\mp$,
	\begin{itemize}
		\item $\exp(o_{2,1}(x,y),s) \in \rho^\pm_{[s]}(G^\pm)$, 
		\item $\exp(o_{3,1}(x,y),s^2) \in \rho^\pm_{[s]}(G^\pm)$ ,
		\item $\exp(o_{i,j}(x,y),s) = 1$ for $i > 2j$ and $i\neq 3j$,
		\item there exists $z \in G^\pm$ such that $\rho_{[s]}(z) = 1 + s\nu_{3,2}(x,y) + O(s^2)$,
	\end{itemize}
	over all scalar extensions,
	then $\hat{\rho}(G^-)\mathcal{H}(G^+,G^-,\rho)\hat{\rho}(G^+)$ is a subalgebra of $A$.
	
	Moreover, if $\rho^+$ and $\rho^-$ are injective on projective vector groups $G^\pm$, then the unique map from the algebra $F$ freely generated by $\mathcal{U}(G^+)$ and $\mathcal{U}(G^-)$ to $A$ corresponding to this representation factors through an algebra structure on the coalgebra $$H = \mathcal{U}(G^-) \otimes \hat{\mathcal{H}}(G^+,G^-,F) \otimes \mathcal{U}(G^+)$$ 
	in which $y \otimes \hat{h} \otimes x$ represents the element $yhx$ in $F$ and $\hat{\mathcal{H}}(G^+,G^-,F)$ is a quotient of $\mathcal{H}(G^+,G^-,F)$. This algebra is $\mathbb{Z}$-graded, if one combines the $\mathbb{N}$-grading of $\mathcal{U}(G^+)$, with the opposite of the $\mathbb{N}$-grading of $\mathcal{U}(G^-)$ and sets $\mathcal{H}(G^+,G^-,F)$ to be $0$-graded.
	\begin{proof}
		Proof in appendix \ref{section proof}.
	\end{proof}
\end{theorem}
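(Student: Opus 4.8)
The plan is to read the conjugation identity defining the $\nu_{i,j}$ as a \emph{straightening rule}, and to organise every factor by the $\mathbb{Z}$-grading in which $\rho^+_i$ carries degree $+i$ and $\rho^-_j$ carries degree $-j$. With this convention the factor $\exp(o_{i,j}(x,y),\cdot)$ collects the terms $\nu_{ki,kj}(x,y)$, each of degree $k(i-j)$, so the whole factor sits in degrees of the sign of $i-j$; since the product is ordered by increasing ratio $i/j$, the factors split cleanly into a negative block ($i<j$), a degree-$0$ block ($i=j$), and a positive block ($i>j$), occurring in that order. First I would record, via \cref{lemma grouplike}, that each factor $\exp(o_{i,j}(x,y),\cdot)$ is grouplike whenever the $\rho^\pm_{[t]}$ are, and that the product of all factors in one block is again grouplike; this is what will let me identify whole blocks with images of groups.

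The heart of the argument is to place each block. The degree-$0$ block lies in $\mathcal{H}(G^+,G^-,\rho)$ essentially by definition, since that algebra is generated by the $\nu_{i,i}$. For the positive block I would exploit the four hypotheses directly: the third kills every positive direction with $i>2j$ except $(3,1)$ (using $\gcd(i,j)=1$); the first and second exhibit $\exp(o_{2,1}(x,y),\cdot)$ and $\exp(o_{3,1}(x,y),\cdot)$ inside $\rho^\pm_{[s]}(G^\pm)$; and the fourth supplies the remaining direction $(3,2)$ as the leading coefficient of a genuine group element $z\in G^\pm$. Starting from these named directions one shows, by induction on degree, that the entire positive block lies in $\hat{\rho}(G^+)$, with the lower-slope corrections controlled by the explicit identities collected in \cref{Lemma equations} and the induction closing because positive-degree grouplike elements of the image of $\mathcal{U}(G^+)$ are exactly $\rho^+(G^+)$ by \cref{theorem PUG} (here the projectivity of $G^\pm$ is used). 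The negative block is handled by the symmetric instance of the hypotheses, obtained by exchanging the roles of $G^+,G^-$ and of $s,t$, so that it lands in $\hat{\rho}(G^-)$.

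These three facts yield the commutation relations $\hat{\rho}(G^+)\,\hat{\rho}(G^-)\subseteq\hat{\rho}(G^-)\,\mathcal{H}\,\hat{\rho}(G^+)$, together with $\mathcal{H}\,\hat{\rho}(G^-)\subseteq\hat{\rho}(G^-)\,\mathcal{H}$ and $\hat{\rho}(G^+)\,\mathcal{H}\subseteq\mathcal{H}\,\hat{\rho}(G^+)$. From them the set $S=\hat{\rho}(G^-)\,\mathcal{H}\,\hat{\rho}(G^+)$ is closed under multiplication, and since it contains $1$ it is a subalgebra, proving the first assertion. For the second assertion I would $\mathbb{Z}$-grade the free product $F$ of $\mathcal{U}(G^+)$ and $\mathcal{U}(G^-)$ by the same degrees (consistent because both $\mathcal{U}(G^\pm)$ are $\mathbb{N}$-graded and the coproduct imposes no cross relations), define $\hat{\mathcal{H}}(G^+,G^-,F)$ as the quotient of $\mathcal{H}(G^+,G^-,F)$ by exactly the identities the straightening rules force, and consider the multiplication map $\mathcal{U}(G^-)\otimes\hat{\mathcal{H}}\otimes\mathcal{U}(G^+)\longrightarrow S$. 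It is surjective by the definition of $S$, respects the grading by construction, and carries a coalgebra structure as a tensor product of coalgebras (the middle factor being a coalgebra because its generators $\nu_{i,i}$ are grouplike). Transporting the product of $A$ along this map then endows $H$ with the claimed algebra structure, makes $F\to A$ factor through $H$, and gives associativity for free.

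The hard part will be injectivity of $\mathcal{U}(G^-)\otimes\hat{\mathcal{H}}\otimes\mathcal{U}(G^+)\to S$, that is, uniqueness of the triangular decomposition, since over a general ring $\Phi$ there are no denominators with which to run a direct normalisation. Here I would argue in Poincar\'e--Birkhoff--Witt style: reduce first to free vector groups over $\mathbb{Z}$, as in \cref{lemma universal representation free over Z}, where the decomposition can be verified inside a characteristic-$0$ universal enveloping algebra, and then base-change to arbitrary $\Phi$, using the injectivity of $\rho^\pm$ and \cref{theorem PUG} to recover $G^\pm$ from their grouplike elements. The secondary difficulty, already present in the first assertion, is the combinatorial bookkeeping needed to show that the unnamed positive directions with $i\le 2j$ (such as $(4,3)$ and $(5,3)$) are determined by the named ones $(2,1),(3,1),(3,2)$; keeping that induction organised by slope and degree, and matching it against the identities of \cref{Lemma equations}, is where the proof is most laborious.
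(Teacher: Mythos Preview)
Your plan hinges on the claim that the whole positive block $\prod_{i>j}\exp(o_{i,j}(x,y),s^it^j)$ lies in $\hat\rho(G^+)$, and symmetrically for the negative block. That claim is not supported by the hypotheses and is not what the paper proves. Hypothesis~4 only says $\nu_{3,2}(x,y)$ is the \emph{first} coefficient of some $z\in G^\pm$; it says nothing about $\nu_{6,4},\nu_{9,6},\dots$, so it does not place $\exp(o_{3,2}(x,y),\cdot)$ in $\rho^+(G^+)$. The same issue arises for every coprime direction with $j<i<2j$ (such as $(4,3),(5,3),(5,4),\dots$): none of these are touched by the four bullets, and you give no mechanism by which they are ``determined by the named ones''. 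Invoking \cref{theorem PUG} does not help here, since that result identifies grouplike elements of $\mathcal U(G^+)$, whereas a priori the positive block lives only in $F/I$. The side relations $\mathcal H\,\hat\rho(G^-)\subseteq\hat\rho(G^-)\,\mathcal H$ and $\hat\rho(G^+)\,\mathcal H\subseteq\mathcal H\,\hat\rho(G^+)$ you need to close the argument are likewise unjustified.

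The paper does not attempt to locate the individual blocks at all. Instead it works monomial by monomial in $F/I$ (the free product of $\mathcal U(G^+)$ and $\mathcal U(G^-)$ modulo the ideal encoding the four bullets), assigning to each monomial $m=\prod (u_i)_{n_i}$ a \emph{level} $\lambda(m)=\sum_{(i,j)\in L} n_in_j$ counting inversions between plus and minus factors, and proves by induction on $\lambda$ that every monomial lies in $U^-\mathcal H U^+$. The essential step, which your outline lacks, is to show that $x_py_q\equiv\nu_{p,q}(x,y)$ modulo lower level, and then that $x_py_q$ drops to lower level whenever $p\neq q$. The latter is obtained from the divided-power relations $\binom{m}{k}g_m=\sum g_ag_b(g(-g))_{2c}$ in $\mathcal U(G^\pm)$ together with an elementary gcd computation on binomial coefficients (Faulkner's lemma that $\gcd\{\binom{n}{i}:0<i<n\}=1$ unless $n$ is a prime power, and a companion for even $n$). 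That arithmetic input is what replaces the ``combinatorial bookkeeping'' you leave open, and it is not recoverable from \cref{Lemma equations}. For the moreover part the paper again proceeds differently: it builds explicit Hopf-algebra projections $F/I\to V^\pm$ and $F/I\to\mathcal H$ from $\Delta$, $S$ and the grading, and checks that $(\pi_{V^-}\otimes\pi_{\mathcal H}\otimes\pi_{V^+})\Delta^2$ inverts multiplication; no reduction to free groups over $\mathbb Z$ is used or needed.
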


\begin{remark}
	If the vector groups $G^\pm$ are not projective, the moreover part of the previous theorem still holds if one replaces $\mathcal{U}(G^\pm)$ by a suitable quotient in the definition of $H$.
\end{remark}

Now, we prove a technical lemma to obtain the necessity of certain linearisation formulas which will be useful in the following sections.
We extend the domains of $\mu_{i,j}$, $\nu_{i,j}$, so that they can also take $o_{k,l}(x,y)$ as input. 
We use $\nu_{(a,b),(c,d)}$ to express certain $((a,b),(c,d))$-linearisations of $\nu_{a+b,c+d}$, corresponding to linearizing the homogeneous maps $G^+ \longrightarrow \text{Hom}(G^-,A),G^- \longrightarrow \text{Hom}(G^+,A).$

\begin{lemma}
	\label{lemma linearizations mu}
	Consider a pair of vector group representations $(\rho^+_i:G^+ \longrightarrow A,\rho^-_i:G^- \longrightarrow A)$. 
	The linearisations of $\mu$ can be computed as
	$$ \mu_{(i,j),k}(x,z,y) = \sum_{a + b = i} x_a \mu_{j,k}(z,y) (x^{-1})_b, \quad \mu_{i,(j,k)}(x,z,y) = \sum_{a + b = i} \mu_{a,j}(x,z)\mu_{b,k}(x,y).$$
	\begin{proof}
		This follows easily if one computes both sides from conjugations and products of $\rho_{[s]}(x), \rho_{[t]}(z),$ and $\rho_{[u]}(y)$.
	\end{proof}
\end{lemma}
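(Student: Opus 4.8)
The plan is to read both identities off the defining power‑series equation for $\mu$, by substituting a product of scaled group elements into the appropriate argument and picking out the coefficient of the relevant monomial. Throughout I work in the ring $A[\alpha,\beta][[s,t]]$ (over an arbitrary scalar extension), using only that each $\rho^\pm_{[s]}$ is a group homomorphism, that $\rho^\pm_{[s]}(\lambda\cdot_1 g)=\rho^\pm_{[\lambda s]}(g)$, that $\lambda\cdot_1$ is a group endomorphism (so $(\lambda\cdot_1 g)^{-1}=\lambda\cdot_1(g^{-1})$ and inverses reverse order), and the explicit formula $\mu_{i,j}(x,y)=\sum_{a+b=i}x_a\,y_j\,(x^{-1})_b$ from the preceding remark.

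For the first identity I would linearise in the $G^+$-argument: $\mu_{(i,j),k}(x,z,y)$ is by definition the coefficient of $\alpha^i\beta^j$ in $\mu_{i+j,k}\bigl((\alpha\cdot_1 x)(\beta\cdot_1 z),\,y\bigr)$, which is the $s^{i+j}t^k$-coefficient of
\[
\rho^+_{[s]}\bigl((\alpha\cdot_1 x)(\beta\cdot_1 z)\bigr)\,\rho^-_{[t]}(y)\,\rho^+_{[s]}\bigl(((\alpha\cdot_1 x)(\beta\cdot_1 z))^{-1}\bigr).
\]
Applying the homomorphism and scaling properties this factors as
\[
\rho^+_{[\alpha s]}(x)\,\bigl(\rho^+_{[\beta s]}(z)\,\rho^-_{[t]}(y)\,\rho^+_{[\beta s]}(z^{-1})\bigr)\,\rho^+_{[\alpha s]}(x^{-1}).
\]
The bracketed middle factor equals $\sum_{j',k}\beta^{j'}s^{j'}t^{k}\mu_{j',k}(z,y)$, and expanding the outer factors as $\sum_a\alpha^a s^a x_a$ and $\sum_b\alpha^b s^b(x^{-1})_b$, the coefficient of $\alpha^i\beta^j s^{i+j}t^k$ forces $a+b=i$ and $j'=j$, giving $\sum_{a+b=i}x_a\,\mu_{j,k}(z,y)\,(x^{-1})_b$ as claimed.

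For the second identity I would linearise in the $G^-$-argument, writing $\mu_{i,(j,k)}(x,z,y)$ as the $\alpha^j\beta^k$-coefficient of $\mu_{i,j+k}\bigl(x,\,(\alpha\cdot_1 z)(\beta\cdot_1 y)\bigr)$. The one non-mechanical move is to decouple the single conjugation into two by inserting $\rho^+_{[s]}(x^{-1})\rho^+_{[s]}(x)=1$ between the two $G^-$-factors: after $\rho^-_{[t]}\bigl((\alpha\cdot_1 z)(\beta\cdot_1 y)\bigr)=\rho^-_{[\alpha t]}(z)\rho^-_{[\beta t]}(y)$ one obtains
\[
\bigl(\rho^+_{[s]}(x)\,\rho^-_{[\alpha t]}(z)\,\rho^+_{[s]}(x^{-1})\bigr)\bigl(\rho^+_{[s]}(x)\,\rho^-_{[\beta t]}(y)\,\rho^+_{[s]}(x^{-1})\bigr),
\]
whose two factors are $\sum_{a,j'}\alpha^{j'}s^a t^{j'}\mu_{a,j'}(x,z)$ and $\sum_{b,k'}\beta^{k'}s^b t^{k'}\mu_{b,k'}(x,y)$. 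Reading off the coefficient of $s^i t^{j+k}\alpha^j\beta^k$ (so $a+b=i$, $j'=j$, $k'=k$) yields $\sum_{a+b=i}\mu_{a,j}(x,z)\,\mu_{b,k}(x,y)$. Since every step is a formal identity between power series that holds functorially, there is no genuine obstacle here; the only points requiring care are keeping track of which formal variable records which degree, and remembering to split the conjugation in the second case via the inserted identity rather than trying to expand a triple conjugate all at once.
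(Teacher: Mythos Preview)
Your proposal is correct and is precisely the computation the paper has in mind: the paper's proof is the one-line remark ``this follows easily if one computes both sides from conjugations and products of $\rho_{[s]}(x), \rho_{[t]}(z),$ and $\rho_{[u]}(y)$'', and you have simply written that computation out in full, including the key step of inserting $\rho^+_{[s]}(x^{-1})\rho^+_{[s]}(x)=1$ to split the conjugation in the second identity.
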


\begin{lemma}
	\label{Lemma equations}
	Suppose that $\rho^\pm_i : G^\pm \longrightarrow A$ are vector group representations.
	The following formulas hold for $x,g \in G^+, y \in G^-$
	\begin{enumerate}
		\item $\sum t^{k+l} \nu_{ki,kj}(x^{-1},y^{-1})\nu_{li,lj}(y,x) = 1,$
		\item \label{REP EQ V1}$[\nu_{1,1}(y^{-1},x^{-1}),\rho_1(g)] = \nu_{(1,1),1}(g,x,y)$,
		\item \label{REP EQ Tau1}$\nu_{2,1}(o_{1,1}(y^{-1},x^{-1}), g) = \nu_{(1,2),2}(g,x,y)$,
		\item \label{REP EQ V 2}$[\nu_{1,1}(y^{-1},x^{-1}),\rho_2(g)] = \nu_{(2,1),1}(g,x,y) + \nu_{(1,1),1}(g,x,y)\rho_1(g)$,
		\item $\nu_{(2,1),2}(g,x,y) = \nu_{2,1}(g,o_{2,1}(y^{-1},x^{-1})) - [\mu_{1,1}(x,y),\mu_{2,1}(g,y)],$
		\item $\nu_{(3,1),2}(g,x,y) = \nu_{3,1}(g,o_{2,1}(y^{-1},x^{-1})) - [\mu_{1,1}(x,y),\mu_{3,1}(g,y)] + \nu_{2,1}(g,y)\nu_{(1,1),1}(g,x,y)$,
		\item $\nu_{(1,3),2}(x,g,y) = [\rho_1(x),\nu_{3,2}(g,y)] + \nu_{(1,1),1}(x,g,y)\nu_{2,1}(g,y).$			
		\item 
		Furthermore, if $\nu_{4,1}(x,y) = 0$ for all $x \in G^+,y \in G^-$, then we also have
		\begin{enumerate}				
			\item \label{REP EQ Tau2}$\nu_{2,2}(o_{1,1}(y^{-1},x^{-1}), g) = \nu_{(2,2),2}(g,x,y) - \nu_{2,1}(g,y)\nu_{2,1}(x,y)).$
			\item $\nu_{3,(1,1)}({y,g,x}) = [\nu_{3,1}(y,g),\rho_1(x)] - [\nu_{1,1}(y,x),\nu_{2,1}(y,g)],$
			\item $\nu_{4,(1,1)}({y,g,x}) = [\nu_{3,1}(y,g),\nu_{1,1}(y,x)] + \nu_{2,1}(y,g)\nu_{2,1}(y,x).$
		\end{enumerate}
	\end{enumerate}		
	\begin{proof}
		The first equation follows from
		$$ (\rho^-_{[t]}(y^{-1})\rho^+_{[s]}(x)\rho^-_{[t]}(y)\rho^+_{[s]}(x^{-1}))^{-1} = \rho^+_{[s]}(x)\rho^-_{[t]}(y^{-1})\rho^+_{[s]}(x^{-1})\rho^-_{[t]}(y),$$
		since this proves, using the defining expressions for $o_{i,j}$ with slightly different indices so that they correspond to commutators, that
		$$ \left(\prod_{\gcd(i,j) = 1, (i,j) \in (\mathbb{N}_{>0})^2} \exp(o_{i,j}(x,y),s^it^j) \right)^{-1} = \prod_{\gcd(i,j) = 1, (i,j) \in (\mathbb{N}_{>0})^2} \exp(o_{i,j}(y^{-1},x^{-1}),s^jt^i).$$
		In the rest, we write $g_i$ for $\rho_i(g)$.
		
		The rest are just calculations involving linearisations. 
		These equations can be proved by rewriting the $\nu$ on the left-hand side in terms of $\mu$, and smartly using
		$$ \mu_{(a,b),c}(g,x,y) = \sum_{i + j = a}g_i \mu_{b,c}(x,y) (g^{-1})_j$$
		and 
		$$ \mu_{a,(b,c)}(y,g,x) = \sum_{i + j = a} \mu_{i,b}(y,g)\mu_{j,c}(y,x). $$
		
		We prove, as an example, equation $8.a$, since the proof of this equation showcases all techniques needed to prove the other cases.
		From the decomposition of $$\exp(o_{1,1}(y^{-1},x^{-1}),l)\rho_{[s]}(g)\exp(o_{1,1}(x,y),l)\rho_{[s]}(g^{-1})  = \exp(o_{1,1}(y^{-1},x^{-1}),l) \sum s^il^j \mu_{i,j}(g,o_{1,1}(x,y))$$
		as a product
		$$ \prod_{\gcd(i,j) = 1, (i,j) \in (\mathbb{N}_{>0})^2}
		\exp(o_{i,j}(o_{1,1}(y^{-1},x^{-1}),g),l^is^j),$$ we learn that 
		\begin{equation}
			\label{equation lemma equations 1}
			\nu_{2,2}((o_{1,1}(y^{-1},x^{-1}), g) = \mu_{2,2}(g,o_{1,1}(x,y)) - \mu_{1,1}(x,y)\mu_{2,1}(g,o_{1,1}(x,y))
		\end{equation}
		by comparing the terms belonging to $l^2s^2$ and using that $\nu_{l,1} = \mu_{l,1}$.
		The ordinary definitions of $\nu$ and $\mu$ yield \[\nu_{2,2}(x,y) = \mu_{2,2}(x,y) - y_1\mu_{2,1}(x,y),\] so that
		\begin{equation}
			\label{equation lemma equations}
			\mu_{2,2}(g,o_{1,1}(x,y)) = \mu_{(2,2),2}(g,x,y) - \sum_{i + j = 2} g_i y_1 \mu_{2,1}(x,y) (g_j)^{-1}
		\end{equation}
		must hold since $\nu_{2,2}(x,y)$ is the second coordinate of $o_{1,1}(x,y)$, using the definition of $\mu_{2,\cdot}(g,\cdot)$.
		Equation (\ref{equation lemma equations}) can be rewritten further as
		\begin{equation}
			\label{equation lemma equations 2}
			\mu_{2,2}(g,o_{1,1}(x,y)) = \mu_{(2,2),2}(g,x,y) - \sum_{k + l= 2} \mu_{k,1}(g,y) \mu_{(l,2),1}(g,x,y),
		\end{equation}
		using $\sum_{i + j = 2} g_i y_1 \mu_{2,1}(x,y) (g_j)^{-1} = \sum_{k + l= 2} \mu_{k,1}(g,y) \mu_{(l,2),1}(g,x,y)$ by Lemma \ref{lemma linearizations mu}.
		Similarly, we have
		\begin{equation}
			\label{equation lemma equations 3}
			- \mu_{1,1}(x,y)\mu_{2,1}(g,o_{1,1}(x,y)) = - \mu_{1,1}(x,y)\mu_{(2,1),1}(g,x,y).
		\end{equation}
		Combining Equations (\ref{equation lemma equations 1}), (\ref{equation lemma equations 2}), and (\ref{equation lemma equations 3}), we obtain
		$$ \nu_{2,2}(g,o_{1,1}(y^{-1},x^{-1})) = \nu_{(2,2),2}(g,x,y) - \mu_{2,1}(g,y)\mu_{2,1}(x,y), $$
		by using
		$$ \nu_{(2,2),2}(g,x,y) = \mu_{(2,2),2}(g,x,y) - \mu_{1,1}(g,y)\mu_{(1,2),1}(g,x,y) - \mu_{1,1}(x,y)\mu_{(2,1),1}(g,x,y)$$
		and $\mu_{(2,2),1}(g,x,y) = 0$.
		Using $\mu_{2,1} = \nu_{2,1}$ yields the equation we wanted to prove.	     
	\end{proof}	
\end{lemma}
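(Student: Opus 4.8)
The plan is to reduce every identity to a comparison of coefficients in formal power series obtained from products and conjugates of the group-like series $\rho^\pm_{[s]}(\cdot)$, and then to translate freely between the $\mu$'s and the $\nu$'s. Two dictionaries do all the work: the explicit formula $\mu_{i,j}(x,y) = \sum_{a+b=i} x_a y_j (x^{-1})_b$ together with the $\mu$-linearisation formulas of Lemma \ref{lemma linearizations mu}, and the recursive passage from $\mu$ to $\nu$ dictated by the ordered product decomposition (with primitive factors ordered by $il<jk$). Once an identity is phrased purely in terms of $\mu$'s and their linearisations it becomes a bookkeeping exercise in a polynomial ring over $A$, so the only genuine content is keeping track of which factors of the $\nu$-decomposition contribute to a given bidegree.

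I would treat equation~1 first and separately, since it is the one structural rather than computational identity. Applying the group inverse to $\rho^-_{[t]}(y^{-1})\rho^+_{[s]}(x)\rho^-_{[t]}(y)\rho^+_{[s]}(x^{-1})$ and recognising the right-hand side as the same conjugator with $x,y$ replaced by $x^{-1},y^{-1}$ and the roles of $s,t$ swapped, I would match the two ordered product decompositions factor by factor; equating the isobaric pieces in each primitive direction $(i,j)$ yields the stated telescoping relation $\sum_k t^{k+l}\nu_{ki,kj}(x^{-1},y^{-1})\nu_{li,lj}(y,x)=1$.

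For the remaining identities (items~2--7 and 8.a--8.c) I would follow the template worked out for 8.a. First rewrite the $\nu$ on the left in terms of $\mu$'s using the recursive definition; this is exactly where the ordering $il<jk$ enters and produces the correction terms, e.g.\ $\nu_{2,2}=\mu_{2,2}-y_1\mu_{2,1}$. Next, whenever an argument is one of the extended-domain objects $o_{k,l}(x,y)$, expand $\mu_{i,j}(g,o_{k,l}(x,y))$ and identify its coordinates with the linearisations $\mu_{(\cdot,\cdot),\cdot}$ and $\mu_{\cdot,(\cdot,\cdot)}$ via Lemma \ref{lemma linearizations mu}; reindexing the resulting double sum, as in the step $\sum_{i+j=2} g_i y_1 \mu_{2,1}(x,y)(g^{-1})_j = \sum_{k+l=2}\mu_{k,1}(g,y)\mu_{(l,2),1}(g,x,y)$, collapses everything into the desired combination of $\nu$-linearisations. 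Items~2--4 are the low-degree instances, needing only the identification $\nu_{i,1}=\mu_{i,1}$ plus a single commutator expansion, while items~5--7 additionally use the second-order correction hidden in $\nu_{\cdot,2}$. For part~8 I would first record that the hypothesis $\nu_{4,1}(x,y)=0$ removes the primitive factor in direction $(4,1)$, so that in the bidegrees relevant to 8.a--8.c only products and commutators of the lower-degree $\nu_{i,1}=\mu_{i,1}$ survive; these surviving cross-terms (such as $\nu_{2,1}\nu_{2,1}$ and $[\nu_{1,1},\nu_{2,1}]$) are precisely the correction summands appearing on the right-hand sides.

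The step I expect to be the main obstacle is the combinatorial control of the ordered $\nu$-decomposition: once an $o_{k,l}$ is substituted as an argument, one must decide exactly which pairs of primitive factors contribute cross-terms in the target bidegree and in which order they multiply, since the wrong order changes the placement of factors inside the commutators. Getting the reindexing of the double sums to match Lemma \ref{lemma linearizations mu} on the nose — rather than up to terms one must then separately argue away — is the delicate part. The worked computation of 8.a serves essentially as a certificate that this bookkeeping can be carried out uniformly, and the remaining items are variations on it, differing only in degree and in how many correction terms the $\mu$-to-$\nu$ dictionary injects.
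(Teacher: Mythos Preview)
Your proposal is correct and follows essentially the same approach as the paper: equation~1 is handled via the group-inverse identity on the ordered $o_{i,j}$-decomposition, and the remaining items are obtained by translating $\nu$'s into $\mu$'s via the recursive definition and then invoking the $\mu$-linearisation formulas of Lemma~\ref{lemma linearizations mu}, with equation~8.a serving as the model computation. Your identification of the ordering bookkeeping as the only real subtlety, and of the role of $\nu_{4,1}=0$ in killing the extra primitive factor in part~8, matches the paper exactly.
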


\begin{remark}
	Once one understands how to work with the linearisations, the previous equations are not that hard to prove. They are just very tedious to write down. All of the computations can be expressed relatively compactly, however.
	For example, if one schematically writes $(\cdot,\cdot)$ for $\mu_{\cdot,\cdot}$ and $[\cdot,\cdot]$ for $\nu_{\cdot,\cdot}$, the previous computation can be restated as
	\begin{align*}
		[2,2]_{o,g} & = (2,2)_{g,o} - (1,1)_{x,y}(2,1)_{g,o} = ((2,2),2) - ((0,1),1)((2,1),1) -\sum_{i + j = 2} ((i,0),1)((j,2),1) \\
		& = [(2,2),2] - ((2,0),1)((0,2),1) - ((0,0),1)((2,2),1) = [(2,2),2] - [(2,0),1][(0,2),1], 
	\end{align*}
	where we did not write evaluations in $(g,x,y)$, used the linearisation properties of $\mu$ and used expressions for $\nu_{i,j}$'s in terms of $\mu_{i,j}$'s. For this computation, one can express the $\nu$'s in terms of $\mu$'s using
	$$ [2,2] = (2,2) - (0,1)(2,1), \quad [4,2] = (4,2) - (1,1)(3,1).$$
	In this last expression we dropped the term $(0,1)(4,1)$, since we assume that $(4,1) = 0$ in the lemma.
	Another relation that is useful in proving the previous lemma is
	$$ [3,2] = (3,2) - (0,1)(3,1) - (1,1)(2,1).$$
\end{remark}
	
\section{Operator Kantor pairs}\label{se:okpairs}

\subsection{Pre-Kantor pairs}

Let $G^+ \le A^+ \times B^+$ and $G^- \le A^- \times B^-$ be a pair of proper vector groups with associated bilinear forms $\psi : A \times A \longrightarrow B$ (we will not write uninformative $\pm$ signs). For  ease of notation, we assume that 
$$ \text{Lie}(G^+) = A^+ \oplus G^+_2,$$
i.e., we assume that $A^+$ is as small as possible (one can always modify $A^+ \times B^+$ so that this is the case). Furthermore, we can assume that $\hat{G} \cong G$ as functors, which is possible since $G$ is proper.
We assume the same for $G^-$ and $A^-$.

\begin{lemma}
	Let $G$ be a vector group.
	The space $[G,G] \le G_2$ is a $\Phi$-submodule.
	\begin{proof}
		By definition $[G,G] \le G_2$ is the subgroup generated by all the commutators of $G$, so it is an additive subspace.
		We prove that it is closed under $\cdot_2$ as well.
		This is the case, since the equation
		$$ \lambda \cdot_2 [a,b] = [\lambda \cdot_1 a, b]$$
		holds for all $a, b \in G$.
	\end{proof}
\end{lemma}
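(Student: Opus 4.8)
The plan is to check the two defining properties of a $\Phi$-submodule separately: that $[G,G]$ is an additive subgroup of $B$, and that it is stable under the second scalar action $\cdot_2$. The first property is essentially built into the construction. Every commutator has the form $[(a_1,a_2),(b_1,b_2)] = (0,\psi(a_1,b_1)-\psi(b_1,a_1))$, so it lies in $0\times B$; and since $(0,b)(0,d) = (0,b+d)$ and $0\times B$ is in fact central in $A\times B$, the subgroup $G_2 = G\cap(0\times B)$ is abelian with group law given by addition in the second coordinate. Consequently the subgroup $[G,G]$ generated by the commutators is abelian, its typical element is a finite sum of commutator values (inverses cause no trouble because $[a,b]^{-1}=[b,a]$), and closure under addition is immediate.

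The heart of the matter is therefore the closure under $\cdot_2$, and the bridge between the two scalar multiplications is the identity
\[
\lambda \cdot_2 [a,b] = \bigl(0,\lambda(\psi(a_1,b_1)-\psi(b_1,a_1))\bigr) = \bigl(0,\psi(\lambda a_1,b_1)-\psi(b_1,\lambda a_1)\bigr) = [\lambda \cdot_1 a,\, b],
\]
valid for $a=(a_1,a_2)$, $b=(b_1,b_2)\in G$ and $\lambda\in\Phi$. The first equality is the definition of $\cdot_2$ applied to the commutator, the middle one is the bilinearity of $\psi$ together with the fact that $\lambda\cdot_1 a = (\lambda a_1,\lambda^2 a_2)$ has first coordinate $\lambda a_1$, and the last one re-reads the resulting element as the commutator of $\lambda\cdot_1 a$ with $b$. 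Crucially, $\lambda\cdot_1 a$ again lies in $G$, since a vector group is by definition closed under $\cdot_1$, so the right-hand side is a genuine commutator of elements of $G$ and hence lies in $[G,G]$.

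Finally I would extend this from a single commutator to an arbitrary element of $[G,G]$. Writing such an element as a finite sum $x=\sum_i [a_i,b_i]$ and using that $\cdot_2$ is additive on $G_2$ (it is a group endomorphism of the abelian group $0\times B$), one gets $\lambda\cdot_2 x = \sum_i \lambda\cdot_2[a_i,b_i] = \sum_i [\lambda\cdot_1 a_i,\,b_i] \in [G,G]$, which completes the argument. I do not expect any serious obstacle here: the only care needed is the elementary bookkeeping that reduces a general element of the commutator subgroup to a finite sum of commutators and the observation that $\cdot_2$ distributes over such sums; everything then follows from bilinearity of $\psi$ and closure of $G$ under $\cdot_1$.
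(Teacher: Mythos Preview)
Your proof is correct and follows essentially the same approach as the paper: both reduce the $\cdot_2$-closure to the identity $\lambda \cdot_2 [a,b] = [\lambda \cdot_1 a, b]$, with your version simply unpacking the bilinearity computation and the extension to sums that the paper leaves implicit.
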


Let $M$ be a module. We call a natural transformation $f : G^+ \times G^- \longrightarrow M$ homogeneous of \textit{bidegree} $[i,j]$ if it is homogeneous of degree $i$ in the first component and of degree $j$ in the second component.
We often use $f_x y$ to denote $f(x,y)$ for a map that is homogeneous in both arguments.

\begin{assumption}
	Throughout this section we will be working with certain operators $Q,R,T,P, Q^\text{grp}$ defined for $G^+,G^-$. We assume, in this section, that these are of the following form: 
	\begin{itemize}
		\item $ Q^{\text{grp}} : G^+ \times G^- \longrightarrow G^+ : (g,h) \mapsto Q^{\text{grp}}_gh = (Q_g h, R_g h)$
		with $Q$ of bidegree $[2,1]$ and $R$ of bidegree $[4,2]$,
		\item $ T : G^+ \times G^- \longrightarrow [G^+,G^+] : (g,h) \mapsto (0,T_g h)$
		of bidegree $[3,1]$,
		\item $ P : G^+ \times G^- \longrightarrow A^+: (g,h) \mapsto P_g h,$ 
		of bidegree $[3,2]$.
	\end{itemize}
	We also use $Q,R,T,P,Q^\text{grp}$ to denote operators $G^-\times G^+ \longrightarrow M$ for the appropriate $M$.
	These operators $Q^\text{grp},T$ will in a precise sense play the role of $o_{2,1}$ and $o_{3,1}$ of Theorem \ref{thm main} and $P$ will play the role of $\nu_{3,2}.$
\end{assumption}

\begin{definition}
	Let $C$ be an associative algebra with involution $a \mapsto \overline{a}$ and consider a left $C$-module $M$. We call a $\Phi$-bilinear map $h : M \times M \longrightarrow C$ a (left-)hermitian form if it satisfies $h(cm,n) = ch(m,n)$ and $\overline{h(m,n)} = h(n,m)$.
\end{definition}

\begin{example}
	\label{example hermitian form}
	Now, we will consider an example of such a set of operators that will form an operator Kantor pair. This will be proved after we define operator Kantor pairs.
	Consider a hermitian form $h : M \times M \longrightarrow C$ and assume that $C$ acts faithfully on $M$.
	The Kantor triple system $(M,V_{x,y})$, with $V_{x,y} : M \longrightarrow M$ for $x,y \in M$ (for a definition of a Kantor triple system, see \cite[section 3]{ALLFLK99}), associated to this hermitian form is given by
	$$ V_{x,y} z = h(x,y)z + h(z,y)x - h(z,x)y.$$

	Set $G = \{ (m,a) \in M \times C | a + \bar{a} = h(m,m), a - \bar{a} \in \langle  h(x,y) - h(y,x) |x,y \in M \rangle\}$ with operation $$(m,a)(n,b) = (m + n, a + b +h(m,n)).$$
	We cannot guarantee that $G$ is proper. So, we assume that $C$ and $M$ are chosen in such a way that $G$ is proper. We remark that setting $M'= M \otimes \Phi[s]/(s^2 - s)$ and $C' = C \otimes \Phi[s]/(s^2 - s)$ with involution $c \otimes s \mapsto \bar{c} \otimes (1 - s)$ guarantees that $G'$ is proper. Namely, we have $G'_2 = \langle h(x,y) \otimes s - h(y,x) \otimes (1 - s) \rangle$ so that $G'_2 \otimes K \longrightarrow \langle h(M,M) \rangle \otimes \Phi[s]/(s^2 - s) \otimes K$ is injective for all $\Phi$-algebras $K$. Therefore, we know there exists a realisation of $\hat{G}'$ of $G$ such that $\Lie(\hat{G}'(K)) \cong \Lie(G') \otimes K$, which proves that $G'$ is proper by Lemma \ref{lem: Lie of G functorial}.
	Define 
	\begin{enumerate}
		\item $Q_{(m,a)}(n) = - h(m,n)m + an,$
		\item $T_{(m,a)}(n) =  h(m,an) -h(an,m),$
		\item $P_{(m,a)}(n,b) =  - h(m,n)Q_{(m,a)}(n) - abm,$
		\item $R_{(m,a)}(n,b) = ab\bar{a} + h(m,n)ah(n,m) - h(m,n)h(m,an).$
	\end{enumerate}
	A direct computation shows that $Q^{\text{grp}}_g h = (Q_g h,R_g h)$ is an operator mapping to $G$. At the end of this section we will prove that $(G,G,Q^\text{grp},T,P)$ forms an operator Kantor pair if $1/2 \in \Phi$ or if
	\begin{equation}
		\label{condition kantor pair hermitian form}
		\langle c - \bar{c} | (a,c) \in G \rangle \subseteq [G,G].
	\end{equation}
	Once again, we remark that this assumption holds whenever we consider $M'$ and $C'$ instead of $M$ and $C$.
	One can recover the original Kantor triple system using
	$$ V_{x,y} z = - Q^{(1,1)}_{z,x} y,$$
	with $Q^{(1,1)}$ the $(1,1)$-linearisation of $Q$.
\end{example}

\begin{definition}
	\label{definition V tau}
	We want to define certain new operators $V, \tau : G^+ \times G^- \longrightarrow \text{End}(G^+) \times \text{End}(G^-)$, and also define them on $G^- \times G^+$.
	In this definition we write $V^+_{g,h}, V^-_{g,h}$ for the projections of $V_{g,h}$ as endomorphisms of $G^+, G^-$ (outside the scope of this definition, we will just write $V$). We do the same for $\tau$.
	Use $f^{(i,j)}$ to denote the $(i,j)$-linearisation $f$ to the first component.
	For $x,g \in G^\epsilon$ and $y \in G^{-\epsilon}$, $\epsilon \in \{\pm\}$, we put
	$$ V^\epsilon_{y^{-1},x^{-1}} g = (Q^{(1,1)}_{g,x} y, T^{(2,1)}_{g,x} y + \psi(Q^{(1,1)}_{g,x} y,g))$$
	and 
	$$ \tau^\epsilon_{y^{-1},x^{-1}} g = (P^{(1,2)}_{g,x} y, R^{(2,2)}_{g,x} y - \psi(Q_g y,Q_x y) + \psi(P^{(1,2)}_{g,x} y, g)).$$	
	So far we have defined $V^{\epsilon},\tau^{\epsilon}$ on $G^{-\epsilon} \times G^\epsilon$. 
	Now, we want to define $V^{\epsilon}, \tau^{\epsilon}$ when these take arguments in $G^{\epsilon} \times G^{-\epsilon}$.
	
	By working over $\Phi[\eta]$ with $\eta^3 = 0$ and taking elements in $G^\pm \le G^\pm(\Phi[\eta])$ under the canonical embedding, we can rewrite the definition of $V^+,\tau^+$ as
	\begin{align*}
		& \left(1 + \eta V^{+}_{y^{-1},x^{-1}} + \eta^2 \tau^{+}_{y^{-1},x^{-1}}\right)(g) = \left(\eta Q^{(1,1)}_{g,x} y + \eta^2P^{(1,2)}_{g,x} y,  \eta T^{(2,1)}_{g,x} y + \eta^2 R^{(2,2)}_{g,x} y - \eta^2\psi(Q_gy,Q_xy)\right) \cdot g,
	\end{align*}
	for $x,g \in G^+$, $y \in G^-$.

	We use $$(1 + \eta V^\epsilon_{x,y} + \eta^2 \tau^\epsilon_{x,y})(1 + \eta V^\epsilon_{y^{-1},x^{-1}} + \eta^2 \tau^\epsilon_{y^{-1},x^{-1}}) = 1$$
	to define $V^\epsilon_{x,y}$ and $\tau^{\epsilon}_{x,y}$ from the already defined $V^\epsilon_{y^{-1},x^{-1}}, \tau^\epsilon_{y^{-1},x^{-1}}$.
\end{definition}

\begin{remark}
	Note that the definition of $V$ and $\tau$ is not that easy to give, although it could be easier for $V$ since $V_{x,y} = - V_{y^{-1},x^{-1}}$. We extended the definition using
	$$(1 + \eta V_{x,y} + \eta^2 \tau_{x,y})(1 + \eta V_{y^{-1},x^{-1}} + \eta^2 \tau_{y^{-1},x^{-1}}) = 1,$$
	which encodes the relationship between the very difficult to define operator $\tau$ and the more easily defined $V$.
	This relationship corresponds to the fact that $(G^+,G^-) \longrightarrow (G^-,G^+):(x,y) \longmapsto (y^{-1},x^{-1})$ corresponds to $o_{i,j}(x,y) \mapsto o_{j,i}(y^{-1},x^{-1})^{-1}$ for any pair of vector group representations.
\end{remark}

\begin{remark}
	If one were to use $Q = \nu_{2,1}, T = \nu_{3,1}, P = \nu_{3,2}, R = \nu_{4,2}$ for some vector group representations (later we shall see that the operators can always be understood in this way), then one can check that the definition of $V_{y^{-1},x^{-1}}$ and $\tau_{y^{1},x^{-1}}$ is derived from the conjugation action of $\exp(o_{1,1}(y^{-1},x^{-1},\eta)).$
	We can uniquely recover the definition of $V$ from equations $2$ and $4$ of Lemma \ref{Lemma equations}, where one can think of the left-hand side of those equations as representing 
	$$ [V_{y^{-1},x^{-1}},\rho_i(g)].$$
	Similarly, one can uniquely recover the definition of $\tau$ from equations $3$ and $8.a$, where we gave expressions $\nu_{2,i}(o_{1,1}(y^{-1},x^{-1}),g)$ instead of the usual conjugation action $\mu_{2,i}(o_{1,1}(y^{-1},x^{-1}),g)$ (which is no problem since one can easily go back and forth between $\nu_{i,j}$'s and $\mu_{i,j}$'s).
\end{remark}

From now onward we do not make a distinction between homogeneous maps $f :G^\pm \longrightarrow M$ of degree $1$, and linear maps $A^\pm \longrightarrow M$.
In particular, we can write $V_{y,x} = V_{-y,-x} = V_{y^{-1},x^{-1}}$.
We use the same convention to identify the commutator on $\text{Lie}(G)$ and the commutator on $G$, using $[a,b] = [(a,\cdot),(b,\cdot)].$

\begin{definition}
	\begin{enumerate}
		\item Let $f : G^\pm \times G^\mp \longrightarrow M$ be a map defined on a product of vector groups which is homogeneous of degree $n$ in the first component and of degree $m$ in the second component. 
		We use $f^{(i,j)}$ to denote the linearisation of $f$ in the first component, $f^{(n,(i,j))}$ to denote the linearisation of $f$ in the second component, and $f^{((i,j),(k,l))}$ if we use linearisations in both components.
		The reason for this asymmetry in notation is that the first component in $f$ plays conceptually a more important role. The used notation $g \mapsto f_g$ can be interpreted as corresponding to a natural transformation $G^\pm \longrightarrow \text{Nat}(G^\mp,M)$.
		\item 
		Consider a pair $(G^+,G^-)$ of proper vector groups with the assumed operators $Q,T,P,R$.
		A pair of automorphisms $(a,b)$ of $G^+$ and $G^-$ is called an \textit{automorphism} if it preserves the operators, i.e.,
		$a(O_g h) = O_{ag} (bh), \quad b(O_hg) = O_{bh}(ag),$
		for $O = Q^{\text{grp}},T,P$ and $g \in G^+, h \in G^-$. To say that $Q^\text{grp}$ is preserved can be restated as $a_1(Q_gh) = Q_{(ag)}(bh)$ and $a_2(Q^\text{grp}_gh) = R_{a(g)}(bh)$, if one writes $a(g) = (a_1(g),a_2(g))$.
	\end{enumerate}
\end{definition}
\begin{definition}
	\label{Definition pre-Kantor pair}
	Let $(G^+,G^-)$ be a pair of vector groups such that
	\begin{equation}
		\label{Kantor pair equation}
		g(-g) \in [G^\pm,G^\pm] \text{ for } g \in G^\pm.
	\end{equation}
	We also assume that
	\begin{equation}
		\label{Kantor pair equation 2}
		G_2^\pm \longrightarrow \text{Hom}_\Phi(A^\mp,A^\pm): s \longmapsto Q_s \quad \text{is injective.}
	\end{equation}  
	We call $(G^+,G^-,Q^\text{grp}, T, P)$ a \textit{pre-Kantor} pair if
	\begin{enumerate}
		\item $1 + \epsilon V_{x,y}$ is an automorphism over $\Phi[\epsilon]$ with $\epsilon^2 = 0$
		\item \label{PKP LIN T} $T^{(1,2)}_{a,g} y = [a, Q_g y]$,
		\item \label{PKP LIN P} $P^{(2,1)}_{g,a} y = Q_gQ_{y^{-1}} a - V_{a,y} Q_g y$,
		\item $P^{(3,(1,1))}_g (y_1,y_2) = Q_{T_g y_1} y_2 - V_{g,y_2} Q_g y_1 $,
		\item $R^{(3,1)}_{g,a} y = T_g Q_{y^{-1}} a - V_{a,y} T_g y + \psi(Q_gy,Q^{(1,1)}_{g,a} y)$,
		\item $R^{(1,3)}_{a,g} y = [a, P_g y] + \psi(Q^{(1,1)}_{a,g} y,Q_g y)$,
		\item $R^{(4,(1,1))}_g(a,b) = - V_{g,b} T_g a + \psi(Q_g a,Q_g b)$.		
	\end{enumerate}
	We assume that these equalities hold over all scalar extensions.
\end{definition}

\begin{remark}
	If $1/2 \in \Phi$, then Condition (\ref{Kantor pair equation}) ensures that $G^\pm_2 = [G^\pm,G^\pm]$ since $$(0,2s) = (0,s)(0,s) = (0,s)(-1 \cdot_1 (0,s)) \in [G^\pm,G^\pm].$$
	Later we will see that this equation ensures the existence of a Lie algebra with grading element associated to a Kantor pair such that both $G^\pm$ are groups of exponentials (in the usual sense if $1/6 \in \Phi$ and in a more general sense if only $1/2 \in \Phi$).
	Conditions (\ref{Kantor pair equation}), (\ref{Kantor pair equation 2}) correspond thus, if $1/6 \in \Phi$, precisely to the conditions \cite[Proposition 7.5]{BENSMI03} for a Jordan-Kantor pair to come from a Kantor pair (using that the action of the Jordan part will be faithful if and only if $s \mapsto Q_s$ is injective on $G_2$).
\end{remark}

\begin{lemma}
	\label{lemma uniqueness P}
	Let $(G^+,G^-)$ be a pre-Kantor pair.
	Let $P'$ be a homogeneous map $G^+ \times G^- \longrightarrow A^+$ with the same linearisations as $P$, 
	then 
	$$ P_xy = P'_xy$$
	for all $x \in G^+, y \in G^-$.
	Furthermore, $P$ is determined uniquely by $Q$ and $T$.
	\begin{proof}
		Recall Theorem \ref{theorem universal homogeneous}, i.e.,   $g \mapsto g_n \in \mathcal{U}(G)_n$ is the universal homogeneous map of degree $n$, so that equalities in the universal representation induce equalities for all homogeneous maps.
		First, we use that $P$ is homogeneous of degree $3$ in the first argument, so that
		$$ 3 P_xy = 3 P^{(1,2)}_{x,x} y - P^{(1,1,1)}_{x,x,x}y,$$
		since
		$$ 3x_3 = 3x_1x_2 - x_1^3$$
		holds in the universal representation.
		Second, we use that $P$ is homogeneous of degree $2$ in the second component and that there exists $a_i,b_i$ such that $y(-y) = \sum_{i = 1}^n [a_i,b_i]$ by (\ref{Kantor pair equation}), to obtain
		$$ 2P_xy = P^{(3,(1,1))}_x(y,y) +  \sum_{i = 1}^n P^{(3,(1,1))}_x(a_i,b_i) - P^{(3,(1,1))}_x(b_i,a_i),$$
		since $2y_2 - y_1^2 = \sum_{i = 1}^n [a_i,b_i]$ in the universal representation.
		These two combined yield that
		$$ P_xy = (3 - 2)P_xy = 3 P^{(1,2)}_{x,x} y - P^{(1,1,1)}_{x,x,x}y - P^{(3,(1,1))}_x(y,y) - \sum_{i = 1}^n P^{(3,(1,1))}_x(a_i,b_i) - P^{(3,(1,1))}_x(b_i,a_i).$$
		Hence, $P$ is uniquely determined by its linearisations. So, we conclude that $P_xy = P'_xy$.
		For the furthermore part, note that the linearisations of $P$ given in the definition of a pre-Kantor pair are expressions only involving $Q, T,$ and linearisations of these operators ($V$ is defined in terms of these linearisations).
	\end{proof}
\end{lemma}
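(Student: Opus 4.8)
The plan is to exploit that $P$ is bihomogeneous of bidegree $[3,2]$ and to lean on \cref{theorem universal homogeneous}, which identifies $g \mapsto g_n$ in $\mathcal{U}(G)_n$ as the universal homogeneous map of degree $n$; consequently any identity among the elements $g_i$ in the universal representation transports to an identity among the corresponding linearisations of \emph{every} homogeneous map. The goal is to produce a closed formula for $P_xy$ whose right-hand side involves only linearisations of $P$ (and never $P_xy$ itself), handling the degree-$3$ dependence on $x$ and the degree-$2$ dependence on $y$ separately and then combining.

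For the first argument I would invoke the identity $3x_3 = 3x_1x_2 - x_1^3$ in $\mathcal{U}(G^+)$, recorded in \eqref{equation homogeneous of degree 3}. Applying the universal property to the degree-$3$ map $x \mapsto P_xy$ yields $3P_xy = 3P^{(1,2)}_{x,x}y - P^{(1,1,1)}_{x,x,x}y$, rewriting three copies of $P_xy$ through the first-argument linearisations $P^{(1,2)}$ and $P^{(1,1,1)}$. For the second argument I would use that $y \mapsto P_xy$ is homogeneous of degree $2$, so that $2y_2 = y_1^2 + (y(-y))_2$ in $\mathcal{U}(G^-)$. By \cref{Kantor pair equation} the element $y(-y)$ lies in $[G^-,G^-]$, hence $y(-y) = \sum_i [a_i,b_i]$; evaluating the degree-$2$ map on each commutator via \cref{Lemma homogeneous map on commutator} gives $2P_xy = P^{(3,(1,1))}_x(y,y) + \sum_i\bigl(P^{(3,(1,1))}_x(a_i,b_i) - P^{(3,(1,1))}_x(b_i,a_i)\bigr)$. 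Subtracting, $P_xy = 3P_xy - 2P_xy$ becomes an expression purely in the linearisations $P^{(1,2)}$, $P^{(1,1,1)}$, $P^{(3,(1,1))}$. In particular two homogeneous maps sharing all linearisations of $P$ must coincide, which is the first assertion.

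For the furthermore part I would show that each linearisation appearing in this closed formula is recoverable from $Q$ and $T$ alone. The identity \cref{PKP LIN P} expresses $P^{(2,1)}$ through $Q$, $T$, and $V$, and the fourth defining identity of \cref{Definition pre-Kantor pair} expresses $P^{(3,(1,1))}$ likewise; since $V$ is built from the linearisations of $Q$ and $T$, both are functions of $Q$ and $T$. The trilinearisation $P^{(1,1,1)}$ is then obtained by one further linearisation of the degree-$2$ slot of $P^{(2,1)}$, hence is also determined by $Q$ and $T$.

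I expect the single subtle point, and the main obstacle, to be the recovery of $P^{(1,2)}$, since a priori the $(1,2)$- and $(2,1)$-linearisations differ by the non-commutativity of $\mathcal{U}(G^+)$. Here I would use the commutation relation of \cref{construction universal representation} to compute $x_1 x_2 - x_2 x_1 = x_1 [x,x]_1$, and observe that $[x,x] \in [G^+,G^+] \subseteq G^+_2$ forces $[x,x]_1 = 0$, so $x_1x_2 = x_2x_1$ and therefore $P^{(1,2)}_{x,x}y = P^{(2,1)}_{x,x}y$. Thus $P^{(1,2)}$ is governed by \cref{PKP LIN P} as well, every ingredient of the closed formula for $P_xy$ is a function of $Q$ and $T$, and $P$ is uniquely determined by $Q$ and $T$.
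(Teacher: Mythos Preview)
Your argument is correct and follows the paper's proof essentially verbatim: the same degree-$3$ identity $3x_3 = 3x_1x_2 - x_1^3$ and degree-$2$ identity $2y_2 = y_1^2 + (y(-y))_2$ are combined via $P_xy = 3P_xy - 2P_xy$, and the furthermore clause is handled by noting that the relevant linearisations are expressed through $Q$, $T$, and $V$. Your explicit verification that $P^{(1,2)}_{x,x} = P^{(2,1)}_{x,x}$ via $x_1x_2 = x_2x_1$ in $\mathcal{U}(G^+)$ is a detail the paper leaves implicit (it simply asserts that ``the linearisations of $P$ given in the definition'' suffice), so your treatment is slightly more careful here; note though that $[x,x]$ is just the identity element of $G^+$, so $[x,x]_1 = 0$ is immediate without invoking $[G^+,G^+] \subseteq G^+_2$.
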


\begin{definition}
	Consider an automorphism $ 1 + \epsilon \delta = (1 + \epsilon \delta^+, 1 + \epsilon \delta^-)$ over $\Phi[\epsilon]$ with $\epsilon^2 = 0$, of a pre-Kantor pair. We call $\delta = (\delta^+,\delta^-)$ a \textit{derivation} of the pre-Kantor pair.  
	For derivations $\delta_1, \delta_2$ we define
	$[\delta_1,\delta_2]$ using
	$$ [1 + \epsilon_1 \delta_1, 1 + \epsilon_2 \delta_2] = 1 + \epsilon_1\epsilon_2 [\delta_1,\delta_2],$$
	over $\Phi[\epsilon_1,\epsilon_2]$ with $\epsilon_i^2 = 0$ for $i = 1,2$.
	On the universal representations of the vector groups $G^+, G^-$, we have
	$$ [\delta_1,\delta_2] = \delta_1\delta_2 - \delta_2\delta_1.$$
\end{definition}

\begin{lemma}
	\label{Lemma tau commutator}
	Let $(G^+,G^-)$ be a pre-Kantor pair.
	Take, $x,u,c,d \in G^+,y,v,a,b \in G^-$.
	The equations
	\begin{enumerate}
		\item $[V_{x,y},V_{u,v}] = V_{V_{x,y} u,v} - V_{u,V_{y,x} v},$
		\item $ \tau_{x,[a,b]} = V_{Q_x a, b} + [V_{x,a},V_{x,b}] - V_{Q_x b, a}$,
		\item $ \tau_{[c,d],y^{-1}} = V_{Q_y d, c} + [V_{y,d},V_{y,c}] - V_{Q_y c,d}$
	\end{enumerate}
	hold.
	\begin{proof}
		The first equation holds since $V$ is a derivation by item (3.a) of Definition \ref{Definition pre-Kantor pair}, $V_{x,y} = - V_{y,x}$ and since $V$ is entirely defined using linearisations of the operators.
		We note that the second and third equations are equivalent, since $\tau_{x,[a,b]} = - \tau_{[a,b],x^{-1}} = \tau_{[b,a],x^{-1}}$ holds by definition.
		We prove the second (and also third equation) in Appendix \ref{Lemma linerizations tau} by proving that $\tau^{(2,(1,1))}_{x,a,b} = V_{Q_xa,b} + V_{x,a}V_{x,b}$, which is sufficient since $\tau_{x,[a,b]} = \tau^{(2,(1,1))}_{x,a,b} - \tau^{(2,(1,1))}_{x,b,a}$ as can be proved by applying Lemma \ref{Lemma homogeneous map on commutator} to the homogeneous map $g \mapsto \tau_{x,g}$.
	\end{proof}
\end{lemma}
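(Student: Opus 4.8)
The plan is to separate the antisymmetric bracket relation (1) from the two ``$\tau$ on a commutator'' identities (2) and (3), which I would treat as a single statement. For (1), I would exploit the first axiom of a pre-Kantor pair, namely that $1+\epsilon V_{x,y}$ is an automorphism of $(G^+,G^-)$ over $\Phi[\epsilon]$; equivalently, $V_{x,y}$ is a derivation. Since the operator $V_{u,v}$ is assembled entirely from the linearisations $Q^{(1,1)}$ and $T^{(2,1)}$ of the basic operators (Definition \ref{definition V tau}), and since an automorphism preserves exactly these data, conjugating $V_{u,v}$ by $1+\epsilon V_{x,y}$ transports it to $V_{u',v'}$ with $u'=(1+\epsilon V_{x,y})u$ and $v'=(1+\epsilon V_{x,y})v$. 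Reading off the coefficient of $\epsilon$ yields $[V_{x,y},V_{u,v}]=V_{V_{x,y}u,\,v}+V_{u,\,V_{x,y}v}$, and the antisymmetry $V_{x,y}=-V_{y,x}$ (so that the $G^-$-component satisfies $V_{x,y}v=-V_{y,x}v$) converts the second term into $-V_{u,V_{y,x}v}$, which is precisely (1).

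For (2) and (3), I would first observe that they are interchanged by the involution $(x,y)\mapsto(y^{-1},x^{-1})$ that swaps $G^+$ and $G^-$, so it suffices to prove (2). Concretely, the defining relation $(1+\eta V_{x,y}+\eta^2\tau_{x,y})(1+\eta V_{y^{-1},x^{-1}}+\eta^2\tau_{y^{-1},x^{-1}})=1$, combined with the fact that $V_{x,\cdot}$ is homogeneous of degree $1$ in its second argument and hence kills $G^-_2\ni[a,b]$, gives $\tau_{x,[a,b]}=-\tau_{[a,b],x^{-1}}=\tau_{[b,a],x^{-1}}$. Next, I would regard $g\mapsto\tau_{x,g}$ as a homogeneous map of degree $2$ on $G^-$ and apply \cref{Lemma homogeneous map on commutator} to obtain
$$\tau_{x,[a,b]}=\tau^{(2,(1,1))}_{x,a,b}-\tau^{(2,(1,1))}_{x,b,a}.$$
Everything then reduces to the single linearisation identity
$$\tau^{(2,(1,1))}_{x,a,b}=V_{Q_xa,b}+V_{x,a}V_{x,b},$$
since substituting it and antisymmetrising in $a,b$ produces exactly $V_{Q_xa,b}+[V_{x,a},V_{x,b}]-V_{Q_xb,a}$, which is (2).

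The main obstacle, which I would isolate and relegate to the appendix (as \cref{Lemma linerizations tau}), is the verification of this last linearisation identity. I would prove it by evaluating both sides as endomorphisms on an arbitrary $g\in G^+$ and expanding $\tau^{(2,(1,1))}_{x,a,b}$ from the definition of $\tau$ componentwise: the $A^+$-component is controlled by $P^{(1,2)}$, while the $B^+$-component involves $R^{(2,2)}$ together with its $\psi$-corrections $\psi(Q_\bullet y,Q_\bullet y)$ and $\psi(P^{(1,2)}_{\bullet}y,g)$. Each linearisation of $P$ and $R$ appearing there would then be rewritten using the pre-Kantor axioms governing them (items 3–7 of \cref{Definition pre-Kantor pair}), after which the contributions should collapse to $V_{Q_xa,b}$ plus the cross term $V_{x,a}V_{x,b}$. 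The difficulty is not conceptual but bookkeeping: keeping track of the bidegrees of every term, correctly matching the quadratic $\psi$-terms against the product $V_{x,a}V_{x,b}$, and confirming that the residual pieces reorganise into $V_{Q_xa,b}$. This computation is purely mechanical but long, which is precisely why it is deferred.
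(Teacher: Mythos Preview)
Your proposal is correct and follows essentially the same approach as the paper: derive (1) from the derivation property of $V$ and its construction from linearisations, reduce (2) and (3) to one another via the defining inversion relation for $\tau$, and then reduce (2) to the single linearisation identity $\tau^{(2,(1,1))}_{x,a,b}=V_{Q_xa,b}+V_{x,a}V_{x,b}$ via \cref{Lemma homogeneous map on commutator}, deferring that identity to the appendix. One point to be aware of when you actually carry out the deferred computation: since $\tau_{x,g}$ lives in $\mathrm{End}(G^+)\times\mathrm{End}(G^-)$, checking the identity only on $g\in G^+$ is not enough; the paper handles this by observing that the two linearisations $\tau^{(2,(1,1))}_{x,a,b}$ and $\tau^{((1,1),2)}_{b,a,x^{-1}}$ are equivalent (via the $\eta^2$-term of the defining relation), and then verifies one on $G^-$ and the other on $G^+$.
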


\begin{definition}
	Let $(G^+,G^-)$ be a pre-Kantor pair. For each invertible $\lambda$ in $K$ we have an automorphism of $(G^+_K,G^-_K)$, namely $(g \mapsto \lambda \cdot_1 g, g \mapsto \lambda^{-1} \cdot_1 g)$ by the assumptions on the degrees of the operators.
	Consider the automorphism associated to $(1 + \epsilon)$, and write its action on $G^+$ and $G^-$ additively as $(1 + \epsilon\zeta,1 + \epsilon \zeta)$. We see that $\zeta (g_1,g_2) = (\pm g_1, \pm 2 g_2)$ for $g \in G^\pm$.
	We call $$ \text{InStr}(G) = \langle V_{x,y} | x \in G^\pm, y \in G^\mp \rangle + \langle \zeta \rangle,$$
	with operation $[a,b]$ the \textit{inner structure algebra}\footnote{We do not call it the inner derivation algebra, as there could be some more morphisms which should fall under that name  if $1/2 \notin \Phi$, namely if $g$ or $t$ in $G_2$ then $\tau(g,t)$ should correspond to a derivation.} and $\zeta$ the \textit{grading element}.
	The previous lemma proves that derivations $\tau_{x,[a,b]}, \tau_{[a,b],x}$ are contained in InStr$(G)$. 
\end{definition}

\begin{lemma}
	The module $\text{InStr}(G)$ forms a Lie algebra with operation $[\cdot, \cdot]$.
	\begin{proof}
		For derivations $D,D'$ of $(G^+,G^-,Q^\text{grp},T,P)$ and arbitrary $x \in G^+, y \in G^-$, we compute
		$$ [D,[D', V_{x,y}]] - [D',[D,V_{x,y}]] = V_{(DD' - D'D)x,y} + V_{x,(DD' - D'D)y} = V_{[D,D']x,y} + V_{x,[D,D']y} = [[D,D'],V_{x,y}]$$
		using that $[E,V_{x,y}] = V_{Ex,y} + V_{x,Ey}$ for all derivations $E$.
		The Jacobi identity now follows for arbitrary triples $a + \lambda_a \zeta, b + \lambda_ b \zeta, c + \lambda_c \zeta$ of elements, with $a,b,c$ linear combinations of $V_{x,y}$'s and $\lambda_a, \lambda_b, \lambda_c \in \Phi$, since the bracket is alternating and linear.
	\end{proof}
\end{lemma}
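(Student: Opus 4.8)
The plan is to exploit that the bracket on $\text{InStr}(G)$ is literally the commutator $\delta_1\delta_2-\delta_2\delta_1$ of derivations acting on the universal representations, so that bilinearity and antisymmetry come for free; the genuine work is to check that $\text{InStr}(G)$ is \emph{closed} under this bracket and that the Jacobi identity holds. Every generator is a derivation: each $V_{x,y}$ because $1+\epsilon V_{x,y}$ is an automorphism (Definition \ref{Definition pre-Kantor pair}), and $\zeta$ because it arises from the scalar-multiplication automorphism attached to $1+\epsilon$. The single tool driving everything is the equivariance identity
\[ [E,V_{x,y}] = V_{Ex,y} + V_{x,Ey} \qquad \text{for every derivation } E, \]
which is the infinitesimal shadow of the fact that an automorphism $\phi=(a,b)$ preserves $Q,T,P$, hence $V$, so that $\phi V_{x,y}\phi^{-1}=V_{\phi x,\phi y}$; differentiating at $\phi=1+\epsilon E$ and matching the $\epsilon$-linear term gives the displayed formula, using that $V$ is linear through $A^\pm$ in each slot.

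First I would settle closure. For two $V$-generators, $[V_{x,y},V_{u,v}]$ already lies in $\langle V\rangle$ by Lemma \ref{Lemma tau commutator}(1) (equivalently by the equivariance identity with $E=V_{x,y}$). For $\zeta$ I use that $V$ has bidegree $[1,1]$ while $\zeta$ acts as $+1$ on the $A^+$-part and $-1$ on the $A^-$-part; hence $V_{\zeta x,y}+V_{x,\zeta y}=V_{x,y}-V_{x,y}=0$, so $[\zeta,V_{x,y}]=0$, and trivially $[\zeta,\zeta]=0$. Thus $\zeta$ is central and $\text{InStr}(G)$ is closed under the bracket.

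Next I would verify Jacobi. By trilinearity and antisymmetry it suffices to check it on generator triples, and since $\zeta$ is central every triple containing $\zeta$ is automatic; so only triples $(D,D',V_{x,y})$ with $D,D'$ arbitrary derivations remain (the third slot reduced to a single $V$-generator by linearity, the first two being combinations of $V$'s). Here I apply the equivariance identity three times: expanding $[D,[D',V_{x,y}]]$ and $[D',[D,V_{x,y}]]$, the mixed terms $V_{D'x,Dy}$ and $V_{Dx,D'y}$ cancel on subtraction, leaving $V_{(DD'-D'D)x,y}+V_{x,(DD'-D'D)y}$, which equals $V_{[D,D']x,y}+V_{x,[D,D']y}=[[D,D'],V_{x,y}]$ by one further use of the identity (with $E=[D,D']$, itself a derivation). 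This is exactly the rearranged Jacobi identity $[[D,D'],V_{x,y}]=[D,[D',V_{x,y}]]-[D',[D,V_{x,y}]]$, and multilinearity extends it to all of $\text{InStr}(G)$.

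The main obstacle is not the cancellation bookkeeping but securing its two inputs: that $[E,V_{x,y}]=V_{Ex,y}+V_{x,Ey}$ holds for every derivation $E$, and that $[D,D']$ is again a derivation so that the identity may be applied to it. The former rests on $V$ being assembled naturally from the automorphism-invariant data $Q,T,P,\psi$, so that conjugation by any automorphism transports $V_{x,y}$ to $V_{\phi x,\phi y}$; the latter is the group-commutator statement packaged into the definition of $[\delta_1,\delta_2]$. Once these are in place, the lemma follows from the two short reductions above.
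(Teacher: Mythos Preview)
Your proof is correct and follows essentially the same route as the paper: both hinge on the equivariance identity $[E,V_{x,y}]=V_{Ex,y}+V_{x,Ey}$ for derivations $E$, and both verify Jacobi via the computation $[D,[D',V_{x,y}]]-[D',[D,V_{x,y}]]=[[D,D'],V_{x,y}]$. You spell out two points the paper leaves implicit---closure of $\text{InStr}(G)$ under the bracket, and the useful fact that $\zeta$ is central (from $V_{\zeta x,y}+V_{x,\zeta y}=V_{x,y}-V_{x,y}=0$)---which makes your reduction to triples slightly cleaner, but the core argument is identical.
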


We want to define a $5$-graded Lie algebra structure on
$$ L(G^+,G^-) = [G^-,G^-] \oplus A^- \oplus \text{InStr}(G^+,G^-) \oplus A^+ \oplus [G^+,G^+].$$
To achieve that, we endow $L(G^+,G^-)$ with the unique alternating operation $[\cdot,\cdot]$ such that
\begin{itemize}
	\item $A^+ \oplus [G^+,G^+]$ is a Lie subalgebra of $\text{Lie}(G^+)$, i.e.,
	$[(a,b),(c,d)] = [a,c] = \psi(a,c) - \psi(c,a),$
	\item $A^- \oplus [G^-,G^-]$ is a Lie subalgebra of $\text{Lie}(G^-),$
	\item $\text{InStr}$ embeds into $L(G^+,G^-)$ as a Lie algebra,
	\item $ [\delta, g] = \delta(g)$ for $g \in [G^\pm,G^\pm], \delta \in \text{InStr}(G)$,
	\item $[\delta,a] = \delta_1(a),$ for $ a \in A^\pm,\delta \in \text{InStr}(G)$ i.e.,
	we compute the action of $\delta$ by only looking at the first component so that 
	$$ [V_{x,y} ,a ] = -Q^{(1,1)}_{a,x} y,$$
	and 
	$$ [\zeta,a] = \pm a,$$
	with the sign corresponding to whether $a \in A^+$ or $A^-$,
	\item $[a,b] = V_{a,b}$ for $a \in A^+,b \in A^-$, 
	\item $[g,h] = \tau_{g,h}$ for $g \in [G^+,G^+]$, $h \in [G^-,G^-]$,
	\item $[g,b] = Q_{g} b$ for $g \in [G^\pm,G^\pm], b \in A^\mp$.
\end{itemize}
We remark that $[b,a] = V_{b,a} = -V_{a,b}$ and $[g,h] = \tau_{g,h} = -\tau_{h,g}$, so that $L(G^+,G^-) \cong L(G^-,G^+)$.

\begin{lemma}
	Consider a pre-Kantor pair $(G^+,G^-)$. The algebra $L(G^+,G^-)$ is a Lie algebra.
	\begin{proof}
		Set $L_+ = A^+ \oplus [G^+,G^+], L_0 = \text{InStr}(G^+,G^-)$ and $L_- = A^- \oplus [G^-,G^-].$
		Since $L_0$ is a Lie algebra that acts as derivations on the Lie algebra $L_+$, we know that $$ L_0 \oplus L_+$$
		forms a Lie algebra. 
		
		We only need to check the Jacobi identity.	As we check this identity we can assume without loss of generality that $a,b \in L_+ \cup L_0$ and that $c \in L_-$.
		Suppose that $a, b \in L_0$, then we are already finished since we know that $L_- \oplus L_0$ forms a Lie algebra.
		So, suppose that $b \in L_0$ and $a \in L_+$.
		Note that all automorphisms of the pre-Kantor pair preserve the Lie bracket of $L$, so that 
		$$ (1 + \epsilon b)[a,c] = [(1 + \epsilon b)a,(1 + \epsilon b)c],$$
		for $b \in L_0$. This proves the Jacobi identity if $b \in L_0$.
		
		So, we can assume that $a,b \in L_+$, $ c \in L_-$.
		
		Suppose first that $c \in A^-$
		For $a,b \in A^+$, the Jacobi identity follows from  $$[[a,b],c] = Q_{[a,b]} c = Q^{(1,1)}_{a,b}c - Q^{(1,1)}_{b,a}c = - V_{b,c} a + V_{a,c} b = [[a,c],b] + [a,[b,c]]$$
		by using Lemma \ref{Lemma homogeneous map on commutator} for the second step.
		If $a, b \in A^+ \cup [G^+,G^+]$ but not both in $A^+$, then we need to check that
		$$ [a,[b,c]] = [b,[a,c]].$$
		Assume that $a \in A^+, b \in [G^+,G^+]$. So, must check \[[a,Q_b c] = - V_{a,c} b.\]		
		This is equivalent to checking that
		\[ T^{(1,2)}_{a,b} c = [a,Q_b c] = - V_{a,c} b = T^{(2,1)}_{b,a},\]
		using Definition \ref{Definition pre-Kantor pair}.\ref{PKP LIN T} for the linearization of $T$ and Definition \ref{definition V tau} for $V$.
		The previous equation holds since $T_{(t \cdot_1 g)b} = T_{b(t \cdot_1 g)}$ for all $g \in G(\Phi[t])$ with first coordinate $a$ since $b \in G_2$.
		If both $a,b \in [G^+,G^+]$, then the Jacobi-identity is trivially satisfied by the grading.
		
		Assume now that $c \in [G^-,G^-]$. If both $a,b \in A^+$, then Lemma \ref{Lemma tau commutator} shows that the Jacobi identity holds, since
		$$ [c,[a,b]] = \tau_{c,[a,b]} = V_{Q_ca,b}- V_{Q_c b,a} = [[c,a],b] - [[c,b],a].$$
		Lastly, if both $a,b \in A^+ \cup [G^+,G^+]$ and if it is not the case that both are contained in $A^+,$ then we must show that
		$$ [a,[b,c]] = [b,[a,c]],$$
		which is the case if both $a,b \in [G^+,G^+]$ since the previous equation becomes
		$$ - \tau_{b,c} a = R^{(2,2)}_{a,b} c = R^{(2,2)}_{b,a} c = - \tau_{a,c} b,$$
		using Definition \ref{definition V tau} for $\tau$, which holds since $\Phi[t]$-scalar multiples of $a$ and $b$ commute. 
		So, suppose that $a \in A^+$ and $b \in [G^+,G^+]$, then the equation reduces to
		$$ P^{(1,2)}_{a,b} c = -Q_bQ_ca,$$
		which is the case since $P^{(1,2)}_{a,b} c = P^{(2,1)}_{b,a} c = - Q_bQ_c a$ using Definition \ref{Definition pre-Kantor pair}.\ref{PKP LIN P}.
	\end{proof}
\end{lemma}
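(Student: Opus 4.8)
The plan is to check that $L(G^+,G^-)$, whose bracket is alternating by construction, satisfies the Jacobi identity. Since the Jacobi expression is trilinear and alternating, it suffices to verify it on homogeneous elements for the $5$-grading in which $[G^-,G^-],A^-,\text{InStr},A^+,[G^+,G^+]$ occupy degrees $-2,-1,0,1,2$; any triple whose relevant brackets leave the range $[-2,2]$ contributes only terms that vanish for grading reasons, so a large portion of the cases is disposed of immediately.

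First I would isolate the positive half. Writing $L_+ = A^+ \oplus [G^+,G^+]$, $L_0 = \text{InStr}(G^+,G^-)$ and $L_- = A^- \oplus [G^-,G^-]$, the subspace $L_+$ is a subalgebra of $\Lie(G^+)$ by the construction of the bracket, $L_0$ is a Lie algebra by the preceding lemma, and $L_0$ acts on $L_+$ by derivations because $1 + \epsilon V_{x,y}$ is an automorphism (Definition \ref{Definition pre-Kantor pair}). Hence $L_0 \oplus L_+$ is a Lie algebra, and by the symmetry $L(G^+,G^-) \cong L(G^-,G^+)$ so is $L_0 \oplus L_-$. Consequently the only Jacobi triples left are, up to exchanging the roles of $+$ and $-$, those with $a,b \in L_+ \cup L_0$ and $c \in L_-$. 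If $a,b \in L_0$ the triple lives in $L_0 \oplus L_-$ and we are done; if exactly one of $a,b$ lies in $L_0$, say $b$, then applying the automorphism $1 + \epsilon b$ to $[a,c]$ and reading off the coefficient of $\epsilon$ yields precisely the Jacobi identity.

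This leaves the genuinely pair-theoretic case $a,b \in L_+$, $c \in L_-$, which I would split according to the degree of $c$. For $c \in A^-$, the subcases $a,b \in A^+$ and $a \in A^+, b \in [G^+,G^+]$ reduce, via Lemma \ref{Lemma homogeneous map on commutator} applied to the degree-$2$ map $g \mapsto Q_g c$ and to the linearisations of $T$, to the identities $Q_{[a,b]}c = -V_{b,c}a + V_{a,c}b$ and $[a,Q_bc] = -V_{a,c}b$, the latter being $T^{(1,2)}_{a,b}c = T^{(2,1)}_{b,a}c$ (Definition \ref{Definition pre-Kantor pair}.\ref{PKP LIN T}), while $a,b \in [G^+,G^+]$ vanishes by the grading. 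For $c \in [G^-,G^-]$, the case $a,b \in A^+$ is exactly the $\tau$-commutator formula of Lemma \ref{Lemma tau commutator}, the case $a \in A^+, b \in [G^+,G^+]$ collapses to $P^{(1,2)}_{a,b}c = -Q_bQ_c a$ (Definition \ref{Definition pre-Kantor pair}.\ref{PKP LIN P}), and the case $a,b \in [G^+,G^+]$ reduces to the symmetry $R^{(2,2)}_{a,b}c = R^{(2,2)}_{b,a}c$, which holds because $\Phi[t]$-multiples of commuting elements commute.

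The main obstacle, I expect, is the subcase $c \in [G^-,G^-]$, $a,b \in A^+$, where $[c,[a,b]] = \tau_{c,[a,b]}$ forces one to control the elusive operator $\tau$. Everything there rests on Lemma \ref{Lemma tau commutator}, whose proof is deferred to the appendix and amounts to establishing the linearisation formula $\tau^{(2,(1,1))}_{x,a,b} = V_{Q_xa,b} + V_{x,a}V_{x,b}$. Extracting $\tau$ from the defining relation $V_{x,y} = -V_{y^{-1},x^{-1}}$ together with Definition \ref{definition V tau} is the delicate computational heart of the matter, and I would concentrate effort there rather than on the essentially formal grading reductions that govern the remaining cases.
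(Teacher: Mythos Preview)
Your proposal is correct and follows essentially the same route as the paper's own proof: the same decomposition $L = L_- \oplus L_0 \oplus L_+$, the same reduction to $a,b \in L_+$ and $c \in L_-$ via the derivation property of $L_0$, and the same case-by-case analysis invoking Lemma~\ref{Lemma homogeneous map on commutator}, Definition~\ref{Definition pre-Kantor pair}.\ref{PKP LIN T}--\ref{PKP LIN P}, and Lemma~\ref{Lemma tau commutator} at exactly the corresponding points. Your identification of the $\tau$-linearisation (Lemma~\ref{Lemma tau commutator}, proved in the appendix) as the computational crux also matches the paper's emphasis.
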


We are almost finished with our exposition on pre-Kantor pairs. Before we end the exposition, we first establish a link with Kantor pairs as they appear in the literature and secondly construct a pair of vector group representations of pre-Kantor pair in the endomorphism algebra of the associated Lie algebra. 

\begin{definition}
	Suppose that $(A^+,A^-)$ is a pair of modules over $\Phi$ and that $W : A^+ \times A^- \longrightarrow \End(A^+) \times \End(A^-)$ is a linear map. We write $V_{a,b} c$ for $W_{a,b} c$ whenever $a, c \in A^+$ and $b \in A^-$ and write $V_{b,a} c$ for $- W_{a,b} c$ whenever $a \in A^+$ and $b, c\in A^-$. Furthermore, we write $K_{a,b} c$ for $V_{a,c} b - V_{b,c} a$.
	We call $(A^+,A^-, W)$ a \textit{Kantor pair} if
	\begin{enumerate}
		\item $[ V_{a,b}, V_{c,d}] = V_{V_{a,b} c,d} - V_{c, V_{b,a} d}$ if $a, c \in A^\epsilon$ and $b, d\in A^{-\epsilon}$,
		\item $V_{a,b} K_{c,d} + K_{c,d} V_{b,a} = K_{K_{c,d} b, a}$ if $a,c,d \in A^\epsilon$, $b \in A^{-\epsilon}$,
	\end{enumerate}
using $\epsilon$ to denote an arbitrary sign.
	This definition is equivalent with the one appearing in \cite[section 3]{ALLFLK99}. 
\end{definition}

\begin{lemma}
	\label{lemma preKantor induces Kantor pair}
	Consider a pre-Kantor pair $(G^+,G^-,Q^\text{grp},T,P)$. Define $A^\pm = G^\pm/G_2^\pm$, the triple $(A^+,A^-,V_{|A^+ \times A^-})$ forms a Kantor pair. Moreover, if $1/6 \in \Phi$ then $(G^+,G^-,Q^\text{grp},T,P)$ can be uniquely recovered from $(A^+,A^-,V_{|A^+ \times A^-})$.
	\begin{proof}
		This immediately follows from the fact that $L = L(G^+,G^-)$ is a Lie algebra. More precisely, since $L$ is a $\mathbb{Z}/2\mathbb{Z}$-graded algebra with as $1$-graded part $A^+ \oplus A^-$, we can conclude that $A^+ \oplus A^-$ is a Lie triple system with operation $[a^\epsilon,b^{-\epsilon},c^\epsilon] = V_{a,b}c,\; [a^\epsilon,b^{\epsilon},c^\epsilon] = 0$ where $x^\epsilon$ is contained in $A^\epsilon$ for $x = a,b,c$. Allison and Faulkner proved \cite[Equations (2) and (3)]{ALLFLK99}\footnote{There is an (implicit) additional sign there, which is no problem since $W$ corresponds to a Kantor pair if and only if $-W$ does. The authors assume that $1/2 \in \Phi$ in the beginning of the section in which they prove the theorem. This assumption is irrelevant for the interpretation of these equations.} that this implies that $(A^+,A^-,V)$ is a Kantor pair.
		
		If $1/6 \in \Phi$, then we know that $G^+ \cong A^+ \oplus \{ a \mapsto V_{x,a}y - V_{y,a} x | x,y \in A^+\}$ with group operation determined by $$\psi(a,b) = c \mapsto (V_{a,c}b - V_{b,c} a)/2$$ since $G_2 = [G,G]$, $G_2 \cong Q_{G_2}$ as modules, and since $Q$ must satisfy $$Q_{[a,b]} c = Q^{(1,1)}_{a,b} c - Q^{(1,1)}_{b,a} c = V_{a,c} b - V_{b,c} a.$$
		The operators $Q,T,P$ and $R$ are determined by their linearisations.
		To be precise, if $x(-x) = \sum_{i = 1}^n [a_i,b_i],$ $ y(-y) = \sum_{i = 1}^m [c_i,d_i]$, then we know that
		\begin{enumerate}
			\item $2 Q_x y = Q^{(1,1)}_{x,x} y + \sum_{i = 1}^n Q^{(1,1)}_{a_i,b_i}y - Q^{(1,1)}_{b_i,a_i} y,$
			\item $ 3 T_x y = 3 T^{(1,2)}_{x,x} y - T^{(1,1,1)}_{x,x,x} y,$
			\item $ 2 P_x y =  P^{(3,(1,1))}_x(y,y) + \sum_{i = 1}^m P^{(3,(1,1))}_x(c_i,d_i) - P^{(3,(1,1))}_x(d_i,c_i),$
			\item $ 2 R_x y = R^{(4,(1,1))}_x(y,y)+ \sum_{i = 1}^m R^{(3,(1,1))}_x(c_i,d_i) - R^{(3,(1,1))}_x(d_i,c_i)$
		\end{enumerate}
		since $2x_2 = x_1^2 + \sum_{i = 1}^n (a_i)_1(b_i)_1 - (b_i)_1(a_i)_1,$ $2y_2 = y_1^2 + \sum_{i = 1}^m (c_i)_1(d_i)_1 - (d_i)_1(c_i)_1$, and $3x_3 = 3x_1x_2 - x_1^3$ hold for the universal homogeneous maps of degree $2$ and $3$ on vector groups.
		Using $V_{x,y} z = - Q^{(1,1)}_{z,x} y$ and the linearisations of the operators for pre-Kantor pairs ($T^{(1,1,1)}$ can be obtained by linearising $T^{(1,2)}$), these equations uniquely determine the pre-Kantor pair.
	\end{proof}
\end{lemma}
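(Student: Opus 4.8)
The plan is to obtain the first assertion almost for free from the Lie algebra $L = L(G^+,G^-)$ built in the preceding lemma, and to prove the recovery statement by a linearisation argument that crucially uses $1/6 \in \Phi$.

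For the Kantor pair assertion, I would note that the $5$-grading on $L$ (with $[G^\pm,G^\pm]$ in degrees $\pm 2$, $A^\pm$ in degrees $\pm 1$, and $\text{InStr}(G^+,G^-)$ in degree $0$) reduces modulo $2$ to a $\mathbb{Z}/2\mathbb{Z}$-grading whose odd part is precisely $A^+ \oplus A^-$. The odd part of any $\mathbb{Z}/2\mathbb{Z}$-graded Lie algebra is a Lie triple system under $[x,y,z] = [[x,y],z]$, and unwinding the brackets of $L$ yields exactly $[a^\epsilon,b^{-\epsilon},c^\epsilon] = [V_{a,b},c] = V_{a,b}c$ and $[a^\epsilon,b^\epsilon,c^\epsilon] = 0$, the latter because $[a,b] \in [G^\epsilon,G^\epsilon]$ lives in degree $\pm 2$ and bracketing with $c \in A^\epsilon$ lands in a vanishing component. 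I would then invoke the Allison--Faulkner correspondence \cite[Equations~(2) and~(3)]{ALLFLK99}, which says that a triple system arising as the $\pm 1$-part of such a graded Lie algebra automatically satisfies the Kantor pair axioms; concretely, the first axiom is the restriction to $A^\pm$ of the first identity of Lemma~\ref{Lemma tau commutator}, while the second is encoded in the Jacobi identities coupling the degree-$0$ and degree-$\pm 2$ parts.

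For the recovery statement, assuming $1/6 \in \Phi$, I would first reconstruct the vector groups. As $1/2 \in \Phi$, Equation~(\ref{Kantor pair equation}) forces $G^\pm_2 = [G^\pm,G^\pm]$, and Equation~(\ref{Kantor pair equation 2}) makes $s \mapsto Q_s$ an isomorphism of $G^\pm_2$ onto the submodule of $\operatorname{Hom}(A^\mp,A^\pm)$ spanned by the operators $K_{x,y}\colon c \mapsto V_{x,c}y - V_{y,c}x$ (for $x,y \in A^\pm$), since $Q_{[a,b]}c = Q^{(1,1)}_{a,b}c - Q^{(1,1)}_{b,a}c = V_{a,c}b - V_{b,c}a$. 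This identifies $G^\pm$ with $A^\pm \oplus \{K_{x,y}\}$, whose group law is governed by the bilinear form $\psi(a,b)\colon c \mapsto (V_{a,c}b - V_{b,c}a)/2$, recovered using $1/2$.

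Finally, I would recover the operators $Q,T,P,R$ from their linearisations. By Theorem~\ref{theorem universal homogeneous} each operator is determined by its linearisations, and the universal-homogeneous-map identities $2x_2 = x_1^2 + \sum_i[a_i,b_i]$ (for $x(-x) = \sum_i[a_i,b_i]$) and $3x_3 = 3x_1x_2 - x_1^3$ let me write each operator as $\tfrac12$ or $\tfrac13$ of a sum of its linearisations---this is exactly where $1/6$ enters, as in the proof of Lemma~\ref{lemma uniqueness P} for $P$. The linearisations $Q^{(1,1)},T^{(1,2)},P^{(2,1)},P^{(3,(1,1))},R^{(3,1)},R^{(1,3)},R^{(4,(1,1))}$ listed in Definition~\ref{Definition pre-Kantor pair} express everything through $Q,T,V$ and $\psi$, and $V$ itself (Definition~\ref{definition V tau}) is assembled from $Q^{(1,1)},T^{(2,1)}$ and $\psi$; iterating linearisations until every argument lies in $A^\pm$ reduces all of these to $V|_{A^+ \times A^-}$. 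The main obstacle is the bookkeeping of this nested recursion: one must verify that it closes---that no linearisation ever introduces data beyond $V|_{A^+ \times A^-}$---and that the only denominators produced are powers of $2$ and $3$, so that invertibility of $6$ indeed suffices to determine the operators uniquely.
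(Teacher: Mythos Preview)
Your proposal is correct and follows essentially the same route as the paper: deduce the Kantor pair axioms from the $\mathbb{Z}/2\mathbb{Z}$-grading on $L(G^+,G^-)$ via the Allison--Faulkner correspondence, then reconstruct $G^\pm$ from $A^\pm$ and the $K_{x,y}$-operators using $G^\pm_2 = [G^\pm,G^\pm]$ and the injectivity of $s \mapsto Q_s$, and finally recover $Q,T,P,R$ from their linearisations through the universal identities $2x_2 = x_1^2 + \sum[a_i,b_i]$ and $3x_3 = 3x_1x_2 - x_1^3$. The paper makes the same moves with the same ingredients; your closing remark about the bookkeeping closing on $V|_{A^+\times A^-}$ with only $2$- and $3$-denominators is exactly what the paper leaves implicit.
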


We define an action of $G^\pm$ on $L(G^+,G^-)$.
Namely, for $g = (a,b) \in G^\pm $ we define
$$ \exp(g) = 1 + g_1 + g_2 + g_3 + g_4,$$
using the following endomorphisms of $L(G^+,G^-)$:
\begin{align*}
	g_1 \cdot z & =  [a,z] \\
	g_2 \cdot z & = \begin{cases} \tau_{g,z} & z \in [G^\mp,G^\mp]  \\
		Q_g z & z \in A^\mp \\
		\hat{z}(g) & z \in \text{InStr}(G^+,G^-) \\
		0 & \text{otherwise}
	\end{cases}, & \text{where } \hat{z}(g) = - (z(g))_2 + \psi(z(g)_1,a)\\
	g_3 \cdot z & = \begin{cases} P_g z & z \in [G^\mp,G^\mp] \\
		T_g z & z \in A^\mp \\
		0 & \text{otherwise} 
	\end{cases}, \\
	g_4 \cdot z & = \begin{cases} R_g z  & z \in [G^\mp,G^\mp]\\
		0 & \text{otherwise} 
	\end{cases}.
\end{align*}
Remark that $(\epsilon z(g)_1,-\epsilon \hat{z}(g)) = ((1 + \epsilon z)(g))g^{-1}$ for all derivations $z$.
\begin{remark}
	Observe that $g_2 \cdot \zeta \in [G^+,G^+]$ for $g \in G^+$ since $g(-g) = (\zeta(g))_2 - \psi(\zeta(g),g)$. We also note that $R_g z \in [G^+,G^+]$ for $z \in [G^-,G^-]$. This can be observed by considering
	$$ R_g [a,b] = R^{(4,(1,1))}_{g}(a,b) - R^{(4,(1,1))}(b,a)$$
	and the $(4,(1,1))$ linearisation of $R$ given in the definition of a pre-Kantor pair.
\end{remark}

In what follows, we will often write $L$ to denote the Lie algebra $L(G^+,G^-)$ if $G$ is clear from the context.

\begin{lemma}
	The maps $(G \longrightarrow \text{End}_\Phi(L) : g \mapsto g_i)_i$ form a vector group representation, with $g_i = 0$ for $i > 4$.
	\begin{proof}
		It is sufficient to prove that $g \mapsto g_i$ is a homogeneous map with $(k,l)$-linearisation $(g,h)_{k,l} = g_kh_l$.
		Since the $g_i$ are defined using homogeneous maps, we only need to prove that the $(k,l)$-linearisations are what they should be.
		
		We already know that $g \mapsto g_1$ is linear.
		For $g_2$ we check whether the $(1,1)$-linearisation evaluated in $(a,b) \in A^+ \times A^+$ applied to the definition coincides with $a_1b_1$. So, we check whether
		\begin{align*}
			\tau^{((1,1),2)}_{a,b,z}&  = - V_{a,Q_z b}, \\
			Q^{(1,1)}_{a,b} z & = - V_{b,z} a, \\
			-(z(a,b))_{(1,1)} + \psi(z(a),b) + \psi(z(b),a) & = [a,z(b)],
		\end{align*}
		i.e., we check all the nontrivial cases in the definition of $g_2 \cdot z$.
		The first case holds by Lemma \ref{Lemma linerizations tau}, which is proved in the appendix, and since $z \in G_2$.
		The second case holds by definition. The last case follows from $(z(a,b))_{(1,1)} = \psi(z(a),b) + \psi(a,z(b))$, which can be proved using that
		$ (1 + \epsilon z)$ is an automorphism.
		
		Most linearisations of $g_3$ and $g_4$ are directly observed to be what they should be, as can be seen from the linearisations of $T, \; P$ and $R$ that are fixed in the definition pre-Kantor pairs. The other linearisations play a role in the definition of $V$ and $\tau$ involving the linearisations of $T,\;P$ and $R$. In these cases a direct computation shows that the linearisations are what they should be.
		For example, we can compute for $T$ that
		$$ T^{(2,1)}_{g,x}(y) = (V_{y,x} g)_2 - \psi(V_{y,x}g,g) = - g_2 \cdot V_{y,x} = g_2x_1 \cdot y.$$
		The homogeneous maps $g_n = 0$ for all $n > 4$ have trivial linearisations, as implied by the grading of the Lie algebra. 
	\end{proof}
\end{lemma}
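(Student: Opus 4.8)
The plan is to use the dictionary between homogeneous maps and vector group representations. A sequence $(\rho_i : G \longrightarrow C)_i$ with $\rho_0 = 1$ is a vector group representation as soon as each $\rho_i$ is homogeneous of degree $i$ with linearisations given by the products $\rho_n^{(k,l)}(g,h) = \rho_k(g)\rho_l(h)$: comparing the degree-$n$ component of the defining identity for a homogeneous map (both scalar multiplications of the first kind, $\lambda=\mu=1$) yields $(gh)_n = \sum_{k+l=n} g_k h_l$, which is exactly the assertion that $\rho_{[t]}$ is a homomorphism into formal power series, while the homogeneity supplies compatibility with the two scalar multiplications. Since the endomorphisms $g_1,g_2,g_3,g_4$ are assembled from the operators $Q,T,P,R$ and the derived operators $V,\tau$ of \cref{definition V tau}, each homogeneous of the degree recorded in the standing Assumption, naturality and scalar-homogeneity are automatic, and the whole argument reduces to checking that the $(k,l)$-linearisation of $g \mapsto g_n$ is the composition $g_k h_l$ in $\End_\Phi(L)$ for every $n$ and every $k+l=n$.

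First I would dispose of the trivial degrees: $g\mapsto g_1 = [a,-]$ is linear, $g_0 = 1$, and for $i>4$ the endomorphism $g_i$ raises the $5$-grading of $L$ by $i$, exceeding the maximal grading gap $4$, so $g_i=0$ and carries only the vanishing linearisations. The heart of the matter is $g_2$, $g_3$, $g_4$. For $g_2$ I would verify that its $(1,1)$-linearisation, evaluated at $(a,b)$ and applied to $z\in L$, equals $a_1b_1\cdot z = [a,[b,z]]$, splitting into three non-trivial cases according to the grading of $z$. These reduce to the identities
\begin{align*}
	\tau^{((1,1),2)}_{a,b,z} &= -V_{a,Q_z b}, \\
	Q^{(1,1)}_{a,b} z &= -V_{b,z}a, \\
	-(z(a,b))^{(1,1)} + \psi(z(a),b) + \psi(z(b),a) &= [a,z(b)],
\end{align*}
for $z\in[G^\mp,G^\mp]$, $z\in A^\mp$ and $z\in\operatorname{InStr}(G^+,G^-)$ respectively. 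The middle identity is the very definition of $V$; the last follows by expanding the automorphism relation for $1+\epsilon z$, which gives $(z(a,b))^{(1,1)} = \psi(z(a),b)+\psi(a,z(b))$; and the first relies on the computation of $\tau^{(2,(1,1))}$ carried out in \cref{Lemma linerizations tau} together with $z\in G_2$.

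For $g_3$ and $g_4$ I would observe that the linearisations coinciding directly with the prescribed linearisations of $T$, $P$ and $R$ fixed in \cref{Definition pre-Kantor pair} are immediate, so that only the mixed-degree linearisations feeding into the definitions of $V$ and $\tau$ remain; these I would confirm by unwinding \cref{definition V tau} explicitly, as in the model computation $T^{(2,1)}_{g,x}(y) = (V_{y,x}g)_2 - \psi(V_{y,x}g,g) = -g_2\cdot V_{y,x} = g_2 x_1\cdot y$.

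The main obstacle is precisely this last bookkeeping. The operators $V$ and $\tau$ are engineered in \cref{definition V tau} to encode the conjugation action of the family $1+\eta V_{y^{-1},x^{-1}}+\eta^2\tau_{y^{-1},x^{-1}}$, so the required linearisation formulas hold essentially by construction; the difficulty lies entirely in tracking which endomorphism composition corresponds to which linearisation component, keeping the signs straight, and accounting for the $\psi$-correction terms built into the second coordinates of $V$ and $\tau$. The $\tau$-case of the $(1,1)$-linearisation of $g_2$ is the most delicate point, since it genuinely depends on the appendix identity for $\tau^{(2,(1,1))}$, whereas everything else becomes routine once this dictionary is in place.
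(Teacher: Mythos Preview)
Your proposal is correct and follows essentially the same approach as the paper: reduce the verification to checking that the $(k,l)$-linearisation of $g\mapsto g_n$ equals $g_k h_l$, handle $g_2$ via the three displayed identities (using \cref{Lemma linerizations tau} for the $\tau$-case and the automorphism property of $1+\epsilon z$ for the $\operatorname{InStr}$-case), and handle $g_3,g_4$ by invoking the linearisations fixed in \cref{Definition pre-Kantor pair} together with the defining formulas for $V$ and $\tau$, with the same $T^{(2,1)}$ model computation. Your write-up even makes explicit the dictionary between homogeneous maps and vector group representations that the paper leaves implicit in its opening sentence.
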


\subsection{Operator Kantor pairs}

\begin{definition}
	We call a pre-Kantor pair $(G^+,G^-)$ an \textit{operator Kantor pair} if
	\begin{enumerate}
		\item $ \tau_{x,T^{(2,1)}_{y,a}(x)} +  V_{x,Q_yQ_{x^{-1}}a} + \tau_{y,T_{x^{-1}}a} = V_{P_xy,a} + V_{Q_{T_xy}a,y} - [V_{x,y},V_{Q_xy,a}],$
		\item   \label{operator Kantor pair uniqueness R}	$ P_xT^{(2,1)}_{y,a}(x) + Q_xQ_yQ_{x^{-1}}a - \tau_{y,T_{x^{-1}}a}x = Q_{Q^{\text{grp}}_xy} a + V_{x,y}Q_{T_xy} a,$
		\item
		$ Q^\text{grp}_x T^{(2,1)}_{y,a}(x) + T_xQ_yQ_{x^{-1}}a - (\tau_{y,T_{x^{-1}}a}(x))_2 + \psi(\tau_{y,T_{x^{-1}}a}(x),x)  =  [Q_x y, Q_{T_xy} a],$
		\item $ T_{Q^{\text{grp}}_x y} a + [P_x y , Q_{T_x y} a ] + V_{x,y} [Q_x y, Q_{T_xy} a] = R_x T_y Q_{x^{-1}} a+ T_x P_y T_{x^{-1}} a,$
	\end{enumerate}
	hold for all $x \in G^\epsilon(K), y \in G^{-\epsilon}(K), a \in A^{-\epsilon} \otimes K$.
	The first three equations express, using the representations $g \mapsto g_i \in \text{End}_\Phi(L)$ and associated operators $\mu_{i,j}, \nu_{i,j}$, precisely that
	\begin{enumerate}
		\item $ \mu_{3,2}(x,y)$ acts as $a \mapsto V_{P_xy,a}  + V_{{Q_{T_xy}a},y} - [V_{x,y},V_{Q_xy,a}] $ on $A^{-\epsilon}$,
		\item  $ \mu_{4,2}(x,y)$ acts as $a \mapsto Q_{Q^{\text{grp}}_xy} a + V_{x,y}Q_{T_xy} a$  on $A^{-\epsilon}$,
		\item $\mu_{5,2}(x,y)$ acts as $a \mapsto [Q_x y, Q_{T_xy} a]$ on $A^{-\epsilon}$.
	\end{enumerate}
	The fourth equation expresses that $\nu_{6,3}(x,y)$ acts as $a \mapsto T_{Q^\text{grp}_x y} a$ on $A^{-\epsilon}$, if $\mu_{4,1}(x,y) = 0,\; \nu_{2,1}(x,y) = \text{ad} \; Q_xy, \;\nu_{1,1}(x,y) = \text{ad} \; V_{x,y},  \;\nu_{3,1}(x,y) = \text{ad} \; T_x y,$ and $\nu_{3,2}(x,y) = \text{ad} \; P_x y$ hold.
\end{definition}

\begin{remark}
	If $1/30 \in \Phi$, then each pre-Kantor pair is an operator Kantor pair. We prove this in Corollary \ref{bijection Kp preKP opKP}. 
\end{remark}

\begin{remark}
	\label{remark sufficient condition}
	A sufficient condition, which will be easier to use in practice, for the previous equations to hold, is given by the following list of properties:
	\begin{enumerate}
		\item $\exp(g)$ is an automorphism for $g \in G^+$ and $G^-$,
		\item $ \nu_{3,2}(x,y) = \text{ad} \; P_x y,$
		\item $ \nu_{4,2}(x,y) = (Q^{\text{grp}}_x y)_2,$
		\item $ \nu_{5,2}(x,y) = 0,$
		\item $ \nu_{6,3}(x,y) = (Q^{\text{grp}}_x y)_3.$
	\end{enumerate}
	These conditions are sufficient since one obtains the axioms of an operator Kantor pair by (1) expressing the $\nu_{i,j}$ as a sum of products $\mu_{k,l}$ and (2) evaluating the endomorphisms on $A^-$.
	The $\mu_{i,j}$ that must be of a specific form to obtain the precise equation of the definition, are $\mu_{1,1}(x,y) = \text{ad} \; V_{x,y},\; \mu_{2,1}(x,y) = \text{ad} \; Q_x y$, $\mu_{3,1}(x,y) = \text{ad} \; T_x y$ and $\mu_{4,1}(x,y) = 0$.
	Using the assumption that $\exp(g)$ is an automorphism one sees that the equation $\mu_{i,1}(x,y) = \text{ad} \; \exp(x)_i(y)$ always holds.
	We remark that these conditions are not only sufficient but necessary as well. In the next lemma we prove that $\exp(g)$ is an automorphism and we argue in the second part of Theorem \ref{thm weights lie algebra}, in which we prove that we can apply Theorem \ref{thm main}, that the other equations hold.
\end{remark}

\begin{remark}
	\label{Remark uniqueness P and R}
	For operator Kantor pairs, the operators $P$ and $R$ can be determined from $Q$, $T$ and $V$.
	We already proved this for $P$ in Lemma \ref{lemma uniqueness P}.
	We want to prove the it for $R$. We determine $R$ for all flat $\Phi$-algebras $K$ and note that this lets us determine the value for all $R_x y$ over all $\Phi$-algebras using naturality.
	 So, consider the set $S = \{ (Q_gh, t) \in G(K)\}$ for flat $K$.
	There exists a $k \in S$, namely $ (Q^{\text{grp}}_x y)$, which satisfies
	\[ k_2 \cdot a = \nu_{4,2}(x,y) \cdot a = (\mu_{4,2}(x,y) - \mu_{1,1}(x,y) \mu_{3,1}(x,y)) \cdot a\]
	for all $a \in A^-$, using that $ \nu_{4,2}(x,y) = (Q^{\text{grp}}_x y)_2$.
	This $k$ is unique by Equation (\ref{Kantor pair equation 2}) and the flatness of $K$, since
	\[ (k(0,s))_2 \cdot a = k_2 \cdot a + Q_{(0,s)} \cdot a.\]
	So, if
	\[ (\mu_{4,2}(x,y) - \mu_{1,1}(x,y) \mu_{3,1}(x,y)) \cdot a \]
	is an element of $A^+$ which can be expressed using $Q$, $T$, $P$, $V$, then we know that $R$ is uniquely determined by the other operators.
	Evaluating this normally yields such an expression if one substitutes $ - \tau_{y,T_{x^{-1}} a} x = P^{(1,2)}_{x, T_{x^{-1}} a} y^{-1},$ for the contribution of
	$ x_1y_2x^{-1}_3 \cdot a$ in $\mu_{4,2}(x,y) \cdot a$.
\end{remark}

\begin{lemma}
	Let $(G^+,G^-)$ be an operator Kantor pair and take $g \in G^\pm$. The map $\exp(g)$ is an automorphism. 
	\begin{proof}
		We note that $\exp(g)$ is an automorphism if 
		$$ \exp(g)[a,b] = [\exp(g)a,\exp(g)b],$$
		for all $a,b \in L$.
		Suppose that $g \in G^+$, the previous identity holds if either one of $a$ and $b$ is an element of $L_+$. We illustrate this if $a \in L_+$. In that case, the equation $\exp(g)(\text{ad} \; a )\exp(g^{-1}) = \text{ad} \; \exp(g) a$ holds since we have a vector group representation, which proves that $ \exp(g)[a,b] = [\exp(g)a,\exp(g)b].$
		
		Similarly, using that $1 + \epsilon l$, with $\epsilon^2 = 0$, is an automorphism of the operator Kantor pair for $l \in L_0$ and that $\exp(g) \cdot \epsilon l = \epsilon l - ( (1 + \epsilon l)(g)\cdot g^{-1})$ holds by definition of the action, we conclude that $\exp(g)$ interacts nicely with brackets involving an $l \in L_0$ by observing that
		$$ \epsilon l - \text{ad} \; \exp(g) \cdot \epsilon l= \exp((1 + \epsilon l) g) \exp(g^{-1}) - 1= (1 + \epsilon l)\exp(g)(1 - \epsilon l)\exp(g^{-1}) - 1 = \epsilon l - \epsilon \exp(g) l \exp(g^{-1}),$$
		as maps on $L$ (remark that we can write $l$ for $\text{ad} \; l$ if we interpret $l$ as an element of $\text{InStr}(G)$).
		
		So, the only thing left to prove is that
		$$ \exp(g)[a,b] = [\exp(g)a,\exp(g)b],$$
		for $a,b \in L_-$.
		
		If $a,b \in A^-$ this follows from the Jacobi identity, the $(n,(1,1))$-linearisations $P,R$ of Definition \ref{Definition pre-Kantor pair} using Lemma \ref{Lemma homogeneous map on commutator}, and the equation
		$$ \tau_{x,[a,b]} = V_{Q_xa,b} + V_{a,Q_xb} + [V_{x,a},V_{x,b}]$$
		proved in Lemma \ref{Lemma tau commutator}.
		
		Now, we assume that $a \in [G^-,G^-]$ and $b \in A^-$.
		We will prove that 
		\begin{equation}
			\label{eq ad a}
			\exp(g)[a,\exp(g^{-1})(b)] = [\exp(g)(a),b],
		\end{equation} which proves that 
		\begin{equation}
			\label{eq ad b}
			\exp(g)(\text{ad} \; \exp(g^{-1})b)\exp(g^{-1}) = \text{ad} \; b
		\end{equation} holds. Using that we already proved that $\exp(g)$ conjugates as expected with $$\exp(g^{-1})(b) - b \in \text{InStr}(G) \oplus A^+ \oplus [G^+,G^+]$$
		we will be able to conclude from Equation (\ref{eq ad b}) that $$\exp(g)(\text{ad} \; b)\exp(g^{-1}) = \text{ad} \; \exp(g)(b).$$
		So, if we prove Equation (\ref{eq ad a}), then we are able to conclude that
		$\exp(g)(\text{ad} \; b)\exp(g^{-1}) = \text{ad} \; \exp(g)(b)$ for all $b \in A^- \cup A^+ \cup \zeta$, i.e., a generating set of $L$. This proves that $\exp(g)(\text{ad} \; c)\exp(g^{-1}) = \text{ad} \; \exp(g)(c)$ for all $c \in L$, since we can write $\text{ad} \; c$ as a polynomial function of elements $\text{ad} \; b$ for which it holds.
		So, proving Equation (\ref{eq ad a}) proves the lemma, without needing to consider the case where $a,b \in [G^-,G^-]$.
		So, we try to prove Equation (\ref{eq ad a}).
		The first $3$ axioms for operator Kantor pairs show that $\exp(g)a_2\exp(g^{-1}) = \text{ad} \; \exp(g)(a)$ on $A^-$ if and only if
		$$ \tau_{g,a} = g_2a_2 - g_1a_2g_1 + a_2(g^{-1})_2$$ on $A^-$.
		Applying the definition of $\tau$ and the action of the elements $g_i$ and $a_2$ on the Lie algebra, shows us that we need to prove that
		\[ -P^{(1,2)}_{b,a}(g^{-1})  = \tau(g,a)(b) = (g_2a_2 - g_1a_2g_1 + a_2(g^{-1})_2) \cdot b = Q_aQ_{g^{-1}}b + Q_{V_{b,g}a} g.\]
		Using that $P^{(2,1)}_{a,b}(g^{-1}) = Q_aQ_g b - V_{b,g}Q_a g$ by Definition \ref{Definition pre-Kantor pair}.\ref{PKP LIN P} and that $P^{(1,2)}_{b,a} = P^{(2,1)}_{a,b} + P^{(2,1)}_{[b,a],a}$ as any homogeneous map of degree $3$, transforms what we need to show into
		\[ Q_a(Q_g + Q_{g^{-1}})b - V_{b,g}Q_ag + Q_{V_{b,g}a}g + Q_{[b,a]}Q_ga - V_{a,g}Q_{[b,a]}g = 0.\]
		Using that $(Q_g + Q_{g^{-1}})(b) = V_{b,g} g$ since $g_2 + (g^{-1})_2 = - g_1^2$, and that $[Q_a,V_{b,g}]g = -Q_{V_{b,g}a}g$ for $a \in G_2$ since $V_{b,g}$ is a derivation, reduces the equation to
		$$ V_{a,g}Q_{[a,b]}g = Q_{[a,b]}Q_ga$$
		which holds since $a \in [G^-,G^-]$ so that $V_{a,g} = 0$ and $Q_{[a,b]} = 0$. This proves that Equation \ref{eq ad a} holds and thus the lemma.
	\end{proof}
\end{lemma}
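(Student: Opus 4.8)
The plan is to prove that $\exp(g)$ preserves the Lie bracket, i.e. $\exp(g)[a,b] = [\exp(g)a,\exp(g)b]$ for all $a,b \in L$, where without loss of generality $g \in G^+$ (the case $g \in G^-$ follows by the symmetry $L(G^+,G^-) \cong L(G^-,G^+)$). Equivalently, since $\operatorname{ad}$ is a Lie homomorphism and every $\operatorname{ad} c$ is a polynomial in the $\operatorname{ad}$'s of a generating set, it suffices to establish the conjugation identity $\exp(g)(\operatorname{ad} c)\exp(g)^{-1} = \operatorname{ad}(\exp(g)c)$ on a generating set, which I take to be $A^+ \cup A^- \cup \{\zeta\}$. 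I would organize the verification along the $5$-grading $L = L_- \oplus L_0 \oplus L_+$.

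First I would dispatch all brackets having at least one argument in $L_+$: here the identity is essentially free, because $g \mapsto g_i$ is a vector group representation into $\operatorname{End}_\Phi(L)$, so conjugation by $\exp(g) = \sum_i g_i$ sends $\operatorname{ad} a$ to $\operatorname{ad}(\exp(g)a)$ for $a \in L_+$. Next, for arguments in $L_0 = \operatorname{InStr}(G)$, I would use that $1 + \epsilon\delta$ is an automorphism of the pre-Kantor pair for every $\delta \in \operatorname{InStr}(G)$ together with the defining relation $\exp(g)\cdot \epsilon l = \epsilon l - ((1+\epsilon l)(g)\,g^{-1})$ for the action; expanding over $\Phi[\epsilon]$ converts the automorphism property for $l$ into the desired conjugation identity.

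The substantial work is the case $a,b \in L_-$. When $a,b \in A^-$ I would deduce the identity from the Jacobi identity in $L$ together with the $(n,(1,1))$-linearisations of $P$ and $R$ fixed in \cref{Definition pre-Kantor pair} and the formula $\tau_{x,[a,b]} = V_{Q_x a,b} + V_{a,Q_x b} + [V_{x,a},V_{x,b}]$ from \cref{Lemma tau commutator}, applying \cref{Lemma homogeneous map on commutator} to pass between linearisations and commutators. When $a \in [G^-,G^-]$ and $b \in A^-$ I would instead prove the shifted identity $\exp(g)[a,\exp(g^{-1})b] = [\exp(g)a,b]$, which yields $\exp(g)(\operatorname{ad} b)\exp(g^{-1}) = \operatorname{ad}(\exp(g)b)$ once one absorbs the already-handled component $\exp(g^{-1})(b) - b \in L_0 \oplus L_+$. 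This is where the operator Kantor axioms enter: they prescribe exactly how $\mu_{3,2}$, $\mu_{4,2}$, $\mu_{5,2}$ and $\nu_{6,3}$ act on $A^-$, and together they force $\exp(g)\,a_2\,\exp(g^{-1})$ to agree with $\operatorname{ad}\exp(g)(a)$ on $A^-$ precisely when $\tau_{g,a} = g_2 a_2 - g_1 a_2 g_1 + a_2(g^{-1})_2$ holds there. Unwinding $\tau$ and the $g_i$-actions, and using \cref{Definition pre-Kantor pair}.\ref{PKP LIN P} for $P^{(2,1)}$, I expect this to collapse to the identity $V_{a,g}Q_{[a,b]}g = Q_{[a,b]}Q_g a$, which holds trivially because $a \in [G^-,G^-]$ makes both $V_{a,g}$ and $Q_{[a,b]}$ vanish.

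Since $A^+ \cup A^- \cup \{\zeta\}$ generates $L$, establishing the conjugation identity on these generators through the cases above suffices, and crucially lets me sidestep the bracket $a,b \in [G^-,G^-]$ entirely. The main obstacle is the last case: correctly rewriting the conjugated action $\exp(g)\,a_2\,\exp(g^{-1})$ on $A^-$ via the operator Kantor equations and the linearisations of $P$ (using $P^{(1,2)}_{b,a} = P^{(2,1)}_{a,b} + P^{(2,1)}_{[b,a],a}$), and then spotting the cancellation driven by $g_2 + (g^{-1})_2 = -g_1^2$ and the derivation property of $V$. This bookkeeping, rather than any conceptual difficulty, is the real content of the proof.
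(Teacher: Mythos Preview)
Your proposal is correct and follows essentially the same route as the paper's proof: the same case split along the grading, the same vector-group/automorphism arguments for $L_+$ and $L_0$, the same shifted identity $\exp(g)[a,\exp(g^{-1})b]=[\exp(g)a,b]$ to handle $a\in[G^-,G^-]$, $b\in A^-$, and the same final cancellation via $P^{(1,2)}_{b,a}=P^{(2,1)}_{a,b}+P^{(2,1)}_{[b,a],a}$, $g_2+(g^{-1})_2=-g_1^2$, and the derivation property of $V$. The only minor discrepancy is that the paper invokes just the first three operator Kantor axioms (on $\mu_{3,2},\mu_{4,2},\mu_{5,2}$) at this step, whereas you also list $\nu_{6,3}$; the fourth axiom is not needed here.
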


\begin{theorem}
	\label{thm weights lie algebra}
	Let $(G^+,G^-)$ be an operator Kantor pair and let $G^+$ and $G^-$ be projective. The $\Phi$-group functor $K \longrightarrow G(K) = \langle G^+(K),G^-(K) \rangle \le \text{End}_\Phi(L(G^+,G^-)) \otimes K$ has as corresponding Lie algebra $$L = \text{Lie}(G^-) \oplus L_0 \oplus \text{Lie}(G^+)$$ for some Lie algebra $L_0$. This Lie algebra is $5$-graded by the weights of the action of $\Phi_m(K) = K^\times$ defined by $\lambda \cdot (g_+,g_-) = (\lambda \cdot_1 g_+, \lambda^{-1} \cdot_1 g_-)$ of $\Phi_m$  on $(G^+,G^-)$ and $L_0$ is the $0$ graded component of this action. Moreover, if $E = 1 + e_1 + e_2 + e_3 + e_4 \in G(K)$ with $e_i(L_j) \subset L_{i+j}$ for all $i,j$, then $E$ must be an element of $G^+(K)$. 
	\begin{proof}
		We want to apply Theorem \ref{thm main}. 
		Suppose that we can use the conclusions of the theorem.
		This proves that the vector group representations of $G^+,G^-$ factor through some $\mathbb{Z}$-graded bialgebra $H = \mathcal{U}(G^-)\mathcal{H}\mathcal{U}(G^+)$ such that each $h \in H$ can be uniquely written as $\sum_{i = 1}^n u_{i}h_iv_i$ with all $u_i \in \mathcal{U}(G^-), v_i \in \mathcal{U}(G^+)$ and all $h_i$ contained in a $0$-graded Hopf subalgebra $\mathcal{H}$.
		Furthermore, $G(K)$ can be constructed by a quotient of the group $I(K) = \langle \rho^+_{[t]}(G^+(K)), \rho^-_{[t]}(G^-(K)) \rangle$ defined by embedding the universal representations $\mathcal{U}(G^\pm)$ in $H$.
		Specifically, take the unique map defined by $$1 + tg_1 + t^2g_2 + \ldots \longmapsto 1 + g_1 + g_2 + g_3 + g_4$$ on the generators $\rho^+_{[t]}(G^+(K))$ and $\rho^-_{{t}}(G^-(K))$ which maps $I(K) \longrightarrow G(K)$.
		Suppose that $1 + \epsilon d \in \text{Lie}(G)$. We can lift this to an element $F$ of $I(K[\epsilon])$. Furthermore, we can assume that $F$ is of the form
		$$ 1 + \epsilon f_1 + \epsilon f_2 + \epsilon f_3 + \ldots,$$
		using the fact that the image of $F$ under $\epsilon \mapsto 0$ lies in $I(K).$
		Since all the generators of $I(K)$ are group like, i.e., satisfy $\Delta(i) = i \otimes i, \epsilon(i) = 1$, we conclude that all $f_i$ satisfy $\Delta(f_i) = f_i \otimes 1 + 1 \otimes f_i$. Lastly, note that $g_k \mapsto 0$ for all $k > 4$ and all $g \in G^+,G^-$, i.e., $1 + ti_1 + t^2i_2 + \ldots \in I(K)$ implies that there exists some $N$ such that $i_k \mapsto 0 $ if $k \ge N$.
		So, we conclude that there exists a primitive element 
		$ \sum_{i = 1}^N f_i$ which maps to $d$.
		Since each primitive element of $H$ can be uniquely written as a sum $\sum u_{i}h_iv_i$, it is not hard to check that each primitive element can be uniquely decomposed as $p = u_1 + h_2 + v_3$ with $u_1,h_2,v_3$ primitive using $(\pi_{\mathcal{U}(G^-)} \otimes \pi_\mathcal{H} \otimes \pi_{\mathcal{U}(G^+)})\Delta^2(yhx) = y \otimes h \otimes x$.
		If we assume that $d$ has no $0$-graded component, then we can conclude that $d \in \text{Lie}(G^+) \oplus \text{Lie}(G^-)$.
		
		If the conditions of Theorem \ref{thm main} apply, then one also proves the moreover part in a similar fashion. Namely, one can find a lift for $E$ in $I(K)$ using the fact that all group-like elements $h$ in $H$ such that $h - 1$ has only positively graded components, must lie in $\mathcal{U}(G^+)$ and then apply Theorem \ref{theorem PUG} to conclude that $h \in G^+(K)$.
		
		So, now we prove that the conditions of Theorem \ref{thm main} apply.
		It is sufficient to prove that 
		$$ \exp(O^{i1}_xy) = 1 + \sum_{k = 1}^4 \nu_{(ki),k}(x,y),$$
		for $O^{21} = Q^{\text{grp}}$ and $O^{31} = T$, that
		$$ \nu_{4,1}, \nu_{5,1}, \nu_{5,2}, \nu_{7,3} = 0$$
		and that
		$$ \nu_{3,2}(x,y) =\text{ad} \; P_x y$$
		for $x \in G^+$ and $y \in G^-$ (or with the roles of $x$ and $y$ reversed) and $l \in A^+$.
		To simplify this task, note that there are no nontrivial derivations $d: L(G^+,G^-) \longrightarrow L(G^+,G^-)$ so that $d(L(G^+,G^-)_i) \subset L(G^+,G^-)_{i + k}$ with $|k| \ge 4$ since $L(G^+,G^-)$ is generated by $A^+,A^-$ and $\zeta$.
		Furthermore, one computes, using the fact that $\exp(g)$ is an automorphism for all $g$, that 
		$$ \exp(o_{i,j}(x,y)) = \sum_{k = 0}^\infty \nu_{ki,kj}(x,y),$$
		is an automorphism as well (similar to Lemma \ref{lemma grouplike}), proving, in particular, that the $\nu_{i,j}$ with $|i - j| \ge 4$ are uniquely determined by the $\nu_{k,l}$ with $|k - l| < 4$ and $i/j = k/l$, as all $\nu_{i,j}$ are determined by those smaller $\nu_{k,l}$ up to a derivation.
		The previous observations reduce what we need to prove to
		\begin{enumerate}
			\item $\nu_{2,1}(x,y) = \text{ad} \; Q_x y$
			\item $\nu_{4,2}(x,y) = (Q^\text{grp}_x y)_2$,
			\item $\nu_{6,3}(x,y) = (Q^\text{grp}_xy)_3$
			\item $\nu_{3,1}(x,y) = \text{ad} \; T_xy$
			\item $\nu_{4,1}(x,y) = \nu_{5,2}(x,y) = 0$
			\item $\nu_{3,2}(x,y) = \text{ad} \; l = \text{ad} \; P_x y$.
		\end{enumerate}
		For $\nu_{i,1}(x,y) = \mu_{i,1}(x,y)$ this follows from the fact that $\exp(x)$ is an automorphism. 
		Note that $\nu_{5,2}(x,y)$ is a derivation on $L(G^+,G^-)$ which acts as $+3$ on the grading. So, the action given in the definition of an operator Kantor pair determines $\nu_{5,2}(x,y)$ uniquely since that the third axiom for operator Kantor pairs implies that $\mu_{5,2}(x,y) - \nu_{2,1}(x,y)\nu_{3,1}(x,y) = 0$ on $A^-$. Similarly, the fourth axiom guarantees that $\nu_{6,3}(x,y) = (Q^\text{grp}_xy)_3$ if we can prove that the other equations hold.
		So, we only need to consider $\nu_{4,2}(x,y)$ and $\nu_{3,2}(x,y)$.
		The element $\nu_{4,2}$ acts trivially on $A^+$ and acts as expected on $A^-$. Evaluating on the grading derivation gives us an element that acts as an inner derivation of the Lie algebra. This evaluation yields $\nu_{4,2}(x,y) \cdot \zeta$ which is an element that acts like the endomorphism $-2\nu_{4,2}(x,y) + \nu_{2,1}(x,y)^2$. This last endomorphism acts exactly as $(0,-2R_xy + \psi(Q_xy,Q_xy))_2$ on $A^-$, which proves that $\nu_{4,2}(x,y)$ acts as $(Q^\text{grp}_x y)_2$ since $G^+_2$ acts faithfully on $A^-$.
		
		Note that 
		$$ \nu_{3,2}(x,y) \cdot \zeta = - l \in A^+.$$
		Using the fact that $\nu_{3,2}$ is a derivation, we conclude $\text{ad} \; l = \nu_{3,2}(x,y)$.
		We know that $(x,y) \mapsto  - \nu_{3,2}(x,y) \cdot l$ is a homogeneous map with the same linearisations, by Lemma \ref{Lemma equations}, as $P$, hence $l = P_xy$ by Lemma \ref{lemma uniqueness P}. 
		This finishes the proof.
	\end{proof}
\end{theorem}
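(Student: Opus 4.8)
The plan is to invoke Theorem \ref{thm main} for the pair of vector group representations $g \mapsto g_i \in \text{End}_\Phi(L)$ attached to the operator Kantor pair, and then to read off the structure of $G(K)$ and of its Lie algebra from the resulting $\mathbb{Z}$-graded bialgebra. I would therefore split the argument into two halves: first assume that the hypotheses of Theorem \ref{thm main} are satisfied and deduce the statement, and only afterwards verify those hypotheses.

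Granting Theorem \ref{thm main}, the representations factor through $H = \mathcal{U}(G^-)\hat{\mathcal{H}}\mathcal{U}(G^+)$, in which every element is uniquely $\sum_i u_i h_i v_i$ with $u_i \in \mathcal{U}(G^-)$, $v_i \in \mathcal{U}(G^+)$ and $h_i$ in the $0$-graded $\mathcal{H}$, and $G(K)$ is a quotient of $I(K) = \langle \rho^+_{[t]}(G^+(K)), \rho^-_{[t]}(G^-(K))\rangle$ via $1 + tg_1 + \dotsb \mapsto 1 + g_1 + \dotsb + g_4$ (legitimate since $g_k = 0$ for $k > 4$). To compute $\Lie(G)$, I would lift $1 + \epsilon d$ to $F = 1 + \epsilon f_1 + \epsilon f_2 + \dotsb \in I(K[\epsilon])$; since all generators of $I(K)$ are group-like, each $f_i$ is primitive, and the vanishing $g_k = 0$ for $k > 4$ forces only finitely many $f_i$ to be nonzero, so $d$ lifts to a primitive element of $H$. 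The decomposition of a primitive $p$ as $p = u_1 + h_2 + v_3$ with each summand primitive, recovered by applying $(\pi_{\mathcal{U}(G^-)} \otimes \pi_{\mathcal{H}} \otimes \pi_{\mathcal{U}(G^+)})\Delta^2$, then yields $\Lie(G) = \Lie(G^-) \oplus L_0 \oplus \Lie(G^+)$ with $L_0 = \text{P}(\mathcal{H})$, once Theorem \ref{theorem PUG} identifies $\text{P}(\mathcal{U}(G^\pm)) \cong \Lie(G^\pm)$ for projective $G^\pm$. The $5$-grading is inherited from the $\mathbb{Z}$-grading on $H$, which matches the weights of the $\Phi_m$-action. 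For the moreover part, $E = 1 + e_1 + \dotsb + e_4$ with $e_i(L_j) \subset L_{i+j}$ is group-like with $E - 1$ purely positively graded; such elements of $H$ lie in $\mathcal{U}(G^+)$, and Theorem \ref{theorem PUG} then places $E$ in $G^+(K)$.

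The remaining, and more delicate, half is checking the four hypotheses of Theorem \ref{thm main}, equivalently that
\begin{enumerate}
\item $\nu_{2,1}(x,y) = \text{ad}\,Q_xy$ and $\nu_{3,1}(x,y) = \text{ad}\,T_xy$,
\item $\nu_{4,2}(x,y) = (Q^\text{grp}_xy)_2$ and $\nu_{6,3}(x,y) = (Q^\text{grp}_xy)_3$,
\item $\nu_{4,1} = \nu_{5,2} = 0$,
\item $\nu_{3,2}(x,y) = \text{ad}\,P_xy$.
\end{enumerate}
Here I would use two rigidity facts to cut the work down. First, since $L$ is generated by $A^+$, $A^-$ and $\zeta$, there is no nonzero derivation shifting the grading by $|k| \ge 4$; second, as in Lemma \ref{lemma grouplike} each $\exp(o_{i,j}(x,y)) = \sum_k \nu_{ki,kj}(x,y)$ is an automorphism, so the $\nu_{i,j}$ with $|i-j| \ge 4$ are determined by those of smaller index of the same slope. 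The identities $\nu_{i,1} = \mu_{i,1} = \text{ad}\,\exp(x)_i(y)$ follow directly from $\exp(x)$ being an automorphism, and the four defining equations of an operator Kantor pair are precisely the statements that $\mu_{3,2},\mu_{4,2},\mu_{5,2}$ and $\nu_{6,3}$ act as prescribed on $A^{-\epsilon}$, which pins down $\nu_{5,2} = 0$ and $\nu_{6,3}$.

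The main obstacle I anticipate is the last pair of identities, $\nu_{4,2} = (Q^\text{grp}_xy)_2$ and $\nu_{3,2} = \text{ad}\,P_xy$, because these endomorphisms are not determined by their action on $A^-$ alone. The device I would use is to evaluate the derivation on the grading element $\zeta$: evaluating $\nu_{4,2}(x,y)$ on $\zeta$ produces the endomorphism $-2\nu_{4,2}(x,y) + \nu_{2,1}(x,y)^2$, which acts on $A^-$ as $(0,-2R_xy + \psi(Q_xy,Q_xy))$, and faithfulness of the $G^+_2$-action (Equation \ref{Kantor pair equation 2}) then forces $\nu_{4,2}(x,y) = (Q^\text{grp}_xy)_2$. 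Likewise $\nu_{3,2}(x,y)\cdot\zeta = -P_xy$; since $(x,y) \mapsto -\nu_{3,2}(x,y)\cdot\zeta$ has the same linearisations as $P$ by Lemma \ref{Lemma equations}, the uniqueness result of Lemma \ref{lemma uniqueness P} identifies it with $P_xy$, whence $\nu_{3,2} = \text{ad}\,P_xy$, completing the verification.
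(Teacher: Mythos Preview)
Your proposal is correct and follows essentially the same approach as the paper's proof: first deduce the Lie algebra decomposition and the moreover statement from Theorem \ref{thm main} via the lift to $I(K)$, the primitive-element decomposition in $H$, and Theorem \ref{theorem PUG}; then verify the hypotheses of Theorem \ref{thm main} by using the two rigidity facts (no high-degree derivations, and $\exp(o_{i,j}(x,y))$ is an automorphism) together with the operator Kantor pair axioms, handling $\nu_{4,2}$ and $\nu_{3,2}$ by evaluating on $\zeta$ and invoking faithfulness and Lemma \ref{lemma uniqueness P} respectively. The organization and all the key steps match the paper's argument.
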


\begin{remark}
	It could very well be true that the previous theorem holds without the assumption that $G^+$ and $G^-$ are projective.
	One can, however, also drop the projective assumption in order to get a weaker result. Namely, if there exists an element $E \in G(K)$ of the form of the previous theorem which is not contained in $G^+(K)$, then it still corresponds to a group-like element in a Hopf quotient $U^+$ of the Hopf algebra $\mathcal{U}(G^+)$. Similarly, the positively graded elements of the Lie algebra $\text{Lie}(G)$ that are not contained in $\text{Lie}(G^+)$, correspond to primitive elements of $U^+$.
\end{remark}

\begin{theorem}
	\label{thm main 2}
	Suppose that $\rho^\pm : G^\pm \longrightarrow A$ are vector group representations of proper vector groups such that for all $x \in G^\pm, y \in G^\mp$,
	\begin{itemize}
		\item $\exp(o_{2,1}(x,y),s) \in \rho^\pm_s(G^\pm)$, 
		\item $\exp(o_{3,1}(x,y),s^2) \in \rho^\pm_s([G^\pm,G^\pm])$ ,
		\item $\exp(o_{i,j}(x,y),s) = 1$ for $i > 2j$ and $i \neq 3j,$
		\item there exists $z \in G^\pm$ such that $\rho_s(z) = 1 + s\nu_{3,2}(x,y) + O(s^2)$.
	\end{itemize}
	Set $K^\pm = \{(\rho^\pm_1(g),\rho_2^\pm(g)) \in A \times A | g \in G^\pm\}.$
	Then $$(K^+,K^-, Q^\text{grp} = o_{2,1}, T = o_{3,1}, P = \nu_{3,2})$$ forms an operator Kantor pair if and only if $\rho_2(G_2) \ni (0,s) \mapsto Q_{(0,s)}(\cdot)$ is injective and $\rho^\pm(g(-g)) \in \rho^\pm[G^\pm,G^\pm]$.
	\begin{proof}
		The conditions that the $(0,s) \in G$ act faithfully and that $$\rho^\pm(g(-g)) \in \rho^\pm[G^\pm,G^\pm]$$ are necessary.
		Furthermore, the equations of Lemma \ref{Lemma equations} prove that we have a pre-Kantor pair since these express firstly, in equations \ref{REP EQ V1}, \ref{REP EQ V 2}, \ref{REP EQ Tau1} and \ref{REP EQ Tau2}, that the definition of $V_{x,y},$ and $\tau_{x,y}$ is such that the action of $1 + \eta V_{x,y} + \eta^2 \tau_{x,y}$ coincides with the conjugation action of $\exp(o_{1,1}(x,y),\eta)$ over $\Phi[\eta]/(\eta^3)$. These equations also prove that $1 + \epsilon V_{x,y}$ is an automorphism since the operators $Q,$ $T,$ $R,$ and $P$ are defined using multiplications in $A$. Thirdly, the rest of the equations also express that the linearisations of the operators $Q,$ $R,$ and $P$ are as required; the equations can be matched to axioms for pre-Kantor pairs by matching the degrees. The linearization for $T$ follows from Lemma \ref{lemma linearizations mu} and $\nu_{k,1} = \mu_{k,1}$.
		
		We will prove that it is an actual operator Kantor pair in Appendix \ref{section proof}.
	\end{proof}
\end{theorem}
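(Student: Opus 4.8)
The plan is to treat the two displayed conditions as the structural axioms (\ref{Kantor pair equation 2}) and (\ref{Kantor pair equation}) of a pre-Kantor pair, and then to upgrade that pre-Kantor pair to an operator Kantor pair. The forward implication is immediate: an operator Kantor pair is by definition a pre-Kantor pair, so (\ref{Kantor pair equation 2}) holds, which is precisely the asserted injectivity of $(0,s)\mapsto Q_{(0,s)}$ on $K^\pm_2=\rho_2(G_2)$; and since $K^\pm$ is the image vector group $\{(\rho_1(g),\rho_2(g))\}$, condition (\ref{Kantor pair equation}) for $K^\pm$, namely $g(-g)\in[K^\pm,K^\pm]$, is exactly $\rho^\pm(g(-g))\in\rho^\pm[G^\pm,G^\pm]$. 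Hence both conditions are forced, and for the converse I would proceed in two stages.

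First, the pre-Kantor pair axioms. The bullet hypotheses make $Q^\text{grp}=o_{2,1}$, $T=o_{3,1}$ and $P=\nu_{3,2}$ operators of the prescribed bidegrees: $\exp(o_{2,1}(x,y),s)\in\rho^\pm_s(G^\pm)$ exhibits $o_{2,1}$ as a map into $K^\pm$ with components $(\nu_{2,1},\nu_{4,2})=(Q,R)$, $\exp(o_{3,1}(x,y),s^2)\in\rho^\pm_s([G^\pm,G^\pm])$ lands $T=\nu_{3,1}$ in $[K^\pm,K^\pm]$, and the vanishing clause removes every remaining $\nu_{i,j}$ with $i>2j$, $i\neq 3j$. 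The axioms of Definition \ref{Definition pre-Kantor pair} are then read off Lemma \ref{Lemma equations}: items \ref{REP EQ V1}, \ref{REP EQ V 2}, \ref{REP EQ Tau1} and \ref{REP EQ Tau2} show that $1+\eta V_{x,y}+\eta^2\tau_{x,y}$ agrees with conjugation by $\exp(o_{1,1}(x,y),\eta)$ modulo $\eta^3$, so that the $V,\tau$ of Definition \ref{definition V tau} are correctly computed and $1+\epsilon V_{x,y}$ is an automorphism (being the first-order part of an honest conjugation), while the remaining items of Lemma \ref{Lemma equations}, matched by bidegree and aided by Lemma \ref{lemma linearizations mu} together with $\nu_{k,1}=\mu_{k,1}$ for the $T$-linearisation, yield axioms (2)--(7); together with the two hypotheses this establishes the pre-Kantor pair structure.

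Second, the four operator Kantor pair equations. Since $K^\pm$ sits inside $A\times A$, the modules $A^\pm$ and $[K^\pm,K^\pm]$ in which those equations live are submodules of $A$, and all the operators occurring in them are precisely the corresponding maps $\nu_{i,j}$ of $\rho$; thus each of the four intrinsic identities becomes an identity in $A$ among the $\nu_{i,j}$ and their linearisations. I would verify these directly, using the linearisation formulas of Lemma \ref{Lemma equations} (items 5--7 and 8(a), together with \ref{REP EQ V1}--\ref{REP EQ Tau2}) and the bullet vanishings (e.g.\ $\nu_{5,2}=0$ because $5>2\cdot 2$ and $5\neq 3\cdot 2$, while $\nu_{4,2}$ and $\nu_{6,3}$ are the higher components of $Q^\text{grp}=o_{2,1}$); Theorem \ref{thm main}, whose hypotheses coincide with the bullet conditions, supplies the subalgebra $\hat{\rho}(G^-)\,\mathcal{H}(G^+,G^-,\rho)\,\hat{\rho}(G^+)$ in which all the needed composites of the $\nu_{i,j}$ may be manipulated. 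The faithfulness condition (\ref{Kantor pair equation 2}) and $g(-g)\in[G^\pm,G^\pm]$ enter to pin down $R$ and to descend from endomorphism identities on $A^{-\epsilon}$ to the group-valued operators, exactly as in the computations of Theorem \ref{thm weights lie algebra}. I prefer this direct route to the equivalent reformulation of Remark \ref{remark sufficient condition} in terms of the Lie-algebra representation $g\mapsto g_i\in\End(L)$, because the latter would require that $\exp(g)$ be an automorphism, whose only earlier proof presupposes the very axioms being established.

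I expect the principal difficulty to be the bookkeeping of this last step: matching each of the four rather intricate identities to the correct combination of Lemma \ref{Lemma equations} items and of bullet vanishings, and keeping track of bidegrees and of which component of $o_{2,1}$, $o_{3,1}$ and $\nu_{3,2}$ realises each operator. The individual manipulations are of the tedious-but-routine kind displayed in the proof of Lemma \ref{Lemma equations}, so that no genuinely new idea beyond this careful translation is needed once the framework of Theorem \ref{thm main} is in place.
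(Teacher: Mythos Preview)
Your treatment of the necessity direction and of the pre-Kantor pair structure matches the paper's argument closely. For the upgrade to an operator Kantor pair, however, you and the paper diverge sharply. You propose to verify the four axioms by direct identity-chasing in $A$ among the $\nu_{i,j}$ and their linearisations, and you explicitly reject the route via Remark~\ref{remark sufficient condition} on the grounds that proving $\exp(g)$ is an automorphism would be circular. The paper's approach in Appendix~\ref{section proof} is precisely to rescue that route: one passes from $A$ to the universal Hopf algebra $F/I$ (the free product $\mathcal{U}(\rho^+(G^+)) * \mathcal{U}(\rho^-(G^-))$ modulo the ideal encoding the bullet hypotheses), identifies the Lie algebra of the pre-Kantor pair with a quotient $(L \oplus \Phi\zeta)/J$ inside $F/I$, and then observes that the \emph{Hopf adjoint action} $x \cdot a = x_{(1)}\, a\, S(x_{(2)})$ of any Hopf algebra on itself satisfies $x \cdot (ab) = (x_{(1)} \cdot a)(x_{(2)} \cdot b)$ automatically. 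Hence conjugation by the group-like element $\exp(g)$ is an automorphism of $L$ \emph{for free}, with no circularity whatsoever, and the remaining items of Remark~\ref{remark sufficient condition} then follow directly from the defining relations of $I$.

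Your direct route is not obviously wrong, but the tools you name do not supply what it needs: Lemma~\ref{Lemma equations} gives linearisation identities rather than the higher compositions (for instance those governing $\nu_{6,3}$) required for the fourth axiom, and Theorem~\ref{thm main} provides a subalgebra decomposition, not the specific equalities among the $\nu_{i,j}$ you would have to establish. The Hopf-adjoint trick is exactly what converts an open-ended computation into a one-line structural observation, and it dissolves the very circularity concern that led you to avoid it.
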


We use the previous theorem to prove that what we constructed in Example \ref{example hermitian form} is in fact an operator Kantor pair.

\begin{definition}
	We call a vector group $G$ with operators $Q^\text{grp}, T, P$  such that $(G,G,Q^\text{grp},T,P)$ forms an operator Kantor pair, an \textit{operator Kantor system}.
\end{definition}

\begin{lemma}
	The quadruple $(G,Q^{\text{grp}},T,P)$ of Example \ref{example hermitian form} forms an operator Kantor system if either condition (\ref{condition kantor pair hermitian form}) holds and $G$ is proper or if $1/2 \in \Phi$.
	\begin{proof}
		We will apply Theorem \ref{thm main 2}. If we construct vector group representations so that the operators of Example \ref{example hermitian form} correspond to the operators of Theorem \ref{thm main 2}, then it is sufficient to check that $g(-g) \in [G,G]$ in order to prove that we have an operator Kantor pair.
		Condition (\ref{condition kantor pair hermitian form}) says precisely this, namely $(m,a) \in G$ implies that
		$$ (m,a)(-m,a) = (0,2a - h(m,m)) = (0,a - \bar{a}) \in [G,G].$$
		We do not need this condition if $1/2 \in \Phi$ since $(m,a) \in G$ implies that $a - \bar{a} \in \langle h(x,y) - h(y,x) | x,y \in M \rangle$, so that $$(0, a - \bar{a}) \in \langle (0,h(x,y) - h(y,x))  \rangle = \langle [(x,h(x,x)/2),(y,h(y,y)/2)] \rangle.$$
		Furthermore, by definition $G$ needs to be proper. However, each vector group over $\Phi$ containing $1/2$ is proper.
		
		We work with the algebra $\mathcal{A}$ associated with a Kantor pair associated to a hermitian form in \cite[section 8]{ALLFLK99}. This algebra is of the form $\begin{pmatrix}
			C & M & C \\
			\bar{M} & \mathcal{E} & \bar{M} \\
			C & M & C\\
		\end{pmatrix}$ as a $\Phi$ module with $\bar{M}$ and $M$ equal as sets but opposite $C$-module structures and $\mathcal{E}$ a subalgebra of pairs of endomorphisms of $M,\bar{M}$; the product of $\mathcal{A}$ corresponds to a matrix product and product that not involve $\mathcal{E}$ are given by the actions of $C$ on $M$ and the bilinear map $M \times \bar{M} \longrightarrow C$. We do not need the products involving $\mathcal{E}$ in this proof.
		
		We also consider the vector group representations
		$$(m,a) \mapsto \begin{pmatrix}
			1 & m & a \\ 
			0 & (\text{Id},\text{Id}) & \bar{m} \\
			0 & 0 & 1 \\
		\end{pmatrix} \in \mathcal{A}, \quad 
		(n,b) \mapsto \begin{pmatrix}
			1 & 0 & 0 \\ 
			\bar{n} & (\text{Id},\text{Id}) & 0 \\
			b & \bar{n} & 1 \\
		\end{pmatrix} \in \mathcal{A}.$$
		One can compute that this gives us a pre-Kantor pair with the given operators $Q,T,P,R$ corresponding to $\nu_{ij}$ using Theorem \ref{thm main 2}.
		The decomposition of $\exp_+(sx)\exp_-(ty)$ as $\exp_-(tY)h\exp_+(sX)$ for certain $X,Y$with $h$ diagonal, can be used to compute certain formal power series in $s$ and $t$. Note that for such a decomposition
		$\exp_-(ty^{-1})\exp_+(sx^{-1}) = \exp_+(sX^{-1})h^{-1}\exp(tY^{-1})$ 
		always holds.
		The first decomposition yields that $\exp_+(tX)$ equals
		$$\begin{pmatrix}
			1 & tm + t^2s Q_xy + t^3s^2P_xy + O(s^3) & t^2a + t^3s(T_xy + (Q_xy) \bar{x}) + t^4s^2(R_xy + (P_xy) \bar{x} )  + O(s^3)\\
			& (\text{Id},\text{Id}) & \ldots \\
			& & 1 \\			
		\end{pmatrix},$$
		which shows the necessity of the formulas for $Q,T,P,R$.
		With the second decomposition, one can easily show that the omitted (and difficult) expression is what it should be, which proves that the formulas hold.
		Theorem \ref{thm main 2} shows that we have an operator Kantor pair.
	\end{proof}
\end{lemma}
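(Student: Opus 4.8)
The plan is to invoke Theorem \ref{thm main 2}, whose hypotheses reduce the problem to exhibiting a pair of vector group representations of $G$ realizing $Q^{\text{grp}}$, $T$ and $P$ as the operators $o_{2,1}$, $o_{3,1}$ and $\nu_{3,2}$; once this is done, the theorem leaves only two conditions to be checked by hand, namely that $(0,s)\mapsto Q_{(0,s)}$ is injective and that $g(-g)\in[G,G]$ for all $g\in G$. First I would take the associative algebra $\mathcal{A}$ attached to the hermitian Kantor pair in \cite[section 8]{ALLFLK99}: as a $\Phi$-module it is a $3\times 3$ array with $C$ in the four corner positions, $M$ and $\bar M$ on the two off-diagonal lines and a subalgebra $\mathcal{E}$ of endomorphism pairs in the middle, its multiplication being matrix multiplication through the $C$-actions on $M$, $\bar M$ and the form $M\times\bar M\to C$ (the products involving $\mathcal{E}$ are not needed). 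I would then embed the two copies of $G$ as the unipotent upper- and lower-triangular subgroups
\[ (m,a)\longmapsto \begin{pmatrix} 1 & m & a \\ 0 & (\mathrm{Id},\mathrm{Id}) & \bar m \\ 0 & 0 & 1 \end{pmatrix}, \qquad (n,b)\longmapsto \begin{pmatrix} 1 & 0 & 0 \\ \bar n & (\mathrm{Id},\mathrm{Id}) & 0 \\ b & \bar n & 1 \end{pmatrix}, \]
and equip $\mathcal{A}$ with the $\mathbb{Z}$-grading placing the $(i,j)$-entry in degree $j-i$, so that these become vector group representations $\rho^+_{[s]}$, $\rho^-_{[t]}$ with $\rho_1,\rho_2$ reading off the first and second off-diagonals.

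The second step is to identify the operators with the $\nu_{i,j}$. I would compute the Gauss-type decomposition $\rho^+_{[s]}(x)\rho^-_{[t]}(y)=\rho^-_{[t]}(Y)\,h\,\rho^+_{[s]}(X)$ with $h$ diagonal; because the matrices are $3\times 3$ and triangular this is a short, explicit calculation. Reading off the upper-triangular factor, its $(1,2)$-entry is a power series of the form $tm+t^2 s\,Q_x y+t^3 s^2\,P_x y+O(s^3)$, while its $(1,3)$-entry produces $T_x y$ at order $t^3 s$ and $R_x y$ at order $t^4 s^2$; matching these against Example \ref{example hermitian form} gives $Q^{\text{grp}}=o_{2,1}$, $T=o_{3,1}$ and $P=\nu_{3,2}$, together with the formula for $R$. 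The conjugate identity $\rho^-_{[t]}(y^{-1})\rho^+_{[s]}(x^{-1})=\rho^+_{[s]}(X^{-1})\,h^{-1}\,\rho^-_{[t]}(Y^{-1})$ then fixes the single genuinely awkward higher-order coefficient with essentially no extra computation, confirming $R$. The remaining structural hypotheses of Theorem \ref{thm main 2} ($o_{i,j}=1$ for $i>2j$, $i\neq 3j$, and $\exp(o_{3,1})\in[G,G]$, as well as the existence of $z$ with $\rho_{[s]}(z)=1+s\,\nu_{3,2}(x,y)+O(s^2)$, the relevant coefficient $P_xy$ lying in $M$) all follow at once from the $5$-graded, triangular shape of $\mathcal{A}$.

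It then remains to verify the two conditions that appear on the right-hand side of the equivalence in Theorem \ref{thm main 2}. Injectivity of $(0,s)\mapsto Q_{(0,s)}$ is immediate from the faithfulness of the $C$-action on $M$, since $Q_{(0,a)}n=an$. The decisive point, and the one separating the two hypotheses of the lemma, is $g(-g)\in[G,G]$: for $g=(m,a)$ one computes $(m,a)(-m,a)=(0,\,2a-h(m,m))=(0,\,a-\bar a)$, using $a+\bar a=h(m,m)$. If condition (\ref{condition kantor pair hermitian form}) holds, this lies in $[G,G]$ by assumption; if instead $1/2\in\Phi$, then $a-\bar a\in\langle h(x,y)-h(y,x)\rangle$, and each generator is a commutator, $(0,h(x,y)-h(y,x))=[(x,h(x,x)/2),(y,h(y,y)/2)]$, so again $g(-g)\in[G,G]$ (and properness of $G$ is then automatic, every vector group over a ring containing $1/2$ being proper). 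I expect this last verification to be the main obstacle, as it is exactly where the two distinct sufficient conditions must do their work, everything else being forced by the structure of $\mathcal{A}$. With both conditions established, Theorem \ref{thm main 2} yields an operator Kantor pair, and since both representations are of the same vector group $G$ this is precisely an operator Kantor system.
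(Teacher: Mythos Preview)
Your proposal is correct and follows essentially the same route as the paper's proof: both invoke Theorem~\ref{thm main 2} via the $3\times 3$ matrix algebra $\mathcal{A}$ of \cite[section~8]{ALLFLK99}, use the identical upper/lower-triangular representations of $G$, extract $Q,T,P,R$ from the Gauss decomposition $\exp_+(sx)\exp_-(ty)=\exp_-(tY)\,h\,\exp_+(sX)$, confirm the remaining coefficient via the conjugate identity, and verify $g(-g)\in[G,G]$ by the same computation in each of the two cases. Your version is slightly more explicit about the injectivity of $(0,s)\mapsto Q_{(0,s)}$ and the grading on $\mathcal{A}$, but these are minor elaborations rather than a different approach.
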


\begin{remark}	
	The fact that we have an operator Kantor pair, can be used to prove that the element $sX$ of the previous theorem is a well-defined element of $s \cdot_1 G^+[[st]]$ which denotes the inverse limit of the groups $s \cdot_1 G^+(\Phi[st]/[(st)^n])$.
	This $sX$ can be thought of as the quasi inverse $(sx)^{ty}$ which one often considers in the context of Jordan pairs.
\end{remark}
	
\section{5-graded Lie algebras over rings $\Phi$ with $1/6$ and $1/30$}\label{se:lie}

We assume throughout this section that $1/6 \in \Phi$.
We use the definition of a Kantor pair $(P^+,P^-)$ with operators $V$ and associated Lie algebra as given by Allison and Faulkner \cite[section 3]{ALLFLK99}.

We remark that each vector group over $\Phi$ is proper.

\begin{lemma}
	\label{lemma kantor pair implies pre kantor pair}
	Let $(P^-,P^+)$ be a Kantor pair with associated Lie algebra $$ L  = [P^-,P^-] \oplus  P^- \oplus (\Phi\zeta \oplus V_{P^+,P^-}) \oplus P^+ \oplus [P^+,P^+].$$
	Then the following hold
	\begin{itemize}
		\item $\exp(P^+ \oplus [P^+,P^+]) = G^+$ is a vector group\footnote{This is the usual exponential $\exp(l) = \sum_{i = 0}^4 (\text{ad} \; l)^i/(i!)$.}. This vector group can be coordinatized by $\exp(e) = 1 + e_1 + e_2 + e_3 + e_4 \mapsto (e_1,e_2) \in \End_\Phi(L)^2$ with $e_i$ the part which acts as $+i$ on the grading.
		The similarly defined $G^-$ is also a vector group.
		\item On these pairs $(e_1,e_2)$ the operators $Q^\text{grp},T,P$ are given by
		\begin{itemize}
			\item 
			$Q^{\text{grp}}_g h = (\nu_{2,1}(g,h), \nu_{4,2}(g,h)),$
			\item $T_g h = (0,\nu_{3,1}(g,h))$,
			\item $P_gh = \nu_{3,2}(g,h)$
		\end{itemize}
		and satisfy the definition of a pre-Kantor pair.
	\end{itemize}
	\begin{proof}
		The map  $\exp : P^+ \oplus [P^+,P^+] \longrightarrow \text{End}_\Phi(L)$ is injective since $1/6 \in \Phi$, as can be observed by evaluating the exponentials on the grading derivation $\zeta$. Mapping $\exp(g) = 1 + e_1 + e_2 + \ldots \mapsto (e_1,e_2)$ is injective, since it is injective on $\text{Lie}(G^+)$. 
		From
		$$\exp(a)\exp(b) = \exp(a+b+[a,b]/2),$$
		which holds by the Baker-Campbell-Hausdorff Theorem, we conclude that $P^+ \oplus [P^+,P^+]$ is a vector group with representation $\exp$. The description as pairs $(e_1,e_2)$ is exactly the image of $P^+ \oplus [P^+,P^+]$ under this representation.
		We choose to work with this second coordinatization since this allows us to define the operators more easily.
		
		We want to check that the mentioned operators $Q^{\text{grp}}, T$ and $P$ map to $G^+, G^+_2$ and $G^+/G^+_2$. If this is the case, then these operators automatically satisfy the part of Definition \ref{Definition pre-Kantor pair} that determines the linearisations of these operators by Lemma \ref{Lemma equations} and Lemma \ref{lemma linearizations mu}. For a more precise correspondence between those lemmas and the definition, see the proof of Theorem \ref{thm main 2}.

		In order to prove that the operators map to the right spaces, we assume $g = (a,b) \in G^+$ and $h = (c,d) \in G^-$.
		We will use that $g_i [a,b] = \sum_{k + l = i}[g_k a,g_l b]$ for $i \le 4$, which must hold since we have $1/6 \in \Phi$.
		Note that this implies $\nu_{i,1}(g,h) = \mu_{i,1}(g,h) = \text{ad}(g_i \cdot h_p),$ where we use $h_p$ to denote the part in $P^-$ without the $\text{ad}$.
		In particular, this implies that $Q$ and $T$ already map to the right spaces.
		We note that $\mu_{1,1}(g,h)$ corresponds to the usual $V_{g,h}$ operator for Kantor pairs and that $\mu_{4,1} = 0$.
		
		For the $\nu_{i,2}$ we will use that $h_2$ is of the form
		$$ (h_1)^2/2 + \text{ad} \; v,$$
		for some $ v \in [P^-,P^-].$
		It is easy to check that 
		$$\mu_{(n,2)}(g,h) = \sum_{i + j = n} \mu_{(i,1)}(g,h)\mu_{(j,1)}(g,h)/2 + \text{ad} \; (g_n \cdot v),$$
		for $n \le 4$.
		Using this, we obtain that
		$$ \nu_{(4,2)}(g,h) =  \mu_{(2,1)}(g,h)^2/2 + \text{ad} \; (g_4 \cdot v) - \text{ad} (V_{g,h} T_g h)/2,$$
		which shows that $G^\text{grp}$ maps to $G^+$.
		Similarly, one proves that $P_g h$ is an inner derivation.
		
		Note that $1 + \epsilon V_{x,y} = 1 + \epsilon \text{ad} \; [x,y]$, which we let act using conjugation on the groups, is a vector group automorphism that interacts nicely with the defined operators.
		Lastly, observe that $G^+_2 = [G^+,G^+]$ and that $G^+_2$ acts faithfully on $P^-$ under the adjoint action, proving that it acts faithfully under $Q$ (since this coincides with adjoint action).
	\end{proof}
\end{lemma}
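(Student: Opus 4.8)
The plan is to treat the two bullets in sequence, first building the vector group and only afterwards identifying the operators. Since $1/6 \in \Phi$ and $L$ is $5$-graded, writing $L_1 = P^+$ and $L_2 = [P^+,P^+]$, every $l \in L_1 \oplus L_2$ has $(\operatorname{ad} l)^5 = 0$ (each $\operatorname{ad}$ raises the degree by at least $1$, and from the bottom degree $-2$ a shift of $5$ already leaves the grading), so $\exp(l) = \sum_{i=0}^4 (\operatorname{ad} l)^i/i!$ is a well-defined automorphism of $L$. I would prove injectivity of $\exp$ by evaluating on $\zeta$: for $l = a+b$ with $a \in L_1$, $b \in L_2$ one reads off $a$ from the degree-$1$ part of $\exp(l)(\zeta)$ and then $b$ from the degree-$2$ part, using $1/2 \in \Phi$. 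Because $[L_1,L_2] \subseteq L_3 = 0$, the subalgebra $L_1 \oplus L_2$ is two-step nilpotent with $L_2$ central, so Baker--Campbell--Hausdorff collapses to $\exp(a)\exp(b) = \exp(a + b + [a,b]/2)$; this is exactly the vector-group multiplication of \cref{Lemma reparam} on the graded Lie algebra $L_1 \oplus L_2$, whence $G^+$ is a vector group. The coordinatisation $\exp(e) \mapsto (e_1,e_2)$ is injective because it is already injective on $\Lie(G^+)$, and the same argument applies verbatim to $G^-$.

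For the operators I would use the representations $\rho^\pm_i(g) = g_i$ (the degree-$i$ component of $\exp(g)$) and the associated maps $\mu_{i,j},\nu_{i,j}$. The key computational input is that, since each $\exp(g)$ is a Lie-algebra automorphism, extracting the degree-$i$ part of $\exp(g)[a,b] = [\exp(g)a,\exp(g)b]$ gives $g_i[a,b] = \sum_{k+l=i}[g_k a, g_l b]$ for $i \le 4$. Applying this to $\mu_{i,1}(g,h) = \sum_{a+b=i} g_a h_1 (g^{-1})_b$, which is the relevant graded part of $\exp(g)\operatorname{ad}(h_p)\exp(g^{-1}) = \operatorname{ad}(\exp(g)h_p)$, yields $\nu_{i,1}(g,h) = \mu_{i,1}(g,h) = \operatorname{ad}(g_i \cdot h_p)$ with $h_p \in P^-$ the $P^-$-component of $h$. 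In particular $g_2\cdot h_p \in L_1 = A^+$ and $g_3\cdot h_p \in L_2 = [G^+,G^+]$, so $Q$ and $T$ already land in the required modules, while $g_4 \cdot h_p \in L_3 = 0$ forces $\mu_{4,1} = 0$.

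The more delicate step, which I expect to be the main obstacle, is to show that $Q^{\text{grp}}_g h = (\nu_{2,1},\nu_{4,2})$ actually lies in $G^+$ and that $P_g h = \nu_{3,2}$ lies in $A^+$. Here I would write the degree-$2$ part of $\exp(h)$ as $h_2 = h_1^2/2 + \operatorname{ad}(v)$ with $v \in [P^-,P^-]$, and, by inserting the identity $\exp(g^{-1})\exp(g)$ between the two copies of $h_1$, derive $\mu_{n,2}(g,h) = \tfrac12\sum_{i+j=n}\mu_{i,1}(g,h)\mu_{j,1}(g,h) + \operatorname{ad}(g_n \cdot v)$ for $n \le 4$. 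Feeding this into $\nu_{4,2} = \mu_{4,2} - \mu_{1,1}\mu_{3,1}$ and $\nu_{3,2} = \mu_{3,2} - \mu_{0,1}\mu_{3,1} - \mu_{1,1}\mu_{2,1}$ from \cref{Lemma equations}, the symmetric $\mu_{2,1}^2$ term survives only in $\nu_{4,2}$ and matches precisely the $e_1^2/2$ part of the coordinatisation, while the remaining terms assemble into $\operatorname{ad}$ of an element of $[P^+,P^+]$, namely $R_g h = g_4\cdot v - \tfrac12 V_{g,h}(T_g h)$; the analogous bookkeeping shows $\nu_{3,2}$ is $\operatorname{ad}$ of an element of $P^+$, so $P$ takes values in $A^+$. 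This is the point where the identification could fail if the degree accounting or the faithfulness below were misaligned, so it warrants the most care.

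Finally I would verify the pre-Kantor axioms. Equation \eqref{Kantor pair equation} holds because $G^+_2 = L_2 = [P^+,P^+] = [G^+,G^+]$, and Equation \eqref{Kantor pair equation 2} holds because $\operatorname{ad}$ embeds $L_2$ faithfully into $\operatorname{Hom}(L_{-1},L_1)$ while $Q_s$ for $s \in G^+_2$ is exactly this adjoint action on $A^- = L_{-1}$. Axiom~(1) of \cref{Definition pre-Kantor pair} follows since $1 + \epsilon V_{x,y} = 1 + \epsilon\operatorname{ad}[x,y]$ is an inner vector-group automorphism compatible with all the operators. The remaining axioms (2)--(7) only prescribe the linearisations of $Q,T,P,R$; once the operators are known to take values in the correct modules, these linearisation formulas are forced by \cref{Lemma equations} and \cref{lemma linearizations mu}, precisely along the lines of the correspondence made explicit in the proof of \cref{thm main 2}. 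This completes the plan.
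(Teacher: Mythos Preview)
Your proposal is correct and follows essentially the same approach as the paper's proof: both establish the vector-group structure via Baker--Campbell--Hausdorff and injectivity on $\zeta$, then verify the operators land in the correct spaces by writing $h_2 = h_1^2/2 + \operatorname{ad} v$ and computing $\mu_{n,2}$ accordingly, and finally invoke \cref{Lemma equations} and \cref{lemma linearizations mu} for the pre-Kantor linearisation axioms. Your write-up is in places more explicit (e.g.\ the $\nu_{3,2}$ formula and the insertion of $\exp(g^{-1})\exp(g)$ to derive $\mu_{n,2}$), but the argument is the same.
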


\begin{definition}
	We call the pre-Kantor pair associated to a Kantor pair, as constructed in the previous lemma, the \textit{associated pre-Kantor pair}.
\end{definition}

\begin{theorem}
	\label{theorem kantor implies operator kantor}
	Suppose that $1/30 \in \Phi$ and that $P = (P^+,P^-)$ is a Kantor pair over $\Phi$.
	The pre-Kantor pair associated to $P$ is an operator Kantor pair. 
	\begin{proof}
		Note that we are working with the usual exponentials and one checks that these are necessarily automorphisms using that $1/30 \in \Phi$.
		Note that 
		$$ \mu_{m,2}(x,y)= \nu_{m,2}(x,y) + \begin{cases}
			y_1\nu_{m,1}(x,y) & 1 \le m \le 2 \\
			y_1\nu_{3,1}(x,y) + \nu_{1,1}(x,y)\nu_{2,1}(x,y)&  m = 3 \\
			\nu_{1,1}(x,y)\nu_{3,1}(x,y) + \nu_{0,1}(x,y)\nu_{4,1}(x,y)& m = 4 \\
			\nu_{2,1}(x,y)\nu_{3,1}(x,y) + \nu_{0,1}(x,y)\nu_{5,1}(x,y) + \nu_{1,1}(x,y)\nu_{4,1}(x,y) & m = 5 \\
			0 & \text{otherwise}
		\end{cases} $$
		and that $\nu_{k,1}(x,y) = \text{ad} (x_k \cdot y_1)$, with $x_k$ the part of $\exp(x)$ that is $k$-graded. This immediately ensures that $\nu_{4,1} = \nu_{5,1} = 0$.
		We prove that the sufficient conditions listed in Remark \ref{remark sufficient condition} are satisfied, in order to prove that we have an operator Kantor pair.
		The first, second, and third condition follow immediately.
		
		Similar to Lemma \ref{lemma grouplike}, one can prove that $\exp(o_{i,j}(x,y))$ is an automorphism for all $x,y$, using that $\exp(x)$ and $\exp(y)$ are automorphisms.
		For the fourth, we only need to see that $\nu_{5,2}$ acts trivially on $P^-$, which is the case since it is a derivation $d$ (as it is the first not necessarily zero component of $o_{5,3}$) that acts as $+3 = 5 - 2$ on the grading.
		Namely, for $i$-graded $u$, we compute that
		$$  id(u) = d[\zeta,u] = [\zeta, d(u)] = (3 + i)d(u),$$
		so that $3d(u) = 0$.
		
		For $\nu_{6,3}(x,y)$, recall that 
		$$ \exp(o_{2,1}(x,y)) = \sum_{k = 1}^4 \nu_{2k,k}(x,y)$$
		is an automorphism. We know already that $\nu_{2,1}(x,y)$ and $\nu_{4,2}(x,y)$ correspond to the parts of $\exp(Q^\text{grp}_xy)$ which act as $+1$ and $+2$ on the grading.
		This proves that 
		$$ \nu_{6,3}(x,y) = \nu_{2,1}(x,y)\nu_{4,2}(x,y) - \nu_{2,1}(x,y)^3/3 + d$$
		with $d$ a derivation. Since there are no derivations which act as $+3$ on the grading, we conclude that $\nu_{6,3}(x,y)$ coincides with the part of $\exp(Q^\text{grp}_xy)$ which acts as $+3$ on the grading.
	\end{proof}
\end{theorem}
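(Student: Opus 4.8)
The plan is to reduce the claim to the checklist of sufficient conditions recorded in Remark \ref{remark sufficient condition}: it suffices to show that $\exp(g)$ is an automorphism of $L$ for every $g \in G^+ \cup G^-$, and that $\nu_{3,2}(x,y) = \text{ad}\, P_x y$, $\nu_{4,2}(x,y) = (Q^{\text{grp}}_x y)_2$, $\nu_{5,2}(x,y) = 0$, and $\nu_{6,3}(x,y) = (Q^{\text{grp}}_x y)_3$. Since the associated pre-Kantor pair is built precisely from the usual Lie-algebra exponentials $\exp(\text{ad}\, l)$ with $l \in P^\pm \oplus [P^\pm,P^\pm]$, and since $\text{ad}(l)$ raises the $5$-grading so that $\text{ad}(l)^5 = 0$, the finite sum $\exp(\text{ad}\, l) = \sum_{i=0}^4 \text{ad}(l)^i/i!$ is well defined and is automatically a Lie-algebra automorphism; the hypothesis $1/30 \in \Phi$ covers this together with the divisions by $3$ used below. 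This settles the first condition, and it is exactly the content of Lemma \ref{lemma kantor pair implies pre kantor pair} that $Q^{\text{grp}}, T, P$ are realized by the $\nu_{i,j}$ attached to these two vector group representations.

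Conditions two and three are then essentially definitional. By Lemma \ref{lemma kantor pair implies pre kantor pair}, $\nu_{3,2}(x,y)$ is an inner derivation $\text{ad}\, l$ with $l \in A^+$, and $P_x y$ is defined to be that $l$; likewise $Q^{\text{grp}}_x y \in G^+$ carries the coordinates $(\nu_{2,1}(x,y),\nu_{4,2}(x,y))$, so its degree-$2$ component is $\nu_{4,2}(x,y)$ by construction. Along the way I would record, as in the proof of Lemma \ref{lemma kantor pair implies pre kantor pair}, the identification $\nu_{k,1}(x,y) = \mu_{k,1}(x,y) = \text{ad}(x_k \cdot y_1)$, where $x_k$ is the degree-$k$ part of $\exp(x)$ and $y_1 \in P^-$. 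Since $x_k = 0$ for $k > 4$, and since $x_4 \cdot y_1$ already has grading $+3$ and therefore vanishes, this yields $\nu_{4,1} = \nu_{5,1} = 0$ for free, which is precisely what the $\mu$-to-$\nu$ conversion formulas require.

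For the fourth condition I would argue that $\nu_{5,2}(x,y)$ is the first possibly-nonzero coefficient of an exponential series $\exp(o_{5,2}(x,y))$ which is itself an automorphism of $L$ (proved exactly as in Lemma \ref{lemma grouplike}, from the fact that $\exp(x)$ and $\exp(y)$ are automorphisms); hence $\nu_{5,2}(x,y)$ is a derivation $d$ of $L$ shifting the grading by $5-2 = 3$. Since $d\zeta \in L_3 = 0$, evaluating the derivation identity on $[\zeta,u] = iu$ for an $i$-graded $u$ gives $i\,d(u) = [\zeta,d(u)] = (i+3)\,d(u)$, so $3\,d(u) = 0$ and $d = 0$ after dividing by $3$. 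The fifth condition is the only genuinely computational step: using that $\exp(o_{2,1}(x,y)) = \sum_{k\ge 1}\nu_{2k,k}(x,y)$ is again an automorphism, I would read off its degree-$1$ and degree-$2$ parts as $\nu_{2,1} = \text{ad}\, Q_x y$ and $\nu_{4,2} = (Q^{\text{grp}}_x y)_2$, and then expand the degree-$3$ part of such a group-like exponential as $\nu_{2,1}\nu_{4,2} - \nu_{2,1}^3/3$ up to a derivation shifting the grading by $+3$; that correction vanishes by the argument just used, leaving $\nu_{6,3}(x,y) = (Q^{\text{grp}}_x y)_3$.

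I expect the main obstacle to be bookkeeping rather than a single deep step: one must keep straight the two coordinatizations in play — the $(e_1,e_2)$-description of $G^\pm$ versus the homogeneous decomposition $\exp(g) = \sum_i g_i$ of the action on $L$ — and verify that the derivation corrections appearing in $\nu_{5,2}$ and in the degree-$3$ part of $\nu_{6,3}$ are genuine derivations of $L$ that shift the grading by exactly $+3$, so that the vanishing principle $3d = 0 \Rightarrow d = 0$ applies. Once the identifications $\nu_{i,1} = \mu_{i,1} = \text{ad}(x_i \cdot y_1)$ and the grading-shift analysis are set up cleanly, the remaining manipulations are the routine $\mu$-versus-$\nu$ conversions already catalogued in Lemma \ref{lemma linearizations mu} and Lemma \ref{Lemma equations}.
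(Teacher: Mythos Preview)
Your proposal is correct and follows essentially the same route as the paper: reduce to the checklist of Remark \ref{remark sufficient condition}, dispose of conditions 1--3 by definition (using Lemma \ref{lemma kantor pair implies pre kantor pair}), and handle $\nu_{5,2}=0$ and $\nu_{6,3}=(Q^{\text{grp}}_xy)_3$ via the ``derivation shifting the grading by $+3$ must vanish since $3$ is invertible'' argument. Your identification of $\nu_{5,2}$ as the leading coefficient of $\exp(o_{5,2}(x,y))$ is in fact cleaner than the paper's phrasing; the only place to be slightly more careful is your assertion that $\exp(\text{ad}\,l)$ is ``automatically'' an automorphism---this is where the factor $1/5$ (and not merely $1/4!$) is genuinely needed, as the paper's subsequent remark indicates.
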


\begin{remark}
	We did only use $1/5 \in \Phi$ to prove that the exponentials are automorphisms of the Lie algebra.
\end{remark}

\begin{corollary}
	\label{bijection Kp preKP opKP}
	If $1/6 \in \Phi$, then the Kantor pairs and pre-Kantor pairs  stand in a bijective relation given by considering the associated pre-Kantor pair to a Kantor pair.
	If $1/30 \in \Phi$, then each pre-Kantor pair is an operator Kantor pair.
	\begin{proof}
		In the previous theorem and lemma, we established that each Kantor pair $P = (P^+,P^-)$ induces a pre-Kantor pair $G = (G^+,G^-)$ if $1/6 \in \Phi$. 
		So, it is sufficient to prove that $P \mapsto G$ establishes a bijection between Kantor pairs and pre-Kantor pairs if $1/6 \in \Phi$.
		
		We know that we can associate a Kantor pair $P'$ to each pre-Kantor pair $G$ by Lemma \ref{lemma preKantor induces Kantor pair}, from which we can uniquely recover the operators of $G$. Furthermore, we know that $P \cong P'$ since both pairs can be understood as $(G^+/G^+_2, G^-/G^-_2)$ with the induced action of $V$.
		
		If $1/30 \in \Phi$, Theorem \ref{theorem kantor implies operator kantor} proves that $P$ is also an operator Kantor pair.
	\end{proof}
\end{corollary}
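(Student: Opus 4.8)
The plan is to split the statement into its two assertions and exploit the maps already constructed in both directions. For the bijection when $1/6 \in \Phi$, we have the forward map $P \mapsto G$ sending a Kantor pair to its associated pre-Kantor pair (Lemma \ref{lemma kantor pair implies pre kantor pair}), and the backward map $G \mapsto (A^+,A^-,V_{|A^+\times A^-})$ sending a pre-Kantor pair to a Kantor pair with $A^\pm = G^\pm/G_2^\pm$ (Lemma \ref{lemma preKantor induces Kantor pair}). First I would check that these two maps are mutually inverse. In the direction $P \mapsto G \mapsto (A^+,A^-,V)$, one observes that by construction $G^\pm = \exp(P^\pm \oplus [P^\pm,P^\pm])$ with $G_2^\pm = [G^\pm,G^\pm]$ matching the $[P^\pm,P^\pm]$ summand, so that $G^\pm/G_2^\pm \cong P^\pm$, and the operator $V$ read off from $G$ agrees with the original $V_{g,h}=\operatorname{ad}[g,h]$ of $P$; hence $(A^+,A^-,V)\cong P$.

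For the reverse composite $G \mapsto (A^+,A^-,V) \mapsto G'$ I would invoke the ``moreover'' clause of Lemma \ref{lemma preKantor induces Kantor pair}: when $1/6 \in \Phi$, the full pre-Kantor pair $(G^+,G^-,Q^{\text{grp}},T,P)$ is uniquely recovered from $(A^+,A^-,V)$. The vector-group structure is fixed because $G_2^\pm = [G^\pm,G^\pm] \cong Q_{G_2^\pm}$ with bilinear form determined by $\psi(a,b)\,c = (V_{a,c}b - V_{b,c}a)/2$, while each operator $Q,T,P,R$ is pinned down by its linearisations through the universal homogeneous maps of Theorem \ref{theorem universal homogeneous}, and those linearisations are themselves expressions in $V$. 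Since $G'$ is built from the same $V$ by the same recovery formulas, $G' \cong G$, completing the bijection.

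For the second assertion, suppose $1/30 \in \Phi$, so in particular $1/6 \in \Phi$, and let $G$ be an arbitrary pre-Kantor pair. By the bijection just established, $G$ is the associated pre-Kantor pair of some Kantor pair $P$. Theorem \ref{theorem kantor implies operator kantor} asserts precisely that the pre-Kantor pair associated to $P$, which is $G$ itself, is an operator Kantor pair; this finishes the proof.

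The hard part will be the mutual-inverse verification, specifically the composite $G \mapsto (A^+,A^-,V) \mapsto G'$. One must make sure the reconstruction of both the vector-group structure and all four operators from $V$ alone is canonical; invertibility of $6$ is exactly what is needed to solve the identities $2\,Q_xy = \dots$, $3\,T_xy = \dots$ and the analogous ones for $P$ and $R$ for the operators in terms of their linearisations. Once the recovery formulas from Lemma \ref{lemma preKantor induces Kantor pair} are assembled this becomes bookkeeping, but it is the conceptual core of the argument; everything else is a direct application of the cited results.
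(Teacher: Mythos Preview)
Your proposal is correct and follows essentially the same approach as the paper: both directions of the bijection come from Lemma~\ref{lemma kantor pair implies pre kantor pair} and Lemma~\ref{lemma preKantor induces Kantor pair} (including its ``moreover'' clause for the recovery of $G$ from $V$), and the operator Kantor pair conclusion follows by applying Theorem~\ref{theorem kantor implies operator kantor} to the Kantor pair corresponding to a given pre-Kantor pair. Your write-up is slightly more explicit about checking both composites, but the content is the same.
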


\section{Structurable algebras}\label{se:struct}

We will construct operator Kantor systems corresponding to each of the classes of central simple structurable algebras as determined by Allison and Smirnov \cite{ALL78, SMI90Example,smirnov1990}, i.e., associative algebras, structurable algebras associated to a hermitian form, tensor products of composition algebras, Smirnov algebras, skew dimension one structurable algebras, and Jordan algebras. Allison and Faulkner \cite{ALLFLK93} proved that these classes of algebras are still structurable over arbitrary rings of scalars, though not in a manner related to $5$-graded Lie algebras. The operator Kantor pairs will have roughly the same Lie algebras. There are certain small divergences, however. Firstly, for associative algebras and structurable algebras associated to hermitian forms, it will prove useful to embed these in bigger structurable algebras of the same type if $1/2 \notin \Phi$. Secondly, we will not consider Smirnov algebras if $1/2 \notin \Phi$.

\subsection{(Quadratic) Jordan algebras}

Consider a quadratic Jordan algebra $(J,Q)$. Set $G = J \times 0$, $Q^\text{grp} = Q = (Q, 0)$, and $T = 0$. 
We also set $P_x y = Q_xQ_yx$. We remark that $G$ is proper by Lemma \ref{lem: Lie of G functorial}.

Now, we show that this forms an operator Kantor system.
First, observe that $(sx,ty)$ is quasi-invertible in $J[[s,t]]$ for $x,y \in J$ since the Bergmann operator $B(sx,ty)$ is invertible \cite[Proposition 3.2]{Loos75}, and that quasi inverses $(sx)^{ty}, (ty)^{sx}$ are determined by the symmetry principle \cite[Proposition 3.3]{Loos75}:
$$ (sx)^{ty} = sx + s^2Q_{x}(ty)^{sx}, \quad (ty)^{sx} = ty + t^2Q_{y}(sx)^{ty}.$$
Loos \cite[Theorem 1.4]{Loos95} proved that
$$ \exp(sx)\exp(ty) = \exp(ty)^{sx}\beta(sx,ty)\exp(sx)^{ty},$$
holds in the TKK representation.
We conclude that
$$ o_{2i+1,2i}(x,y) = \exp((Q_xQ_y)^ix), \quad o_{2i+2,2i+1}(x,y) = \exp((Q_xQ_y)^iQ_xy), \quad o_{i,j}(x,y) = 1 \text{ if $|i - j| > 1$}$$
hold in the TKK representation. Theorem \ref{thm main 2} shows that $(G,Q^{\text{grp}},T,P)$ forms an operator Kantor system.

\subsection{Associative algebras}

Let $A$ be an associative algebra with involution over $\Phi$.
If $1/2 \notin \Phi$, we set $B = A \otimes \Phi[s]/(s^2 - s)$ with involution determined by $a \otimes s \mapsto \bar{a} \otimes (1 -s) $. If $1/2 \in \Phi$ we also use $B$ to denote $A$.

The algebra $B$ induces an operator Kantor system of the form of Example \ref{example hermitian form}, using $(a,b) \mapsto a\bar{b}$ as the hermitian form. 
The vector group is given by
$$ G_B = \{ (a,b) \in B \times B | b + \bar{b} = a\bar{a}\}.$$
This vector group is proper, since $(G_B)_2 = \{a \otimes s - \bar{a} \otimes (1 - s)\}$ so that $\Lie(G) \otimes K \cong \Lie(\hat{G}_K)$, which is sufficient by Lemma \ref{lem: Lie of G functorial} for $G$ to be proper.
The operators are
\begin{enumerate}
	\item $Q_{(a,b)}c = -a\bar{c}a + bc,$
	\item $T_{(a,b)}c = a\bar{c}\bar{b} - bc\bar{a},$
	\item $P_{(a,b)}(c,d) = a\bar{c}a\bar{c}a - a\bar{c}bc - bda,$
	\item $R_{(a,b)}(c,d) = bd\bar{b} + a\bar{c}b\bar{c}a - a\bar{c}a\bar{c}b$.
\end{enumerate}

The reason we chose to extend $A$ if $1/2 \notin \Phi$, is to guarantee that condition (\ref{Kantor pair equation}) of the definition of a pre-Kantor pair holds (or its equivalent formulation in Example \ref{example hermitian form}), i.e.,
$$ x - \bar{x} \in [G_B,G_B]$$
holds for all $x \in B$. This condition holds in the extension, since
$$ [(x, x\bar{x}s), (1,s)] = (0,x - \bar{x}),$$
for all $x \in B$. 

\begin{remark}
	In order to obtain formulas compatible with the operators of following sections, one must apply the automorphism $((a,b) \mapsto (-a,b), (c,d) \mapsto (-c,d))$ to this operator Kantor system.
\end{remark}

\subsection{Structurable algebras associated to hermitian forms}\label{sss:herm}

Consider an associative algebra $C$ with involution $c \mapsto \bar{c}$ and a right $C$-module $M$ with (right-) hermitian form $h :  M \times M \longrightarrow C,$ i.e., it is the same as a left-hermitian form except that $M$ is a right $C$-module and $h$ should satisfy $h(a,bc) = h(a,b)c$ for $c \in C, a,b \in M$.
Set $A = C \times M$ with operation
$$ (a,m)(b,n) = (ab + h(n,m), na + m\bar{b})$$
and involution
$$ \overline{(a,m)} = (\bar{a},m).$$
This algebra is isomorphic to the structurable algebra associated to a hermitian form constructed by Allison \cite[8.iii]{ALL78}, by still using $h$ as the (now left) hermitian form with $M$ considered as a left module under $a \cdot m = m\bar{a}.$ When we speak of $C$ as a left $A$ module, we consider $M$ with this second module structure (and $A$ with the obvious structure). The reason we work with the right hermitian description, is that there appear less involutions in computations.

We shall consider
\[ G_{A} = \{ ((c,m),(d,mc)) \in A \times A| d + \bar{d} = c\bar{c} + h(m,m)\}.\]
We assume that $G_{A}$ satisfies condition (\ref{Kantor pair equation}), i.e., $x - \bar{x} \in [G_A,G_A]$ for all $x \in A$, and that $G$ is proper.
As for associative algebras, we can assume that both statements hold for $A$ or $A \otimes \Phi[s]/(s^2 - s)$ with $\bar{s} = 1 - s$ (with $ms = \bar{s}m$ for all $m \in M$). 

We shall construct all the operators by making use of a representation. This approach also establishes that $A$ corresponds to a $3$-special Kantor pair in the sense of \cite[section 6]{ALLFLK99}.
Define \[h^\pm : A \times A \longrightarrow C : ((a,m),(b,n)) \mapsto a\bar{b} \pm h(m,n).\] These are (left-)hermitian maps if we consider $A$ as a left $C$-module.
We let $M_2(C)$ act on $A^{2 \times 1}$ using
\[ \begin{pmatrix}
	a & b \\
	c & d
\end{pmatrix} \cdot \begin{pmatrix}
	(e,f) \\ (g,h)
\end{pmatrix} = \begin{pmatrix}
	(ae + bg, af - bh) \\
	(ce + dg, -cf + dh)
\end{pmatrix},\]
i.e., we act normally on $C^{2 \times 1} \subset A^{2 \times 1}$ and as
\[ \begin{pmatrix}
	a & -b \\
	-c & d \\
\end{pmatrix}\]
on $M^{2 \times 1}$. We write $b \cdot_{\epsilon} (g,h)$ for $(bg, - bh)$.
So, 
\[ f: A^{2 \times 1} \otimes A^{1\times2} \longrightarrow M_2(C) : \begin{pmatrix}
	a \\ b
\end{pmatrix} \otimes \begin{pmatrix}
	c & d
\end{pmatrix} \longmapsto \begin{pmatrix}
	h^+(a,c) & h^-(a,d) \\
	h^-(b,c) & h^+(b,d) 
\end{pmatrix}\]
is an $M_2(C)$-bimodule map if we let $M_2(C)$ act on the right on $A^{1 \times 2}$ using $x \cdot M = (M^* \cdot x^*)^*$ with the $a^*$ the hermitian transpose of $a$,
since
\[ h^-(c \cdot_\epsilon a,b) = c h^+(a,b), \quad ch^-(a,b) = h^+(c \cdot_\epsilon a,b),\]
for all $a,b \in A, c \in C$.
Set \[\mathcal{E} = \{ (a,b) \in \text{End}(A^{2\times1}) \times \text{End}(A^{1\times2}) |f(a(x),y) = f(x,b(y))\}.\]
Define
\[ g : A^{1\times2} \otimes_{M_2(C)} A^{2\times1} \longrightarrow \mathcal{E}\]
as
\[ g( x \otimes y)(z) = \begin{cases}
	xf(y,z) & z \in A^{1,2}\\
	f(z,x)y & z \in A^{2,1}
\end{cases}.\]
It is not hard to check that $g$ is a well defined $\mathcal{E}$-bimodule map. This proves that
\[ (M_2(C) \oplus \mathcal{E}) \oplus ( A^{2\times1} \oplus A^{1\times2}),\]
forms an associative algebra if all undefined multiplications are seen as $0$.
Using this definition, we can interpret elements as matrices in
\[ \begin{pmatrix}
	C & A & C \\
	A & \mathcal{E} & A \\
	C & A & C\\
\end{pmatrix},\]
with the obvious embeddings of subspaces.

We have two representations of the associated vector group, namely
\[ ((a,m),(u,ma)) \mapsto \begin{pmatrix}
	1 & (a,m) & -u + h(m,m) \\
	0 & 1 & (-a,-m) \\
	0 & 0 & 1\\
\end{pmatrix}\]
and 
\[ ((a,m),(u,ma)) \mapsto \begin{pmatrix}
	1 & 0 &  0 \\
	(-a,-m) & 1 & 0 \\
	-u + h(m,m) & (a,m) & 1\\
\end{pmatrix}. \]

Given an element $g = ((a,m),(u,ma))$, we write $\tilde{u}$ to represent $u - h(m,m)$ in operators involving $g$.
This representation yields us operators
\begin{enumerate}
	\item $Q_{(x,y)} b= h^+(x,b)\cdot x - \tilde{y} \cdot_\epsilon b,$
	\item $T_{(x,y)} b =  h^-( \tilde{y} \cdot_\epsilon b,x) -h^-( x,\tilde{y} \cdot_\epsilon b),$
	\item $P_{(x,y)} (u,v) = h^+(x,u) \cdot Q_{(x,y)} u - \tilde{y}\tilde{v} \cdot x,$
	\item $\tilde{R}_{(x,y)} (u,v) = \tilde{u}\tilde{v}\overline{\tilde{u}} - h^+(x,b)T_{(x,y)}u,$
\end{enumerate}
with $(Q_xy , \tilde{R}_{x}y)$ the image of $Q^\text{grp}_xy$ under $(a,b) \mapsto (a, \tilde{b})$.
So, we see that these operators form an operator Kantor pair on the structurable algebra $A$, using Theorem \ref{thm main 2}.
It is not hard to compute that
\[ (x\bar{y})x - uy = Q_{(x,u)} y\]
holds for $(x,u) \in G_A$ and $y \in A$, so that
\[ (x\bar{y})z + (z\bar{y})x - (z\bar{x})y = Q^{(1,1)}_{z,x}y.\]
Hence, the constructed operator Kantor pair corresponds precisely to the structurable algebra.

We remark that this construction works more generally for $M_1 \times M_2$ with $M_i$ left $C$-modules with hermitian forms $h_i$.
This corresponds to the Kantor triple system
\[ V_{x,y}(z) = h^+(x,y) \cdot z + h^+(z,y) \cdot x - h^-(z,x) \cdot_\epsilon y,\]
with $h^\pm = h_1 \pm h_2$. Using as underlying vector group
\[ G = \{(a,b) \in (M_1 \times M_2) \times C | b + \bar{b} = h^-(a,a), b - \bar{b} \in \langle h^-(x,y) - h^-(y,x) | x,y \in M_1 \times M_2 \rangle \}\]
contained in $(M_1 \times M_2) \times C$ with $\psi = h^-$, we get an operator Kantor pair if $1/2 \in \Phi$. If $1/2 \notin \Phi$ we need to make the extra assumptions that \[ \langle h^-(x,y) - h^-(y,x) | x,y \in M_1 \times M_2 \rangle = [G,G] \]
and that $G$ is proper.

\subsection{Constructing operator structurable algebras using $\mathbb{Z}$-substructures}

We want to construct subsystems over $\mathbb{Z}$ from some operator Kantor systems over $\mathbb{Q}$. We want to do this specifically for systems associated to structurable algebras. In this setting, the associated operator Kantor system over $\mathbb{Q}$ means the operator Kantor pair associated to the $5$-graded Lie algebra \cite[section 3]{ALL79} corresponding to the structurable algebra over $\mathbb{Q}$.

The advantage of these $\mathbb{Z}$-subsystems is that we do not need to check the axioms of operator Kantor pairs, but only need to check that the operators are internal.
Remark \ref{Remark uniqueness P and R} shows that if $P$ and $R$ exist for an operator Kantor pair, then they are unique. In this section, we start by first constructing $P$ and $R$ as homogeneous maps from $Q, T$ and $V$. After this construction, it becomes sufficient to construct the primary operators $Q$ and $T$ to construct all operators. 
Later we will construct $\mathbb{Z}$-analogues for different classes of structurable algebras. With those analogues we will be able to construct operator Kantor pairs for all classes of central simple structurable algebras (except Smirnov algebras if $1/2 \notin \Phi$).

We consider an algebra $C$ with involution $a \mapsto \bar{a}$.
Consider $\psi : C \times C \longrightarrow C, (a,b) \mapsto a\bar{b}$. This endows 
$$ G_C = \{ (a,b) \in C \times C | b + \bar{b} = a\bar{a}\},$$
with a vector group structure.
We will be interested in defining an operator Kantor pair structure with operator
\[ Q_{(a,b)}(c,d) = (a\bar{c})a - bc.\]
We assume that 
$$ \text{Lie}(G_C) = C \times (G_C)_2,$$
i.e., for each $a \in C$ there exists $b$ such that $b + \bar{b} = a\bar{a}$. The $(G_C)_2$ appearing in this assumption does not impose anything as proved in Lemma \ref{lemma Lie G}. We set $S = (G_C)_2$.
Note that $x - \bar{x} \in S$ for all $x \in C$ and that $2S = \{ s - \bar{s} | s \in S\}$ since $s = - \bar{s}$ for all $s \in S$.
So, the assumption that $ \text{Lie}(G_C) = C \times (G_C)_2$ guarantees that conditions (\ref{Kantor pair equation}), (\ref{Kantor pair equation 2}) for pre-Kantor pairs always hold. This assumption is only important if $1/2 \notin \Phi$ and will, for us, only play a role in structurable algebras that can have an arbitrary associative part.

We want to prove that there exists at most one operator Kantor system with
$$ Q_{(a,b)}(c,d) = (a\bar{c})a - bc,$$
and a given operator $T : G_C \times C \longrightarrow \{ x - \bar{x} | x \in C \} = \{ [a,b] = a\bar{b}- b \bar{a} | a,b \in C\}$ of bidegree $[3,1]$. We remark that we will use $V$ and $\tau$ without making reference to the operator Kantor pair structure on which they are defined. In that case, we use operators
$$V_{x,y} g = ( - Q^{(1,1)}_{g,x} y, - T^{(2,1)}_{g,x} y - \psi(Q^{(1,1)}_{g,x}y,g))$$ and $$\tau_{y,x} g = (P^{(1,2)}_{g,x^{-1}}y^{-1}, R^{(2,2)}_{g,x^{-1}}y^{-1} + \psi(P^{(2,1)}_{g,x^{-1}}y, g) - \psi(Q_gy,Q_{x^{-1}}y)),$$
i.e, we identify the operator Kantor $V$ with  its restriction to $C^+ \times C^- \longrightarrow \text{Nat}(G^+)$ and $\tau$ to its restriction $G^- \times G^+ \longrightarrow \text{Nat}(G^+)$. There is an incongruity in the order of arguments with the purpose of making $V$ correspond to the usual $V$ for structurable algebras, while still taking the version of $\tau$ that is not only the most easy to define, but also the only one which we will need in this section.

\begin{lemma}
	\label{lemma constructing structurable algebras}
	Let $C$ be a unital algebra with involution and consider a (proper) vector group $G \le G_C$ with $\Lie(G) = C \times S$ for some $S$.
	Define $Q_{(a,u)}b = (a\bar{b})a - ub$. Suppose that there exist operators $T,P,R$ such that $(G,Q^\text{grp},T,P)$ forms an operator Kantor system. Use $(V_{x,y})_i$ to denote the action of $V$ on the $i$-graded parts of $\text{Lie}(G_C)$. The operators $P,R$ are uniquely determined by $Q,T$, as indicated by the following formulas, in which $x = (a,b)$, and $g, h \in G$:
	\begin{enumerate}
		\item $P_g (x(-x))= T_g(b) - (T_g1)b - (V_{g,1})_1Q_g b + (V_{g,b})_1 Q_g 1,$
		\item $(\tau_{h,g(-g)} g)_1 = - Q_{g(-g)}Q_h g + V_{g,h} Q_{g(-g)}h ,$
		\item $ P_{g} h = - P_{g} (h(-h)) - Q_{g}Q_hg - \tau_{h,g(-g)}g + 2 Q_{T_{g}h}h - (V_{g,h})_1Q_{g}h,$
		\item $(\tau_{h,T_{g^{-1}}1} g)_1 = [Q_{T_{g^{-1}}1},Q_{h^{-1}}g] - Q_{[g,Q_{T_{g^{-1}}1}h]}h,$
		\item $ R_g h = (Q_gh)^2 - P_g(T^{(2,1)}_{h,1}g) - Q_gQ_hQ_{g^{-1}} 1 + (\tau_{hT_{g^{-1}}1} g)_1 + (V_{g,h})_1 T_gh.$
	\end{enumerate}
	Moreover, if there is no $3$ torsion, then $T$ is uniquely determined by
	$$ 3(T_g h - [g,Q_g h]) = 2((g\bar{h})g)\bar{g} - 2g(\bar{g}(h\bar{g})) - ((g\bar{g})h)\bar{g} + g(\bar{h}(g\bar{g})).$$
	\begin{remark}
		Substituting equations $1$ and $2$ in equation $3$ of the lemma yields that $P_g(a,b)$ must equal to
		\[- T_g(b) + (T_{g}1)b + (V_{g,1})Q_gb - (V_{g,b})Q_g1 - Q_gQ_{(a,b)}g - Q_{g(-g)}Q_{(- a,\bar{b})}g + V_{g,a}Q_{g(-g)}a + 2Q_{T_ga}a - (V_{g,a})_1Q_ga, \] 
		which is a homogeneous map of bidegree $[3,2]$ in $g$ and $(a,b)$.
		Similarly, substituting equation $4$ in equation $5$ yields us a definition of $R_g h$ as a homogeneous map of bidegree $[4,2]$ in terms of $V$, $Q$, $T$ and $P$.
	\end{remark}
	\begin{proof}
		
		We work with the Lie algebra $L$ associated to an operator Kantor pair, and prove that these equalities hold.
		
		We know that $x(-x) = (0,2b - a\bar{a}) = (0,b - \bar{b}) = [b,1]$ for $x = (a,b)$.
		The first equation is obtained by evaluating
		$$ \text{ad} \; g_3 ([b,1]) = \nu_{3,2}(g,x(-x)),$$
		on the grading element $\zeta$ of $L$. Namely,
		using that $\exp(g)$ is necessarily an automorphism so that
		\[\text{ad} \; g_3 ([b,1]) \cdot \zeta = ([T_gb,1] + [b,T_g1] + [Q_gb,V_{g,1}] + [V_{g,b},Q_g1]) \cdot \zeta = [T_gb,1] + [b,T_g1] + [Q_gb,V_{g,1}] + [V_{g,b},Q_g1],\]
		and that
		\[ \nu_{3,2}(x,y) = \text{ad} \; P_x y,\]
		we obtain the first equation.
		The second equation is obtained from $$(\tau_{a,b}c)_1 = - P^{(1,2)}_{c,b} a^{-1} =- P^{(2,1)}_{b,c} a^{-1} = - Q_{b} Q_a c + V_{c,a} Q_{b} a$$
		which holds if $b \in G_2$,
		by first applying the definition of $\tau$, using that $f^{(2,1)}_{b,a} = f^{(1,2)}_{a,b}$ if $b \in G_2$ for all $f$ homogeneous of degree $3$, and the expression of $P^{(2,1)}$ for pre-Kantor pairs.
		
		The third equation follows from evaluating 
		$$ \nu_{3,2}(g,h) = \mu_{3,2}(g,h) - h_1\mu_{3,1}(g,h) - \mu_{1,1}(g,h)\mu_{2,1}(g,h)$$
		on the grading element $\zeta$.
		The last equation follows from evaluating 
		$$ \nu_{4,2}(g,h) = \mu_{4,2}(g,h) - \mu_{1,1}(g,h)\mu_{3,1}(g,h),$$
		on the $1$ contained in the $-1$-graded copy of $C$ in $L$.
		Namely, we know that
		\[ (\nu_{4,2}(g,h) + \mu_{1,1}(g,h)\mu_{3,1}(g,h)) \cdot 1 = (Q_gh)^2 - R_gh + (V_{g,h})_2 T_g h,\]
		while we also know that
		\[ (g_3 h_2 g_1^{-1} + g_2 h_2 g_2^{-1} + g_1 h_2 g_3^{-1}) \cdot 1 =  P_g\left(T^{2,1}_{(h,1)} g\right) + Q_gQ_h Q_{g^{-1}} 1 - \tau_{h,T_{g^{-1}}1} g\]
		since
		$$ g_3 h_2 g_1^{-1} \cdot 1 = g_3 ( h_2 \cdot  V_{1,g}) = P_g(T^{(2,1)}_{(h,1)}g).$$
		The fourth equation follows from evaluating the equation
		$$ (g_1h_2g^{-1}_3 + h^{-1}_2g_1g^{-1}_3 - h_1g_1h_1g^{-1}_3 - (Q_{h^{-1}}g)_1 g^{-1}_3) = 0,$$
		on the $-1$-graded $1$. This last equation holds since
		$$ \mu_{2,1}(h^{-1},g) = (Q_{h^{-1}}g)_1.$$
		
		Note that equations $1,2$ and $4$ uniquely determine their left-hand sides, since these only use $Q,T$ and $V$ (with $V$ itself also being defined in terms of $Q$ and $T$).
		Now, we see that the third equation, uniquely determines $P_g h$.
		Now that $P$ is defined, we see that the last equation determines $R_gh$ uniquely as well.	
		
		We use Equation (\ref{equation homogeneous of degree 3}) to obtain
		$$ 3T_g - 3T^{(1,2)}_{g,g} = - T^{(1,1,1)}_{g,g,g},$$
		as $T$ must be homogeneous of degree $3$ in $g$. Using that $T^{(1,2)}_{g,g}h = [g,Q_gh]$ and evaluating the $(1,1)$-linearisation $T^{(1,1,1)}_{g,g,g}h$ yields the expression for $T$ given in the statement of this lemma, since 
		\[ 	- T^{(1,1,1)}_{g,g,g} h = - [g,Q^{(1,1)}_{g,g} h] = - [g,2(g\bar{h})g - (g\bar{g})h] = - g\overline{(2(g\bar{h})g - (g\bar{g})h)} + (2(g\bar{h})g - (g\bar{g})h	)\bar{g}. \qedhere \]
		
	\end{proof}
\end{lemma}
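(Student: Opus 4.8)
The plan is to carry out the entire argument inside the associated $5$-graded Lie algebra $L = L(G^+,G^-)$, exploiting the explicit endomorphism representation $g \mapsto g_i \in \End_\Phi(L)$ of an operator Kantor system. Here the graded components act concretely: $g_1$ as $\text{ad}\,a$, $g_2$ as $Q_g$ on $A^\mp$ and as $\tau_{g,\cdot}$ on $[G^\mp,G^\mp]$, $g_3$ as $T_g$ on $A^\mp$ and $P_g$ on $[G^\mp,G^\mp]$, and $g_4$ as $R_g$. The engine of every computation is a single device: the operators $\nu_{i,j}(g,h)$ and $\mu_{i,j}(g,h)$ are \emph{derivations} of $L$, and to recover the concrete element of $C$ hidden inside such a derivation I would evaluate it on a distinguished element --- the grading element $\zeta$ (which returns a scalar multiple of any homogeneous argument) or the unit $1 \in C$ sitting inside the degree $\pm 1$ copies $A^\pm$. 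Combined with the $\nu$--$\mu$ conversion identities of \cref{Lemma equations} and the linearisation formulas of \cref{lemma linearizations mu}, this turns each claimed equality into a bracket computation in $L$, with every term read off by its graded degree and then translated back to its $C$-representative.

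For the three identities on the $P$ side I would proceed as follows. Formula~1 begins from $x(-x) = (0, b-\bar b) = [b,1]$ with $b, 1 \in A^-$; applying that $\exp(g)$ is an automorphism to the bracket $[b,1]$ and extracting the degree-$1$ component identifies $g_3 \cdot [b,1] = P_g(x(-x))$ with the four graded brackets $[b, T_g 1]$, $[V_{g,b}, Q_g 1]$, $[Q_g b, V_{g,1}]$, $[T_g b, 1]$, each of which is rewritten in $C$ through $[s,c] = Q_s c$, $Q_{(0,s)}c = -sc$ and the degree-$1$ action $(V)_1$; matching these against the right-hand side (up to the sign conventions of the identifications) gives the stated formula. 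Formula~2 is the first component of $\tau_{h,g(-g)}g$ unfolded via its definition, namely $(\tau_{h,g(-g)}g)_1 = -P^{(1,2)}_{g,g(-g)}h^{-1}$, which equals $-P^{(2,1)}_{g(-g),g}h^{-1}$ because $g(-g)\in G_2$, whereupon the linearisation of $P$ from \cref{Definition pre-Kantor pair} produces the expression. Formula~3 is obtained by evaluating the conversion $\nu_{3,2}(g,h) = \mu_{3,2}(g,h) - h_1\mu_{3,1}(g,h) - \mu_{1,1}(g,h)\mu_{2,1}(g,h)$ on $\zeta$, expanding each $\mu_{i,j}(g,h) = \sum_{a+b=i} g_a h_j (g^{-1})_b$ and recognising the resulting summands as $-P_g(h(-h))$, $-Q_gQ_hg$, $-\tau_{h,g(-g)}g$, $2Q_{T_gh}h$ and $-(V_{g,h})_1 Q_g h$.

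The two identities on the $R$ side are analogous but evaluated on the unit $1$ in the degree $-1$ copy of $C$. Formula~5 is the conversion $\nu_{4,2}(g,h) = \mu_{4,2}(g,h) - \mu_{1,1}(g,h)\mu_{3,1}(g,h)$ applied to that $1$; the only genuinely new contribution is $g_1 h_2 g_3^{-1}\cdot 1$, which forces the auxiliary quantity $\tau_{h,T_{g^{-1}}1}g$, and Formula~4 is precisely the identity isolating its first component, read off by evaluating the vanishing combination $g_1 h_2 g_3^{-1} + h_2^{-1} g_1 g_3^{-1} - h_1 g_1 h_1 g_3^{-1} - (Q_{h^{-1}}g)_1 g_3^{-1} = 0$ on the same $1$ (this combination vanishes because $\mu_{2,1}(h^{-1},g) = (Q_{h^{-1}}g)_1$). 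With all five equalities established, uniqueness is immediate: every right-hand side is built solely from $Q$, $T$, $V$ (itself a combination of linearisations of $Q$ and $T$) and from $P$-values already pinned down, so Formulas~1--3 determine $P$ and Formulas~4--5 then determine $R$, in agreement with \cref{lemma uniqueness P} and \cref{Remark uniqueness P and R}.

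For the final statement about $T$ in the absence of $3$-torsion, I would apply the degree-$3$ homogeneity identity \eqref{equation homogeneous of degree 3} to the map $g \mapsto T_g h$, obtaining $3(T_g h - T^{(1,2)}_{g,g}h) = -T^{(1,1,1)}_{g,g,g}h$, substitute $T^{(1,2)}_{g,g}h = [g, Q_g h]$ from \cref{Definition pre-Kantor pair}.\ref{PKP LIN T} together with the polarisation $Q^{(1,1)}_{g,g}h = 2(g\bar h)g - (g\bar g)h$, and expand the outer commutator through $[g,w] = g\bar w - w\bar g$ and $\overline{(g\bar h)g} = \bar g(h\bar g)$, $\overline{(g\bar g)h} = \bar h(g\bar g)$; this reproduces the stated right-hand side verbatim. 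I expect the main obstacle to be organisational rather than conceptual: correctly tracking the case-split action of each $g_i$ across the five graded pieces of $L$, expanding the $\mu$-sums, and maintaining sign discipline under the identifications $[s,c]=Q_sc$, $Q_{(0,s)}c = -sc$ and $[g,w]=g\bar w - w\bar g$. No individual step is difficult, but the formulas are long and each term must be matched to the correct graded component and its $C$-representative, so the entire risk lies in the bookkeeping.
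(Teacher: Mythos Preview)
Your proposal is correct and takes essentially the same approach as the paper: both work in the associated $5$-graded Lie algebra, evaluate the $\nu/\mu$ conversion identities on $\zeta$ (for formulas~1 and~3) or on the degree-$-1$ unit (for formulas~4 and~5), unfold $\tau$ via the $P^{(2,1)}$-linearisation (formula~2), and obtain the $T$-statement from the degree-$3$ homogeneity identity. One harmless slip in your framing: the $\mu_{i,j}$ are not derivations of $L$ in general---only specific $\nu$'s are (e.g.\ $\nu_{3,2}=\operatorname{ad}P_gh$)---but your actual steps only evaluate these endomorphisms on chosen elements, so nothing in the argument is affected.
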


\begin{remark}
	The previous lemma is not that useful if $1/2 \in \Phi$, since the operators are then uniquely defined by
	\[ 2 P_g h = P^{(3,(1,1))}_g (h,h) + P_g^{(3,(1,1))}(h_2,1) - P_g^{(3,(1,1))}(1,h_2),\]
	and
	\[ 2 R_g h = R^{(4,(1,1))}_g (h,h) + R_g^{(4,(1,1))}(h_2,1) - R_g^{(4,(1,1))}(1,h_2),\]
	using the known linearisations of these operators.
\end{remark}

\subsubsection{Tensor product composition algebras}

Let $O_1$, $O_2$ be two composition algebras with norms $N_1,N_2$.
Consider $A = O_1 \otimes O_2$ with components-wise involution and suppose that there exists $t_i \in O_i$ such that $t_i + \bar{t}_i = 1$.
Note that if $a = \sum_{i = 1}^n \lambda_i a_i \otimes b_i,$ corresponding to a basis formed by pure tensors $\{a_i \otimes b_i | i \in \{1,\ldots,n\}\}$ of $A$, we can define
$$q(a) = \sum_{i = 1}^n \lambda_i^2 N_1(a_i)N_2(b_i) t_1 \otimes 1 + \sum_{i < j} \lambda_i\lambda_j a_i\bar{a_j} \otimes b_i\bar{b_j},$$ so that
$$ q(a) + \overline{q(a)} = a\bar{a}.$$
This map $q$ is clearly quadratic.
Set
$$ G_A = \{ (a,s + q(a)) \in A \times A | s \in (1 \otimes S_{O_2} + S_{O_1} \otimes 1) \; \cup \; \{ x - \bar{x} | x \in O_1 \otimes O_2 \} \},$$
with $S_{O_i} = \{ x \in O_i | x \perp 1,t_i\}$. 
This means that $G_2$ contains all $x - \bar{x}$ and all $1 \otimes s_1 + s_2 \otimes 1$ with $s_i \perp 1,t_i$, i.e., all elements that could reasonably be called skew. These $G$ are clearly proper.

We want to prove that there exists an operator Kantor system associated with $G_A$ and 
$$ Q_{(a,b)} h = (a\bar{h})a - bh.$$

We do this by considering universal variants of $O_1,O_2$ and we will still refer to these variants as $O_1,O_2$.
Consider $R = \mathbb{Z}[a,b,c,d,e,f]$.
Set $O_1 = CD(R[t_1]/(t_1^2 - t_1 + a),b,c)$ with which we mean the algebra obtained by applying the Cayley-Dickson process $2$ times using the parameters $b,c$ starting from $R[t]/(t^2 - t +a)$.
Similarly, we set $O_2 = CD(R[t_2]/(t_2^2 - t_2 + d),e,f)$.
Note that $O_i$ is a subalgebra of an octonion algebra $P_i$ over $\mathbb{Q}(a,b,c,d,e,f)$.

We want to prove that $A = O_1 \otimes O_2 \le P_1 \otimes P_2$ induces an operator Kantor system.
So, we use the similarly defined $G_A$ over $\mathbb{Z}$ and note that this group is proper as well.
We first check whether $T$ restricts to $A$.
We know that $T_gh \in A$ for $h \in A$ and $g$ of the form $(o_1 \otimes o_2, u)$ by using the alternativity of the multiplication for octonion algebras and the expression for $T$ proved in Lemma \ref{lemma constructing structurable algebras}. More precisely, for these elements
we can compute that $$ T_{(a \otimes b, u)} c = [a \otimes b, - uc ] = - (a \otimes b)(\bar{c}\bar{u}) + (uc)(\bar{a} \otimes \bar{b}).$$
Moreover, the linearisations of $T$ restrict nicely to $A$ since they satisfy $T^{(1,2)}_{x,g} y = [x,Q_gy]$ and $T^{(2,1)}_{x,g} = T^{(1,2)}_{g,x} + T^{(1,2)}_{x,[x,g]}$.
Hence $T$ maps $$G_A \times G_A \longrightarrow [G_A,G_A] \subset (G_A)_2.$$
So, the operators $Q,P$ restrict to maps from $G_A \times G_A$ to $A$.
Observe that
$$ \langle (t - \bar{t}) \otimes 1, t \otimes t - \bar{t} \otimes \bar{t} \rangle_{\mathbb{Z}} \subset A \otimes \mathbb{Q} = \langle (t - \bar{t}) \otimes 1, 1/2 ((t - \bar{t}) \otimes 1 + 1 \otimes (t - \bar{t})) \rangle_{\mathbb{Z}}.$$
We also note that $(1 \otimes S_2 + S_1 \otimes 1) \otimes \mathbb{Q} \cap A = (1 \otimes S_2 + S_1 \otimes 1)$. Using that $$ x - \bar{x} \in (1 \otimes S_2 \oplus S_1 \otimes 1 \oplus (1 - 2t_1)\Phi \otimes 1 + 1 \otimes \Phi(1 - 2t_2)) \otimes \mathbb{Q}$$ for $x \in A \otimes \mathbb{Q}$ since $1 - 2t_i = \bar{t}_i - t_i$, and that $A \ni R_gh - q(Q_gh) \in \langle x - \bar{x} | x \in A \otimes \mathbb{Q} \rangle$, we conclude that $Q^\text{grp}$ restricts to $G_A$.
This proves that all operators restrict nicely to $G_A$.

Now, consider $G_A(g)(G_A(k[a,b,c,d,e,f]))$ for $g: k[a,b,c,d,e,f] \longrightarrow k$ corresponding to actually considering octonion algebras $O_1,O_2$ obtained by a Cayley Dickson process over a field $k$. This induces an operator Kantor pair structure associated to the tensor product of octonion algebras.
For general composition algebras, we only need to consider subsystems of the just constructed operator Kantor pair.

\subsubsection{Smirnov algebras}

Allison and Faulkner generalized the Smirnov algebra already to arbitrary rings \cite[example 6.7]{ALLFLK93} combined with \cite{ALLFLK93_2} in a setting suitable for the study of Lie algebras (but not $5$-graded Lie algebras). We show that if $1/2 \in \Phi$, then there is an operator Kantor system that yields the same Lie algebra.

Consider an octonion algebra $O$ with norm $N$. The Smirnov algebra is a quotient of the subalgebra $B$ of $O \otimes O$ generated by $o \otimes o$ for $o \in O$ by an ideal $I$ of hermitian elements.
The ideal $I$ corresponds to the scalar multiplications on $O$ under the embedding $$B \longrightarrow \text{End}(O) : c \otimes c \mapsto (x \mapsto N(c,x)c).$$ If $O$ is split with basis $\{e_1,e_2,e_3,e_4,e^*_1,e^*_2,e^*_3,e^*_4\}$ such that $N(e_i,e_j) = N(e^*_i,e^*_j) = 0$ for all $i,j$ and $N(e_i,e^*_j) = \delta_{ij}$, then $I$ is linearly generated by
$$ \sum_{i = 1}^4 e_i \otimes e^*_i + e^*_i \otimes e_i,$$

We set $A = B/I$.
We can take the fixed elements under the exchange involution of the operator Kantor system associated to the tensor product of composition algebras.
This yields an operator Kantor system associated to $B$ if $1/2 \in \Phi$. 
It is not hard to check that all operators are compatible with the quotient with respect to $I$, as they are formed by multiplications, involutions and the operator $T$.
For the operator $T$ this follows from the formula for $T$ and the linearizations of $T$, if there is no $3$-torsion. It also holds in general, as can be proved using the model of $O \otimes O$, now over $\mathbb{Z}[1/2]$, from the previous subsection.

\begin{remark}
	If $1/2 \notin \Phi$, it is better let the unit go. Let $O$ be the split octonion algebra over $\mathbb{Z}$. If necessary we represent elements as Zorn matrices.	
	Consider \[A = \{ x \in \langle o \otimes o | o \in \mathbb{O} \rangle : n(x) = 0\},\] with $n(o \otimes o) = o\bar{o} = N(o)$, which is a nonunital algebra. 
	Let $G_A$ be the subgroup of $A \times A$ generated by the set $\{ (o \otimes o, 0) | N(o) = 0\}$,
	and elements $ (\tilde{t} - \tilde{e}_i,\tilde{t} - \tilde{e}_i),$
	with $\tilde{t} = t \otimes \bar{t} + \bar{t} \otimes t$, $\tilde{e}_i = e_i \otimes e_i^T + e_i^T\otimes e_i$ for a standard generator $e_i \in \mathbb{Z}^3$ using
	\[ t = \begin{pmatrix}
		1 & 0 \\
		0 & 0
	\end{pmatrix}, \quad e_i = \begin{pmatrix}
		0 & e_i \\
		0 & 0 \\
	\end{pmatrix}.\]
	It is not hard to check that $\text{Lie}((G_{A})_\Phi) = (A \oplus \{ x - \bar{x} | x \in A\}) \otimes \Phi$ using that $A$ is the direct sum of the rank $32$ subspace $\langle o \otimes o| N(o) = 0\rangle$ and the rank $3$ subspace $\langle \tilde{t} - \tilde{v} | v = e_1,e_2,e_3 \rangle$, proving that $G_A$ is a vector group. This immediately shows that $G_2 \otimes K = \hat{G}_2(K)$ and thus that $G$ is proper by Lemma \ref{lem: Lie of G functorial}.
	By embedding $G_A \subset G_{O \otimes O}$ we inherit the operators $Q,T,P$ and $R$ mapping $G_A \times G_A \longrightarrow G_{O \otimes O}$.
	On $G_A(\mathbb{Q})$ the operators are $\mathbb{Q}$-polynomial expressions of $A$-elements, proving that the operators map to $G_A$ instead of $G_{O \otimes O}$.
	This construction differs form $B/I$ since $  \tilde{t} + \tilde{e}_1 + \tilde{e}_2 + \tilde{e}_3 \neq 0$ in $A \otimes \mathbb{F}_2$ while
	\[  \tilde{t} + \tilde{e}_1 + \tilde{e}_2 + \tilde{e}_3 \in I \otimes \mathbb{F}_2. \]
\end{remark}

\subsubsection{Structurable algebras of skew-dimension $1$}

Consider a hermitian cubic norm structure $(J,N,T,\sharp,\times,e)$, as described for example by the second author \cite[Definition 4.1]{DeMedts2019}\footnote{ We will not explicitly use the additional axioms of \cite[Remark 4.4]{DeMedts2019} that are required for fields of characteristic $2$ or $3$. These will still implicitly play a role, however, in constraining the classes of hermitian cubic norm structures that we will consider. }, over a quadratic étale extension $A = E = \Phi[t]/(t^2 - t + \alpha)$ of $\Phi$, for some $\alpha \in \Phi$, and associated algebra
$ E \times J$ with operation
\[ (a,b)(c,d) = (ac + T(b,d), ad + \bar{c}b + b \times d) \]
and involution
\[ (a,c) \mapsto (\bar{a},c). \]
Over fields of characteristic different from $2$ and $3$ each structurable algebra of skew dimension $1$, i.e., a structurable algebra such that the space $\{ x | x + \bar{x} = 0\}$ is $1$-dimensional, can be described in such a way as proved in \cite[Theorem 5.14]{DeMedts2019}\footnote{This theorem says that specific forms of matrix structurable algebras can be described as hermitian cubic norm structures. These specific forms are all skew-dimension $1$-structurable algebras \cite[Theorem 1.13]{ALL90}.}. Recall that an arbitrary cubic norm structure $(J,N,T,\sharp,\times,e)$ over $F$ induces a hermitian cubic norm structure $(J \oplus J, N', T', \sharp', \times', e')$ uniquely determined by $N '(a,b) = (N(a),N(b)), T'((a,b),(c,d))= (T(a,d),T(b,c)), (a,b)^{\sharp'} = (b^\sharp,a^\sharp)$ \cite[Example 3.10]{DeMedts2019}. We call this hermitian cubic norm structure a \textit{split hermitian cubic norm structure}.

We consider the vector group
$$ G_A = \{ ((a,b),((u,b^\sharp + ab)) \in (E \times J)^2| u + \bar{u} = a\bar{a} + T(b,b)\}.$$
This vector group is proper, since $(G_A)_2 = \langle t - \bar{t} | t \in E\rangle$.
Remark that $u,v$ satisfy $u + \bar{u} = v + \bar{v} = a\bar{a} + T(b,b)$ if and only if $(u - v) + \overline{(u - v)} = 0$. If we work over a field $\Phi$ the space of $x \in E$ such that $x + \bar{x} = 0$ coincides with the span of all $y - \bar{y}$ since both spaces are $1$ dimensional and contain $t - \bar{t} \neq 0$. 
One can compute, see Appendix \ref{appendix T}, that
$$ T_{((a,j),(u,j^\sharp + aj))} (b,k) = [(u - T(j,j))(b\bar{a}) + (u - a\bar{a})T(k,j) - T(ak,j^\sharp) + bN(j) + T(j^\sharp \times k,j),1],$$
using $[a,1] = a - \bar{a}$,
if one works over a ring $\Phi$ without $3$-torsion.

Now, we prove the analogon of \cite[Theorem 5.14]{DeMedts2019} in our setting, in which certain forms of matrix structurable algebras are related to hermitian cubic norm structures over fields of characteristic different from $2$ and $3$.

\begin{lemma}
	Suppose that $A$ is an algebra over a field $\Phi$ for which there exist a quadratic extension $K/\Phi$ such that (1) $A \otimes K$ forms a matrix structurable algebra, i.e., $A \otimes K$ can be constructed from a split hermitian cubic norm structure over $K$, (2) there exists a vector group $G_A$ such that $(G_A)_K \cong G_{A \otimes K}$. Then $A$ is an algebra constructed from a hermitian cubic norm structure.
	\begin{proof}We will try to reconstruct all operators uniquely from the assumptions we made in the lemma. All axioms for hermitian cubic norm structures will hold, since we are considering a substructure, since $G_A(\Phi) \subset G_A(K)$ for all $\Phi$-algebras $K$, of a split hermitian cubic norm structure for which these axioms necessarily hold.
		
		Consider $\alpha$ such that $(1,\alpha) \in G_A(\Phi) \subset G_A(K)$ and let $t = Q_{(1,\alpha)} 1$. We can assume that $t \neq 1/2$.
		We set $E$ to be the submodule of $A$ generated by $t$ and $\bar{t} = 1 - t$.
		We observe that $E \otimes K \cong K \oplus K$ (as an algebra) since $\langle Q_{(1,\alpha)} 1 \in A \otimes K | (1,\alpha) \in G(K) \rangle = K \oplus K$, so that $t^2 - t = - a \in \Phi$ and $E \cong \Phi[t]/(t^2 - t + a)$.
		We put
		$$ J = \langle j \in A | ej = j \bar{e}, \forall e \in E \rangle.$$
		We know that $A_K \cong E_K \oplus J_K$. 
		So, for $b \in A$, we write $b = e_k + j_k$ and compute $tb - b\bar{t} = e_k(t - \bar{t})$.
		We obtain that $(tb - b\bar{t})(t - \bar{t}) = e_k(1 - 4a) \in \Phi^\times e_k$ since $4a \neq 1$ ($4a = 1$ implies that $a = 1/4$, $t = 1/2$, $t - \bar{t} = 0$). We see that $e_k \in E$ and $j_k \in J$ and thus $A = J \oplus E$. 
		The $E$-module structure of $J$ is given by left multiplication in $A$. 
		
		We immediately obtain the existence of $\sharp$ restricted from $J_K \longrightarrow J_K$ using the second assumption.
		We remark that $\sharp$ defines all operations uniquely. Firstly, $\times $ is just the linearization of $\sharp$.
		Secondly, if $E$ is a field, then $N$ is uniquely defined from $N(a)a = (a^\sharp)^\sharp$. If it is not a field, then there exists an idempotent $e \in E$ such that $E = \Phi e \oplus \Phi (1 - e)$. In that case, $N(a)e, N(a)(1 -e)$ are uniquely defined from evaluating $N$ on $ea,$ $(1 -e)a$.
		Namely, $eJ$ and $(1 - e)J$ are $\Phi$ vector spaces, $\Phi \cong e\Phi \subset E$ acts on the former by scalar multiplication and acts trivially on the latter while $(1 - e)\Phi$ acts only nontrivially on the latter, so $eN(a), (1-e)N(a)$ are easily recovered if $ea, (1-e)a \neq 0$.
		If $ea = 0$, however, then we know that $eN(a) = N(ea) = 0$.
		
		Lastly, we can recover $T$ using $T(a,b) = ab - a \times b$.
	\end{proof}
\end{lemma}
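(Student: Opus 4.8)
The plan is to reconstruct, directly over $\Phi$, the quadratic étale algebra $E$, the module $J$, and the cubic norm structure operations $N,T,\sharp,\times,e$, exploiting that the operators $Q,T$ of the operator Kantor system are $\Phi$-rational and that every defining axiom is inherited from the split structure over $K$. The organizing principle is descent: anything that exists over $K$ and is cut out by a $\Phi$-rational formula automatically lives over $\Phi$, and all hermitian cubic norm structure axioms hold on $A$ because $(G_A)(\Phi) \subseteq (G_A)(K) \cong G_{A \otimes K}$ embeds $A$ into the split system, where the axioms are already known.

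First I would produce the distinguished quadratic element. Picking $(1,\alpha) \in G_A(\Phi)$ and setting $t = Q_{(1,\alpha)}1$, the image of $t$ in $A \otimes K$ is an idempotent of the split étale factor; hence $t$ satisfies $t^2 - t + a = 0$ for some $a \in \Phi$, and $E := \langle 1, t \rangle \cong \Phi[t]/(t^2-t+a)$ is quadratic étale with involution $\bar t = 1 - t$. One may arrange $t \neq 1/2$, equivalently $4a \neq 1$, so that $t - \bar t$ is invertible in $E$. Next I would establish the Peirce-type splitting $A = E \oplus J$ with $J = \{\, j \in A : ej = j\bar e \text{ for all } e \in E \,\}$. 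Over $K$ this is exactly the decomposition of the split hermitian cubic norm structure, by hypothesis (2). To descend it I give an explicit $\Phi$-rational projection: writing $b = e_b + j_b$ one computes $tb - b\bar t = e_b(t - \bar t)$, whence $(tb - b\bar t)(t - \bar t) = (1 - 4a)\,e_b \in \Phi^\times e_b$; this exhibits $e_b \in E$ and $j_b \in J$, so $A = E \oplus J$ over $\Phi$, with $J$ an $E$-module by left multiplication.

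Finally I would recover the operations. The adjoint $\sharp$ on $J$ is given over $K$ by the hypotheses and, being determined by the $\Phi$-rational operators of $G_A$, restricts to $J$; its bilinearization yields $\times$. The norm $N$ is recovered from $(a^\sharp)^\sharp = N(a)a$ when $E$ is a field, and componentwise via the idempotents $e, 1-e$ when $E \cong \Phi \times \Phi$, using $N(ea) = e N(a)$ (with the degenerate case $ea = 0$ forcing $e N(a) = 0$); the trace is then $T(a,b) = ab - a \times b$.

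I expect the main obstacle to be precisely the descent of $N$ in the split case $E \cong \Phi \times \Phi$: there the adjoint-squared identity only determines the product $N(a)a$, which fails to pin down $N(a)$ when the relevant component of $a$ vanishes, so the argument must proceed componentwise through the idempotents and treat the vanishing case by hand. The secondary difficulty is verifying that $\sharp$, and hence all derived operations, is genuinely $\Phi$-rational rather than merely $K$-rational, which is exactly where the $\Phi$-definedness of the operators of $G_A$ is indispensable.
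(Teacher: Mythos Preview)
Your proposal is correct and follows essentially the same route as the paper's proof: you construct $t = Q_{(1,\alpha)}1$, build $E = \Phi[t]/(t^2 - t + a)$, define $J$ by the twisted-commutation condition, use the identical projection formula $(tb - b\bar t)(t - \bar t) = (1-4a)e_b$ to descend the splitting $A = E \oplus J$, and then recover $\sharp,\times,N,T$ in the same order and by the same mechanisms (including the componentwise recovery of $N$ via idempotents when $E$ splits). One small slip: $t$ need not be an idempotent in $E \otimes K$, only an element satisfying $t + \bar t = 1$; but this does not affect the argument, since what you actually use is that $t$ lies in the rank-two \'etale factor and hence satisfies a monic quadratic over $\Phi$.
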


Using similar observations as for the tensor product of composition algebras, one can prove that $(G,Q,T,P,R)$ forms an operator Kantor system for certain classes of hermitian cubic norm structures. 

Namely, over fields $\Phi$ a non degenerate (split hermitian) cubic norm structure is either of the form $\Phi \oplus J(Q,1)$ for some quadratic form with basepoint $1$, $\mathcal{H}_3(\mathcal{C},\gamma)$ for a composition algebra $\mathcal{C}$, or it is anisotropic \cite[Theorem 4.1.59]{MUL22}. 
In the first two cases, \cite[example 2.2 and 2.3]{PETRAC86} provide versions of cubic norm structures defined over arbitrary rings of scalars, which one can use to prove that the split hermitian cubic norm structures corresponding to isotropic cubic norms induce operator kantor pairs associated to the skew dimension $1$ algebras over arbitrary fields.

For anisotropic cubic norm structures, one can also sometimes construct variants over arbitrary rings. For example, any ring $\Phi$ forms a cubic norm structure over itself with $$N(x) = x^3, \quad T(x,y) = 3xy, \quad x^\sharp = x^2.$$ In particular, $\mathbb{Z}[t_i | i \in I] \subset \mathbb{Q}(t_i | i \in I)$ forms such a cubic norm structure and thus induces an operator Kantor pair for all $I$. The split hermitian cubic norm structure related to an inseparable field extension $E/F$ of degree $3$ is a substructure of the split hermitian cubic norm structure related to $E$, which is an operator Kantor pair as it is the quotient of the operator Kantor pair induced by $\mathbb{Z}[t_i | i \in E]$ (we should pass via $E[t_i | i \in E]$ in order to obtain an operator Kantor pair over $E$). For the other classes of anisotropic cubic norm structures, which can be denoted as 27/F, 27K/F, 9/F, 9K/F, and 3/F using the notation of \cite[Section 15]{Tits2002} and \cite[Theorem 17.6]{Tits2002}, it is easier to identify them with anisotropic substructures of isotropic structures, so that we work with cubic norms on split Albert algebras (27(K)/F), $3 \times 3$ hermitian matrices over $\Phi[t]/(t^2 - t)$ (9(K)/F), or $\Phi \oplus \Phi \oplus \Phi$ (3/F).

\subsection{Structurable algebras as operator Kantor pairs with unit}

Now, we will propose a definition of structurable operator Kantor pairs. First, we mention two results in the literature from which we took our inspiration. Thereafter we state the definition and see that it fits the examples of structurable algebras we constructed earlier in this section. There are two exceptions, however, if $1/2 \notin \Phi$. Namely, the Smirnov algebras since we threw away the unit in our construction and the quadratic Jordan algebras since the skew elements start to play a role in characteristic $2$. For structurable operator Kantor pairs, we will be able to see $G^+_2$ as skew elements with respect to a certain involution and be able to prove that there exists a certain filtration of the associated Lie algebra.

 Allison \cite[Theorem 4]{ALL79} reconstructed a structurable algebra from a $5$-graded Lie algebra that decomposes in a certain way with respect to a certain $\mathfrak{sl}_2$ subalgebra. This subalgebra can, after the reconstruction of the structurable algebra, be seen as being generated by the units of the distinct copies of the structurable algebra.
 
  Later, Allison and Faulkner \cite[Corollary 15]{ALLFLK99} saw that a structurable algebra over a ring $\Phi$ containing $1/6$ is nothing more and nothing less than a Kantor pair over the same ring containing a 1-invertible element $\exp(x)$ for $x$ contained in the Kantor pair (see \cite{ALLFLK99} for the definition of a 1-invertible element). 
  Furthermore, in their setting they saw that $x$ being 1-invertible is the same as asking it to be conjugate invertible, i.e., asking that there exists an $y$ such that $(V_{x,y}, V_{y,x}) = (2 \text{Id}, - 2 \text{Id})$.
  The logical choice for $x$, if one looks at the Kantor pair obtained from a structurable algebra, is $1$.
  The corresponding $y$ is $2$, since $V_{1,1} a= a$.
  Furthermore, $x$ being 1-invertible with corresponding $y$ requires that
  \[ \tau_{x,y} = \exp_-(y) \exp_+(x) \exp_-(y)\]
  reverses the grading of the Lie algebra $L$ associated to the Kantor pair. This $\tau_{x,y}$ is an automorphism with $\tau_{1,2} (a_+) = 2 a_-$ for $a$ in the structurable algebra. Unsurprisingly, annoying factors $2$ play an important role here.

\begin{definition}
	We call an operator Kantor pair $(G^+,G^-)$ \textit{structurable} if there exist $x \in G^+/G^+_2$ and $y \in G^-/G^-_2$ such that
	\begin{enumerate}
		\item[(a)] $\tau = \exp(v) \exp(u) \exp(v)$	induces a bijection $G^- \longrightarrow G^+$ under the conjugation\footnote{If $1/6 \in \Phi$ this is the same as asking that $\tau$ reverses the grading of the Lie algebra, i.e., that $x$ is 1-invertible. However, if $1/6 \notin \Phi$, we prefer the more restrictive version we ask here.} action for $u = (\sqrt{2}x,\psi(x,x)) \in G^+(\Phi[\sqrt{2}])$ and $v = (\sqrt{2} y, \psi(y,y)) \in G^-(\Phi[\sqrt{2}])$ ,
		\item[(b)] $V_{x,y}$ is equal to the grading derivation $\zeta$.
	\end{enumerate}
\end{definition}

\begin{remark}
	One can check that $Q_u y = x$ and that $y$ is the unique element for which this is the case.
	Moreover, $\tau (y) = x$ and $\tau (x) = y$ so that $\tau = \exp(u) \exp(v) \exp(u)$ as well. 
\end{remark}

\begin{remark}
	Over fields of characteristic $2$, the definition of a structurable operator Kantor pair implies that there exist $x$ and $y$ such that $x^{[2]} = (0,\psi(x,x))$ and $y^{[2]} = (0,\psi(y,y))$ induce a bijection $\tau = \exp(y^{[2]}) \exp(x^{[2]}) \exp(y^{[2]}): G^- \longrightarrow G^+$, since we can use a map $\sqrt{2} \mapsto 0$. One can also check that this condition, combined with $V_{x,y} = 1$, is sufficient as well. This indicates that the definition can only work for operator Kantor pairs such that $G^\pm_2 \neq 0$ if one works over a field of characteristic $2$. Hence, quadratic Jordan algebras will not be considered structurable over fields of characteristic $2$.
\end{remark}

So, we can think of the definition of being structurable as a reasonable way to combine
\begin{itemize}
	\item if $1/2 \in \Phi$, there exists a 1-invertible $x$ in the Kantor pair part (such that the corresponding $\tau$ induces a bijection $G^+ \longrightarrow G^-$),
	\item if $\Phi$ is a field of characteristic $2$, there exists an $x$ for which $Q^\text{grp}_{x^{[2]}}$ is invertible\footnote{We will not prove the equivalence between what we stated here and what we expressed before. But one can check that $\tau : G^- \longrightarrow G^+$ is given by  = $Q^\text{grp}_{x^{[2]}}$. } and $V_{x, Q_{x^{[2]}}^{-1} x } = \zeta$,
\end{itemize}
into a single definition. Now, we explain how the $x = 1 = y$ is a natural choice for the previously considered examples of structurable algebras, excluding the quadratic Jordan algebras. We also ignore the Smirnov algebras, since our construction of the associated operator Kantor pair throws the unit away.

\begin{remark}
	If we use $x = 1$ and $y = 1$ for any of the examples of structurable algebras discussed earlier in this section,
	we can guarantee that the conjugation action $\tau : G^- \longrightarrow G^+$ is given by $(a,b) \mapsto (a,b)$ instead of the $\tau_1(a,b) = (2a, 4b)$ we discussed before.
	More precisely, $(\lambda \cdot 1,\mu \cdot 1)$ is 1-invertible in the setting of Allison and Faulkner if and only if $\lambda \mu = 2$. 
	Each
	\[ \tau_{\lambda,\mu} = \exp_-(\mu) \exp_+(\lambda) \exp_-(\mu)\]
	has as action
	\[ \tau_{\lambda,\mu} (a,b)_+ = (\lambda^2/2 \cdot  a, \lambda^4/4  \cdot b)_-,\quad \tau_{\lambda,\mu} (a,b)_- = (\mu^2/2  \cdot a, \mu^4/4  \cdot b)_+.\]
	This makes $(\lambda,\mu) = (\sqrt{2},\sqrt{2})$ a natural choice.
\end{remark}

\begin{remark}
	One can show that $V_{u,v} = 2 \zeta$ for all $u$, $v$ with $u^{-1} = -u$ and $v^{-1} = -v$ such that
	\[ \exp_-(v) \exp_+(u) \exp_-(v)\]
	reverses the grading. 
	So, involving $\sqrt{2}$ in the definition is only relevant if $1/2 \notin \Phi$.
	Hence, if $1/2 \in \Phi$, the definition simply reduces to requiring the existence of a 1-invertible $x$ (with a nicely behaved $\tau$ associated to it if $1/3 \notin \Phi$).
\end{remark}

Besides the fact that the definition of being structurable fits the typical examples, it also guarantees the existence of an involution such that $G^+_2$ can be seen as skew elements.

\begin{lemma}
	\label{lem: strongly struct}
	Suppose that the operator Kantor pair $(G^+,G^-)$ is structurable with corresponding $x \in G^+/G^+_2$ and $y \in G^-/G^-_2$, then there exists an involution $a \mapsto \bar{a} = a - [y,[x,a]]$.
	Moreover, $G^+_2 \longrightarrow G^+/G^+_2 : s \mapsto [s,y] = Q_{s} y $ is injective with $\overline{Q_s y} = - Q_s y$. 
	\begin{proof}
		A direct computation shows that $\bar{\bar{a}} = a$ for all $a$ using that $[x,[y,[x,a]]] = 2[x,a]$ since $V_{x,y} = \zeta$.
		We remark that $[s,y] = 0$ implies that $[x,[y,s]] = 0$ so that $2 s = 0$.
		If $[s,y] = 0$, we know for $a \in G^+/G^+_2$ that
		\[ [s, \tau^{-1} a] = [s,[y,[y,a]] + \sqrt{2} [P_v, [x,a]]] = [s,[y,[y,a]] + 2[y,[y,[y,[x,a]]]]] = [s,[y,[y,a]]] = 0,\]
		using $2s = 0$ to expand $P_v [x,a]$ as $\sqrt{2} [y,[y,[y,[x,a]]]]$.
		This shows that $Q_{s} = 0$ whenever $[s,y] = 0$, which implies that $s = 0$.
		Finally, we compute \[\overline{ Q_s y} = [s,y] - [y,[x,[s,y]]] = [s,y] + 2 [y,s] = - [s,y] = - Q_s y.\qedhere \]
	\end{proof}
\end{lemma}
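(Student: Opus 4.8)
\section*{Proof proposal}

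My plan is to carry out every computation inside the $5$-graded Lie algebra $L=L(G^+,G^-)$, identifying $x\in A^+=L_1$ and $y\in A^-=L_{-1}$, and realizing $s\in G^+_2$ in degree $+2$ through the representation $g\mapsto g_i\in\End_\Phi(L)$, so that $[s,\cdot\,]$ denotes the derivation $s_2$ (this is a derivation because $1+\epsilon\cdot_2 s=1+\epsilon s_2$ is an automorphism over $\Phi[\epsilon]$; note $2s\in[G^+,G^+]\subseteq L_2$, while $s_2(A^+)\subseteq L_3=0$ and $[\zeta,s_2]=2s_2$, which makes the manipulations below legitimate). Everything rests on three facts: the grading lives in degrees $-2,\dots,2$, so $[L_1,L_2]\subseteq L_3=0$; the bracket $[x,y]=V_{x,y}$ equals the grading element $\zeta$ by hypothesis~(b), so $[\zeta,w]=i\,w$ for $w\in L_i$; and the Jacobi identity. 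Together these give the master identity, valid for every $w\in L_2$,
\[ [x,[y,w]]=[[x,y],w]+[y,[x,w]]=[\zeta,w]+0=2w, \]
since $[x,w]\in L_3=0$.

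For the involution I would apply the master identity to $w=[x,a]\in L_2$, with $a\in A^+$, obtaining $[x,[y,[x,a]]]=2[x,a]$, and hence
\[ [x,\bar a]=[x,a]-[x,[y,[x,a]]]=-[x,a]. \]
Applying $\text{ad}\,y$ gives $[y,[x,\bar a]]=-[y,[x,a]]$, so that
\[ \bar{\bar a}=\bar a-[y,[x,\bar a]]=\bar a+[y,[x,a]]=a, \]
which shows that $a\mapsto\bar a$ is an involution.

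For the moreover part I first record that $[s,y]=Q_s y$ by the bracket rule on $L_2\times A^-$, and that the master computation with $[x,s]\in L_3=0$ yields $[x,[y,s]]=[\zeta,s]+[y,[x,s]]=2s$. Thus if $Q_s y=0$ then $[y,s]=-[s,y]=0$, whence $[x,[y,s]]=0$ and therefore $2s=0$. To upgrade this to $s=0$ I would use the bijection $\tau$: by hypothesis~(a) conjugation by $\tau$ reverses the grading, so $\tau^{-1}$ restricts to a bijection $A^+=L_1\to L_{-1}=A^-$, and it suffices to prove $[s,\tau^{-1}a]=Q_s(\tau^{-1}a)=0$ for all $a\in A^+$, since this forces $Q_s\equiv 0$ on $A^-$ and hence $s=0$ by~(\ref{Kantor pair equation 2}). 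Expanding $\tau^{-1}=\exp(v^{-1})\exp(u^{-1})\exp(v^{-1})$, with $u=(\sqrt2\,x,\psi(x,x))$ and $v=(\sqrt2\,y,\psi(y,y))$, and collecting the degree-$(-1)$ part, I expect an expression of the shape $\tau^{-1}a=[y,[y,a]]+2\xi$, the second summand carrying an explicit factor $2$ (the odd powers of $\sqrt2$ cancel because $\tau^{-1}a$ is defined over $\Phi$, and the surviving even powers $\geq 2$ have coefficients divisible by $2$; concretely one should find $\sqrt2\,[P_v,[x,a]]=2[y,[y,[y,[x,a]]]]$). As $2s=0$ annihilates $2\xi$ under $[s,\cdot\,]$, this leaves $[s,[y,[y,a]]]$, which vanishes by two Jacobi steps: writing $c=[y,a]\in L_0$ one has $[s,c]=[[s,y],a]+[y,[s,a]]=0$ (using $[s,y]=0$ and $[s,a]\in L_3=0$), and then $[s,[y,c]]=[[s,y],c]+[y,[s,c]]=0$.

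For the skew identity $\overline{Q_s y}=-Q_s y$ I would run the master computation one last time: since $[s,y]=Q_s y\in L_1$ and $[x,s]\in L_3=0$,
\[ [x,[s,y]]=[[x,s],y]+[s,[x,y]]=[s,\zeta]=-2s, \]
so $[y,[x,[s,y]]]=-2[y,s]=2[s,y]$, and therefore
\[ \overline{Q_s y}=[s,y]-[y,[x,[s,y]]]=[s,y]-2[s,y]=-Q_s y. \]
The only genuinely laborious step is the explicit expansion of $\tau^{-1}a$ through the exponentials $\exp(u^{\pm1})$, $\exp(v^{\pm1})$ and the operators $Q,T,P,R$, together with the verification that every term other than $[y,[y,a]]$ acquires a factor $2$; this power-of-$\sqrt2$ bookkeeping over $\Phi[\sqrt2]$ is where I expect the real work to lie, whereas all the remaining identities fall out immediately from the $5$-grading, the relation $V_{x,y}=\zeta$, and the Jacobi identity.
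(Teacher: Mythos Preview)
Your proposal is correct and follows essentially the same route as the paper's proof: both arguments use $V_{x,y}=\zeta$ together with the $5$-grading to obtain $[x,[y,[x,a]]]=2[x,a]$ for the involution, deduce $2s=0$ from $Q_sy=0$, then kill the remaining terms of $\tau^{-1}a$ modulo~$2$ and finish with $[s,[y,[y,a]]]=0$ via Jacobi, invoking~(\ref{Kantor pair equation 2}) for injectivity; the skew identity is the same bracket computation in both. Your write-up is in fact more explicit than the paper's in two places---you spell out the Jacobi steps showing $[s,[y,[y,a]]]=0$, and you are careful to interpret $[s,\cdot]$ as the derivation $s_2$ (since $s\in G^+_2$ need not lie in $L_2=[G^+,G^+]$)---but these are elaborations of the same argument, and your identification of the $\tau^{-1}a$ expansion as the one place requiring genuine bookkeeping matches the paper exactly.
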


Lastly, we remark that being structurable yields a generalization of the characterizing decomposition employed in \cite[Theorem 4]{ALL79}.
To obtain this generalization, we first need certain building blocks. After that we prove a proposition generalizing the decomposition. Thereafter, we explain in Remark \ref{rem: explanation all79} in what sense this is a generalization.

Let $V_1$ be the rank $1$ module over $\Phi$ on which $\exp(u)$, $\exp(v)$, $\text{ad}\; x$ and $\text{ad} \; y$ act trivially.
Set $V_3$ to be the module $\langle w, [y,w], [y,[y,w]] \rangle_\Phi$ which is free of rank $3$ over $\Phi$ and let $[x,w] = 0, [x,[y,w]] = w,$ and $[x,[y,[y,w]]] = [y,w]$. Note that $\exp(u)$ and $\exp(v)$ act on $V_3$ as well.
Finally, we define $V_5$ as the free rank $5$ module
\[ \langle s, [y,s], [y,[y,s]], s_3, s_4 \rangle\]
This has actions of $\exp(v)$ and $\text{ad}(y)$ using that
\[ \exp(v) \cdot s = s + \sqrt{2} [y,s] + [y,[y,s]] + \sqrt{2} s_3 + s_4,\]
so that $[y,[y,[y,s]]] = 3 s_3$ and $[y,s_3] = 4 s_4$. To determine the action of $\exp(u)$ and $\text{ad}(x)$ we can use that $\tau = \exp(v)\exp(u)\exp(v)$ acts on the generating set by reversing the order of the elements.

\begin{remark}
	The $\exp(u)$, $\text{ad}\; x$ and similar maps we used in the definition of the $V_i$ cannot simply be identified with the similar elements contained in the endomorphism algebra of the Lie algebra. For example, $\phi \cdot \text{ad}\; x$ might be zero for nonzero $\phi \in \Phi$. The point is that we will be able to recognize quotients of these $V_i$ in the Lie algebra and that taking this quotient preserves these maps.
\end{remark}

\begin{proposition}
	Suppose that $(G^+,G^-)$ is a structurable operator Kantor pair with corresponding $x$ and $y$. 
	There exists a filtration $0 \subset U_1 \subset U_2 \subset L =  L(G^+,G^-)$ of subspaces of the associated Lie algebra closed under $\exp(u),$ $\exp(v)$, $[x,\cdot]$ and $[y,\cdot]$ and there exist spaces $S = G^+_2$, $H = (G^+/G^+_2)/(Q_S y)$, and $D = L_0/\langle V_{t,y} | t \in G^+ \rangle$, such that
	\[ L/U_2 \cong V_1 \otimes_\Phi D, \quad U_2/U_1 \cong V_3 \otimes_\Phi H, \quad \text{and} \quad U_1 \cong V_5 \otimes_\Phi S.\]
	\begin{proof}
		For $s \in G^+_2$ we define a submodule \[ M_s = \langle \tau^{-1}(s) , \tau^{-1} Q_s y, V_{Q_s y, y}, Q_s y, s \rangle \subset L(G^+,G^-)\]corresponding to the image of $V_5 \otimes s$.
		We remark that each of these generators is nonzero if and only if $s$ is nonzero by Lemma \ref{lem: strongly struct} and $[x,y] = V_{x,y}$. So, this shows that $U_1 = M_{G^+_2} \cong V_5 \otimes_\Phi S$.
		
		For $U_2$ we take the whole Lie algebra, where we replace the $0$-graded part by $\langle V_{t,y} | t \in G^+ \rangle$. This immediately shows that $L/U_2 \cong V_1 \otimes_\Phi D$.
		Lastly, we observe that in $U_2/U_1$ we are looking at elements $t$ in $L_1/(Q_S y)$.
		For such a $t$, we obtain $\tau^{-1} (t), [t,y]$ and $t$ as the logical generators corresponding to the image of $V_3 \otimes t$. It is easy to check that $\tau^{-1} (t) = [y,[y,t]] \mod [x,G^-_2]$ so that these generators are well defined for $t \mod U_2$ contained in $L_1$ and that these generators are $0$ if and only if $t$ is zero. 	
	\end{proof}
\end{proposition}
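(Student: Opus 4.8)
The plan is to recognize $x$, $y$ and $\zeta=V_{x,y}$ as an $\mathfrak{sl}_2$-triple acting on $L$, and to view $V_1,V_3,V_5$ as the irreducible ``standard modules'' of ranks $1,3,5$ for this action, the factors $\sqrt2$ in $u=(\sqrt2 x,\psi(x,x))$ and $v=(\sqrt2 y,\psi(y,y))$ absorbing the nonstandard normalization forced by $V_{x,y}=\zeta$ (condition (b)). Since $\zeta$ is the grading derivation, the five weight spaces for $2\zeta$ are exactly the graded pieces $L_{-2},\dots,L_2$, and the highest weights $+2,+1,0$ are realized respectively by $S=[G^+,G^+]$, by $H=(G^+/G^+_2)/(Q_Sy)$, and by $D=L_0/\langle V_{t,y}\mid t\in G^+\rangle$. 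The strategy is to build, for each highest-weight vector, one copy of the corresponding building block and to assemble these into the filtration $0\subset U_1\subset U_2\subset L$.

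First I would construct $U_1\cong V_5\otimes_\Phi S$. For $s\in S=G^+_2$ (grading $+2$) set $M_s=\langle \tau^{-1}(s),\,\tau^{-1}Q_sy,\,V_{Q_sy,y},\,Q_sy,\,s\rangle$; since $\tau$ reverses the grading, these five generators lie in gradings $-2,-1,0,1,2$. I would check that matching them with the generating basis $s,[y,s],[y,[y,s]],s_3,s_4$ of $V_5$ is equivariant for the lowering operator $[y,\cdot]$ (using $[s,y]=Q_sy$, so that $[y,[y,s]]=V_{Q_sy,y}$, and the relations $[y,[y,[y,s]]]=3s_3$, $[y,s_3]=4s_4$) and for $\exp(v)$, and then transport the raising action $[x,\cdot],\exp(u)$ through $\tau=\exp(v)\exp(u)\exp(v)$, which on $V_5$ reverses the generating basis. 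Using \cref{lem: strongly struct} (injectivity of $s\mapsto Q_sy$ and $\overline{Q_sy}=-Q_sy$) together with $[x,y]=V_{x,y}=\zeta$, every generator of $M_s$ is nonzero exactly when $s\neq0$, so $s\mapsto M_s$ identifies $U_1=M_{G^+_2}$ with $V_5\otimes_\Phi S$.

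Next I would take $U_2$ to be $L$ with its $0$-graded part cut down to $\langle V_{t,y}\mid t\in G^+\rangle$. The main point is that $U_2$ is closed under the four operations: $[x,\cdot]$ and $[y,\cdot]$ shift degree into the untouched graded pieces except for $[x,A^-]$ and $[y,A^+]$, which land in degree $0$ and must be absorbed by $\langle V_{t,y}\rangle$; this holds because that degree-$0$ subspace is $\tau$-stable, whence $\langle V_{x,b}\mid b\in A^-\rangle=\langle V_{t,y}\mid t\in G^+\rangle$. Since $\exp(u),\exp(v)$ act as the identity modulo strictly higher-degree terms and $[x,\cdot],[y,\cdot]$ shift degree into $U_2$, the quotient $L/U_2$ carries the trivial action, giving $L/U_2\cong V_1\otimes_\Phi D$. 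Finally, in $U_2/U_1$ the degree-$1$ part is $A^+/(Q_Sy)=H$; for a class $t\in H$ I would form $N_t=\langle \tau^{-1}(t),\,[t,y],\,t\rangle$ (gradings $-1,0,1$), verify the congruence $\tau^{-1}(t)\equiv[y,[y,t]]\pmod{[x,G^-_2]}$ so the generators are well defined modulo $U_1$, check that they reproduce the $V_3$-action and vanish precisely when $t=0$ in $H$, and conclude $U_2/U_1\cong V_3\otimes_\Phi H$.

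The hard part will be the equivariance and well-definedness of the two assignments $s\mapsto M_s$ and $t\mapsto N_t$: the raising operators $\exp(u)$ and $[x,\cdot]$ are prescribed on $V_5,V_3$ only indirectly, through $\tau$ reversing the generating basis, so one must confirm that these prescriptions agree with the genuine action of $\exp(u),[x,\cdot]$ on $L$. This is where the operator Kantor pair axioms and the structurable hypotheses (a) and (b) enter, and where the bookkeeping of the $\sqrt2$ factors matters; by contrast, the nonvanishing of all generators — hence injectivity of the two assignments — reduces cleanly to \cref{lem: strongly struct} and $V_{x,y}=\zeta$.
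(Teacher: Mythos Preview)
Your proposal is correct and follows essentially the same approach as the paper: you build $U_1$ from the five-generator modules $M_s$, take $U_2$ to be $L$ with its $0$-graded part replaced by $\langle V_{t,y}\rangle$, and handle $U_2/U_1$ via the three-generator modules $N_t$, invoking \cref{lem: strongly struct} and $V_{x,y}=\zeta$ for the nonvanishing. Your write-up is in fact more detailed than the paper's on the equivariance checks and the $\mathfrak{sl}_2$-module interpretation, which the paper leaves largely implicit.
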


\begin{remark}
	\label{rem: explanation all79}
	
	In \cite[Theorem 4]{ALL79} the requirement was that $L \cong S \otimes V_5 \oplus H \otimes V_3 \oplus D$ with respect to $\text{ad} \; x$ and $\text{ad} \; y$. The direct sum is, however, too strong to be obtainable if $1/2 \notin \Phi$.
	On the other hand, if $1/6 \in \Phi$, we can fully recover the direct sum of \cite[Theorem 4]{ALL79}. Namely, let $H \oplus S$ be the eigenspaces of $x \mapsto \bar{x}$ in $L_1$ with eigenvalues $\pm 1$. We can identify $U_2/U_1$ with $[y,[y,H]] \oplus [y,H] \oplus H \subset L$. Moreover, we can set $D$ to be $\{ d \in L_0 | [d,x] = 0\}$ using that $V_{h + s/3,y} x = h + s$ for all $h + s \in H \oplus S$.
\end{remark}
	
	\begin{remark}
		One can change the definition we used, by not putting the emphasis on what we can say about the elements $1$ in the Lie algebra and by simply requiring the existence of one-invertible elements with maybe some additional assumptions. Doing so has some inherent difficulties.
		
		For example, $(a,u) \in G_B$ for a (not necessarily unital) associative algebra $B$ with involution is one-invertible if and only if $u$ is invertible. So, there exists a one-invertible element in $G_B$ if and only if there exists an element $1$ in $B$, which means that $B$ should be considered structurable. However, we learn that there exists an invertible element in the wrong way. Namely, by looking at the second coordinate.
		If we assume that $B$ is the unital hull of some unital $C$ and look at $H = \{(g,h) \in C \times B | (g,h) \in G_B\}$, then $H$ is an operator Kantor system if the characteristic of $\Phi$ is $2$.
		This operator Kantor system should not be considered structurable (and is not structurable under the definition we employed). 
		
		On the other hand, we know that $(a,u)$ in $H$ is 1-invertible if and only if $u$ is invertible.
		Some assumptions we could put on $a$, such as requiring the existence of a $b$ such that $V_{a,b} = \zeta$, are satisfied for $a = 1_C$. Other ones, such as $(0,s) \mapsto Q_{(0,s)} a$ being invertible, do fail.
		However, even if we require that the aforementioned assumptions hold, we still run into the problem that $(a,u)^{-1} = (-a, -u + a\bar{a}) \neq (-a , u)$ since the assumptions imply that $a\bar{a} \neq 0 = 2u$. This seems a minor problem. However, because of this ``minor problem" the action of the $\tau$ associated to $(a,u)$ on the Lie algebra is not guaranteed to be of order $2$.
	\end{remark}

	\appendix
	
	\section{Proof of Theorems \ref{thm main} and \ref{thm main 2}}
\label{section proof}

We shall prove Theorems \ref{thm main} and \ref{thm main 2} using a universal construction.
Specifically, let $G^\pm$ be vector groups and $U^\pm$ be their universal representations $\mathcal{U}(G^\pm)$.
Set $F$ equal to the free product $U^+*U^-$ of associative algebras where we identify the units of $U^+, U^-$ so that these units correspond to the unit in $F$. Recall that $\mathcal{H}(G^+,G^-,F)$ is the subalgebra of $F$ generated by all the $\nu_{i,i}(x,y)$ for $x \in G^+$ and $y \in G^-$ in $F$ and that the universal representation is given by $\exp_{[s]}(x) = \sum_{i = 0}^\infty s^i x_i$ for $x \in G^\pm$. We also write $\exp(sx)$ for $\exp_{[s]}(x)$.

We will mod an ideal $I$ out that corresponds to imposing the conditions of Theorem \ref{thm main} on $F$ and nothing more.
In doing so, we will also lift the functions $o_{2,1}, o_{3,1}, \nu_{3,2}$ which are maps that depend on the representation in $A$, to maps $o_{2,1}, o_{3,1}, \nu_{3,2}$ to $F/I$.

We consider the ideal $I$ corresponding to equalities
$$\nu_{(i_1,\ldots,i_n),(j_1,\ldots,j_m)}(x_1,\ldots,x_n,y_1,\ldots,y_m) = u_{(i_1,\ldots,i_n),(j_1,\ldots,j_m),x_1,\ldots,x_n,y_1,\ldots,y_m} \in U^\pm_{i - j}$$ (where we still need to define the $u$) for  $$\quad \sum_{k = 1}^n i_k \ge 2 \sum_{k =1}^m j_k \text{ or } \left(\sum i_k,\sum j_k\right) = (3,2).$$ Note that the $(i_1,\ldots,i_n)$ in $\nu$ denote linearisations; we use it just for indexing purposes in $u$.
Mostly, it does not matter what the $u$ are, except that they lie in the right space $U^\pm$.
However, concrete forms for the $u$ are given by
\begin{itemize}
	\item $u_{2i,i,x,y} = (Q^\text{grp}_xy)_i$,
	\item $u_{3i,i,x,y} = (T_xy)_{2i}$,
	\item $u_{i,j,x,y} = 0$ if $i > 2j$ and $i \neq 3j$,
	\item $u_{3,2,x,y} = (P_x y)_1$,
\end{itemize}
in which $Q^\text{grp}_xy$ denotes an element in $G^\pm$ that maps to $o_{2,1}(x,y)$ in $A$ and in which $T$ and $P$ are defined similarly.
In the case that $Q^\text{grp}, T,P$ are defined from an injective representation we know that $(x,y) \mapsto u_{i,j,x,y} \in U^\pm$ is a homogeneous map\footnote{This requires, technically, an argument since $Q, T$, and $P$ are not necessarily uniquely determined over non flat $\Phi$-algebras $K$. However, we are only interested in the linearisations of $u$, which can be recovered from flat $\Phi$-algebras alone. After we determined the linearisations of $u$, we can recover what $Q$, $T$ and $P$ should be over arbitrary $K$.}, so we can linearize $u_{i,j,\cdot,\cdot}$ to obtain the $$ u_{(i_1,\ldots,i_n),(j_1,\ldots,j_m),x_1,\ldots,x_n,y_1,\ldots,y_m}.$$
If there is no injective representation, then one can take random elements in $U^\pm$ which map to \[\nu_{(i_1,\ldots,i_n),(j_1,\ldots,j_m)}(x_1,\ldots,x_n,y_1,\ldots,y_m) \in A \] under the non-injective representation. We assume that these random elements have the right grading, i.e., $u_{(i_1,\ldots,i_n),(j_1,\ldots,j_m),x_1,\ldots,x_n,y_1,\ldots,y_m}$ is $\sum i_k - \sum j_k$ graded.

\begin{lemma}
	For injective representations, the algebra $F/I$ is a Hopf algebra.
	\begin{proof}
		One easily sees that the generators of the ideal $I$ form a coideal using the fact that
		$$ \Delta(u_{(ki_1,\ldots,ki_n),(kj_1,\ldots,kj_m),\mathbf{v}}) = \sum_{m + n = k}u_{(mi_1,\ldots,mi_n),(mj_1,\ldots,mj_m),\mathbf{v}} \otimes u_{(ni_1,\ldots,ki_n),(nj_1,\ldots,nj_m),\mathbf{v}}$$
		if $\gcd(i_1,\ldots,j_m) = 1$ and that the same holds for $\nu_{(i_1,\ldots,i_n),(j_1,\ldots,j_m)}(\mathbf{v})$, where $\mathbf{v}$ stands for $x_1,\dots,x_n,y_1,\dots,y_m$ with all $x_i \in G^+$ and all $y_i \in G^-$.
		These equalities for $u$ are easily obtained by observing that $U_{2,1,t,x,y} = \sum t^i u_{2i,i,x,y}$ and the similarly defined $U_{3,1,t,x,y}$ are group-like and by using comparison of scalars for the linearisations. For the primitive element $u_{3,2,x,y}$ and its linearisations, we can argue similarly.
		For $\nu$ these equations follow from Lemma \ref{lemma grouplike}.
	\end{proof}
\end{lemma}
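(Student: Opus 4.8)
The plan is to exhibit $I$ as a \emph{Hopf ideal} of the free product $F = U^+ * U^-$. First I note that $F$ carries a canonical Hopf algebra structure: it is the coproduct of the Hopf algebras $U^\pm = \mathcal{U}(G^\pm)$ in the category of unital associative algebras, so by the universal property its comultiplication $\Delta$, counit $\epsilon$ and antipode $S$ are the unique (anti)algebra morphisms restricting to those of $U^+$ and $U^-$. A two-sided ideal $I$ yields a quotient Hopf algebra exactly when $\Delta(I) \subseteq I \otimes F + F \otimes I$, $\epsilon(I) = 0$ and $S(I) \subseteq I$. Since $I$ is generated as a two-sided ideal and $\Delta, \epsilon, S$ are (anti)multiplicative while $I \otimes F + F \otimes I$ is stable under left and right multiplication by $\Delta(F)$, it suffices to verify these three conditions on the generators $d := \nu_{(\mathbf i),(\mathbf j)}(\mathbf v) - u_{(\mathbf i),(\mathbf j),\mathbf v}$.

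The heart of the argument is the coideal condition, and it rests on the observation that within each primitive direction $(i,j)$ with $\gcd(i,j) = 1$ the families $\nu$ and $u$ obey \emph{the same} comultiplication law. For $\nu$ this is immediate from Lemma~\ref{lemma grouplike}: the series $\exp(o_{i,j}(\mathbf v), t) = 1 + \sum_{k \ge 1} t^k \nu_{ki,kj}(\mathbf v)$ is group-like, so comparing coefficients of $t^k$ gives $\Delta(\nu_{ki,kj}) = \sum_{a+b=k} \nu_{ai,aj} \otimes \nu_{bi,bj}$ with the convention $\nu_{0,0} = 1$. For $u$ the same law holds direction by direction: the series $U_{2,1,t,x,y} = \sum_i t^i u_{2i,i,x,y}$ and $U_{3,1,t,x,y} = \sum_i t^i u_{3i,i,x,y}$ are images of the single group elements $Q^{\text{grp}}_x y$ and $T_x y$ under the universal representation, hence group-like; the element $u_{3,2,x,y} = (P_x y)_1 \in U^\pm_1 = \Lie(G^\pm)_1$ is primitive; and in the directions with $u = 0$ the law is trivial. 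Linearising in the arguments $\mathbf v$ preserves these coproduct laws, which yields exactly the stated formula for the linearised generators.

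From here the coideal condition follows by a telescoping identity, valid verbatim for the linearised generators. Writing $d_a = \nu_{ai,aj}(\mathbf v) - u_{ai,aj,\mathbf v}$, the shared law gives
\[
\Delta(d_k) = \sum_{a+b=k}\bigl(\nu_{ai,aj}\otimes \nu_{bi,bj} - u_{ai,aj}\otimes u_{bi,bj}\bigr) = \sum_{a+b=k}\bigl(d_a \otimes \nu_{bi,bj} + u_{ai,aj} \otimes d_b\bigr),
\]
and every summand lies in $I \otimes F + F \otimes I$ because the defining index condition $\sum i_k \ge 2\sum j_k$ is scale-invariant, so each $d_a, d_b$ with $0 < a,b < k$ is again one of the generators of $I$ (the endpoint terms $a=0$ and $b=0$ reproduce $1 \otimes d_k + d_k \otimes 1$, since $d_0 = 0$). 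The exceptional direction $(3,2)$, where only $k=1$ is imposed, is the degenerate case of this: there $d$ is primitive, so $\Delta(d) = d \otimes 1 + 1 \otimes d$. Thus $\Delta(I) \subseteq I \otimes F + F \otimes I$.

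It remains to dispatch the counit and antipode. Each generator has strictly positive total degree in the $\mathbb{Z}$-grading of $F$ and therefore lies in $\ker\epsilon$, whence $\epsilon(\nu_{(\mathbf i),(\mathbf j)}) = \epsilon(u_{(\mathbf i),(\mathbf j)}) = 0$ and so $\epsilon(I) = 0$. For the antipode I would use group-likeness once more: $S$ inverts $\exp(o_{i,j}(\mathbf v), t)$, and Lemma~\ref{Lemma equations}(1), namely $\sum t^{k+l}\nu_{ki,kj}(x^{-1},y^{-1})\nu_{li,lj}(y,x) = 1$, identifies this inverse as a product of $\exp(o_{j,i})$-series with inverted and swapped arguments; the same relation holds for the $u$-counterparts by construction, so a telescoping identical to the one above gives $S(d) \in I$. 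The main obstacle I expect is the uniform bookkeeping: making the linearised comultiplication formulas precise for the indexed $u$'s and matching the scale-invariant directions against the isolated primitive direction $(3,2)$ simultaneously in the $\Delta$, $\epsilon$ and $S$ computations. Group-likeness of $\exp(o_{i,j})$ from Lemma~\ref{lemma grouplike} is the single fact that makes all three verifications go through cleanly.
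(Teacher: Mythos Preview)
Your proof is correct and follows the same approach as the paper: both reduce the coideal condition to the fact that the $\nu$- and $u$-families in each primitive direction $(i,j)$ obey the same comultiplication law, using group-likeness of $\exp(o_{i,j})$ from Lemma~\ref{lemma grouplike} on one side and the fact that $Q^{\text{grp}}_x y$ and $T_x y$ are honest group elements (so their images under the universal representation are group-like) on the other, with $u_{3,2}$ primitive. Your telescoping identity $\Delta(d_k)=\sum_{a+b=k}(d_a\otimes\nu_{bi,bj}+u_{ai,aj}\otimes d_b)$ simply makes explicit what the paper compresses into ``one easily sees''.

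You are in fact more thorough than the paper's proof, which treats only the coideal condition and leaves the counit and antipode implicit. One small remark on the antipode: you do not need Lemma~\ref{Lemma equations}(1), and the claim that ``the same relation holds for the $u$-counterparts by construction'' would require extra justification, since that lemma relates $\nu$'s with swapped and inverted arguments rather than computing $S(\nu_{ki,kj})$. The cleaner route is the one you already have in hand: since $\exp(o_{i,j}(x,y),t)$ and $U_{i,j,t,x,y}$ are both group-like, $S$ sends each to its formal inverse, whose $k$-th coefficient is a \emph{universal} noncommutative polynomial $P_k$ in the lower coefficients; as $\nu_{ai,aj}\equiv u_{ai,aj}\bmod I$ for all $a\le k$ (scale-invariance of the index condition again), the same telescoping gives $S(d_k)=P_k(\nu)-P_k(u)\in I$. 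The isolated direction $(3,2)$ is even simpler: both $\nu_{3,2}$ and $u_{3,2}$ are primitive, so $S(d)=-d$.
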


We argue along the lines of Faulkner \cite[section 6]{FLK00} to prove Theorem \ref{thm main}.

For a monomial $m = \prod_{i=1}^k {(u_i)}_{n_i} \in F/I$ with $u_i \in G^\pm$ and $n_i \in \mathbb{N}$ and $k \in \mathbb{N}$, we define the \textit{$\sigma$-degree} as
$$ \deg_\sigma{m} = \sum_{u_i \in G^\sigma} n_i. $$
Additionally, the \textit{level} of $m$ is defined as
$$ \lambda(m) = \sum_{(i,j) \in L} n_in_j, $$
where
$$ L = \{ (i,j) : i < j, u_i \in G^+, u_j \in G^- \}.$$

Something useful to note is, if $f(m_2) \le f(m_2)'$ for $f = \deg_\pm$ and $f = \lambda$, then
\begin{equation}
	\label{equation level}
	\lambda(m_1m_2m_3) \le \lambda(m_1m_2'm_3),
\end{equation}
holds, for all $m_1,m_3,$
since $\lambda(m_1m_2) = \lambda(m_1) + \lambda(m_2) + \deg_+(m_1)\deg_-(m_2)$ holds for all $m_1,m_2$.

Let $\mathcal{M}_{pq}(r)$ be the span of all the monomials $m$ with $\deg_+(m) \le p$, $\deg_-(m) \le q$ and $\lambda(m) \le r$.
Define $\mathcal{H}$ as the image of $\mathcal{H}(G^+,G^-,F)$ under the projection $F \longrightarrow F/I$, i.e., it is the subalgebra of $F/I$ generated by all $\nu_{i,i}(x,y)$ for $x \in G^+$ and $y \in G^-$.

Consider the statement $I_n$ for $F/I$: ``all the $\nu_{i,j}(g^+,g^-)$ with $\min(i,j) \le n$ are contained in $\mathcal{U}^+ \mathcal{H} \mathcal{U}^-$".
We know that $I_1$ holds.
We will be using induction to prove $I_n$, as part of a proof of Theorem \ref{thm main}.

Recall that we use $\exp(sx) = \exp_{[s]}(x)$ to denote the image of $x \in G^\pm$ under the universal representation in $F$. 
We work in $F/I$ from now onward.

\begin{lemma}
	\label{lemma xpyq equiv wpq}
	For all $p,q \in \mathbb{N}$ and all $x \in G^+, y \in G^-$ we have
	$$ x_py_q \equiv \nu_{p,q}(x,y) \mod \mathcal{M}_{pq}(pq-1).$$
	\begin{proof}
		We look at the terms with coefficient $s^pt^q$ in $$\exp(sx) \cdot \exp(ty) \cdot \exp(sx^{-1}) = \prod \exp( o_{a,b}(x,y), s^at^b),$$
		and realize that $x_py_q$ is the only term that is not necessarily $0$ mod $\mathcal{M}_{pq}(pq-1)$ on the left-hand side. If we look at the right-hand side, $\nu_{pq}(x,y)$ is the only term which is not a product of multiple terms.
		Each of these products has a lower level since the last contributing term has nonzero $\deg_+$.	
	\end{proof}
\end{lemma}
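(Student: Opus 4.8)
The plan is to read off, from both sides of the defining identity
$$\exp(sx)\exp(ty)\exp(sx^{-1}) = \prod_{\substack{\gcd(i,j)=1\\ (i,j)\in\mathbb{N}\times\mathbb{N}_{>0}}} \exp\bigl(o_{i,j}(x,y),s^it^j\bigr),$$
the coefficient of $s^pt^q$, and to show that modulo $\mathcal{M}_{pq}(pq-1)$ the left-hand coefficient reduces to $x_py_q$ while the right-hand coefficient reduces to $\nu_{p,q}(x,y)$. Since the two sides are literally the same power series in $F/I$, their $s^pt^q$-coefficients are equal, so these two reductions give $x_py_q\equiv\nu_{p,q}(x,y)$ at once. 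I would carry this out for $p,q\ge 1$; the remaining boundary cases follow immediately from $x_0=y_0=1$ and $\nu_{0,q}=\mu_{0,q}$.

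First I would treat the left-hand side. Expanding the three exponentials, the coefficient of $s^pt^q$ is exactly $\mu_{p,q}(x,y)=\sum_{a+b=p} x_ay_q(x^{-1})_b$. Each summand is a single monomial with $\deg_+=a+b=p$ and $\deg_-=q$; its only $G^+$-before-$G^-$ pair is $(x_a,y_q)$, so its level is $aq$. When $b>0$ we have $a<p$, hence $aq\le (p-1)q\le pq-1$, placing such a summand in $\mathcal{M}_{pq}(pq-1)$. Thus the only surviving term is $a=p$, $b=0$, namely $x_py_q$ (of level $pq$), giving $\mu_{p,q}(x,y)\equiv x_py_q \bmod \mathcal{M}_{pq}(pq-1)$.

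For the right-hand side, collecting $s^pt^q$ after expanding the product amounts to choosing a nonempty set of distinct coprime pairs $(i_l,j_l)$ (in the product's slope order) with multiplicities $k_l\ge 1$ such that $\sum_l k_li_l=p$ and $\sum_l k_lj_l=q$, the contribution being $\nu_{a_1,b_1}\cdots\nu_{a_r,b_r}$ with $(a_l,b_l)=(k_li_l,k_lj_l)$. The unique single-factor choice yields $\nu_{p,q}(x,y)$, and it remains to show every product with $r\ge 2$ lies in $\mathcal{M}_{pq}(pq-1)$. Here the key step is a level estimate: since $\rho^+$ lands in $s$-graded and $\rho^-$ in $t$-graded elements, every monomial of $\nu_{a,b}$ has $\deg_+=a$, $\deg_-=b$, and therefore level at most $ab$; applying $\lambda(m_1m_2)=\lambda(m_1)+\lambda(m_2)+\deg_+(m_1)\deg_-(m_2)$ iteratively, any monomial of the product has level at most $\sum_{l\le l'} a_lb_{l'} = pq-\sum_{l>l'} a_lb_{l'}$. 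The only real obstacle is to see this defect is at least $1$ once $r\ge 2$: every factor has $j_l\ge 1$ so $b_1>0$, and because the pairs have strictly increasing slopes the last factor has the largest, hence positive, slope, so $i_r\ge 1$ and $a_r>0$; thus $a_rb_1\ge 1$ already contributes to $\sum_{l>l'} a_lb_{l'}$. Combined with $\deg_+\le p$ and $\deg_-\le q$, this shows the $r\ge 2$ contributions lie in $\mathcal{M}_{pq}(pq-1)$, so the right-hand coefficient is $\equiv\nu_{p,q}(x,y)$ and the comparison is complete.
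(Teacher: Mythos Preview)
Your proof is correct and follows essentially the same approach as the paper: compare the $s^pt^q$-coefficients on both sides of the conjugation identity, observe that on the left all terms of $\mu_{p,q}$ except $x_py_q$ have level $<pq$, and on the right all multi-factor products have level $<pq$ because the last factor has positive $\deg_+$. Your write-up is simply more explicit than the paper's, in particular spelling out the level bound $\lambda\le pq-\sum_{l>l'}a_lb_{l'}$ and the reason $a_r\ge 1$ (distinct coprime pairs in increasing slope order force the last slope to be positive), which the paper compresses into a single sentence.
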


\begin{lemma}
	\label{lemma Faulkner}
	Consider $n \in \mathbb{N}$ and $d = \text{gcd}\{ \binom{n}{i} | 1 < i < n\}$. In that case $d \neq 1$ if and only if $n = p^e$ for a prime $p$ and $d = p$.
	\begin{proof}
		Faulkner \cite[Lemma 21]{FLK00} proved this.
	\end{proof}
\end{lemma}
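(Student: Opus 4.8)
The statement is the classical evaluation of the greatest common divisor of a row of Pascal's triangle, and the plan is to prove it directly rather than merely invoke \cite{FLK00}. Throughout I would assume $n \ge 3$, the cases $n = 1, 2$ being vacuous under the convention that the empty $\gcd$ equals $0$. The first observation is that the index set $\{i : 1 < i < n\}$ contains $i = n-1$, and $\binom{n}{n-1} = n$; hence $d \mid n$, so every prime factor of $d$ is already a prime factor of $n$. It therefore suffices, for each prime $p \mid n$, to compute the exact power of $p$ dividing $d$, i.e. $\min\{v_p\binom{n}{i} : 1 < i < n\}$, where $v_p$ denotes the $p$-adic valuation.

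For the direction $n = p^e \Rightarrow d = p$: first I would use the Frobenius congruence $(1+x)^{p^e} \equiv 1 + x^{p^e} \pmod p$ over $\mathbb{F}_p$, which gives $p \mid \binom{p^e}{i}$ for all $0 < i < p^e$, hence $p \mid d$. Combined with $d \mid n = p^e$, this forces $d$ to be a power of $p$. To pin down $d = p$ exactly, I would exhibit one interior binomial of valuation $1$: for $e = 1$ this is $\binom{p}{p-1} = p$; for $e \ge 2$, Kummer's theorem computes $v_p\binom{p^e}{p^{e-1}}$ as the number of carries when adding $p^{e-1}$ and $(p-1)p^{e-1}$ in base $p$, and since these two single digits sum to exactly $p$ and produce precisely one carry, $v_p\binom{p^e}{p^{e-1}} = 1$, with $1 < p^{e-1} < p^e$.

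For the direction ``$n$ not a prime power'' $\Rightarrow d = 1$: since $d \mid n$, it is enough to produce, for each prime $p \mid n$, one interior binomial coprime to $p$. I would set $a = v_p(n)$ and $i = p^a$, writing $n = p^a m$ with $p \nmid m$ and $m > 1$ (this is exactly where ``not a prime power'' enters), so that $1 < p^a < n$. Kummer's theorem then expresses $v_p\binom{n}{p^a}$ as the number of carries in adding $p^a$ and $n - p^a = p^a(m-1)$ in base $p$; checking the digit at position $a$, where $1 + (m_0 - 1) = m_0 < p$ with $m_0$ the lowest nonzero base-$p$ digit of $m$, shows there are no carries, so $p \nmid \binom{n}{p^a}$ and hence $p \nmid d$.

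The main obstacle is the \emph{exact} valuation computation, as opposed to the mere divisibility $p \mid d$: this is the single place where a genuinely arithmetic input — Kummer's carry count, or equivalently Lucas' theorem — is required, whereas $p \mid d$ and $d \mid n$ are soft. If one prefers to bypass Kummer entirely, an alternative is to combine the identity $\binom{n}{p^a} = m\binom{n-1}{p^a-1}$ with the congruence $\binom{p^e-1}{j} \equiv (-1)^j \pmod p$ (valid for $0 \le j \le p^e - 1$), which reduces both valuation statements to elementary manipulations.
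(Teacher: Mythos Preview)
Your argument is correct. The paper itself gives no proof: it simply invokes Faulkner \cite[Lemma~21]{FLK00}. You instead supply a self-contained argument via Kummer's carry-counting theorem, which is the standard route to this classical fact. The key steps --- $d \mid n$ from $\binom{n}{n-1}=n$, the Frobenius congruence giving $p \mid d$ when $n=p^{e}$, and the explicit witnesses $\binom{p^{e}}{p^{e-1}}$ (of $p$-adic valuation exactly~$1$) and $\binom{n}{p^{a}}$ (coprime to $p$ when $n=p^{a}m$ with $p\nmid m>1$) --- are all verified correctly. Your proof is essentially what one finds upon unpacking the cited reference; it buys self-containment at no real cost.
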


\begin{lemma}
	\label{lemma gcds}
	Consider even $n > 2$ and $d = \text{gcd}\{ \binom{n}{i} | 1 < i < n, 2i \neq n\},$ then $d = 2$ or $d = 4$ if $n = 2^e$, $d = p$ if $n = 2p^e$ for an odd prime $p$, and $d = 1$ in all other cases.
	\begin{proof}
		We write $2k = n$.
		
		Set $S = \{ \binom{2k}{i} | 1 < i < 2k, i \neq k\}$.
		Suppose that there is an odd prime $p$ such that $p | S$ and write $2k = p^e l$ with $p \nmid l$.
		We have
		$$ 1 + \binom{2k}{k} t^k + t^{2k} \equiv (1 + t)^{2k} \equiv 1 + \binom{l}{1}t^{p^e} + \binom{l}{2}t^{2p^e} + \ldots \mod p.$$
		So, $l = 2$, $k = p^e$ and $p^i \mid S$ for some $1 \le i \le e$.
		We know that there exists $0 < j < k$ such that
		$$(1 + t)^k = 1 + a_j t^j + O(t^{j+1}) \mod p^2$$ with $a_j \neq 0 \mod p^2$, by Lemma \ref{lemma Faulkner}.
		Hence, we obtain $$(1 + t)^{2k} \equiv 1 + 2 a_j t^j + O(t^{j+1}) \mod p^2.$$
		So, we conclude that $p^2 \nmid S$ and thus $d = p$.
		
		Now, suppose that $n = 2^i = 2k.$
		Take $j$ such that
		$$ (1 + t)^{k} \equiv 1 + a_jt^j + O(t^{j+1}) \mod 4,$$
		with $a_j \equiv 2 \mod 4$, which exists by Lemma \ref{lemma Faulkner}.
		We compute that
		$$ (1 + t)^{2k} \equiv (1 + 2 a_j t^j + O(t^{j+1})) \equiv 1 + 4 t^{j} + O(t^{j+1}) \mod 8.$$
		Hence, the maximal power of $2$ that could divide all of $S$ is $4$.
	\end{proof}
\end{lemma}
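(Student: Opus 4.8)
The plan is to analyze $d$ one prime at a time. First I would record the elementary constraint that $d \mid n$: since $n>2$, the index $i=n-1$ satisfies $1<i<n$ and $i\neq k:=n/2$, so $\binom{n}{n-1}=n$ lies in the set $S=\{\binom{n}{i}\mid 1<i<n,\ 2i\neq n\}$ whose gcd is $d$. Hence every prime dividing $d$ divides $n$, and it suffices to examine primes $p\mid n$. For such a $p$ write $n=p^e m$ with $e\ge 1$ and $p\nmid m$, and use the Frobenius congruence $(1+t)^n\equiv(1+t^{p^e})^m\pmod p$, so that the coefficient of $t^{jp^e}$ in $(1+t)^n$ is $\binom{m}{j}\bmod p$ while all other coefficients vanish mod $p$. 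Testing $i=p^e$ gives $\binom{n}{p^e}\equiv m\not\equiv 0\pmod p$; since $p^e\ge 2>1$ and $1<p^e<n$ unless $m=1$, the hypothesis $p\mid d$ forces $i=p^e$ to be excluded from $S$, i.e. $p^e=k$ (so $m=2$) or $p^e\ge n$ (so $m=1$). For odd $p$, $m=1$ would make $n=p^e$ odd, which is impossible, so $m=2$ and $n=2p^e$; for $p=2$, $m=2$ contradicts the parity of $m=n/2^e$, so $m=1$ and $n=2^e$. This already shows $d=1$ whenever $n$ is neither a power of $2$ nor $2p^e$ for an odd prime $p$.

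Next I would pin down the exact power in the case $n=2p^e$ with $p$ odd. That $p\mid d$ here is immediate from $(1+t)^n\equiv 1+2t^{p^e}+t^{2p^e}\pmod p$, which kills every $\binom{n}{i}$ with $1<i<n$ and $i\neq p^e=k$. To show $p^2\nmid d$, set $k=p^e$ and apply \cref{lemma Faulkner} to $k$: as $k$ is a prime power, $\gcd\{\binom{k}{i}\mid 1<i<k\}=p$, so there is an index with $p$ exactly dividing $\binom{k}{i}$. Writing $(1+t)^k\equiv 1+t^k+p\,h(t)\pmod{p^2}$ with $h(t)=\sum_{1\le i\le k-1}c_i t^i$ and $h\not\equiv 0\pmod p$, squaring yields
\[ (1+t)^{2k}\equiv 1+2t^k+t^{2k}+2p\,h(t)+2p\,t^k h(t)\pmod{p^2}. \]
Since $h$ is supported in degrees $\le k-1$ and $t^k h$ in degrees $\ge k+1$, for any $j$ with $2\le j\le k-1$ and $c_j\not\equiv 0\pmod p$ (such $j$ exists: $c_1=0$ when $e\ge 2$, while for $e=1$ one has $c_2\neq 0$) the coefficient of $t^j$ equals $2p\,c_j$, which is exactly divisible by $p$ since $p$ is odd. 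Thus $\binom{2k}{j}\in S$ is not divisible by $p^2$, and combined with the first paragraph we obtain $d=p$.

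For $n=2^e$ (with $e\ge 2$, as $n>2$) the first paragraph leaves only the prime $2$, whose power I would bound by the same squaring trick. The case $n=4$ is immediate, since then $S=\{\binom{4}{3}\}=\{4\}$ and $d=4$. For $e\ge 3$, \cref{lemma Faulkner} applied to $k=2^{e-1}$ provides $j$ with $1<j<k$ and $2$ exactly dividing $\binom{k}{j}$; taking $j$ minimal among indices $>1$ with $4\nmid\binom{k}{i}$ (using $\binom{k}{1}=2^{e-1}\equiv 0\pmod 4$, so degree $1$ causes no trouble) gives $(1+t)^k\equiv 1+a_j t^j+O(t^{j+1})\pmod 4$ with $a_j\equiv 2\pmod 4$, whence
\[ (1+t)^{2k}\equiv 1+4t^j+O(t^{j+1})\pmod 8. \]
So $8\nmid\binom{2k}{j}$ with $1<j<k$, giving $8\nmid d$; together with $2\mid d$ (every $\binom{2^e}{i}$ with $0<i<2^e$ is even) this yields $d\in\{2,4\}$.

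The reductions modulo $p$ are straightforward; the main obstacle I anticipate is the exact-power bookkeeping, namely guaranteeing that the index $j$ witnessing the minimal prime valuation avoids both the endpoint $i=1$ and the excluded middle $i=k$. This is precisely why the support-degree count of $h$ and $t^k h$, the separate inspection of the coefficient at $t^1$ (which vanishes when the relevant exponent is large enough), and the base case $n=4$ all appear in the plan.
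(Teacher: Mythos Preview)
Your proof is correct and follows essentially the same strategy as the paper's: determine which primes can divide $d$ via the Frobenius congruence $(1+t)^{n}\equiv(1+t^{p^e})^{m}\pmod p$, and then bound the exact power by squaring $(1+t)^{k}$ modulo $p^2$ (respectively modulo $8$) together with \cref{lemma Faulkner}. If anything, your write-up is a bit more careful than the paper's about the index bookkeeping---you explicitly ensure the witness $j$ stays in the range $1<j<k$ (handling $c_1$ separately) and you isolate the base case $n=4$, whereas the paper leaves these checks implicit.
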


\begin{lemma}
	\label{lemma xpyq in Mpqpq-1}
	If $I_n$ holds and if we consider $p \neq q, \min(p,q) \le n + 1$, $x \in G^+,$ and  $y \in G^-$, then we know
	$$ x_py_q \in \mathcal{M}_{pq}(pq - 1).$$
	\begin{proof}
		If $\min(p,q) \le n$, this is true since $I_n$ holds.
		So, suppose that $q = n + 1$ (the case $p = n + 1 $ is similar).
		We can assume that $n + 1 < p < 2n + 2$, as $ p \ge 2q$, either learns us that we imposed $\nu_{p,q}(x,y) = (T_xy)_q$ if $p = 3q$, $\nu_{p,q}(x,y) = (Q^{\text{grp}}_{x}y)_q$ if $p = 2q$, or $\nu_{p,q} = 0$ if $p \neq 3q$, which implies that $x_py_q \in \mathcal{M}_{pq}(pq-1)$ by Lemma \ref{lemma xpyq equiv wpq}.
		
		Suppose first that $x \in G^+_2$ and $y \in G^-_2$.
		We can assume that $p = 2l, q = 2k$ since $x_py_q = 0$ otherwise.
		Using the fact that
		\[ \binom{l}{a} x_p = x_{2a}x_{p-2a},\]
		which holds by Construction \ref{construction universal representation}.\ref{cons:g2 scalar},
		and the fact that $x_{2a}x_{p-2a}y_q = x_{p-2a}x_{2a}y_q \in \mathcal{M}_{pq}(pq-1),$ which holds by induction since $\min(a,p-1) \le n$, we see that all $\binom{l}{i}$-multiples of $x_py_q$ with $0 < i < l$ and all $\binom{k}{j}$-multiples with $0<j<k$ are contained in $\mathcal{M}_{pq}(pq-1)$. The greatest common divisor of all those binomial coefficients is $1$ by Lemma \ref{lemma Faulkner} since there are no powers $a,b$ of the same prime such that $a < b < 2a$. So, we conclude that $x_py_q \in \mathcal{M}_{pq}(pq-1)$, since $\gcd(a_1,\ldots,a_n) = k$, for $a_i \in \mathbb{Z}$, implies that there exist integers $b_1,\ldots,b_n$ such that
		\[ \sum_{i = 1}^k a_ib_i = k.\]	
		Suppose now that $x \in G^+$ and $y \in G^-_2$.
		We will use that
		$$\binom{m}{k} x_m= \sum_{\substack{b + c = k\\ a +c = m}} x_a x_b (x(-x))_{2c}, \quad \binom{m}{k} y_{m} = \sum_{\substack{b + c = k\\ a +c = m}} y_a y_b (2 \cdot_2 y)_{2c},$$
		i.e., \ref{construction universal representation}.\ref{cons:g scalar}.
		Analoguous to the previous case, one argues that all $\binom{p}{i}, 0 < i < p, \binom{q}{j}, 0 < j < q$ multiplies of $x_py_q$ are contained in  $\mathcal{M}_{pq}(pq-1)$, except possibly the $\binom{q}{q/2}$ multiples since the expression for $\binom{q}{q/2} y_mq$ contains $(2 \cdot_2 y)_q$.
		If $q = 2$, and thus $p = 3$, we can use that $x_3y_2 \equiv \nu_{3,2}(x,y) \mod \mathcal{M}_{32}(5)$ and $\nu_{3,2}(x,y) \in \mathcal{M}_{10}(0) \subset \mathcal{M}_{32}(5)$.
		If $q > 2$, then the set $S = \{ \binom{q}{a} | a \neq 1,q,q/2\}$ is nonempty. Since $p$ and $q$ cannot both be powers of the same prime, we can use Lemma \ref{lemma gcds} and Lemma \ref{lemma Faulkner} to conclude that $x_py_q \in \mathcal{M}_{pq}(pq-1)$.

		Lastly, suppose that $x \in G^+$ and $y \in G^-$.
		Using the previous case and analogous argumentation to the first case one proves that $x_py_q \in \mathcal{M}_{pq}(pq-1)$.
	\end{proof}
\end{lemma}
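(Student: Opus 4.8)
The plan is to combine Lemma~\ref{lemma xpyq equiv wpq} with the scalar-expansion relations of Construction~\ref{construction universal representation} and to finish each case by an elementary gcd argument. By the $+/-$ symmetry of the setup I may assume $q = n+1$, the case $p = n+1$ being identical. If $\min(p,q) \le n$ then $p = q$ is excluded and the hypothesis $I_n$ already yields the claim, so this is the base case on which everything else rests. Next I would clear the range $p \ge 2q$: there the defining relations of the ideal $I$ force $\nu_{p,q}(x,y)$ to equal $(Q^{\text{grp}}_x y)_q$ (if $p = 2q$), $(T_x y)_{2q}$ (if $p = 3q$), or $0$ (otherwise), each lying in $\mathcal{U}^\pm$ and hence of level $0$; Lemma~\ref{lemma xpyq equiv wpq} then gives $x_p y_q \in \mathcal{M}_{pq}(pq-1)$. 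This reduces everything to the genuinely delicate window $q = n+1 < p < 2q$.

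In that window the strategy is to show $x_p y_q$ lies in $\mathcal{M}_{pq}(pq-1)$ by proving that sufficiently many integer multiples of it do. The mechanism is relations \ref{cons:g2 scalar} and \ref{cons:g scalar}, which rewrite a binomial multiple $\binom{\cdot}{\cdot} x_p$ (respectively $\binom{\cdot}{\cdot} y_q$) as a sum of products of strictly lower-degree homogeneous pieces. Because $p < 2q = 2(n+1)$, every such product carries a same-side factor of degree $\le n$; commuting that factor (the $x_i$ pairwise commute, and likewise the $y_j$) next to the opposite-side generator produces a subword $x_{p'} y_q$ or $x_p y_{q'}$ with $\min(\cdot,\cdot) \le n$, which the base case places in the corresponding $\mathcal{M}$ at level strictly below the maximum, after which multiplying back by the remaining factor keeps the total level $\le pq-1$ by the additivity recorded in Equation~(\ref{equation level}). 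I would organise this into the subcases $x \in G^+_2,\ y \in G^-_2$, then $x \in G^+,\ y \in G^-_2$, and finally the general $x \in G^+,\ y \in G^-$ obtained by combining the first two.

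The closing step in each subcase is purely arithmetic: once a family of binomial-coefficient multiples of $x_p y_q$ is known to sit in $\mathcal{M}_{pq}(pq-1)$, Bézout's identity delivers $x_p y_q$ itself as soon as those coefficients have gcd $1$. For $x,y \in G_2$ all interior coefficients $\binom{l}{i}$, $\binom{k}{j}$ are available and coprimality is immediate from Lemma~\ref{lemma Faulkner}, since $q < p < 2q$ forbids $p$ and $q$ from both being powers of one prime. \emph{The hard part} is the subcase $x \in G^+,\ y \in G^-_2$ with $q$ even: there relation \ref{cons:g scalar} produces the term $(2 \cdot_2 y)_q$, so the central coefficient $\binom{q}{q/2}$ is the one multiple I cannot reduce. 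I would dispose of the single exceptional pair $(p,q) = (3,2)$ directly, using $x_3 y_2 \equiv \nu_{3,2}(x,y) \bmod \mathcal{M}_{32}(5)$ together with $\nu_{3,2}(x,y) \in \mathcal{U}^\pm$; and for $q > 2$ the surviving coefficients $\{ \binom{q}{a} : a \neq 1, q, q/2 \}$ form a nonempty set whose gcd is $1$ by Lemma~\ref{lemma gcds} combined with Lemma~\ref{lemma Faulkner}, again invoking that $p$ and $q$ are not simultaneously powers of a single prime. Verifying that deleting the central binomial never leaves a residual common prime is exactly the point where the careful even-index gcd estimate of Lemma~\ref{lemma gcds} becomes indispensable.
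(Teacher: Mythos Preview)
Your proposal is correct and follows essentially the same approach as the paper's proof: the same reduction to $q = n+1 < p < 2q$, the same three-step case split on membership in $G_2$, the same binomial-expansion-plus-Bézout mechanism, and the same isolation of the exceptional pair $(p,q) = (3,2)$ handled via $\nu_{3,2}$. Your explanation of \emph{why} the split factors always produce a piece of degree $\le n$ (namely $p < 2q$) is in fact stated more explicitly than in the paper.
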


\begin{lemma}
	\label{Lemma decomposition}
	Each element of $F/I$ can be written as an element in $U^-\mathcal{H}U^+ \le F$.
	\begin{proof}
		We will show that $I_n$ implies that the statement $J_r$: ``$\mathcal{M}_{pq}(r)$ is contained in $U^-\mathcal{H}U^+$ for all $p,q$ such that $\min(p,q) \le n + 1$", holds for all $r$.
		Since $I_1$ holds, proving that all $J_r$ holds, lets us conclude that the lemma holds, as it proves by induction (i) that all $I_n$ hold and (ii) that the lemma holds if all $I_n$ hold since $\bigcup_{p,q,r} \mathcal{M}_{p,q}(r) = F$.
		
		We use induction on $r$ to prove $J_r$.
		If $r = 0$, this is trivial.
		So, suppose that $r > 0$.
		Take a monomial $m \in \mathcal{M}_{pq}(r)$ but not in $\mathcal{M}_{pq}(r-1)$.
		This monomial cannot factor as
		$ m = m_1 x_a y_b m_2,$
		with $0 \neq a \neq b \neq 0$, $x \in G^+$ and $y \in G^-$ by Lemma \ref{lemma xpyq in Mpqpq-1} and (\ref{equation level}).
		Now, consider a factorization $m_1 x_iz_jy_cm_2,$ with $i,j,c \neq 0$ with $x,z \in G^+$ and $y \in G^-$. Using the previous case, we conclude that $j = c$. Now, we use that $i \neq 0$, to obtain that
		$$ m_1x_iz_cy_cm_2 = m_1 \left((xz)_{i+c} - \sum_{k+l = i+c, l \neq c} x_kz_l\right)y_c m_2 \in \mathcal{M}_{pq}(r-1)$$
		using the previous case.
		Similarly, it cannot factor as $m_1 x_a y_c w_dm_2$
		with $cd \neq 0$ or $0 \neq c + d \neq a$ for $x \in G^+$ and $y,w \in G^-$. 
		
		Hence, $m$ decomposes as a product
		$$ m_1 \cdot \prod (x_i)_{p_i}(y_i)_{p_i} \cdot m_2,$$
		with $m_1 \in U^-$, $m_2 \in U^+$, $x_i \in G^+$ and $y_i \in G^-$.
		Applying Lemma \ref{lemma xpyq equiv wpq} learns us that this product can be rewritten as 
		$$ m_1 \cdot \prod (x_i)_{p_i}(y_i)_{p_i} \cdot m_2 \equiv m_1 \cdot \prod \nu_{p_i,p_i}(x,y) \cdot m_2 \mod \mathcal{M}_{pq}(r - 1).$$
		So, $m$ is an element of $U^- \mathcal{H}U^+$ by the induction hypothesis.
	\end{proof}
\end{lemma}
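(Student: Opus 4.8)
The plan is to exploit the level filtration $\mathcal{M}_{pq}(r)$ together with the two straightening results already in place, and to organize everything as a nested induction. Concretely, I would keep the two families of auxiliary statements from the setup: $I_n$, asserting that every $\nu_{i,j}(g^+,g^-)$ with $\min(i,j)\le n$ already lies in $U^-\mathcal{H}U^+$, and $J_r$, asserting that $\mathcal{M}_{pq}(r)\subseteq U^-\mathcal{H}U^+$ for all $p,q$ with $\min(p,q)\le n+1$. Since $I_1$ holds by construction and $\bigcup_{p,q,r}\mathcal{M}_{pq}(r)=F/I$, it suffices to show that $I_n$ implies $J_r$ for every $r$: this simultaneously propagates $I_n\Rightarrow I_{n+1}$ (because establishing all $J_r$ forces the relevant $\nu_{i,j}$ with $\min(i,j)=n+1$ into normal form) and yields the lemma in the limit, as every element of $F/I$ sits in some $\mathcal{M}_{pq}(r)$.

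First I would fix $n$, assume $I_n$, and prove $J_r$ by induction on $r$, the base case $r=0$ being immediate since $\mathcal{M}_{pq}(0)$ contains no ``crossing'' pairs. For a monomial $m\in\mathcal{M}_{pq}(r)\setminus\mathcal{M}_{pq}(r-1)$ with $\min(p,q)\le n+1$, the idea is to trade every ``badly ordered'' sub-factor for terms of strictly smaller level, so that $m$ becomes congruent modulo $\mathcal{M}_{pq}(r-1)$ to a product $m_1\cdot\prod (x_i)_{p_i}(y_i)_{p_i}\cdot m_2$ with $m_1\in U^-$ and $m_2\in U^+$. The engine is \cref{lemma xpyq in Mpqpq-1}: a factor $x_ay_b$ with $a\neq b$ and $\min(a,b)\le n+1$ lies in $\mathcal{M}_{ab}(ab-1)$, and combined with the level inequality (\ref{equation level}) this pushes any such occurrence into $\mathcal{M}_{pq}(r-1)$. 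I would then rule out the remaining obstructions one configuration at a time: a mismatched factor $x_ay_b$; a three-factor pattern $x_iz_jy_c$ in which two positive generators meet one negative, where after using the product relation of \cref{construction universal representation} to rewrite $x_iz_j=(xz)_{i+j}-\sum_{k+l=i+j,\,l\neq c}x_kz_l$ one lands in lower level by the previous case; and the mirror pattern $x_ay_cw_d$ treated symmetrically.

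Once $m$ is in the form $m_1\cdot\prod (x_i)_{p_i}(y_i)_{p_i}\cdot m_2$ I would invoke \cref{lemma xpyq equiv wpq} to replace each equal-degree block $(x_i)_{p_i}(y_i)_{p_i}$ by $\nu_{p_i,p_i}(x_i,y_i)$ modulo $\mathcal{M}_{pq}(r-1)$; this places $m$ in $U^-\mathcal{H}U^+$ up to a term of level $<r$, which the inner induction hypothesis absorbs. The hard part, I expect, is the combinatorial bookkeeping in the reduction step: verifying that \emph{every} factorization pattern with a crossing either matches one of the admissible equal-degree blocks or strictly lowers the level, and in particular handling the three-factor configurations, where splitting a product of two same-side generators via the relations of \cref{construction universal representation} must be checked to keep all resulting monomials inside $\mathcal{M}_{pq}(r-1)$. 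Managing the interplay of $\deg_\pm$ and $\lambda$ through (\ref{equation level}) so that the nested induction stays well-founded is where the real care is needed; the algebraic identities themselves are routine once the level accounting is pinned down.
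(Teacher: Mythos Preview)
Your proposal is correct and follows essentially the same approach as the paper: the nested induction on $I_n$ and $J_r$, the level-lowering via \cref{lemma xpyq in Mpqpq-1} and (\ref{equation level}) for mismatched $x_ay_b$, the reduction of the three-factor patterns $x_iz_jy_c$ and $x_ay_cw_d$ using the product relation from \cref{construction universal representation}, and the final replacement of equal-degree blocks by $\nu_{p_i,p_i}$ via \cref{lemma xpyq equiv wpq} are exactly the steps the paper carries out. Your identification of the combinatorial bookkeeping as the delicate part is accurate, but the paper handles it just as you outline, with no additional ideas needed.
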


\begin{proof}[Proof of Theorem \ref{thm main}]
	The non-moreover part is Lemma \ref{Lemma decomposition}.
	For the moreover part, we first prove that the image of $V^+$ of $U^+ = \mathcal{U}(G^+)$ is a direct summand of $F/I$ (without needing the projectivity).
	Suppose that we have an element 
	$$ \sum_{i = 1}^k y_i h_i x_i,$$
	with $y_i \in V^-$, $x_i \in V^+$ and $h_i \in \mathcal{H}$ and assume that all the $y_i,x_i$ are nicely graded (i.e., homogeneous). We want to prove that 
	$$ \sum_{i = 1}^k y_i h_i x_i \in V^+,$$
	implies that $y_ih_i\in \Phi$ for all $i$.
	First of all, we can assume that $\epsilon(x_i) = 0$ for all $i$, since $\sum y_ih_i \in V^+$ implies, using the fact that this element is necessarily $0$-graded, that $\epsilon (\sum y_ih_i) = \sum y_ih_i$.
	Take the terms with the $y_i$ with minimal grading in $F/I$ (or equivalently with maximal grading in $V^-$), which we assume to be $-n$, and take of those terms the ones with $x_i$ maximally graded, which we assume to be $m$ graded. Note that $m > 0$.
	We call the set of those terms $S$.
	We compute if $-n < 0$, using $\pi_i$ the projection on the $i$th grading component of $F/I$, that
	$$ 0 = \mu \left((\pi_{-n} \otimes S \otimes \pi_m)\Delta^3\left(\sum_{i = 1}^k y_i h_i x_i\right)\right) = \sum_{yhx \in S} yh_{(1)}S(h_{(2)})h_{(3)}x = \sum_{t \in S} t$$
	using that the function of the left-hand side must evaluate to $0$ on $U^+$, the fact that $\Delta(h) = h \otimes 1 + 1 \otimes h \mod \ker \epsilon \otimes \epsilon$, and where we used Sweedler summation notation so that
	$$ h_{(1)}S(h_{(2)})h_{(3)} = h,$$
	for all $h \in \mathcal{H}$.
	
	Now, to resolve the $n = 0$ case, consider $\sum_i h_ix_i$ with $h_i \notin \Phi$. We can assume that $\epsilon(h_i) = 0$ for all $i$. We use
	$$ 0 = \mu \left( S(1 - \epsilon)\pi_0 \otimes \pi_m) \Delta\left(\sum_i h_ix_i\right)\right) = \sum_i S(h_{i,(1)})h_{i,(2)}x_i - \sum_i h_ix_i = -\sum_i h_ix_i. $$
	This proves that $V^+$ is a direct summand. Note that the projection $\pi_{V^+}$ acts as
	$$ \pi_{V^+}(yhx) = \epsilon(yh)x$$
	for $y \in V^-, h \in \mathcal{H}, x \in V^+$.
	
	For projective $G^+$, we can see that Hopf algebra map $f : \mathcal{U}(G^+) \longrightarrow F/I$ is an embedding because
	$$  (f(1 - \epsilon))^{\otimes n+1}\Delta^n(y) = (1 - \epsilon)^{\otimes n+1}\Delta^n(f(y)),$$
	for all $y \in \mathcal{U}(G^\pm)$ and since $f$ induces an embedding of $\text{Lie}(G^+) \longrightarrow F/I$ (the map $U^+ \longrightarrow A$ factors through $F/I$). More precisely, using the well-behavedness of projective vectorgroups this can be used to prove that 
	$ (1 - \epsilon)^{\otimes n+1}\Delta^nf$ corresponds to an injective map $Y_{n+1}/Y_n \longrightarrow (F/I)^{\otimes n+1}$ for all $n$ (namely, one can use the argumentation of Lemma \ref{Lemma well behaved}) which can be used to prove the injectivity $Y_{n+1} \longrightarrow F/I$ using induction on $n$. Thus, we see that $V^\pm \cong \mathcal{U}(G^\pm)$.
	
	Now, we only need to prove that $F/I \cong V^- \otimes \mathcal{H} \otimes V^+$ as modules since this implies that both spaces are isomorphic as coalgebras.
	It is easy to check that $$\mu \circ (S \otimes \text{Id} \otimes S) \circ (\pi_{V_-} \otimes \text{Id} \otimes \pi_{V_+}) \circ \Delta^2 (yhx)  = \epsilon(x)\epsilon(y)h$$
	for all $y \in U^-, h \in \mathcal{H}, x \in U^+$. This yields a projection $\pi_h : F/I \longrightarrow \mathcal{H}$.
	
	Note that $yhx = \mu(\pi_{V^-} \otimes \pi_h \otimes \pi_{V^+})\Delta^2(yhx)$ for all $y \in V^-, h \in \mathcal{H}, x \in V^+$. So, we conclude that 
	$$ yhx \mapsto (\pi_{V^-} \otimes \pi_h \otimes \pi_{V^+})\Delta^2(yhx)$$
	forms a coalgebra isomorphism $F/I \longrightarrow V^- \otimes \mathcal{H} \otimes V^+$.
\end{proof}

Now, we assume that we are working with the vector groups $\rho^+(G^+)$ and $\rho^-(G^-)$ in order to prove Theorem \ref{thm main 2}.
So, we use the previously considered $F/I$ with $U^+$ the universal representation of $\rho^+(G^+)$ and similarly defined $U^-$.

Consider the Lie subalgebra $L$ of $F/I$ generated by the $g_1,h_1$ for $g \in G^+$ and $h \in G^-$.
We add a grading element of $F/I$, namely take $F/I \otimes \Phi[\zeta]$ with associative product defined from
$$ k \otimes \zeta \cdot l \otimes p(\zeta) = kl \otimes \zeta p(\zeta) + i kl \otimes p(\zeta),$$
for $l$ in the $i$-th grading component of $F/I$ and $p(\zeta) \in \Phi[\zeta]$.
We effectively added a single generator $\zeta$ and relations $[\zeta , l] = i l$ for $i$-graded $l$.
Given this interpretation, we do not write the $\otimes$-sign anymore.
We use $J$ to denote the ideal $$Z(L \oplus \Phi\zeta) \cap \{ z \in (F/I)_0 | \sum_{i + j = 2} g_izg^{-1}_{j} = 0, \forall g \in G^\pm \},$$ i.e., the elements $e$ of the center of the Lie algebra $L \oplus \Phi\zeta$ which are $0$-graded so that $\exp(g) e \exp(g^{-1}) = e$ for all $g \in G^\pm$, with $\exp(g) = \sum_{i = 0}^\infty g_i$ (one can prove that only $g_i$ with $i \le 2$ contribute in the conjugation action).

\begin{lemma}
	The Lie algebra $(L \oplus \Phi \zeta)/J$ is isomorphic to the Lie algebra associated to the pre-Kantor pair $(\rho^+(G^+),\rho^-(G^-))$.
	\begin{proof}
		We first prove that the underlying modules of both Lie algebras are the same. We see that $\text{Lie}(\rho^+(G^+))$ embeds into $F/I$ and the same holds for $G^-$. The $0$-graded part of the Lie algebra can be identified with certain maps $G^\pm \longrightarrow \text{Lie}(G^\pm)$ using $[\delta,g_1] = \delta(g)_1$ and $g_2\delta - g_1\delta g_1 + \delta \times (g^{-1})_2 = - \delta(g)_2 + \delta(g)_1g_1$, and this identification is bijective because we divided the ideal $J$ out.
		To prove that all the $V_{x,y}$ are contained in $L_0$, one uses Lemma \ref{Lemma equations} to prove that $[x,y]$ corresponds exactly to $V_{x,y}$.
		
		One easily checks that the other brackets coincide, proving the isomorphism.
	\end{proof}
\end{lemma}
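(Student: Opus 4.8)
The plan is to build the isomorphism one grading component at a time, matching the five graded pieces of the pre-Kantor Lie algebra $L(\rho^+(G^+),\rho^-(G^-)) = [G^-,G^-] \oplus A^- \oplus \text{InStr} \oplus A^+ \oplus [G^+,G^+]$ against the graded pieces of $(L \oplus \Phi\zeta)/J$. First I would dispose of the nonzero-graded components, which are the easy ones. Since $\rho^+$ is a vector group representation, the Lie subalgebra of $F/I$ generated by the degree-one elements $g_1$ together with their degree-two brackets is exactly $\Lie(\rho^+(G^+))$, and by Theorem \ref{thm main} this embeds into $F/I$ (its image $V^+ \cong \mathcal{U}(G^+)$ being a direct summand). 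Invoking the defining hypothesis $g(-g) \in [G^+,G^+]$ of a pre-Kantor pair, the degree $+1$ and $+2$ parts of this subalgebra recover $A^+ = G^+/G^+_2$ and $[G^+,G^+]=G^+_2$ respectively, and symmetrically for $G^-$ in the negative degrees. On the $\pm 1,\pm 2$ components the two Lie algebras thus have literally the same underlying modules.

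The only delicate component is the $0$-graded part, and this is exactly where dividing out $J$ does the work. A $0$-graded element $\delta \in (F/I)_0$ acts on the graded pieces, and its low-degree action is pinned down by $[\delta,g_1]=\delta(g)_1$ together with the second-order relation $g_2\delta - g_1\delta g_1 + \delta(g^{-1})_2 = -\delta(g)_2 + \delta(g)_1 g_1$; this assigns to $\delta$ a pair of derivation-like maps $G^\pm \longrightarrow \Lie(G^\pm)$. I would show the assignment $\delta \mapsto (\delta^+,\delta^-)$ becomes injective modulo $J$: by its very definition $J$ is the set of $0$-graded central elements fixed by every conjugation $\exp(g)(\cdot)\exp(g^{-1})$, i.e.\ precisely those with vanishing induced maps, so $J$ is the kernel of $\delta \mapsto (\delta^+,\delta^-)$. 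It then remains to identify the image with $\text{InStr}(G) = \langle V_{x,y}\rangle + \langle\zeta\rangle$, with $\zeta$ mapping to the grading derivation.

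To match the bracket I invoke Lemma \ref{Lemma equations}: its equations \ref{REP EQ V1} and \ref{REP EQ V 2} state that the commutator action of $\nu_{1,1}(y^{-1},x^{-1})$ on $\rho_1(g)$ and $\rho_2(g)$ reproduces exactly the linearisations $Q^{(1,1)}$ and $T^{(2,1)}$ that define $V$ in Definition \ref{definition V tau}; since $[x_1,y_1]=\mu_{1,1}(x,y)=\nu_{1,1}(x,y)$ in $F/I$, the class of this commutator in $L_0/J$ is $V_{x,y}$, so $\text{InStr}$ is precisely the image and the modules agree. Finally one checks that the remaining brackets coincide with the associative commutators of $F/I$, namely $[g,b]=Q_gb$, $[g,h]=\tau_{g,h}$, $[\delta,a]=\delta(a)_1$ and $[a,b]=V_{a,b}$; each follows because $Q,T,P,R$ were defined to be the corresponding $\nu_{i,j}$ (cf.\ Theorem \ref{thm main 2}), so the graded commutator in $F/I$ literally computes these operators. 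The main obstacle I anticipate is the $0$-graded identification: controlling both the kernel and the image of $\delta \mapsto (\delta^+,\delta^-)$ so that it exhausts $\text{InStr}$ and nothing more, which is exactly what the somewhat intricate definition of $J$ (central, $0$-graded, conjugation-invariant elements) is engineered to guarantee.
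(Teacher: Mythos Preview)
Your proposal is correct and follows essentially the same approach as the paper's proof: identify the nonzero-graded pieces with $\Lie(\rho^\pm(G^\pm))$, handle the $0$-graded part via the assignment $\delta \mapsto (\delta^+,\delta^-)$ whose kernel is precisely $J$, and invoke Lemma~\ref{Lemma equations} to show $[x_1,y_1]$ realizes $V_{x,y}$. One small slip: the equality $[G^+,G^+]=G^+_2$ you write in passing is not true in general (it requires $1/2\in\Phi$), but this does not affect your argument since both Lie algebras have $[G^+,G^+]$, not $G^+_2$, as their $+2$-component.
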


We call the action
$$ x \mapsto (a \mapsto x \cdot a = x_{(1)}aS(x_{(2)}))$$
of $F/I$ on itself, using Sweedler summation notation, the \textit{adjoint action}.

\begin{lemma}
	The adjoint action of $F/I$ on $L \oplus \Phi \zeta$ induces an action of the pre-Kantor pair $(\rho^+(G^+),\rho^-(G^-))$ on $(L \oplus \Phi \zeta)/J$ which coincides with the usual action of a pre-Kantor pair on its associated Lie algebra.
	\begin{proof}
		We only need to check that certain elements act as expected on $L \oplus \Phi \zeta$ under the adjoint action.
		For the elements $g_1$, this is the case since the Lie algebras are isomorphic.
		For elements $g_3,g_4$ this is easily checked using that $T,P,R$ coincide, on the Lie algebra, with $\mu_{i,j}(g,h)$ for $(i,j) = (3,1), (3,2),(4,2)$ respectively.
		For $g_2$ one notes that the map $\tau_{g,v}$ acts as the same as $g_2v_2 - g_1v_2g_1 + v_2(g^{-1})_2$ for $v_2 \in [G,G]$ and that $g_2 \cdot \delta  = - \delta(g)_2 + (\delta(g)_1)g_1$ is used as part of the definition of $\delta(g)$ for $0$-graded primitive $\delta$.
	\end{proof}
\end{lemma}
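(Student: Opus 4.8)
The plan is to prove the statement by decomposing the adjoint action of each group-like element $\exp(g) = \sum_i g_i$ (for $g \in G^\pm$) into its homogeneous components with respect to the $\mathbb{Z}$-grading of $F/I$, and then matching the grading-shift-$i$ piece $g_i$, case by case, against the operator that fixes it in the prescribed $\exp(g)$-action on the associated Lie algebra $L(G^+,G^-)$ (namely $V$ for $i=1$, the triple of maps $\tau, Q, \hat{\delta}$ for $i=2$, $P, T$ for $i=3$, and $R$ for $i=4$). Since each of these target formulas is valued in $L(G^+,G^-)$, verifying the match simultaneously shows that $\text{ad}(\exp(g))$ preserves $L \oplus \Phi\zeta$; because $J$ is defined by conditions (centrality in $L\oplus\Phi\zeta$, being $0$-graded, being fixed under conjugation by every $\exp(g)$) that are stable under every algebra automorphism of $F/I$, and $\text{ad}(\exp(g))$ is such an automorphism as $\exp(g)$ is group-like, the action descends to $(L\oplus\Phi\zeta)/J$. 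Composing with the isomorphism of the previous lemma then identifies the descended action with the usual pre-Kantor pair action.

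First I would dispatch the straightforward shifts. For $i=1$, $g_1 \cdot z = [a,z]$ is just the restriction of the commutator, which matches automatically because the two Lie algebras have already been identified. For $i=3$ and $i=4$, I would use the identities $\mu_{3,1}(g,h) = \text{ad}\, T_g h$, $\mu_{3,2}(g,h) = \text{ad}\, P_g h$, and $\mu_{4,2}(g,h) = (Q^{\text{grp}}_g h)_2$ on the Lie algebra, which are exactly the correspondences supplied by \cref{Lemma equations} together with the pre-Kantor pair axioms (as already exploited in the proof of \cref{thm main 2}); these show $g_3$ and $g_4$ act by $P, T$ and by $R$ respectively on the homogeneous pieces where they are nonzero, and by $0$ elsewhere by the grading.

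The main obstacle is the $i=2$ component, which must be checked separately on each of the three kinds of homogeneous pieces. On $[G^\mp,G^\mp]$ one matches $\text{ad}(\exp(g))$ to $\tau_{g,\cdot}$ via the relation $g_2 v_2 - g_1 v_2 g_1 + v_2 (g^{-1})_2 = \tau_{g,v}$ for $v_2 \in [G,G]$, and on $A^\mp$ one matches it to $Q_g$ directly from the definitions of the $\nu$'s. The genuinely delicate sub-case is the action on the $0$-graded part $\text{InStr}(G)$: for a $0$-graded primitive derivation $\delta$ one must show $g_2 \cdot \delta = \hat{\delta}(g) = -\delta(g)_2 + \psi(\delta(g)_1, a)$. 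Here I would invoke the relation $g_2\delta - g_1\delta g_1 + \delta(g^{-1})_2 = -\delta(g)_2 + \delta(g)_1 g_1$, which encodes that $1+\epsilon\delta$ is an automorphism of the pre-Kantor pair, and read off the $0$-graded component of $\text{ad}(\exp(g))(\delta)$ from it; the point is that only the terms $g_i$ with $i \le 2$ contribute to this conjugation, and one must keep track of the cross term $\delta(g)_1 g_1$ correctly. Once these three sub-cases are assembled, $g_2$ acts exactly as prescribed, and the identification of the adjoint action with the usual pre-Kantor pair action is complete.
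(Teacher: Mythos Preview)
Your proposal is correct and follows essentially the same approach as the paper: decompose the adjoint action of $\exp(g)$ into its graded pieces $g_i$, handle $g_1$ via the Lie algebra isomorphism, handle $g_3,g_4$ via the identification of $T,P,R$ with the $\mu_{i,j}$ for $(i,j)=(3,1),(3,2),(4,2)$, and handle $g_2$ by splitting into the $[G^\mp,G^\mp]$ case (where $\tau_{g,v}$ matches $g_2v_2 - g_1v_2g_1 + v_2(g^{-1})_2$) and the $0$-graded derivation case (where the formula $g_2\cdot\delta = -\delta(g)_2 + \delta(g)_1 g_1$ is built into the identification). Your additional remarks on why the action preserves $L\oplus\Phi\zeta$ and descends modulo $J$ are extra detail the paper leaves implicit, but the core argument is the same.
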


\begin{proof}[Proof of Theorem \ref{thm main 2}]	
	We only need to prove the axioms for operator Kantor pairs, as the rest was already proved below the statement of Theorem \ref{thm main 2}.
	All the axioms for operator Kantor pairs follow from the fact that $\nu_{3,2}(x,y) = (P_xy)_1$, $\nu_{2i,i}(x,y) = (Q^\text{grp}_xy)_i$ for $i = 2,3$ and $\nu_{5,2}(x,y) = 0$, using Remark \ref{remark sufficient condition}, since the adjoint action satisfies 
	\[ x \cdot ab = (x_{(1)} \cdot a)(x_{(2)} \cdot b). \qedhere \]
\end{proof}
	
\section{Computations}

\subsection{Lemma 1}

For a pre-Kantor pair $(G^+,G^-)$ we have an operator $V$.
We will often write $V_{x,y}z$ with $z \in A^+ \cong G^+/(G^+_2)$, i.e., the space of possible first coordinates.
In that case, the definition of $V$ becomes
\[ V_{x,y }z = - Q^{(1,1)}_{z,x} y.\]

\begin{lemma}
	\label{Lemma further linearizations P,R}
	Let $(G^+,G^-)$ be a pre-Kantor pair. The following equations hold for all $g,h,x,a \in G^+, y,y',b,c \in G^-$:
	\begin{align*}
		P^{((1,2),(1,1))}_{a,g}(y,y') = &  V_{Q_g y, y'} a + V_{g,y'} V_{g,y} a , \\
		R^{((2,2),(1,1))}_{g,h}(c,b) = & - V_{g,b}T^{(1,2)}_{g,h} c - V_{h,b} T^{(2,1)}_{g,h} c  + \psi(Q_g c,Q_h b) + \psi(Q^{(1,1)}_{g,h} c, Q^{(1,1)}_{g,h} b) + \psi(Q_h c,Q_g b), \\
		P^{(1,1,1)}_{g,x,a}y = & \; Q^{(1,1)}_{g,x}Q_{y^{-1}} a - V_{a,y} Q^{(1,1)}_{g,x} y, \\
		R^{(2,1,1)}_{g,x,a} y = &T^{(2,1)}_{g,x}Q_{y^{-1}} a - V_{a,y} T^{(2,1)}_{g,x} y + \psi(Q_g y, Q^{(1,1)}_{x,a} y) + \psi(Q^{(1,1)}_{g,x}y,Q^{(1,1)}_{g,a} y).
	\end{align*}
	\begin{proof}
		These equations are obtained by linearizing the expressions in the Definition \ref{Definition pre-Kantor pair} for $P^{(3,(1,1))}$, $R^{(4,(1,1))},$  $P^{(2,1)}$ and $R^{(3,1)}$ respectively.
		For the first equation, linearizing only yields
		\[ Q_{T^{(1,2)}_{a,g}y} y' - V_{a,y'} Q_g y - V_{g,y'} Q^{(1,1)}_{a,g} y.  \]
		Applying $T^{(1,2)}_{a,g} y = [a,Q_gy],$ $Q_{[u,v]} w = Q^{(1,1)}_{u,v} w - Q^{(1,1)}_{v,u} w$, and $Q^{(1,1)}_{a,g} y = - V_{g,y} a$ yields the desired result. The other expressions are obtained more easily.
	\end{proof}
\end{lemma}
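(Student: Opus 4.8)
The plan is to treat all four identities uniformly as \emph{second} linearizations of the first-order linearization formulas already fixed in \cref{Definition pre-Kantor pair}. Since $Q,T,P,R$ are homogeneous maps, they factor through the universal homogeneous maps of the appropriate degree by \cref{theorem universal homogeneous}, so all of their partial linearizations are well defined and mutually consistent. Concretely, a linearization such as $P^{((1,2),(1,1))}$ is computed by replacing the relevant argument of a lower linearization by a sum (e.g.\ $g\mapsto g+h$) and extracting the homogeneous component of the prescribed bidegree. Thus no genuinely new input is needed beyond the defining formulas together with the bilinearity of $\psi$ and of the products in $\End$.

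Concretely, I would obtain each equation by linearizing the indicated formula of \cref{Definition pre-Kantor pair}. The identity for $P^{((1,2),(1,1))}$ comes from linearizing the degree-$3$ argument $g$ of $P^{(3,(1,1))}_g(y_1,y_2)=Q_{T_gy_1}y_2-V_{g,y_2}Q_gy_1$ into a part $a$ of degree $1$ and a part $g$ of degree $2$; the identity for $R^{((2,2),(1,1))}$ from linearizing the degree-$4$ argument of $R^{(4,(1,1))}_g(a,b)=-V_{g,b}T_ga+\psi(Q_ga,Q_gb)$ into two degree-$2$ parts $g,h$; the identity for $P^{(1,1,1)}$ from linearizing the degree-$2$ argument of $P^{(2,1)}_{g,a}y=Q_gQ_{y^{-1}}a-V_{a,y}Q_gy$ into two degree-$1$ parts $g,x$; and the identity for $R^{(2,1,1)}$ from linearizing the degree-$3$ argument of $R^{(3,1)}_{g,a}y=T_gQ_{y^{-1}}a-V_{a,y}T_gy+\psi(Q_gy,Q^{(1,1)}_{g,a}y)$ into a degree-$2$ part $g$ and a degree-$1$ part $x$. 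The only subtlety here is the Leibniz-type bookkeeping: whenever the variable being split occurs in two factors of a product (as in $V_{g,\cdot}Q_g$, $V_{g,\cdot}T_g$, or inside $\psi(Q_g\cdot,Q_g\cdot)$), one must collect all ways of distributing the requested degrees across the factors. For $R^{(2,1,1)}$ this produces the two $\psi$-terms $\psi(Q_gy,Q^{(1,1)}_{x,a}y)$ and $\psi(Q^{(1,1)}_{g,x}y,Q^{(1,1)}_{g,a}y)$, for $R^{((2,2),(1,1))}$ the three $\psi$-terms, and the $V\cdot T$ product splits as $V_{g,b}T^{(1,2)}_{g,h}+V_{h,b}T^{(2,1)}_{g,h}$.

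For the first identity one must further translate the result into purely $V,Q$ language, and this is where the computation is least mechanical. After linearizing one arrives at $Q_{T^{(1,2)}_{a,g}y}y' - V_{a,y'}Q_gy - V_{g,y'}Q^{(1,1)}_{a,g}y$, and I would then apply, in order, the three dictionary identities
\[ T^{(1,2)}_{a,g}y=[a,Q_gy],\qquad Q_{[u,v]}w=Q^{(1,1)}_{u,v}w-Q^{(1,1)}_{v,u}w,\qquad Q^{(1,1)}_{z,x}y=-V_{x,y}z, \]
the first from \cref{Definition pre-Kantor pair}, the second from \cref{Lemma homogeneous map on commutator} applied to the homogeneous map $s\mapsto Q_sw$, and the third directly from the definition of $V$. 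Expanding $Q_{[a,Q_gy]}y'$ by the commutator rule and rewriting every $Q^{(1,1)}$ as a $V$ makes the stray $V_{a,y'}Q_gy$ contribution cancel, leaving the asserted expression $V_{Q_gy,y'}a+V_{g,y'}V_{g,y}a$. The remaining three identities need no commutator manipulation, so once the distribution step is carried out they read off directly; they are, as the statement suggests, obtained more easily.

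I expect the main obstacle to be organizational rather than conceptual: getting the degree bookkeeping right in the products and bilinear $\psi$-terms, so that no cross-term is dropped or double-counted, and handling the commutator rewrite in the first identity with the correct signs, since the sign of $V_{Q_gy,y'}a$ is sensitive to the conventions fixed earlier for the bracket $[\,\cdot\,,\cdot\,]$, for $\psi$, and for $V$.
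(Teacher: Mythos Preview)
Your proposal is correct and follows essentially the same approach as the paper: linearize the four formulas $P^{(3,(1,1))}$, $R^{(4,(1,1))}$, $P^{(2,1)}$, $R^{(3,1)}$ from Definition~\ref{Definition pre-Kantor pair}, and for the first identity rewrite the resulting expression $Q_{T^{(1,2)}_{a,g}y}y' - V_{a,y'}Q_gy - V_{g,y'}Q^{(1,1)}_{a,g}y$ using exactly the three dictionary identities you list. Your intermediate expression and the rewriting steps match the paper's proof verbatim.
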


\begin{lemma}
	Let $(G^+,G^-)$ be a pre-Kantor pair. For $x \in G^+, a,b \in G^-$ we have
	\label{Lemma linerizations tau}
	$$ \tau^{(2,(1,1))}_{x,a,b}  = - V_{b,Q_{x}a} + V_{a,x}V_{b,x},$$
	$$ \tau^{((1,1),2)}_{b,a,x^{-1}} =  - V_{Q_{x}a,b} + V_{b,x}V_{a,x}.$$
	\begin{proof}
		In order to avoid subtleties with the second coordinate, we assume that $d(g_1,g_2) = (d_1(g_1),d_2(g_2))$ with $d_i$ a linear function for derivations $d$. This can be assumed if we work with the first $2$ coordinates of the universal representation.
		Consequently, we can say that
		$$ \delta(\psi(a,b)) = \psi(\delta(a),b) + \psi(a,\delta(b)),$$
		without ever running into identification issues.

		Both equations that we want to prove are equivalent. To observe that this is the case set $g = (t \cdot_1 a)b \in G(\Phi[t])$, and consider the equation
		$$ \tau_{x,g} + \tau_{g^{-1},x^{-1}} - V_{x,g}^2 = 0$$
		used in the definition of $\tau$, cfr., Definition \ref{definition V tau}.
		Comparing the terms belonging to $t$ shows that both equations are equivalent.
		
		So, suppose that $x \in G^+, a,b \in G^-$.
		We show that the first equality holds on $G^-$ and the second on $G^+$, in order to prove that both equations hold in $\text{InStr}(G)$.
		
		We first evaluate the $\tau^{(2,(1,1))}_{x,a,b}$ at $g \in G^-$. 
		The first coordinate is
		$$ P^{(1,1,1)}_{g,b,a}x^{-1} = (V_{Q_{x}a,b} + V_{a,x}V_{b,x})_1 g$$
		by Lemma \ref{Lemma further linearizations P,R}. Hence, the left and right-hand side agree on the first coordinate.
		The second coordinate is
		$$ R^{(2,1,1)}_{g,b,a} x^{-1} - \psi(Q_g x, Q^{(1,1)}_{b,a} x) + \psi(P^{(1,1,1)}_{g,b,a}x^{-1},g)$$
		which equals
		$$ T^{(2,1)}_{g,b}Q_{x} a - V_{a,x} T^{(2,1)}_{g,b} x^{-1} + \psi(Q^{(1,1)}_{g,b}x,Q^{(1,1)}_{g,a} x) + \psi(P^{(1,1,1)}_{g,b,a}x^{-1},g).$$ 
		We can factor $V_{a,x}$, using the expression for $P^{(1,1,1)}$ of Lemma \ref{Lemma further linearizations P,R} and the fact that $\delta(\psi(a,b))$ equals $\psi(\delta a,b) + \psi(a, \delta b)$ to obtain
		$$  T^{(2,1)}_{g,b}Q_{x} a + \psi(V_{b,Q_xa}g,g) + V_{a,x}(T^{(2,1)}_{g,b}x + \psi(V_{b,x} g,g)).$$
		So, by evaluating the right-hand side of the equation we want to prove, we conclude that the left-hand side and the right-hand side have the same action on $G^-$.
		
		Now, we prove that the second equality holds as functions on $G^+$.
		The first coordinate of $\tau^{(1,1)}_{b,a,x^{-1}} g$ equals
		$$ P^{((1,2),(1,1))}_{g,x}(a,b) = V_{Q_{x}a,b}g + V_{x,b}V_{x,a}g.$$
		This proves the equality on the first coordinate.
		The second coordinate equals
		$$ R^{((2,2),(1,1))}_{g,x}(a,b) - \psi(Q_ga,Q_xb) - \psi(Q_gb,Q_xa) + \psi(P^{((1,2),(1,1))}_{g,x}(a,b),g).$$
		By substituting the linearisation of $R$ appearing in Lemma \ref{Lemma further linearizations P,R}, we can conclude that the second coordinate equals
		\begin{equation}
			\label{eq1}
			- V_{g,b}T^{(1,2)}_{g,x} a - V_{x,b} T^{(2,1)}_{g,x} a  + \psi(Q^{(1,1)}_{g,x} a, Q^{(1,1)}_{g,x} b) + [Q_x a,Q_g b] +  \psi(P^{((1,2),(1,1))}_{g,x}(a,b),g).
		\end{equation}
		We want to prove that this expression equals the second coordinate of $(V_{Q_{x}a,b} + V_{b,x}V_{a,x})(g),$ i.e., that it equals
		$$ T^{(2,1)}_{g,Q_xa} b + \psi(V_{Q_xa,b}g,g) + V_{b,x}(T^{(2,1)}_{g,x}a + \psi(V_{a,x}g,g))) .$$
		We use the equality
		$$ T^{(2,1)}_{g,Q_xa} b = T^{(1,2)}_{Q_xa,g} b + T^{(2,1)}_{[Q_xa,g],g} b  = [Q_xa,Q_gb] - V_{g,b} T^{(1,2)}_{g,x} a, $$
		to rewrite the expression above as
		$$ - V_{g,b}T^{(1,2)}_{g,x} a + [Q_x a,Q_g b]  + \psi(V_{Q_xa,b}g,g) + V_{b,x}(- T^{(2,1)}_{g,x}a + \psi(V_{a,x}g,g))).$$
		So, to conclude the equality between that and Equation (\ref{eq1}), we must check
		$$ V_{x,b}T^{(2,1)}_{g,x} a -  \psi(V_{x,a}g, V_{x,b}g) - \psi(V_{Q_xa,b}g + V_{x,b}V_{x,a}g,g) + \psi(V_{Q_xa,b}g,g ) + V_{b,x}(T^{(2,1)}_{g,x}a + \psi(V_{a,x}g,g))) = 0,$$
		as it is the previous expression minus Expression (\ref{eq1}) in which we substituted the known expression for the $P$ and replaced all the $Q^{(1,1)}$'s with $V$'s. We see that this is the case, using the fact that $V_{x,b}$ acts as a derivation on $\psi$.
	\end{proof}                                 
\end{lemma}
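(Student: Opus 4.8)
The plan is to read both claimed identities as equalities in $\text{InStr}(G)$, that is, as equalities of pairs of endomorphisms acting on $\text{Lie}(G^+)$ and $\text{Lie}(G^-)$. Throughout I would adopt the convention — legitimate on the first two coordinates of the universal representation, as in the definition of $V,\tau$ in Definition \ref{definition V tau} — that a derivation $d$ splits as $d(g_1,g_2)=(d_1(g_1),d_2(g_2))$ with each $d_i$ linear, so that the Leibniz rule $d(\psi(a,b))=\psi(da,b)+\psi(a,db)$ holds verbatim. This is precisely what keeps the second-coordinate ($B^\pm$) bookkeeping under control, since every troublesome term in the computation is a $\psi$-expression differentiated by some $V$.

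First I would show that the two stated identities are equivalent, so that only one of them needs to be verified. Applying the defining relation of $\tau$ from Definition \ref{definition V tau}, namely $(1+\eta V_{x,y}+\eta^2\tau_{x,y})(1+\eta V_{y^{-1},x^{-1}}+\eta^2\tau_{y^{-1},x^{-1}})=1$, together with $V_{x,y}=-V_{y^{-1},x^{-1}}$, yields $\tau_{x,g}+\tau_{g^{-1},x^{-1}}-V_{x,g}^2=0$. Substituting $g=(t\cdot_1 a)b\in G^-(\Phi[t])$ and comparing the coefficients of $t$ then matches $\tau^{(2,(1,1))}_{x,a,b}$ against $\tau^{((1,1),2)}_{b,a,x^{-1}}$. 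Because the two are thus the same element of $\text{InStr}(G)$, it suffices to pin it down by checking the first identity on $G^-$ and the second identity on $G^+$; together these determine both the $G^+$- and the $G^-$-component of the common derivation.

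Next comes the actual verification, done coordinate by coordinate. Evaluating the first identity at $g\in G^-$, the first coordinate unwinds, via the definition of $\tau$ on $G^-$ in terms of $P$, to $P^{(1,1,1)}_{g,b,a}x^{-1}$, which Lemma \ref{Lemma further linearizations P,R} identifies with the first coordinate of $(V_{Q_x a,b}+V_{a,x}V_{b,x})g$; this matches the claim after $V_{Q_x a,b}=-V_{b,Q_x a}$. The second coordinate is the laborious part: it is assembled from $R^{(2,1,1)}_{g,b,a}x^{-1}$ together with the $\psi$-corrections in the definition of $\tau$, and the target is the second coordinate of the same operator. Here I would substitute the $R^{(2,1,1)}$ formula of Lemma \ref{Lemma further linearizations P,R}, then factor out $V_{a,x}$ using the expression for $P^{(1,1,1)}$ and the Leibniz rule for $\psi$, and collect. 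The action on $G^+$ (the second identity) is handled symmetrically using the $P^{((1,2),(1,1))}$ and $R^{((2,2),(1,1))}$ linearisations of Lemma \ref{Lemma further linearizations P,R}, after rewriting $T^{(2,1)}_{g,Q_x a}b=T^{(1,2)}_{Q_x a,g}b+T^{(2,1)}_{[Q_x a,g],g}b$ in order to expose the $V_{g,b}T^{(1,2)}_{g,x}a$ term.

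The main obstacle will be exactly this second-coordinate cancellation: the $\psi$-terms proliferate and must cancel on the nose, and keeping the signs and the argument order straight — the asymmetry between $\tau^{(2,(1,1))}$ and $\tau^{((1,1),2)}$, and between $V_{Q_x a,b}$ and $V_{b,Q_x a}$ — is where mistakes would enter. Conceptually the content is light: everything reduces to the already-established linearisations of $P$, $R$, and $T$ and to $V$ acting as a derivation on $\psi$. The key to making it run cleanly is to organize the computation as ``evaluate on $G^-$, then on $G^+$, factoring the $V$'s out by Leibniz'' rather than expanding blindly.
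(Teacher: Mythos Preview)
Your proposal is correct and follows essentially the same approach as the paper: establish the equivalence of the two identities via the defining relation $\tau_{x,g}+\tau_{g^{-1},x^{-1}}-V_{x,g}^2=0$ and coefficient comparison in $t$, then verify one component on $G^-$ and the other on $G^+$ using the linearisations $P^{(1,1,1)}$, $R^{(2,1,1)}$, $P^{((1,2),(1,1))}$, $R^{((2,2),(1,1))}$ from Lemma~\ref{Lemma further linearizations P,R}, together with the Leibniz rule for $V$ on $\psi$ and the rewriting of $T^{(2,1)}_{g,Q_xa}b$. Your organizational remarks about factoring $V$'s via Leibniz and tracking the sign $V_{Q_xa,b}=-V_{b,Q_xa}$ are exactly the bookkeeping the paper relies on.
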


\subsection{Computation of $T$}
\label{appendix T}
We work with the structurable algebra and corresponding vector group associated to a hermitian cubic norm structure.
Set $g = ((a,j),(u,aj + j^\sharp)) \in G$.
First, we compute 
\begin{align*}
	Q_g(b,k)  = & ((a,j)(\bar{b},k))(a,j) - (u,aj + j^\sharp)(b,k) \\
	= & (a\bar{b} + T(j,k), ak + bj + j \times k)(a,j) \\ & - (ub + aT(j,k)+ T(j^\sharp,k), uk + a\bar{b}j + \bar{b}j^\sharp + \bar{a}\cdot j \times k + j^\sharp \times k) \\
	= & (a^2\bar{b} - ub + aT(k,j) + bT(j,j) + T(j^\sharp,k),\\ & a\bar{a}k - uk + T(j,k)j + b\bar{a}j - \bar{a} \cdot j \times k + \bar{b} j^\sharp + (j \times k) \times j - j^\sharp \times k).	
\end{align*}
By linearising, we obtain that $l =  Q^{(1,1)}_{(a,j),(a,j)}(b,k)$ equals
\begin{align*}
	(& 2a^2\bar{b} - a\bar{a}b + 2aT(k,j) + bT(j,j) + T(j\times j,k), \\ & a\bar{a}k - T(j,j)k + 2T(j,k)j + 2b\bar{a}j - 2\bar{a}\cdot j\times k + 2\bar{b} j^\sharp + 2(j\times k)\times j - (j\times j)\times k).
\end{align*}
We want to compute $[l,(a,j)]$.
This is easily computed by computing the first coordinate of $l(\bar{a},j)$ and calling it $t$, and then using $[l,(a,j)] = (t - \bar{t},0)$.
We get that
\begin{align*}
	t = & \; 2a^2\bar{b}\bar{a} - ab\bar{a}^2 + 3a\bar{a}T(k,j) + b\bar{a}T(j,j) + \bar{a}T(j \times j,k) - T(j,j)T(k,j) + 2T(j,k)T(j,j)\\ & \;  + 2b\bar{a}T(j,j) + 2\bar{a}T(j \times k,j) + \bar{b}T(j\times j,j) + 2T((j \times k) \times j,j) - T((j\times j)\times k,j).
\end{align*}
Using that $$T(j \times k,j) = T(j \times j,k), \quad T((j\times k)\times j,j) = T(j,(j \times j) \times k), \quad 2 j^\sharp = j \times j, \quad 3N(j) = T(j,j^\sharp)$$ we obtain 
$$ t - \bar{t}= 3(s - \bar{s})$$
with
$$ s =  -ab\bar{a}^2 + a\bar{a}T(k,j) + \bar{a}bT(j,j) - aT(k,j \times j) - T(j,j)T(k,j) - 2 bN(j) - T(j\times(j\times j),k).$$
We can rewrite this $s$ as
$$ s= (a\bar{a} - T(j,j))(T(k,j) - b\bar{a}) - 2 \left( T(ak,j^\sharp) + bN(j) + T(j^\sharp \times k,j)\right).$$
Finally, we can compute $T$ if there is no $3$-torsion using the formula
$$ T_g(b,k) = [(a,j),Q_g(b,k)] + s - \bar{s},$$
proved in Lemma \ref{lemma constructing structurable algebras}, using that $3(\bar{s} - s)$ coincides with the right-hand side of the defining expression of $T$. To see that this $s$ coincides with that expression, observe that the mentioned expression in Lemma \ref{lemma constructing structurable algebras}, is obtained by computing $[(a,j),Q^{(1,1)}_{(a,j),(a,j)}(b,k)] = 3(\bar{s} - s)$.
Using the same technique as before, we compute that
$$ [Q_g(b,k),(a,j)] = v - \bar{v},$$
with
$$ v = b\bar{a}(2T(j,j) - u - a\bar{a}) + T(k,j)(2a\bar{a} - T(j,j) - u) - T(ak,j^\sharp) - 3\left(bN(j) + T(j^\sharp \times k,j)\right).$$
So, we conclude that
$$ T_g(b,k) = w - \bar{w},$$
with $w = s - v = (u - T(j,j))(b\bar{a}) + (u - a\bar{a})T(k,j) - T(ak,j^\sharp) + bN(j) + T(j^\sharp \times k,j)$. 
	
	\paragraph{Acknowledgements}
	
	Sigiswald Barbier is supported by a FWO postdoctoral junior fellowship
	from the Research Foundation Flanders (1269821N).
	
	\paragraph{Conflict of Interest}
	
	The authors have no conflict of interest to declare that are relevant to this article.

\end{document}